\documentclass[a4paper,11pt,openright,twoside]{article}
\usepackage{geometry}
\geometry{a4paper,tmargin=2.5cm,bmargin=2.5cm,
	lmargin=3cm,rmargin=2cm}
\usepackage{etex}%
\usepackage[utf8]{inputenc}
\usepackage[english]{babel}
\usepackage[T1]{fontenc}
\usepackage{acronym}%
\usepackage{epstopdf}%
\usepackage{epigraph}%
\usepackage{indentfirst}%
\usepackage{amsmath,amssymb, amsthm, amsfonts}
\usepackage{braket}%
\usepackage{empheq}%
\usepackage{hyperref}
\usepackage{mathtools}
\usepackage{dcpic}%
\usepackage[all,cmtip,2cell]{xy} 
\UseAllTwocells
\input{diagxy}
\usepackage{wrapfig}%
\usepackage{subfig,caption}%
\usepackage{stmaryrd}%
\usepackage{textcomp}%
\usepackage{mathrsfs}%
\usepackage{lmodern}
\usepackage{tikz}

\usetikzlibrary{graphs,quotes,matrix}
\usepackage[all]{xy}

\newcommand{\cd}[2][]{\vcenter{\hbox{\xymatrix#1{#2}}}}
\newcommand\nbd\nobreakdash
\newcommand{\Cat}{{\mathcal{C}\mspace{-2.mu}\it{at}}}
\newcommand{\nCat}[1]{{#1}\hbox{\protect\nbd-}\kern1pt\Cat}	
\newcommand{\ulthreecell}[3][0.5]{\ar@{}[#2] \ar@3?(#1)+/dr  0.2cm/;?(#1)+/ul 0.2cm/_{#3}}

\newcommand{\gpd}{{\mathcal{G}\mspace{-2.mu}\it{pd}}}
\newcommand{\ngpd}[1]{{#1}\hbox{\protect\nbd-}\kern1pt\gpd}
\newcommand{\wgpd}{ \ngpd{\infty}}	
\newcommand{\Twocong}[2][0.5]{\ar@{}[#2] \save ?(#1)*{\cong}\restore}
\newcommand{\Twoeq}[2][0.5]{\ar@{}[#2] \save ?(#1)*{=}\restore}
\newcommand{\Ltwocell}[3][0.5]{\ar@{}[#2] \ar@{=>}?(#1)+/r 0.2cm/;?(#1)+/l 0.2cm/^{#3}}
\newcommand{\Rtwocell}[3][0.5]{\ar@{}[#2] \ar@{=>}?(#1)+/l 0.2cm/;?(#1)+/r 0.2cm/^{#3}}
\newcommand{\Utwocell}[3][0.5]{\ar@{}[#2] \ar@{=>}?(#1)+/d  0.2cm/;?(#1)+/u 0.2cm/_{#3}}
\newcommand{\Dtwocell}[3][0.5]{\ar@{}[#2] \ar@{=>}?(#1)+/u  0.2cm/;?(#1)+/d 0.2cm/^{#3}}
\newcommand{\Ultwocell}[3][0.5]{\ar@{}[#2] \ar@{=>}?(#1)+/dr  0.2cm/;?(#1)+/ul 0.2cm/^{#3}}
\newcommand{\Urtwocell}[3][0.5]{\ar@{}[#2] \ar@{=>}?(#1)+/dl  0.2cm/;?(#1)+/ur 0.2cm/^{#3}}
\newcommand{\Dltwocell}[3][0.5]{\ar@{}[#2] \ar@{=>}?(#1)+/ur  0.2cm/;?(#1)+/dl 0.2cm/^{#3}}
\newcommand{\Drtwocell}[3][0.5]{\ar@{}[#2] \ar@{=>}?(#1)+/ul  0.2cm/;?(#1)+/dr 0.2cm/^{#3}}
\newcommand{\Ulthreecell}[3][0.5]{\ar@{}[#2] \ar@3?(#1)+/dr  0.2cm/;?(#1)+/ul 0.2cm/_{#3}}

\newcommand{\D}{\mathcal{D}}

\newcommand{\M}{\mathbf{M}}

\newcommand{\R}{\mathcal{R}}

\newcommand{\G}{\mathbb{G}}
\newcommand{\p}{\mathbb{P}}

\newcommand{\C}{\mathscr{C}}

\newcommand{\cyl}{\mathbf{Cyl}}

\newcommand{\Tn}{\Theta^{\leq n}_0}

\newcommand{\plus}[1]{\mathop{\amalg}\limits_{#1}}
\renewcommand{\epsilon}{\varepsilon}
\renewcommand{\theta}{\vartheta}
\renewcommand{\rho}{\varrho}
\renewcommand{\phi}{\varphi}

\newcounter{ctr} \numberwithin{ctr}{section}

\theoremstyle{definition}

\theoremstyle{definition}

\theoremstyle{definition}
\newtheorem{defi}[ctr]{Definition}
\theoremstyle{definition}

\theoremstyle{definition}
\newtheorem{rmk}[ctr]{Remark}
\theoremstyle{definition}
\newtheorem{ex}[ctr]{Example}
\theoremstyle{plain}
\newtheorem{thm}[ctr]{Theorem}
\theoremstyle{plain}

\theoremstyle{plain}
\newtheorem{prop}[ctr]{Proposition}
\theoremstyle{plain}
\newtheorem{lemma}[ctr]{Lemma}
\theoremstyle{plain}
\newtheorem{cor}[ctr]{Corollary}

\DeclareMathOperator{\ob}{Ob}

\DeclareMathOperator{\colim}{colim}

\DeclareMathOperator{\height}{ht}
\begin{document}
	\title{A semi-model structure for Grothendieck weak 3-groupoids}
	\author{Edoardo Lanari\thanks{Supported by Macquarie University iMQRes PhD scholarship}}
	\maketitle
\begin{abstract}
In this paper we apply some tools developed in our previous work on Grothendieck $\infty$-groupoids to the finite-dimensional case of weak 3-groupoids. 

We obtain a semi-model structure on the category of Grothendieck 3-groupoids of suitable type, thanks to the construction of an endofunctor $\p$ that has enough structure to behave like a path object. This makes use of a recognition principle we prove here that characterizes globular theories whose models can be viewed as Grothendieck $n$-groupoids (for $0\leq n \leq \infty$). Finally, we prove that the obstruction in arbitrary dimension (possibly infinite) only resides in the construction of (slightly less than) a path object on a suitable category of Grothendieck (weak) $n$-categories with weak inverses. This also gives a sufficient condition for endowing an $n$-groupoid à la Batanin with the structure of a Grothendieck $n$-groupoid.
\end{abstract}
\tableofcontents
\section{Introduction}
Alexander Grothendieck proposed an algebraic definition of  weak $\infty$-groupoids in 1983, in a letter to Quillen, see \cite{Gr}. His idea was to have a completely algebraic model of these sofisticated objects, in contrast with the existing non-algebraic ones (i.e. Kan complexes). Moreover, as it was proven to be true for other models, he conjectured that these algebraic structures modeled all homotopy types. This goes under the name of ``homotopy hypothesis''.

In his recent paper (\cite{Hen}), Henry proved that if a technical condition on the category of Grothendieck $\infty$-groupoids is satisfied, namely the ``pushout lemma'' (which states that pushouts of certain maps induce isomorphisms on homotopy groups) then the homotopy hypothesis holds true. Moreover, this is the only non-trivial step in constructing a semi-model structure on the category of Grothendieck $\infty$-groupoids. 

In our previous work (\cite{EL}) we developed some tools to attack this problem. We constructed a globular set that ought to model a path object for the category $\ngpd{\infty}$, provided one can endow it with the required algebraic structure. We also initiated this construction by interpreting all the categorical operations in a non-functorial way, and we showed how to fix it in low dimensions. Here we prove in Theorem \ref{semi model str on C_W} that essentially this is the only obstruction to the construction of the semi-model structure, and we get the desired extension in the simpler case of a truncated 3-dimensional version of these highly-structured algebraic objects. In fact, we provide this path object with enough structure to make it into an object of $\ngpd{3}$, the category of Grothendieck 3-groupoids (of suitable type), and we use this path object to endow the abovementioned category with a semi-model structure. According to the (generalized) homotopy hypothesis, weak 3-groupoids should model all homotopy 3-types, and this will be the object of study of subsequent work, which will make use of the existence of this semi-model structure.
Unless otherwise stated, all the structures are weak, thus we use the term $n$-groupoids and $n$-categories to mean weak ones.

In Section 2 we recall the basic definitions available in the existing literature, with the necessary modifications needed to adapt them, when appropriate, to the category $\ngpd{n}$ of $n$-groupoids.
This section also contains the definition and some basic properties of a weak factorization system on the category $\ngpd{n}$ for $0\leq n \leq \infty$ that will be used throughout the entire paper.

The core of this work is in Section 3 and 4: in the former we prove a characterization of those globular theories for which the category of models bears a cofibrantly generated semi-model structure whose objects look like Grothendieck $n$-groupoids for $0\leq n \leq \infty$ (though they may possibly be strict $n$-groupoids): this is Theorem \ref{semi model str characterization}, which enables us to prove the main result of Section \ref{the semi model str section}, i.e. Theorem \ref{semi model str on C_W}. This is a very general result which states that it is enough to construct a path object on the category of weak $n$-categories with weak inverses, i.e. $\mathfrak{C}^{\mathbf{W}}$-models (see Definition \ref{D_W def}), in order to show that these are Grothendieck $n$-groupoids and obtain a semi-model structure on said category.

Section 5 is a recollection of all the main constructions performed in \cite{EL}, such as cylinders on globular sums, modifications, and, most importantly, the elementary interpretation $\hat{\rho}\colon \cyl(D_n) \rightarrow \cyl(A)$ of a given homogeneous (categorical) operation $\rho\colon D_n \rightarrow A$, all adapted to work in the case of $\mathfrak{C}^{\mathbf{W}}$-models. We also include the explicit description of a specific instance of the stack of cylinders that we defined abstractly in the previous paper, since this turns out to be useful in some calculations.

In the final section, given a $\mathfrak{C}_3^{\mathbf{W}}$-model $X$, where $\mathfrak{C}_3$ denotes a coherator for 3-categories, we endow the globular set $\p X$ with the structure of a $\mathfrak{C}_3^{\mathbf{W}}$-model: this makes use of results from Section 6 of \cite{EL} together with new content. Essentially, we inductively correct the boundary of $\hat{\rho}$ so as to make it into a functor $\cyl \colon \mathfrak{C}_3^{\mathbf{W}} \rightarrow \mathbf{Mod}(\mathfrak{C}_3^{\mathbf{W}})$. 
Therefore, we get an endofunctor $\p$ on $\mathbf{Mod}(\mathfrak{C}_3^{\mathbf{W}})$ (see Corollary \ref{lift to C_W}) as stated in Proposition \ref{path object for Batanin 3-groupoids}, which concludes this work according to Theorem \ref{semi model str on C_W}.
\section{Background}
The basic shapes that constitute the arities for globular theories are the so-called globular sums, i.e. a suitable notion of pasting of globes that will be introduced in the following section. We then recall the definition of models of a globular theory and their universal property, together with a class of (trivial) cofibrations on the category of such models.
\subsection{Globular theories and models}
\begin{defi}
	Let $\G$ be the category obtained as the quotient of the free category on the graph
	\[
	\bfig
	\morphism(0,0)|a|/@{>}@<2pt>/<300,0>[0`1;\sigma_0]
	\morphism(0,0)|b|/@{>}@<-2pt>/<300,0>[0`1;\tau_0]
	\morphism(300,0)|a|/@{>}@<2pt>/<300,0>[1`\ldots;\sigma_1]
	\morphism(300,0)|b|/@{>}@<-2pt>/<300,0>[1`\ldots;\tau_1]
	\morphism(720,0)|a|/@{>}@<2pt>/<400,0>[n`n+1;\sigma_n]
	\morphism(720,0)|b|/@{>}@<-2pt>/<400,0>[n`n+1;\tau_n]
	\morphism(1120,0)|a|/@{>}@<2pt>/<400,0>[n+1`\ldots;\sigma_{n+1}]
	\morphism(1120,0)|b|/@{>}@<-2pt>/<400,0>[n+1`\ldots;\tau_{n+1}]
	\efig
	\]
	by the set of relations $\sigma_k \circ \sigma_{k-1}=\tau_k \circ \sigma_{k-1}$, $\sigma_k \circ \tau_{k-1}=\tau_k \circ \tau_{k-1}$ for $k\geq 1$.
	
	Given integers $j>i$, define $\sigma^j_i=\sigma_{j-1}\circ \sigma^{j-1}_i$, where $\sigma^{i+1}_i=\sigma_i$. The maps $\tau^j_i$ are defined similarly, with the appropriate changes.
\end{defi}
The category of globular sets is the presheaf category $[\G^{op},\mathbf{Set}]$.
\begin{defi}
	A map $f\colon X \rightarrow Y$ of globular sets is said to be $m$-bijective if $f_k\colon X_k \rightarrow Y_k$ is a bijection of sets for every $k\leq m$, and it is $m$-fully faithful if the following square is cartesian for all $i\geq m$:
	\[
	\bfig 
	\morphism(0,0)|a|/@{>}@<0pt>/<800,0>[X_{i+1}`Y_{i+1};f_{i+1}]
	\morphism(0,0)|l|/@{>}@<0pt>/<0,-400>[X_{i+1}`X_i\times X_i;(s,t)]
	\morphism(800,0)|r|/@{>}@<0pt>/<0,-400>[Y_{i+1}`Y_i\times Y_i;(s,t)]
	\morphism(0,-400)|l|/@{>}@<0pt>/<800,0>[X_{i}\times X_i`Y_{i}\times Y_i;f_i\times f_i]
	\efig 
	\]
	We denote the class of $m$-bijective morphisms by $\mathbf{bij_m}$, and that of $m$-fully faithful ones by $\mathbf{ff_m}$.
\end{defi}
The following result holds true, and its proof is left as a simple exercise
\begin{prop}
	\label{fact syst glob set}
	The pair $(\mathbf{bij_m},\mathbf{ff_m})$ is an orthogonal factorization system on the category of globular sets $[\G^{op},\mathbf{Set}]$.
\end{prop}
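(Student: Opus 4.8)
The plan is to verify the three defining conditions of an orthogonal factorization system for the pair $(\mathbf{bij_m},\mathbf{ff_m})$, with $\mathbf{bij_m}$ as left class and $\mathbf{ff_m}$ as right class: that each class contains the isomorphisms and is closed under composition, that every map admits a $(\mathbf{bij_m},\mathbf{ff_m})$-factorization, and that every $\mathbf{bij_m}$-map is left orthogonal to every $\mathbf{ff_m}$-map. The first condition is immediate. Isomorphisms of globular sets are level-wise bijections, hence lie in $\mathbf{bij_m}$, and a commuting square all of whose arrows are isomorphisms is automatically cartesian, so isomorphisms also lie in $\mathbf{ff_m}$. Closure of $\mathbf{bij_m}$ under composition is the corresponding statement for bijections of sets, while closure of $\mathbf{ff_m}$ under composition follows from the pasting lemma for pullback squares applied to the two defining $(s,t)$-squares at each level $i\geq m$.

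For the factorization of a map $f\colon X\to Y$, I would build the intermediate globular set $Z$ by induction on the degree. In degrees $k\leq m$ I set $Z_k=X_k$, with $l_k=\mathrm{id}$ and $r_k=f_k$; in degrees $i+1$ with $i\geq m$ I define $Z_{i+1}$ to be the pullback $Y_{i+1}\times_{Y_i\times Y_i}(Z_i\times Z_i)$, taking $(s,t)\colon Z_{i+1}\to Z_i\times Z_i$ and $r_{i+1}\colon Z_{i+1}\to Y_{i+1}$ to be the two projections. The map $l_{i+1}\colon X_{i+1}\to Z_{i+1}$ is then supplied by the universal property of this pullback, fed with $f_{i+1}$ and $(l_i\times l_i)\circ(s,t)$, the requisite compatibility being exactly the identity $r_i\circ l_i=f_i$ carried along in the induction. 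By construction $l\in\mathbf{bij_m}$ (it is the identity below degree $m$) and $r\in\mathbf{ff_m}$ (its $(s,t)$-squares are cartesian for $i\geq m$), and $r\circ l=f$.

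For orthogonality, given a commuting square with $i\colon A\to B$ in $\mathbf{bij_m}$ on the left and $p\colon X\to Y$ in $\mathbf{ff_m}$ on the right, and boundary maps $u\colon A\to X$, $v\colon B\to Y$, I would construct the unique lift $w\colon B\to X$ degree-wise. In degrees $k\leq m$ the value $w_k=u_k\circ i_k^{-1}$ is forced and well defined since $i_k$ is a bijection, and one checks $p_k\circ w_k=v_k$. In degree $i+1$ with $i\geq m$, the cartesianness of the $(s,t)$-square for $p$ lets me read off $w_{i+1}$ from the pullback, feeding in $v_{i+1}$ together with $(w_i\times w_i)\circ(s,t)$; the compatibility needed is precisely the inductive hypotheses $p_i\circ w_i=v_i$ and $w_i\circ i_i=u_i$, and uniqueness is immediate from the universal property. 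This simultaneously yields existence and uniqueness of the filler.

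The routine parts are genuinely routine, so the place demanding care, and what I expect to be the main obstacle, is threading the globular structure through the two inductions. One must check at every step that the maps produced by the universal properties commute with source and target, and that the source and target of $Z_{i+1}$ satisfy the globular relations, so that these families assemble into honest morphisms of $[\G^{op},\mathbf{Set}]$ rather than mere degree-wise families of functions; and one must verify that the cartesian squares align so that the pullback compatibility conditions reduce exactly to the inductive hypotheses $r_i\circ l_i=f_i$, respectively $p_i\circ w_i=v_i$ and $w_i\circ i_i=u_i$. Once this bookkeeping is in place the three axioms follow and the pair is an orthogonal factorization system.
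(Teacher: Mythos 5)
Your overall strategy (isomorphisms and closure under composition, factorization by iterated pullback, degree-wise construction of the unique lift) is the standard one, and the orthogonality argument is correct as written: the lift $w$ is forced in degrees $\leq m$, is read off uniquely from the cartesian squares in higher degrees, and the identity $(s,t)\circ w_{i+1}=(w_i\times w_i)\circ (s,t)$ falls out of the second pullback component, so $w$ really is a morphism of globular sets. (The paper offers no proof to compare against, so I am judging the argument on its own.)

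The factorization, however, breaks at exactly the point you flag as ``demanding care''. Setting $Z_{i+1}=Y_{i+1}\times_{Y_i\times Y_i}(Z_i\times Z_i)$ does \emph{not} yield a globular set once you iterate: an element of $Z_{i+2}$ is a triple $(y,\alpha,\beta)$ with $r_{i+1}(\alpha)=s(y)$ and $r_{i+1}(\beta)=t(y)$, and the globular identity $s\circ s=s\circ t$ demands $s(\alpha)=s(\beta)$ in $Z_i$, whereas the pullback condition only gives $r_i(s(\alpha))=s(s(y))=s(t(y))=r_i(s(\beta))$, and $r_i$ is not injective in general. Concretely, take $m=0$, let $X$ have two $0$-cells and nothing else, and let $Y$ be the terminal globular set: any factorization whose right part makes the squares over $Y_i\times Y_i$ literally cartesian forces $|Z_0|=2$, $|Z_1|=4$, $|Z_2|=16$ with $(s,t)\colon Z_2\to Z_1\times Z_1$ bijective, and one then finds a $2$-cell whose source and target $1$-cells are not parallel --- so no such globular set $Z$ exists and the construction cannot be repaired within that reading. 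The fix is to replace $Z_i\times Z_i$ (and correspondingly $X_i\times X_i$, $Y_i\times Y_i$ in the definition of $\mathbf{ff_m}$) by the object of \emph{parallel} pairs $Z_i\times_{Z_{i-1}\times Z_{i-1}}Z_i$ (the full product when $i=0$); with that reading your induction goes through verbatim, the globular identities for $Z$ hold by construction, and the orthogonality and closure arguments are unaffected. This correction needs to be made explicitly rather than deferred to ``bookkeeping'', since as stated the bookkeeping fails.
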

 Globes are not enough to capture a meaningful theory of $n$-groupoids, for which we need more complex shapes, called globular sums, which are a special kind of pasting of globes.
\begin{defi}
	A table of dimensions is a sequence of integers of the form 
	\[\begin{pmatrix}
	i_1 &&i_2 & \ldots&i_{m-1} & &i_m\\
	& i'_1 & &\ldots&& i'_{m-1}
	\end{pmatrix}\]
	satisfying the following inequalities: $i'_k<i_k$ and $i'_k<i_{k+1}$ for every $1\leq k\leq m-1$.\\
	Given a category $\mathcal{C}$ and a functor $F\colon \G \rightarrow \mathcal{C}$, a table of dimensions as above induces a diagram of the form
	\[
	\bfig
	\morphism(0,0)|l|/@{>}@<0pt>/<-400,400>[F(i'_1)`F(i_1);F(\sigma_{i'_1}^{i_1})]
	\morphism(0,0)|a|/@{>}@<0pt>/<400,400>[F(i'_1)`F(i_2);F(\tau_{i'_1}^{i_2})]
	\morphism(800,0)|l|/@{>}@<0pt>/<-400,400>[F(i'_2)`F(i_2);F(\sigma_{i'_2}^{i_2})]
	\morphism(800,0)|a|/@{>}@<0pt>/<400,400>[F(i'_2)`F(i_3);F(\tau_{i'_2}^{i_3})]
	\morphism(1300,150)|r|/@{}@<0pt>/<100,0>[ `\ldots;]
	\morphism(2000,0)|l|/@{>}@<0pt>/<-400,400>[F(i'_{m-1})`F(i_{m-1});F(\sigma_{i'_{m-1}}^{i_{m-1}})]
	\morphism(2000,0)|a|/@{>}@<0pt>/<400,400>[F(i'_{m-1})`F(i_{m});F(\tau_{i'_{m-1}}^{i_{m}})]
	\efig 
	\]
	A globular sum of type $F$ (or simply globular sum) is the colimit in $\mathcal{C}$ (if it exists) of such a diagram.
	
	We also define the dimension of this globular sum to be $\dim(A)=\max \{i_k\}_{k \in \{1, \ldots, \ m\}}$. Given a globular sum $A$, we denote with $\iota_k^A$ the colimit inclusion $F({i_k})\rightarrow A$, dropping subscripts when there is no risk of confusion.
\end{defi}
We denote by $\Theta_0$ the full subcategory of globular sets spanned by the globular sums of type $y\colon \G \rightarrow [\G^{op},\mathbf{Set}]$, where $y$ is the Yoneda embedding. Moreover, we denote $y(i)$ by $D_i$ and  the globular sum corresponding to the table of dimensions 
\[\begin{pmatrix}
1 &&1 & \ldots&1 & &1\\
& 0 & &\ldots&& 0
\end{pmatrix}\] by $D_1^{\otimes k}$, where the integer $1$ appears exactly $k$ times.

 In dealing with Grothendieck $n$-groupoids, we will need a truncated version of the category $\G$, which we now introduce.
\begin{defi}
We denote with $\G_n$ the full subcategory of $\G$ generated by the set of objects $\{k\in\G \colon k\leq n\}$. Analogously to the infinite dimensional case, we consider the presheaf category $[\G_{n}^{op},\mathbf{Set}]$, called the category of $n$-truncated globular sets, or simply $n$-globular sets.
\end{defi}
We will always assume $n>0$, to avoid the trivial case of $0$-groupoids, i.e. sets.
Proposition \ref{fact syst glob set} can be extended to the case of $n$-globular sets, when (using the notation of the proposition) $m\leq n$.
If we consider the subcategory $\Tn \subset \Theta_0$ spanned by globular sums of dimension less or equal than $n$, we see that there is a fully faithful embedding functor $\Tn\rightarrow [\G_{n}^{op},\mathbf{Set}]$. The category $\Tn$ plays a similar role for $n$-groupoids as $\Theta_0$ does for $\infty$-groupoids.
\begin{defi}
An $n$-truncated globular theory is a pair $(\mathfrak{E},\mathbf{F})$, where $\mathfrak{E}$ is a category and $\mathbf{F}\colon \Tn \rightarrow \mathfrak{E}$ is a bijective on objects functor that preserves globular sums of dimension less than or equal to $n$.

We denote by $\mathbf{GlTh_n}$ the category of $n$-globular theories and $n$-globular sums preserving functors. More precisely, a morphism $H\colon(\mathfrak{E},\mathbf{F}) \rightarrow (\mathfrak{C},\mathbf{G})$ is a functor $H\colon \mathfrak{E} \rightarrow \mathfrak{C}$ such that $\mathbf{G}=H\circ \mathbf{F}$.
\end{defi}
If there is no risk of confusion we will omit the structural map $\mathbf{F}\colon \Tn \rightarrow \mathfrak{E}$ and simply denote the globular theory $(\mathfrak{E},\mathbf{F})$ by $\mathfrak{E}$.
\begin{defi}
	Given an $n$-globular theory $\mathfrak{E}$, we define the category of its models, denoted $\mathbf{Mod}(\mathfrak{E})$, to be the category of $n$-globular product preserving functors $G\colon \mathfrak{E}^{op} \rightarrow \mathbf{Set}$. Clearly, the Yoneda embedding $y\colon \mathfrak{E} \rightarrow [\mathfrak{E}^{op},\mathbf{Set}]$ factors through $\mathbf{Mod}(\mathfrak{E})$, and it will still be denoted by $y\colon \mathfrak{E} \rightarrow \mathbf{Mod}(\mathfrak{E})$. Also, notice that $\mathbf{Mod}(\Tn)\cong [\G_{n}^{op},\mathbf{Set}]$.
\end{defi}
Again, we denote the image of $i$ under $y$ by $D_i$.
\begin{prop}
	\label{UP of models}
	Given an $n$-globular theory $\mathfrak{E}$, its category of models $\mathbf{Mod}(\mathfrak{E})$ enjoys a universal property: given any cocomplete category $\D$, a cocontinuous functor $F\colon \mathbf{Mod}(\mathfrak{E}) \rightarrow \D$ is determined up to a unique isomorphism by a functor $\overline{F}\colon \mathfrak{E} \rightarrow \D$, corresponding to its restriction along the Yoneda embedding, that preserves $n$-globular sums.
\end{prop}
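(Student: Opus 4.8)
\emph{Plan.} This is the standard universal property of the category of models of a limit sketch, and the plan is to deduce it from the universal property of the free cocompletion by exhibiting $\mathbf{Mod}(\mathfrak{E})$ as a reflective localization of the presheaf category $[\mathfrak{E}^{op},\mathbf{Set}]$. Write $J\colon \mathbf{Mod}(\mathfrak{E}) \hookrightarrow [\mathfrak{E}^{op},\mathbf{Set}]$ for the full inclusion and $h\colon \mathfrak{E} \to [\mathfrak{E}^{op},\mathbf{Set}]$ for the ordinary Yoneda embedding. For each globular sum $A$ in $\mathfrak{E}$, presented as $A=\colim_{J_A}\Delta_A$ of its globular diagram $\Delta_A\colon J_A\to\mathfrak{E}$, let $\gamma_A\colon \colim_{J_A}(h\circ\Delta_A)\to h(A)$ be the canonical comparison map in $[\mathfrak{E}^{op},\mathbf{Set}]$. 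By the Yoneda lemma a presheaf $P$ is a model exactly when it is orthogonal to every $\gamma_A$: indeed $[\mathfrak{E}^{op},\mathbf{Set}](h(A),P)=P(A)$ while $[\mathfrak{E}^{op},\mathbf{Set}](\colim_{J_A}(h\circ\Delta_A),P)=\lim_{J_A^{op}}P(\Delta_A)$, so orthogonality to $\gamma_A$ is precisely the condition that $P$ send the globular product indexed by $A$ to a product in $\mathbf{Set}$. Since the objects of $\mathfrak{E}$, and hence its globular sums (indexed by tables of dimensions), form a set, $\{\gamma_A\}_A$ is a small set of maps; the orthogonal-reflection theorem in the locally presentable category $[\mathfrak{E}^{op},\mathbf{Set}]$ then supplies a reflector $L\dashv J$. \emph{This reflectivity is the technical heart of the argument, and the only place where smallness of $\Tn$ is genuinely used.}

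Next I would record that $y\colon \mathfrak{E}\to\mathbf{Mod}(\mathfrak{E})$ preserves globular sums and equals $L\circ h$. Each representable $\mathfrak{E}(-,c)$ is a model because $\mathfrak{E}(\colim_{J_A}\Delta_A,c)=\lim_{J_A^{op}}\mathfrak{E}(\Delta_A,c)$, so $J$ is (up to isomorphism) the identity on representables and $y=L\circ h$. That $y$ carries $A$ to $\colim_{J_A}(y\circ\Delta_A)$ follows from the computation above: for every model $M$ one has, naturally in $M$, $\mathbf{Mod}(\mathfrak{E})(y(A),M)=M(A)=\lim_{J_A^{op}}M(\Delta_A)=\mathbf{Mod}(\mathfrak{E})(\colim_{J_A}(y\circ\Delta_A),M)$, using that $M$ preserves globular products. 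This yields the easy direction at once: if $F\colon \mathbf{Mod}(\mathfrak{E})\to\D$ is cocontinuous then $\overline{F}:=F\circ y$ preserves $n$-globular sums, being a composite of globular-sum-preserving functors.

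For the converse, start from a functor $\overline{F}\colon \mathfrak{E}\to\D$ preserving $n$-globular sums and form its left Kan extension $\widetilde{F}:=\lan_h\overline{F}\colon [\mathfrak{E}^{op},\mathbf{Set}]\to\D$, which is cocontinuous and satisfies $\widetilde{F}\circ h\cong\overline{F}$ by the universal property of the free cocompletion. Applying $\widetilde{F}$ to $\gamma_A$ produces the comparison map $\colim_{J_A}(\overline{F}\circ\Delta_A)\to\overline{F}(A)$, which is an isomorphism precisely because $\overline{F}$ preserves the globular sum $A$; hence $\widetilde{F}$ inverts every map in $\{\gamma_A\}_A$. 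By the universal property of the localization, $\widetilde{F}$ factors, uniquely up to isomorphism, as $F\circ L$ for a functor $F\colon \mathbf{Mod}(\mathfrak{E})\to\D$. Here $F$ is cocontinuous because colimits in the reflective subcategory $\mathbf{Mod}(\mathfrak{E})$ are computed by reflecting colimits taken in $[\mathfrak{E}^{op},\mathbf{Set}]$, on which $\widetilde{F}$ is already cocontinuous (using $L\circ J\cong\mathrm{id}$). Precomposing with $h$ gives $F\circ y=F\circ L\circ h=\widetilde{F}\circ h\cong\overline{F}$, as required.

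Finally, for uniqueness up to a unique isomorphism I would invoke density of the representables in $\mathbf{Mod}(\mathfrak{E})$: since $L$ is a left adjoint, hence cocontinuous, and every presheaf is the canonical colimit of representables, each model $M\cong L(P)\cong\colim L(h(c_i))\cong\colim y(c_i)$ is a colimit of representables. Two cocontinuous functors out of $\mathbf{Mod}(\mathfrak{E})$ that agree up to coherent isomorphism on representables therefore agree up to a unique compatible isomorphism, which pins down $F$ from $\overline{F}$. As flagged above, the one genuinely non-formal ingredient is the existence of the reflector $L$; everything else is bookkeeping with Kan extensions and orthogonality.
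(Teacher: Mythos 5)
The paper states Proposition \ref{UP of models} without proof, treating it as a standard fact about categories of models of globular theories (it is essentially the universal property of models of a limit sketch, as in Ara's thesis), so there is no in-paper argument to compare against. Your proposal is a correct and complete rendering of the standard argument: identifying $\mathbf{Mod}(\mathfrak{E})$ as the small orthogonality class of the comparison maps $\gamma_A$, obtaining the reflector, checking that representables are models so that $y=L\circ h$ preserves globular sums, and then running the Kan-extension/localization argument in both directions. The one step you invoke as a black box --- that a cocontinuous $\widetilde{F}$ inverting the set $\{\gamma_A\}$ factors through $L$ --- deserves a sentence of justification: the unit $\eta_P$ of the orthogonal reflection is constructed by transfinite composition of cobase changes of the $\gamma_A$ and of their codiagonals $h(A)\amalg_{\colim h\Delta_A}h(A)\to h(A)$, and all of these are inverted by any cocontinuous functor inverting the $\gamma_A$ (the codiagonal case because $\widetilde{F}$ preserves the pushout), so $\widetilde{F}(\eta_P)$ is an isomorphism and $F:=\widetilde{F}\circ J$ does the job. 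With that gloss the argument is airtight, and the uniqueness clause follows from density of the representables exactly as you say.
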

The 3-groupoids we are going to consider are presented as models of a certain class of 3-globular theories, namely the cellular and contractible ones.
\begin{defi}
	\label{contr glob th}
	Given $k\leq n$, two maps $f,g\colon D_k\rightarrow A$ in an $n$-globular theory are said to be parallel if either $k=0$ or $f\circ \epsilon= g\circ \epsilon$ for $\epsilon=\sigma,\tau$.
	A pair of parallel maps $(f,g)$ is said to be admissible if $\dim(A) \leq k+1$.
	A globular theory $(\mathfrak{C},F)$ is called contractible if for every admissible pair of maps $f,g\colon D_k\rightarrow A$ if $k=n$ then $f=g$, otherwise $k<n$ and there exists an extension $h\colon D_{k+1}\rightarrow A$ rendering the following diagram serially commutative
	\[
	\bfig 
	\morphism(0,0)|a|/@{>}@<3pt>/<500,0>[D_k`A;f]
	\morphism(0,0)|b|/@{>}@<-3pt>/<500,0>[D_k`A;g]
	
	\morphism(0,0)|r|/@{>}@<3pt>/<0,-400>[D_k`D_{k+1};\tau_k]
	\morphism(0,0)|l|/@{>}@<-3pt>/<0,-400>[D_k`D_{k+1};\sigma_k]
	
	\morphism(0,-400)|r|/@{>}@<0pt>/<500,400>[D_{k+1}`A;h]
	\efig 
	\]
\end{defi}
Contractibility ensures the existence of all the operations that ought to be part of the structure of an $n$-groupoid. However, it does not guarantee weakness of the models, and indeed there exists a contractible globular theory (which we denoted by $\tilde{\Theta}^{\leq n}$) whose models are strict $n$-groupoids.

To remedy this, we need the concept of cellularity, or freeness, to restrict the class of globular theories we consider. This notion is based on a slight variation of a construction explained in paragraph 4.1.3 of \cite{AR1},  which we record in the following proposition.
\begin{prop}
	\label{univ prop of glob th}
	Given an $n$-globular theory $\mathfrak{E}$ and set $X$ of admissible pairs in it, there exists another $n$-globular theory $\mathfrak{E}[X]$ equipped with a morphism $\phi\colon \mathfrak{E} \rightarrow \mathfrak{E}[X]$ in $\mathbf{GlTh_n}$ with the following universal property: given an $n$-globular theory $\mathfrak{C}$, a morphism $H\colon\mathfrak{E}[X] \rightarrow \mathfrak{C}$ is determined up to a unique isomorphism by its precomposition $F$ with $\phi$, a choice of an extension as in Definition \ref{contr glob th} for the image under $F$ of each admissible pair $f,g\colon D_k \rightarrow A$ in $X$ with $k<n$ and the requirement that $H(f)=H(g)$ if $k=n$.
\end{prop}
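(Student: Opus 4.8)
The plan is to present $\mathfrak{E}[X]$ by generators and relations and to read off its universal property, the one genuinely delicate point being that the resulting category is still a globular theory. I would separate the two kinds of admissible pairs. For each pair $(f,g)\colon D_k\to A$ in $X$ with $k<n$ I would adjoin a single new arrow $h_{(f,g)}\colon D_{k+1}\to A$, subject to the serial-commutativity relations $h_{(f,g)}\circ\sigma_k=f$ and $h_{(f,g)}\circ\tau_k=g$; for each pair with $k=n$ I would instead impose the equation $f=g$, adjoining nothing. Concretely, $\mathfrak{E}[X]$ is the quotient of the free category on the arrows of $\mathfrak{E}$ together with these new generators, by the congruence generated by the composition law of $\mathfrak{E}$, the relations just listed, and the identifications $f=g$ for the top-dimensional pairs; the functor $\phi\colon\mathfrak{E}\to\mathfrak{E}[X]$ is the evident one, which is the identity on objects (so that $\phi\circ\mathbf{F}$ is again bijective on objects). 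Equivalently, and more structurally, each pair in $X$ is named by a morphism out of the corresponding free globular theory $\mathfrak{P}$ on a single admissible pair (of the appropriate dimension and target), and $\mathfrak{E}[X]$ is the simultaneous pushout of these classifying maps along the canonical maps $\mathfrak{P}\hookrightarrow\mathfrak{P}^{+}$ (adjoining the extension, for $k<n$) and $\mathfrak{P}\twoheadrightarrow\mathfrak{P}^{=}$ (imposing the equation, for $k=n$), computed in $\mathbf{GlTh_n}$.

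The universal property is then formal. Given a globular theory $\mathfrak{C}$ and a morphism $F\colon\mathfrak{E}\to\mathfrak{C}$, extending $F$ along $\phi$ to a functor $H\colon\mathfrak{E}[X]\to\mathfrak{C}$ is, by the presentation, exactly the datum of an image for each generator $h_{(f,g)}$ compatible with the relations, i.e. an extension of $(F(f),F(g))$ as in Definition \ref{contr glob th} for each pair with $k<n$, subject to the constraint $F(f)=F(g)$ for each pair with $k=n$ (without which the relations could not be respected). Such an $H$ is forced on the image of $\phi$ and on every generator, hence unique; and $H\circ(\phi\circ\mathbf{F})=F\circ\mathbf{F}=\mathbf{G}$, so $H$ is a morphism in $\mathbf{GlTh_n}$. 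This is precisely the asserted universal property, $\mathfrak{E}[X]$ being determined up to unique isomorphism.

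The main obstacle is to confirm that $\mathfrak{E}[X]$ is a bona fide globular theory, i.e. that $\phi\circ\mathbf{F}\colon\Tn\to\mathfrak{E}[X]$ preserves globular sums of dimension $\le n$; equivalently, that $\phi$ preserves the colimit cones exhibiting each globular sum $A$ as a gluing of globes. Bijectivity on objects is immediate, but preservation of the cones is not: computing the pushout in $\mathbf{GlTh_n}$, rather than in the larger category of bijective-on-objects functors out of $\Tn$ where colimits are the naive generators-and-relations ones, requires reflecting back into globular theories, and one must check this reflection does not collapse the cones. This is exactly the ``slight variation of \cite{AR1}, 4.1.3'' referred to in the text. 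I would handle it by producing a normal form for the morphisms of $\mathfrak{E}[X]$: since every adjoined arrow has the globe $D_{k+1}$ as its domain and the relations only rewrite $h_{(f,g)}\circ\sigma_k$ and $h_{(f,g)}\circ\tau_k$ back into $\mathfrak{E}$, every morphism should admit a canonical alternating expression as a composite of arrows of $\mathfrak{E}$ and adjoined generators. With such a description the required bijection $\Hom_{\mathfrak{E}[X]}(A,B)\cong\{\text{compatible families }(\Hom_{\mathfrak{E}[X]}(D_{i_k},B))_k\}$ follows by inspection, the top-dimensional identifications $f=g$ being harmless since they only merge arrows that were already parallel and so cannot identify distinct cones. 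I expect this normal-form analysis — equivalently, the explicit control of the reflection into $\mathbf{GlTh_n}$ — to be the one step requiring real work, everything else following from the pushout presentation.
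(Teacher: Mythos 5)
The paper offers no proof of this proposition: it records the statement as ``a slight variation of a construction explained in paragraph 4.1.3 of \cite{AR1}'' and defers entirely to that reference. So there is no in-paper argument to compare against; your proposal must stand on its own, and in outline it is the same construction Ara performs there --- present $\mathfrak{E}[X]$ by adjoining a generator $h_{(f,g)}\colon D_{k+1}\to A$ with the serial-commutativity relations for each pair with $k<n$, impose $f=g$ for the top-dimensional pairs, and read the universal property off the presentation. That formal part of your argument is correct.

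The problem is that the one step you defer --- verifying that the presented category is still an object of $\mathbf{GlTh_n}$ --- is not a technical afterthought but the entire mathematical content of the proposition, and your treatment of it is a plan rather than a proof. Two things are missing. First, the normal form for morphisms of $\mathfrak{E}[X]$ is asserted, not established: you need a confluence-type argument showing that every arrow is represented by an alternating word and that two words are identified only by the evident elementary moves; without it the bijection $\Hom_{\mathfrak{E}[X]}(A,B)\cong\lim_k\Hom_{\mathfrak{E}[X]}(D_{i_k},B)$ does not ``follow by inspection''. Second, and more seriously, your claim that the top-dimensional identifications $f=g$ are ``harmless'' is precisely the delicate point: after quotienting, a family of arrows $(u_k\colon D_{i_k}\to B)_k$ need only be compatible up to the generated congruence, so surjectivity of the comparison map requires rectifying such a family to a strictly compatible one in the unquotiented category, and injectivity requires showing that two arrows out of a globular sum which become globewise equal become equal --- neither of which is implied by the pairs ``being already parallel''. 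Note also that your alternative description of $\mathfrak{E}[X]$ as a pushout computed in $\mathbf{GlTh_n}$ is circular as an existence proof, since the existence of such pushouts in $\mathbf{GlTh_n}$ is exactly what is at stake. To close the gap you would need to reproduce the quotient-and-rectification analysis of [AR1, 4.1.3] (and the accompanying lemmas on quotients of globular extensions), which is evidently why the author chose to cite it rather than prove the proposition.
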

In words, $\mathfrak{E}[X]$ is obtained from $\mathfrak{E}$ by universally adding a lift for each pair in $X$ of non-maximal dimension and by equalizing parallel $n$-dimensional operations in $X$.
\begin{defi}
	\label{cell glob th}
	An $n$-globular theory $\mathfrak{E}$ is said to be cellular if there exists a functor $\mathfrak{E}_{\bullet} \colon \omega \rightarrow \mathbf{GlTh_n}$, where $\omega$ is the first infinite ordinal, such that:
	\begin{enumerate}
		\item $\mathfrak{E}_0 \cong \Tn$;
		\item for every $m \geq 0$, there exists a family $X$ of admissible pairs of arrows in $\mathfrak{E}_m$ (as in Definition \ref{contr glob th}) such that $\mathfrak{E}_{m+1}\cong \mathfrak{E}_m[X]$;
		\item $\colim_{m \in \omega}\mathfrak{E}_{m}\cong \mathfrak{E}$.
	\end{enumerate}
\end{defi}
As anticipated earlier, we now define the class of $n$-globular theories which are appropriate to develop a theory of $n$-groupoids.
\begin{defi}
An $n$-truncated (groupoidal) coherator, or, briefly, an $n$-coherator, is a cellular and contractible $n$-globular theory. Given an $n$-coherator $\mathfrak{G}$, the category of $n$-groupoids of type $\mathfrak{G}$ is the category $\mathbf{Mod}(\mathfrak{G})$ of models of $\mathfrak{G}$. In what follows, $\mathfrak{G}$ will always denote a coherator for $n$-groupoids, with $0\leq n \leq \infty$, and sometimes we will denote the category of its models by $\ngpd{n}$, with no reference to $\mathfrak{G}$.
\end{defi}
The restriction of an $n$-groupoid $X\colon\mathfrak{G}^{op} \rightarrow \mathbf{Set}$ to ${\Tn}^{op}$ gives an object of $\mathbf{Mod}(\Tn)\simeq [\G_n^{op},\mathbf{Set}]$, which we call the underlying $n$-globular set of $X$. The set $X_i$ represents the set of $i$-cells of $X$ for each $i\leq n$.

Let us now consider the algebraic structure acting on these sets of cells. Section 3 of \cite{AR2} shows how to endow the underlying globular set of an $\infty$-groupoid with all the sensible operations it ought to have to deserve to be called such. A completely analogous argument applies, mutatis mutandis, to the case of $n$-groupoids.

For example, we can build operations that represent binary composition of a pair of $1$-cells, codimension-$1$ inverses for $2$-cells and an associativity constraint for composition of $1$-cells by solving, respectively, the following extension problems:
\[
\bfig 
\morphism(0,0)|a|/@{>}@<3pt>/<600,0>[D_0`D_1\plus{ D_0}D_1;i_0\circ \sigma_0]
\morphism(0,0)|b|/@{>}@<-3pt>/<600,0>[D_0`D_1\plus{ D_0}D_1;i_1\circ \tau_0]

\morphism(0,0)|r|/@{>}@<3pt>/<0,-400>[D_0`D_{1};\tau_0]
\morphism(0,0)|l|/@{>}@<-3pt>/<0,-400>[D_0`D_{1};\sigma_0]

\morphism(0,-400)|r|/@{>}@<0pt>/<600,400>[D_{1}`D_1\plus{ D_0}D_1;\nabla^1_0]

\morphism(1000,0)|a|/@{>}@<3pt>/<500,0>[D_1`D_2;\tau_1]
\morphism(1000,0)|b|/@{>}@<-3pt>/<500,0>[D_1`D_2;\sigma_1]

\morphism(1000,0)|r|/@{>}@<3pt>/<0,-400>[D_1`D_{2};\tau_1]
\morphism(1000,0)|l|/@{>}@<-3pt>/<0,-400>[D_1`D_{2};\sigma_1]

\morphism(1000,-400)|r|/@{>}@<0pt>/<500,400>[D_{2}`D_2;\omega^2_1]

\morphism(2000,0)|a|/@{>}@<3pt>/<1400,0>[D_1`D_1 \plus{ D_0} D_1 \plus{ D_0} D_1;(\nabla^1_0\plus{ D_0} 1_{D_1})\circ \nabla^1_0]
\morphism(2000,0)|b|/@{>}@<-3pt>/<1400,0>[D_1`D_1 \plus{ D_0} D_1 \plus{ D_0} D_1;(1_{D_1} \plus{ D_0} \nabla^1_0 )\circ \nabla^1_0]

\morphism(2000,0)|r|/@{>}@<3pt>/<0,-400>[D_1`D_{2};\tau_1]
\morphism(2000,0)|l|/@{>}@<-3pt>/<0,-400>[D_1`D_{2};\sigma_1]

\morphism(2000,-400)|r|/@{>}@<0pt>/<1400,400>[D_{2}`D_1 \plus{ D_0} D_1 \plus{ D_0} D_1;\alpha]
\efig 
\]
In a similar fashion one can build every sensible operation an $n$-groupoid ought to be endowed with.

Whenever a choice of such operations is understood, at the level of models (i.e. $n$-groupoids) we denote with the familiar juxtaposition of cells the (unbiased) composition of them, and with the exponential notation $A^{-1}$ we denote the codimension-$1$ inverse of an $m$-cell $A$.

We will need to choose some operations once and for all, so we record here their definition.
Choose an operation $\nabla^1_0\colon D_1 \rightarrow D_1 \amalg_{D_0} D_1$ as above, and define $w=\nabla^1_0$.
Next, pick operations $D_2 \rightarrow D_2 \amalg_{D_0} D_1$ and $D_2 \rightarrow D_1 \amalg_{D_0} D_2$ whose source and target are given, respectively by $\left((\sigma\amalg_{ D_0}1 )\circ w,(\tau \amalg_{ D_0}1 )\circ w\right)$ and $\left((1\amalg_{ D_0} \sigma) \circ w,(1\amalg_{ D_0} \tau) \circ w\right)$. Proceeding in this way we get specified whiskering maps for every $k\leq n$ of the form:
\begin{equation}
\label{w's maps}
_{k}w\colon D_k \rightarrow D_k \plus{D_0} D_1 $$ 
$$w_k\colon D_k \rightarrow D_1 \plus{D_0} D_k
\end{equation}
We will often avoid writing down all the subscripts, when they are clear from the context.
\begin{defi}
	\label{whiskering w}
	Given a globular sum $A$, whose table of dimensions is
	\[\begin{pmatrix}
	i_1 &&i_2 & \ldots&i_{m-1} & &i_m\\
	& i'_1 & &\ldots&& i'_{m-1}
	\end{pmatrix}\]
	we define a map $_A w\colon A\rightarrow A \amalg_{ D_0} D_1$ by \[w_{i_1 +1 }\plus{w_{i'_1 +1} } \ldots \plus{w_{i'_{m-1} +1 }} w_{i_m +1}\colon D_{i_1 +1 }\plus{D_{i'_1 +1} } \ldots \plus{D_{i'_{m-1} +1 }} D_{i_m +1} \rightarrow (D_{i_1 +1 }\plus{D_{i'_1 +1} } \ldots \plus{D_{i'_{m-1} +1 }} D_{i_m +1})\plus{D_0}D_1  \] noting  that the target is isomorphic to \[(D_{i_1 +1 }\plus{D_0}D_1)\plus{D_{i'_1 +1} \plus{D_0} D_1 } \ldots \plus{D_{i'_{m-1} +1 } \plus{D_0} D_1} (D_{i_m +1} \plus{D_0} D_1)\]
	In a completely analogous manner we define a map $w_A\colon A \rightarrow D_1\amalg_{D_0}A$.
\end{defi}
Consider the forgetful functor \[\mathbf{U_n}\colon \mathbf{Mod}(\mathfrak{G}) \rightarrow \mathbf{Mod}(\Tn) \simeq [\G_n^{op},\mathbf{Set}] \] induced by the structural map $\Tn \rightarrow \mathfrak{G}$. Given a map of $n$-groupoids $f\colon X \rightarrow Y$ and a natural number $m\leq n$, we can factor the map $\mathbf{U_n}(f)$ as $\mathbf{U_n}(f)=g\circ h$, where $h$ is $m$-bijective and $g$ is $m$-fully faithful thanks to Proposition \ref{fact syst glob set}. It is not hard to see that the target of $h$ can be endowed with the structure of an $n$-groupoid so that $g$ and $h$ are maps of such. This fact, thanks to Proposition 2 of \cite{BG}, provides the following result that will be used in this paper.
\begin{prop}
	\label{fact of maps of gpds}
	Given $m\leq n$, the orthogonal factorization system $(\mathbf{bij_m},\mathbf{ff_m})$ on $n$-globular sets lifts to one on $\mathbf{Mod}(\mathfrak{G})$ via the forgetful functor $\mathbf{U_n}\colon \mathbf{Mod}(\mathfrak{G}) \rightarrow [\G_n^{op},\mathbf{Set}]$.
\end{prop}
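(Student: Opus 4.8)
The plan is to define the two lifted classes on $\mathbf{Mod}(\mathfrak{G})$ as the preimages under $\mathbf{U_n}$ of the classes on $[\G_n^{op},\mathbf{Set}]$: a map $f$ of $n$-groupoids lies in the left (resp.\ right) class precisely when $\mathbf{U_n}(f)\in\mathbf{bij_m}$ (resp.\ $\mathbf{U_n}(f)\in\mathbf{ff_m}$). Since $\mathbf{U_n}$ is faithful — a morphism of models is a natural transformation whose component at a globular sum $A$ is forced by its components at the globes $D_i$, which $\mathbf{U_n}$ retains — the abstract recognition result, Proposition 2 of \cite{BG}, reduces the claim to a single point: that the $(\mathbf{bij_m},\mathbf{ff_m})$-factorization of $\mathbf{U_n}(f)$ lifts to a factorization of $f$ through a genuine $n$-groupoid. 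Uniqueness of diagonal fillers then transfers from $[\G_n^{op},\mathbf{Set}]$ along the faithful and conservative functor $\mathbf{U_n}$.

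So fix $f\colon X\to Y$ in $\mathbf{Mod}(\mathfrak{G})$ and factor $\mathbf{U_n}(f)=g\circ h$ with $h\colon\mathbf{U_n}(X)\to Z$ in $\mathbf{bij_m}$ and $g\colon Z\to\mathbf{U_n}(Y)$ in $\mathbf{ff_m}$, using the extension of Proposition \ref{fact syst glob set} to $n$-globular sets. The heart of the matter is to promote the globular set $Z$ to a $\mathfrak{G}$-model so that $h$ and $g$ become maps of models. I would do this through cellularity: writing $\mathfrak{G}\cong\colim_m\mathfrak{G}_m$ as in Definition \ref{cell glob th}, I build the structure stage by stage, at each step $\mathfrak{G}_{m'+1}\cong\mathfrak{G}_{m'}[X']$ choosing, via the universal property of Proposition \ref{univ prop of glob th}, a filler for the $Z$-image of each admissible pair in $X'$ (the $n$-dimensional pairs require no choice, as contractibility of $\mathfrak{G}$ forces the corresponding operations on $X$ and $Y$, and hence on $Z$, to coincide).

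The choice of filler is governed by a dichotomy in the output dimension $k+1$ of an admissible pair $f',g'\colon D_k\to A$ appearing in $X'$ (recall $\dim(A)\le k+1$). If $k+1\le m$, then $A$ involves only cells of dimension $\le m$, so $h$-bijectivity yields bijections $Z(A)\cong X(A)$ and $Z_{k+1}\cong X_{k+1}$, and I transport the filler $f'_X,g'_X$ already carried by the $\mathfrak{G}$-model $X$ across them. If instead $k\ge m$, then $k+1\ge m+1$ and I appeal to $m$-full-faithfulness of $g$: for $a\in Z(A)$ the image $g(a)\in Y(A)$ admits the filler supplied by $Y$, a $(k+1)$-cell of $Y$ whose source and target are $g(f'_Z(a))$ and $g(g'_Z(a))$; since the defining square of $\mathbf{ff_m}$ is cartesian at level $i=k\ge m$, there is a unique $(k+1)$-cell of $Z$ lying over this $Y$-cell and with prescribed boundary $f'_Z(a),g'_Z(a)$ in $Z_k$, which I take as the value of the filler on $a$. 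By construction the maps $h$ and $g$ are compatible with all chosen operations, i.e.\ morphisms of $\mathfrak{G}$-models, and satisfy $\tilde g\circ\tilde h=f$.

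The main obstacle is checking that this recipe assembles into a well-defined and coherent $\mathfrak{G}$-model rather than a mere collection of operations. One must verify that the two regimes glue correctly across the boundary dimension $m$ (the case $k=m$, where the source boundary lives in $Z_m\cong X_m$ while the filled cell lives in $Z_{m+1}$, governed by the cartesian square), that the uniqueness clause of Proposition \ref{univ prop of glob th} makes the stagewise choices canonical enough to pass to the colimit $\colim_m\mathfrak{G}_m\cong\mathfrak{G}$, and that naturality of $h$ and $g$ with respect to the previously constructed operations is preserved at each step. Once this bookkeeping is in place the factorization lifts, and Proposition 2 of \cite{BG} yields the orthogonal factorization system $(\mathbf{bij_m},\mathbf{ff_m})$ on $\mathbf{Mod}(\mathfrak{G})$.
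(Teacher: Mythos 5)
Your proposal follows exactly the paper's (much terser) argument: factor $\mathbf{U_n}(f)$ in $n$-globular sets, endow the intermediate object with a $\mathfrak{G}$-model structure by cellular induction so that both legs become maps of models, and invoke Proposition 2 of \cite{BG}. The dimension dichotomy you use to define the operations on $Z$ (transport from $X$ below degree $m$, unique lifting through the cartesian square of $\mathbf{ff_m}$ at and above it) is precisely the content behind the paper's ``it is not hard to see'', so this is a correct filling-in of the same route.
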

This means, in particular, that every map in $\mathbf{Mod}(\mathfrak{G})$ admits a unique factorization $f=g\circ h$ where $\mathbf{U_n}(h)$ is $m$-bijective and $\mathbf{U_n}(g)$ is $m$-fully faithful, and that $m$-bijective maps are closed under colimits in $\mathbf{Mod}(\mathfrak{G})$.
\begin{ex}
	\label{source/targets are n-2 bij}
	The maps $\sigma_k,\tau_k\colon D_k \rightarrow D_{k+1}$ are $(k-2)$-bijective. Indeed, since the forgetful functor $\mathbf{U_n}$ preserves the right class of the factorization system $(\mathbf{bij_k},\mathbf{ff_k})$ on $\mathbf{Mod}(\mathfrak{G})$ for every $k\leq n$, its left adjoint $\mathbf{F_n}\colon [\G_n^{op},\mathbf{Set}] \rightarrow \mathbf{Mod}(\mathfrak{G})$ preserves the left class. Now it is enough to observe that $\mathbf{F_n}$ sends source and target maps of globular sets to source and target maps of $n$-groupoids, and for the former it is easy to check the statement on $(k-2)$-bijectivity.
\end{ex}
Given a globular sum $A$ such that $\dim(A)=m>0$, whose table of dimensions is \[\begin{pmatrix}
i_1 &&i_2 & \ldots&i_{q-1} & &i_q\\
& i'_1 & &\ldots&& i'_{q-1}
\end{pmatrix}\]
we define its boundary to be the globular sum whose table of dimensions is 
\[\begin{pmatrix}
\bar\imath_1 &&\bar\imath_2 & \ldots&\bar\imath_{q-1} & &\bar\imath_q\\
& i'_1 & &\ldots&& i'_{q-1}
\end{pmatrix}\]
where we set \[\bar\imath_k=\begin{cases}
i_k-1&\text{if} \ i_k= m\\
i_k&\text{otherwise}
\end{cases}\]
The maps $\sigma,\tau\colon D_{m-1} \rightarrow D_{m}$ induce maps 
\begin{equation}
\label{partial defi}
\partial_{\sigma},\partial_{\tau}\colon \partial A \rightarrow A
\end{equation}
Thanks to what we observed in Example \ref{source/targets are n-2 bij}, we have the following result.
\begin{prop}
	\label{partial are n-2 bij}
	Given a globular sum $A$, with $0<m=\dim(A)$, the maps $\partial_{\sigma},\partial_{\tau}\colon \partial A \rightarrow A$ are $(m$-$2)$-bijective.
\end{prop}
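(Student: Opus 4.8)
The plan is to realise $\partial_\sigma$ and $\partial_\tau$ as colimits of maps that are already $(m-2)$-bijective, and then to appeal to the closure of this class under colimits. By Proposition \ref{fact of maps of gpds} the $(m-2)$-bijective maps form the left class of an orthogonal factorization system on $\mathbf{Mod}(\mathfrak{G})$ (legitimately, since $m=\dim(A)\leq n$ forces $m-2\leq n$), and as such they are stable under composition, pushout, and more generally under colimits in the arrow category, as recorded right after that proposition. It therefore suffices to exhibit $\partial_\sigma$ and $\partial_\tau$ as colimits of building blocks lying in this class.

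These blocks are read off the two tables of dimensions. Writing $A=\colim_k F(i_k)$ for the globular sum attached to its table, the map $\partial A\to A$ is induced by a morphism of the underlying diagrams which is the identity $D_{i_k}\to D_{i_k}$ on every non-maximal globe (where $i_k<m$, so $\bar\imath_k=i_k$) and is $\sigma_{m-1}$, respectively $\tau_{m-1}$, on every maximal globe $D_{\bar\imath_k}=D_{m-1}\to D_m=D_{i_k}$ (where $i_k=m$). Identities are $(m-2)$-bijective for free, and the source and target maps $\sigma_{m-1},\tau_{m-1}\colon D_{m-1}\to D_m$ are exactly $(m-2)$-bijective; these are the maps treated in Example \ref{source/targets are n-2 bij}, and the sharp degree is confirmed by the transparent computation given at the end. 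Passing to the colimit and invoking the closure property then yields the statement.

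The one delicate point, which I expect to be the real obstacle, is checking that these componentwise choices assemble into a genuine morphism of diagrams inducing $\partial_\sigma$ and $\partial_\tau$. The naturality squares over the spine maps commute at once, except when two maximal globes are glued along an $(m-1)$-cell, that is $i_k=i_{k+1}=m$ with $i'_k=m-1$: there the boundary table degenerates, the two lowered globes are identified in $\partial A$, and a uniform choice of $\sigma_{m-1}$ would demand $\iota_{k+1}\sigma_{m-1}=\iota_{k+1}\tau_{m-1}$, which is false in $A$. The resolution, and the reason the sharp degree is $m-2$ rather than $m-1$, is that $\sigma_{m-1}$ and $\tau_{m-1}$ coincide once restricted to cells of dimension below $m-1$; thus below that dimension the assignment does not see the $\sigma/\tau$ distinction and the comparison of diagrams is well defined and objectwise bijective.

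To make this completely airtight I would argue one level lower, on globular sets. Since $\mathbf{F_n}\colon [\G_n^{op},\mathbf{Set}]\to\mathbf{Mod}(\mathfrak{G})$ is cocontinuous and carries the globular sums $\partial A$, $A$ together with $\partial_\sigma,\partial_\tau$ to their counterparts in $\mathbf{Mod}(\mathfrak{G})$, and since $\mathbf{F_n}$ preserves the class $\mathbf{bij_{m-2}}$ (Example \ref{source/targets are n-2 bij}), it is enough to prove that the underlying maps of globular sets are $(m-2)$-bijective. There the evaluation at each dimension $j$ is cocontinuous, so $\partial_\sigma$ and $\partial_\tau$ act on $j$-cells as the colimit of the diagram maps in degree $j$; for $j\leq m-2$ the two diagrams agree objectwise, because $(D_m)_j=(D_{m-1})_j$ and $\sigma_{m-1},\tau_{m-1}$ induce the same bijection on $j$-cells, so the induced maps on $j$-cells are bijections, which is exactly $(m-2)$-bijectivity.
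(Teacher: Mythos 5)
Your proof is correct and follows the same route as the paper, which simply cites Example \ref{source/targets are n-2 bij} together with the closure of $(m-2)$-bijective maps under colimits; your direct verification on underlying globular sets, and in particular your handling of the degenerate case where two maximal globes are glued along an $(m-1)$-cell, supplies detail the paper leaves implicit. Note only that you correctly rely on the sharp count that $\sigma_{m-1},\tau_{m-1}\colon D_{m-1}\to D_{m}$ are $(m-2)$-bijective, which is what the computation indicated in that example actually yields even though its statement records only $(k-2)$-bijectivity for $\sigma_{k}\colon D_{k}\to D_{k+1}$.
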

Let us now see how to adapt the main definitions to the case of $n$-categories, following \cite{AR1}.
The definition is essentially the same as that of $n$-groupoids, except we have to restrict the class of admissible maps.
\begin{defi}
	Given an $n$-globular theory $(\mathfrak{C},F)$, we say that a map $f$ in $\mathfrak{C}$ is globular if it is in the image of $\Tn$ under $F$.
	
	On the other hand, $f$ is called homogeneous if for every factorization $f=g\circ f'$ where $g$ is a globular map, $g$ must be the identity.
	
	$\mathfrak{C}$ is said to be homogeneous if it comes endowed with a globular sum preserving functor $H\colon \mathfrak{C} \rightarrow \Theta^{\leq n}$ that detects homogeneous maps, in the sense that a map $f$ in $\mathfrak{C}$ is homogeneous if and only if $H(f)$ is such, where $\Theta$ is the globular theory for strict $\infty$-categories, as defined in \cite{AR1}, and $\Theta^{\leq n}$ is its subcategory spanned by all globular sums of dimension less or equal to $n$. If this is the case, then given an homogeneous map $\rho\colon D_m\rightarrow A$ we have $m\geq \dim(A)$, and every map $f$ admits a unique factorization as a homogeneous map followed by a globular one.
\end{defi}
\begin{rmk}
\label{general homogeneous map}
A map $f\colon A \rightarrow B$ in a homogeneous globular theory $\mathfrak{C}$ is homogeneous if and only if, for every $D_{i_k}$ appearing in the globular decomposition of $A$, the homogeneous-globular factorizations of $D_{i_k}\rightarrow A \rightarrow B$ given by $D_{i_k} \rightarrow B_k \rightarrow B$ induce an isomorphism of the form: 
\[\colim_k B_k \cong B\]
\end{rmk}
\begin{defi}
	Let $(\mathfrak{C},F)$ be an $n$-globular theory. A pair of maps $(f,g)$ with $f,g\colon D_k \rightarrow A$ is said to be admissible for a theory of $n$-categories (or just admissible, in case there is no risk of confusion with the groupoidal case) if either $k=0$, or both of them are homogeneous maps or else if there exists homogeneous maps $f',g'\colon D_k \rightarrow \partial A$ such that the following diagrams commute
	\[
	\bfig 
	\morphism(0,0)|a|/@{>}@<0pt>/<400,0>[D_k`A;f]

	\morphism(0,0)|l|/@{>}@<0pt>/<0,-400>[D_k`\partial A;f']

	\morphism(0,-400)|r|/@{>}@<0pt>/<400,400>[\partial A`A;\partial_{\sigma}]

	\morphism(1000,0)|a|/@{>}@<0pt>/<400,0>[D_k`A;g]

	\morphism(1000,0)|l|/@{>}@<0pt>/<0,-400>[D_k`\partial A;g']

	\morphism(1000,-400)|r|/@{>}@<0pt>/<400,400>[\partial A`A;\partial_{\tau}]
	
	\efig 
	\]
\end{defi}
The definition of a coherator for $n$-categories is totally analogous to that for $n$-groupoids, i.e. it is a contractible and cellular globular theory, except the pair of maps that we consider in both cases have to be the admissible ones in the sense of the previous definition.

More precisely, the pairs appearing in Definition \ref{contr glob th} and in point 2 of Definition \ref{cell glob th} must be pairs of admissible maps.
\begin{defi}
	A (Grothendieck) $n$-category is a model of a coherator for $n$-categories.
\end{defi} 
Unless specified otherwise, $n$-category and $n$-groupoid will always mean weak ones, i.e. Grothendieck $n$-categories and Grothendieck $n$-groupoids. 
\subsection{Direct categories and cofibrations}
\begin{defi}(see also \cite{Ho}, Chapter 5)
	\label{direct category}
	A direct category is a pair $(\C,d)$, where $\C$ is a small category and $d:\ob(\C) \rightarrow \lambda$ is a function into an ordinal $\lambda$ , such that if there is a non-identity morphism $f:a \rightarrow b$ in $\C$, then $d(a)< d(b)$.
	
	Given a cocomplete category $\D$ and a functor $X\colon \C \rightarrow \D$, we define the latching object of $X$ at an object $c\in \C$ to be the object of $\D$ given by
	\[L_c(X)=\colim_{c' \in \C_{< d(c)}\downarrow c}X(c')\]
	This defines a functor $L_c$ from the functor category $[\C,\D]$ to the category $\D$, together with a natural transformation $\epsilon_c\colon L_c \Rightarrow ev_c$, with codomain the functor given by evaluation at $c$.
	We also define the latching map of a natural transformation $\alpha \colon  X\rightarrow Y$ in $\D^{\C}$ at an object $c\in \C$ to be the map of $\D$ 
	\[\hat{L}_c(\alpha)\colon X(c) \plus{L_c(X)} L_c(Y) \rightarrow Y(c)\]
	induced by $L_c(f)$ and $\epsilon_c$.
\end{defi}
The following results on direct categories are well known, therefore we omit their proofs. The notion of weak orthogonality is denoted with $\pitchfork$
\begin{lemma}
	\label{Reedy construction}
	Let $\D$ be a direct category and $\C$ a category equipped with two classes of arrows $(\mathscr{L},\R)$ such that $\mathscr{L}\pitchfork \R$.
	If we define

	\[	 \mathscr{L}^{\D}= \{ \alpha\colon X \rightarrow Y \ \text{in} \ \C^{\D} \ | \ \hat{L}_d(\alpha)\in \mathscr{L} \ \forall d \in \D \} \]
	and \[\R^{\D}=\{\alpha\colon X \rightarrow Y \ \text{in} \ \C^{\D} \ | \ \alpha_d\colon X(d) \rightarrow Y(d) \in \R  \ \forall d \in \D \}\] we have $\mathscr{L}^{\D} \pitchfork \R^{\D}$.
%	\begin{proof}
%		Consider a commutative square \[
%		\vcenter{\hbox{\xymatrix@!0@=15mm{
%					A\ar[r]^-{a}\ar[d]_-{l}&X\ar[d]^-{r}\\
%					B\ar[r]_-{b}&Y
%		}}}
%		\]
%		where $l \in \mathscr{L}^{\D}$ and $r \in \R^{\D}$, and let $d \colon\ob(\D) \rightarrow \lambda$ be the degree functor . The idea is to use transfinite induction on the degree of objects of $\D$ to find a lift for the given square. Clearly, the only non trivial step is extending a lift for the restriction of the square to $\C^{\D_{\leq \alpha}}$ to a lift for the restriction of the square to $\C^{\D_{\leq \alpha +1}}$.
%		
%		Consider an object $e\in\D$ such that $d (e)= \alpha$. We have an induced square
%		
%		\[
%		\bfig
%		\morphism(0,0)|a|/@{>}@<0pt>/<800,0>[	A(e) \plus{L_e(A)} L_e(B)`X(e);]
%		\morphism(0,0)|a|/@{>}@<0pt>/<0,-500>[A(e) \plus{L_e(A)} L_e(B) `B(e);\hat{L}_e(l)]
%		\morphism(800,0)|a|/@{>}@<0pt>/<0,-500>[X(e) `Y(e);r_e]
%		\morphism(0,-500)|a|/@{>}@<0pt>/<800,0>[B(e)`Y(e);]
%		\efig
%		\] where the upper-horizontal map is induced by $a$ and the lifts at lower degrees that exist by inductive assumption.
%		Choose a filler $k_e\colon B(e) \rightarrow X(e)$, which exists since the left-hand side arrow is in $\mathscr{L}$ and the right-hand side one is in $\R$.
%		The collection $\{k_e\colon d(e)=\alpha\}$ gives the desired extension to $\C^{\D_{\leq \alpha +1}}$.
%	\end{proof}
\end{lemma}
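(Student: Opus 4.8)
This is the classical Reedy\nbd-type lifting lemma, and the plan is to solve lifting problems in $\C^{\D}$ one object of $\D$ at a time, by transfinite induction on the dimension function $d\colon \ob(\D) \to \lambda$. Concretely, suppose we are given $\alpha\colon A \to B$ in $\mathscr{L}^{\D}$, $\beta\colon W \to Z$ in $\R^{\D}$, together with a commutative square consisting of $u\colon A \to W$ on top, $v\colon B \to Z$ on the bottom, $\alpha$ on the left and $\beta$ on the right. A diagonal filler is a natural transformation $h\colon B \to W$ with $h\alpha = u$ and $\beta h = v$, and I would build its components $h_c\colon B(c) \to W(c)$ by induction on $d(c)$, the crucial point being that $h_c$ will depend only on the $h_{c'}$ with $d(c') < d(c)$, so that directness of $\D$ makes the recursion well\nbd-founded.

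For the inductive step, fix $c \in \D$ and assume that compatible components $h_{c'}$ have been chosen for all $c'$ with $d(c') < d(c)$. Since the latching object $L_c(B) = \colim_{c' \in \D_{<d(c)}\downarrow c} B(c')$ involves only objects of dimension strictly below $d(c)$, the partial lift already defined induces a map $L_c(B) \to L_c(W) \xrightarrow{\epsilon_c} W(c)$; together with $u_c\colon A(c) \to W(c)$, and using that the two agree on $L_c(A)$, this produces a map $A(c) \amalg_{L_c(A)} L_c(B) \to W(c)$ out of the pushout. I would then contemplate the square in $\C$ whose left edge is the latching map $\hat{L}_c(\alpha)\colon A(c) \amalg_{L_c(A)} L_c(B) \to B(c)$, whose right edge is $\beta_c\colon W(c) \to Z(c)$, whose top is the map just constructed, and whose bottom is $v_c\colon B(c) \to Z(c)$. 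By hypothesis $\hat{L}_c(\alpha) \in \mathscr{L}$ (because $\alpha \in \mathscr{L}^{\D}$) and $\beta_c \in \R$ (because $\beta \in \R^{\D}$).

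After checking that this square commutes --- a diagram chase using the naturality of $u$ and $v$ along the morphisms $c' \to c$ that constitute the latching object, together with the inductive hypothesis $\beta_{c'}h_{c'} = v_{c'}$ --- the weak orthogonality $\mathscr{L} \pitchfork \R$ furnishes a diagonal $h_c\colon B(c) \to W(c)$, which I take to be the value of $h$ at $c$. It then remains to verify that the family $\{h_c\}$ is natural and solves the original problem: the equalities $h\alpha = u$ and $\beta h = v$ are read off from the $A(c)$\nbd-summand of the top map and from the lower triangle of the lifting square respectively, while naturality with respect to a non\nbd-identity arrow $c' \to c$ holds because $h_c$ was chosen as a lift against the latching map, whose source records exactly these arrows and so forces $h_c$ to restrict to the previously chosen $h_{c'}$ along them (there being no non\nbd-identity morphisms between objects of equal dimension in a direct category). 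I expect the main obstacle to be the bookkeeping in these commutativity and naturality checks, and in particular confirming that the contribution of $u$ on the $A$\nbd-part and the contributions of the lower\nbd-dimensional lifts on the latching part agree on the overlap $L_c(A)$, so that the map out of the pushout is genuinely well defined; the direct\nbd-category hypothesis is precisely what guarantees that each latching object depends only on strictly lower data, ruling out any circularity in the recursion.
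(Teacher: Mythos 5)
Your proof is correct and is exactly the standard degreewise-induction argument that this lemma refers to; the paper itself omits the proof, declaring the result well known. Your identification of the key points — that the latching object only involves strictly lower-degree data (so the recursion is well founded), that the inductive hypothesis $h_{c'}\alpha_{c'}=u_{c'}$ is what makes the map out of the pushout well defined on the overlap $L_c(A)$, and that naturality of $h$ at non-identity arrows is encoded in the upper triangle of the lifting square against $\hat{L}_c(\alpha)$ — covers all the substantive content.
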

\begin{lemma}
	\label{preservation of cofs^D}
	Let $A,B$ be two cocomplete categories equipped, respectively, with two classes of arrows $(\mathscr{L}_A,\mathcal{R}_A)$ and $(\mathscr{L}_B,\mathcal{R}_B)$ such that $\mathscr{L}_A \pitchfork \mathcal{R}_A$ and $\mathscr{L}_B \pitchfork\mathcal{R}_B$.
	Given a cocontinuous functor $F\colon A \rightarrow B$ such that $F(\mathscr{L}_A)\subset \mathscr{L}_B$ and a direct category $\D$, the induced map $F^{\D}\colon A^{\D} \rightarrow B^{\D}$ preserves the direct cofibrations, i.e.\[F(\mathscr{L}^{\D}_A)\subset \mathscr{L}^{\D}_B\]
%	\begin{proof}
%		Given a map $\alpha\colon X \rightarrow Y$ in the functor category $A^{\D}$ which is in $\mathscr{L}^{\D}_A$, we have by definition that $F^{\D}(\alpha)$ is in $\mathscr{L}^{\D}_B$ if and only if the latching map at every object $d\in \D$
%		\[\hat{L}_d(F(\alpha))\colon FX(d) \plus{L_d(FX)} L_d(FY) \rightarrow FY(d)\] is in $\mathscr{L}_B$. But this map is isomorphic to  $F(\hat{L}_d(\alpha))$ since $F$ is cocontinuous, and this map is in $\mathscr{L}_B$ since $F$ sends maps in $\mathscr{L}^{\D}_A$ to maps in $\mathscr{L}^{\D}_B$ and the latching map of $\alpha$ at $d$ is in $\mathscr{L}_A$.
%	\end{proof}
\end{lemma}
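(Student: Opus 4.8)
The plan is to exploit the fact that the whole latching construction is assembled out of colimits, so that the cocontinuous functor $F$ commutes with it. Recall first that $F^{\D}$ is nothing but post-composition with $F$, i.e. $F^{\D}(X)=F\circ X$ for every $X\in A^{\D}$. The strategy is then to show, in turn, that $F$ preserves latching objects, that it consequently preserves latching maps, and finally to read off the statement directly from the hypothesis $F(\mathscr{L}_A)\subset\mathscr{L}_B$.

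First I would establish that $F$ commutes with the formation of latching objects: for every $X\in A^{\D}$ and every object $d\in\D$ there is a natural isomorphism $F\bigl(L_d(X)\bigr)\cong L_d\bigl(F^{\D}(X)\bigr)$. This is immediate from cocontinuity, since
\[ F\bigl(L_d(X)\bigr)=F\Bigl(\colim_{d'} X(d')\Bigr)\cong\colim_{d'} F\bigl(X(d')\bigr)=\colim_{d'}(F\circ X)(d')=L_d\bigl(F^{\D}(X)\bigr), \]
where the colimit ranges over the comma category $\D_{<d}\downarrow d$. The only point requiring care is that this comparison is compatible with the natural transformation $\epsilon_d\colon L_d\Rightarrow ev_d$, i.e. that it sits in a commuting triangle over $F(X(d))=F^{\D}(X)(d)$; this follows from the universal property of the colimit, since the image under $F$ of the cocone defining $\epsilon_d$ for $X$ is precisely the cocone defining $\epsilon_d$ for $F^{\D}(X)$.

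Next I would deduce that $F$ commutes with latching maps. By Definition \ref{direct category} the map $\hat{L}_d(\alpha)$ is induced on the pushout $X(d)\plus{L_d(X)}L_d(Y)$; as $F$ is cocontinuous it preserves this pushout, and combining this with the natural isomorphisms of the previous step (applied to both $X$ and $Y$) produces a commuting triangle in the arrow category of $B$ exhibiting $F\bigl(\hat{L}_d(\alpha)\bigr)\cong\hat{L}_d\bigl(F^{\D}(\alpha)\bigr)$ with common target $F(Y(d))=F^{\D}(Y)(d)$. Concretely, $\hat{L}_d\bigl(F^{\D}(\alpha)\bigr)$ is identified with $F\bigl(\hat{L}_d(\alpha)\bigr)$ up to precomposition with the canonical isomorphism of pushouts.

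Finally the conclusion follows at once. If $\alpha\in\mathscr{L}_A^{\D}$ then $\hat{L}_d(\alpha)\in\mathscr{L}_A$ for every $d\in\D$, so the hypothesis gives $F\bigl(\hat{L}_d(\alpha)\bigr)\in\mathscr{L}_B$, and transporting along the isomorphism above (using that $\mathscr{L}_B$, as the left class of a weak orthogonality, is closed under isomorphism of arrows) yields $\hat{L}_d\bigl(F^{\D}(\alpha)\bigr)\in\mathscr{L}_B$ for every $d$; by definition of $\mathscr{L}_B^{\D}$ this is exactly $F^{\D}(\alpha)\in\mathscr{L}_B^{\D}$. I expect the only genuine obstacle to be the bookkeeping in the middle step: checking that the separate colimit-comparison isomorphisms for $X$ and for $Y$ assemble, compatibly with $\epsilon_d$ and the pushout structure, into a single isomorphism of the two latching maps as arrows and not merely of their sources. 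Everything else is a formal consequence of the cocontinuity of $F$.
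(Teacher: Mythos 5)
Your argument is correct and is exactly the standard proof: the paper omits a proof of this lemma (it is stated as "well known"), and the expected argument is precisely the one you give — cocontinuity of $F$ identifies $F(L_d(X))$ with $L_d(F^{\D}X)$ compatibly with $\epsilon_d$, hence identifies $F(\hat{L}_d(\alpha))$ with $\hat{L}_d(F^{\D}\alpha)$ via the canonical pushout comparison, and the conclusion follows from $F(\mathscr{L}_A)\subset \mathscr{L}_B$. The only caveat is your final appeal to closure of $\mathscr{L}_B$ under isomorphism of arrows: a class merely satisfying $\mathscr{L}_B\pitchfork\mathcal{R}_B$ need not be so closed (only ${}^{\pitchfork}(\mathcal{R}_B)$ is automatically), but in every application in the paper $\mathscr{L}_B$ is a saturated class of the form ${}^{\pitchfork}(I^{\pitchfork})$, so this is harmless.
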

\begin{ex}
	\label{G is a direct cat}
	The category $\mathbb{G}_n$ has a natural structure of direct category, with degree function defined by
	\[\begin{matrix}
	\deg \colon \G_n \rightarrow \mathbb{N} \\
	\qquad m \mapsto m 
	\end{matrix}\]
	Every time we have an $n$-coglobular object $\mathbf{D}_{\bullet}\colon \G_n \rightarrow \C$ in a finitely cocomplete category, we can consider the latching map of $!\colon \emptyset \rightarrow \mathbf{D}_{\bullet}$ at $m$, i.e. the map \[\hat{L}_m(!)\colon L_m(\mathbf{D}_{\bullet}) \rightarrow \mathbf{D}_m\]
	Notice that \[\hat{L}_1(!)=(\mathbf{D}(\sigma_0),\mathbf{D}(\tau_0))\colon \mathbf{D}_0 \coprod \mathbf{D}_0\rightarrow \mathbf{D}_1\] and the other latching maps are obtained inductively from the following cocartesian square
	\[
	\bfig 
	
	\morphism(0,0)|a|/@{>}@<0pt>/<500,0>[L_m(\mathbf{D}_{\bullet})` \mathbf{D}_m;\hat{L}_m(!)]
	\morphism(0,0)|a|/@{>}@<0pt>/<0,-500>[L_m(\mathbf{D}_{\bullet})` \mathbf{D}_m;\hat{L}_m(!)]
	\morphism(500,0)|a|/@{>}@<0pt>/<0,-500>[ \mathbf{D}_m`L_{m+1}(\mathbf{D}_{\bullet});]
	\morphism(0,-500)|a|/@{>}@<0pt>/<500,0>[\mathbf{D}_m`L_{m+1}(\mathbf{D}_{\bullet}); ]
	
	\morphism(500,-500)|l|/@{-->}@<0pt>/<300,-300>[L_{m+1}(\mathbf{D}_{\bullet})` \mathbf{D}_{m+1};\exists ! \hat{L}_{m+1}(!)]
	
	\morphism(500,0)|a|/{@{>}@/^2em/}/<300,-800>[ \mathbf{D}_{m}` \mathbf{D}_{m+1}; \mathbf{D}(\sigma_m)]
	\morphism(0,-500)|l|/{@{>}@/_2em/}/<800,-300>[\mathbf{D}_{m}` \mathbf{D}_{m+1};\mathbf{D}(\tau_m) ]
	\efig 
	\]
	
\end{ex}
When $\mathbf{D}_{\bullet}\colon\G_n \rightarrow \mathfrak{G}\rightarrow \mathbf{Mod}(\mathfrak{G})$ is the canonical coglobular $n$-groupoid, we will also denote $L_{m}(\mathbf{D}_{\bullet})$ by $S^{m-1}$, borrowing  this notation from topology.
\begin{defi}
	\label{def cofi}
	Let $I_n$ (resp. $J_n$) be the set $\{S^{k-1} \rightarrow D_k\}_{0\leq k \leq n}\cup\{(1,1)\colon S^n\rightarrow D_n\}$ of boundary inclusions in $\mathbf{Mod}(\mathfrak{G})$, together with the map collapsing a pair of parallel $n$-cells to a single $n$-cells (resp. source maps $\{\sigma_k\colon D_{k} \rightarrow D_{k+1}\}_{0 \leq k \leq n-1}$), and $\mathbb{I}_n$ its saturation, i.e. the set $^{\pitchfork}(I_n^{\pitchfork})$ (resp. $\mathbb{J}_n= {}^{\pitchfork}(J_n^{\pitchfork})$). 
	
	We say that a map of $n$-groupoids $f\colon X \rightarrow Y$ is a cofibration (resp. trivial cofibration) of $n$-groupoids if it belongs to $\mathbb{I}_n$ (resp. $\mathbb{J}_n$).
	
	The maps in the class $J_n^{\pitchfork}$ (resp. $I_n^{\pitchfork}$) are called fibrations (resp. trivial fibrations).
\end{defi} 
The small object argument provides a factorization system on $n$-groupoids given by cofibrations and trivial fibrations. Lemma \ref{Reedy construction} will be applied to this factorization system and to the the direct category structure on $\G_n$ as defined in Example \ref{G is a direct cat}, to provide a way of inductively extending certain maps in $\mathbf{Mod}(\mathfrak{G})^{\G}$.

Let $*$ denote the terminal object in the category of $n$-groupoids. Since every map in $J$ admits a retraction, the following result is straightforward.
\begin{prop}
	\label{groupoids are contractible}
	Every $n$-groupoid is fibrant, i.e. the unique map $X \rightarrow *$ is a fibration for every $X \in \mathbf{Mod}(\mathfrak{G})$.
\end{prop}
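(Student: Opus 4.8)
The plan is to unwind the definition of fibration and reduce the statement to a retraction argument, exactly as the sentence preceding the proposition announces. By Definition \ref{def cofi} a map is a fibration precisely when it lies in $J_n^{\pitchfork}$, and since $J_n^{\pitchfork}=\mathbb{J}_n^{\pitchfork}$ it suffices to check the right lifting property against the generating set $J_n=\{\sigma_k\colon D_k\rightarrow D_{k+1}\}_{0\leq k\leq n-1}$. Thus I must show that for every $n$-groupoid $X$, every $0\leq k\leq n-1$ and every map $a\colon D_k\rightarrow X$, the map $a$ extends along $\sigma_k$ to a map $D_{k+1}\rightarrow X$; the lower triangle of the lifting square against $X\rightarrow *$ is automatic because $*$ is terminal.

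First I would produce, for each $0\leq k\leq n-1$, a retraction $r_k\colon D_{k+1}\rightarrow D_k$ of $\sigma_k$, i.e. a map with $r_k\circ\sigma_k=\mathrm{id}_{D_k}$. This is precisely the identity (degeneracy) operation of the coherator, and it is supplied by contractibility: the pair $(\mathrm{id}_{D_k},\mathrm{id}_{D_k})\colon D_k\rightarrow D_k$ is admissible, since it is trivially parallel and its codomain $D_k$ has dimension $k\leq k+1$, so Definition \ref{contr glob th} yields an extension $r_k\colon D_{k+1}\rightarrow D_k$ rendering the relevant diagram serially commutative; in particular $r_k\circ\sigma_k=\mathrm{id}_{D_k}$ (and also $r_k\circ\tau_k=\mathrm{id}_{D_k}$, which we will not need). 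Here it is essential that $k<n$, which holds for every element of $J_n$, so that contractibility produces a genuine lift rather than forcing an equality of $n$-cells.

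Given the retractions, the lifting problem is immediate. For a map $a\colon D_k\rightarrow X$ I set the lift to be $\ell=a\circ r_k\colon D_{k+1}\rightarrow X$; then $\ell\circ\sigma_k=a\circ r_k\circ\sigma_k=a$, so the upper triangle commutes, while commutativity with $X\rightarrow *$ is free. Hence $X\rightarrow *\in J_n^{\pitchfork}$ and $X$ is fibrant. I do not expect any genuine obstacle: the only point requiring care is the justification that each $\sigma_k$ admits a retraction, which is where contractibility of $\mathfrak{G}$ enters, together with the observation that the indices occurring in $J_n$ stay strictly below $n$ so that the degeneracy operation is available in all the required dimensions.
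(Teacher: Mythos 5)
Your proof is correct and is exactly the argument the paper intends: the sentence preceding the proposition ("Since every map in $J$ admits a retraction, the following result is straightforward") is the whole of the paper's proof, and you have simply supplied the details — the retraction $r_k$ of $\sigma_k$ obtained by applying contractibility to the admissible pair $(\mathrm{id}_{D_k},\mathrm{id}_{D_k})$, and the lift $a\circ r_k$. No gaps.
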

\begin{defi}
	An $n$-groupoid $X$ is said to be contractible if the unique map $X \rightarrow *$ is a trivial fibration.
\end{defi}
The proof of the following fact is analogous to the one given for $\infty$-groupoids in Proposition 3.8 of \cite{EL}.
\begin{prop}
	\label{glob sums are contractible}
	Globular sums, seen as objects in the image of the Yoneda embedding functor $y\colon \mathfrak{G} \rightarrow \mathbf{Mod}(\mathfrak{G})$, are contractible $n$-groupoids.
%	\begin{proof}
%		We proceed by induction on $m=\dim(A)$. If $m=0$ then $A=D_0$, in which case the statement is obvious.
%		
%		Let $m>0$ and  let us prove that any map $\alpha\colon S^{k-1} \rightarrow A$ extends to $D_k$. By contractibility of $\mathfrak{E}$ we already know this is possible whenever $\dim(A)=m \leq k$, so we assume $k<n$. Consider $\partial A$, whose dimension is $m-1$ by construction, and is therefore contractible by inductive assumption. The map $\partial_{\sigma}\colon \partial A \rightarrow A$ is $(m-2)$-bijective, thanks to Proposition \ref{partial are n-2 bij}, thus $\alpha$ must factor through it since it consists of a pair of parallel $(k-1)$-cells, and contractibility of $\partial A$ allows us to find the desired extension.
%	
%	By definition of a coherator for $n$-groupoids, it follows that every globular sum in $n$-groupoids lifts against the map $(1,1)\colon S^n\rightarrow D_n$, which concludes the proof.
%	\end{proof}
\end{prop}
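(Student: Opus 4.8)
The plan is to verify directly that the unique map $y(A) \to *$ lies in $I_n^{\pitchfork}$, i.e.\ that it is a trivial fibration. By Definition \ref{def cofi} this amounts to solving, for each generating cofibration in $I_n$, the corresponding lifting problem against $y(A) \to *$; since the codomain is terminal, such a lift is simply an extension of a map out of the domain. Unwinding the boundary inclusions $S^{k-1} \to D_k$ via the pushout description of $S^{k-1} = L_k(\mathbf{D}_\bullet)$ and the Yoneda lemma (recall $\hm_{\mathbf{Mod}(\mathfrak{G})}(y(B), y(A)) \cong \hm_{\mathfrak{G}}(B, A)$), a map $S^{k-1} \to y(A)$ is the same datum as a parallel pair $f, g\colon D_{k-1} \to A$ in $\mathfrak{G}$, and extending it along $S^{k-1} \to D_k$ means producing a filler $h\colon D_k \to A$ with $h\sigma_{k-1} = f$ and $h\tau_{k-1} = g$. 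Likewise, lifting against the collapse map $(1,1)\colon S^n \to D_n$ amounts to showing that any parallel pair of $n$-cells $f, g\colon D_n \to A$ already satisfies $f = g$. Thus the whole statement reduces to a filling/rigidity property of the hom-sets $\hm_{\mathfrak{G}}(D_\bullet, A)$.

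First I would dispose of the admissible pairs. If $\dim(A) = m$, then any parallel pair $f, g\colon D_{k-1} \to A$ with $k \ge m$ is admissible in the sense of Definition \ref{contr glob th} (its codomain has dimension $\le k$), so contractibility of the coherator $\mathfrak{G}$ supplies the filler $h\colon D_k \to A$ directly; in the top case this also yields $f = g$ for parallel $n$-cells, settling the collapse map (note $m \le n$). Hence the content is concentrated in the non-admissible pairs, namely parallel $f, g\colon D_{k-1} \to A$ with $k < m$.

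These I would handle by induction on $m = \dim(A)$, the base case $m = 0$ (where $A = D_0$ and every parallel pair is automatically admissible) being covered by the previous paragraph. For the inductive step, consider a non-admissible pair $f, g\colon D_{k-1} \to A$ with $k - 1 \le m - 2$. By Proposition \ref{partial are n-2 bij} the map $\partial_{\sigma}\colon \partial A \to A$ is $(m-2)$-bijective, so the induced map $\hm_{\mathfrak{G}}(D_{k-1}, \partial A) \to \hm_{\mathfrak{G}}(D_{k-1}, A)$ is a bijection; I would use it to lift $(f,g)$ to a pair $\tilde f, \tilde g\colon D_{k-1} \to \partial A$, still parallel by injectivity in dimension $k-2$. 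Since $\partial A$ is a globular sum of dimension $m - 1$, the inductive hypothesis gives that $y(\partial A)$ is contractible, hence a filler $\tilde h\colon D_k \to \partial A$ for $(\tilde f, \tilde g)$; pushing it forward produces $h = \partial_{\sigma} \circ \tilde h\colon D_k \to A$, which fills $(f,g)$ because $\partial_{\sigma}\tilde f = f$ and $\partial_{\sigma}\tilde g = g$ by construction.

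The main obstacle, and the only genuinely non-formal step, is precisely this treatment of non-admissible pairs: contractibility of $\mathfrak{G}$ is phrased only for admissible pairs, so fillers for low-dimensional parallel cells in a high-dimensional globular sum are not immediately available and must be imported from the boundary via the bijectivity of $\partial_{\sigma}$ together with the inductive hypothesis. This is the step where the argument parallels the proof of Proposition 3.8 in \cite{EL}; the remaining verifications (that the lifts remain parallel, that the pushed-forward filler has the prescribed source and target, and that the reduction to generating cofibrations is legitimate) are routine.
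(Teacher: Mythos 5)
Your proof is correct and follows essentially the same strategy as the argument the paper defers to (Proposition 3.8 of \cite{EL}): reduce contractibility of $y(A)$ to filling parallel pairs $f,g\colon D_{k-1}\to A$, dispose of the admissible ones (including the collapse map case $f=g$ for parallel $n$-cells) directly by contractibility of the coherator, and handle the non-admissible ones by lifting along $\partial_{\sigma}\colon \partial A \to A$ using its $(m-2)$-bijectivity (Proposition \ref{partial are n-2 bij}) and inducting on $\dim(A)$. I see no gaps.
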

\section{Recognition principle for semi-model structures on categories of models of globular theories}
In this section we are going to characterize those globular theories $\mathfrak{C}$ for which the category of models $\mathbf{Mod}(\mathfrak{C})$ bears a cofibrantly generated semi-model structure that satisfies some natural conditions for objects of $\mathbf{Mod}(\mathfrak{C})$ to look like $\infty$-groupoids. The definition of a (cofibrantly generated) semi-model structure can be found in \cite{FR}, Section 12.1. It is clear that everything can be adapted, with the appropriate changes, to the case of $n$-globular theories for $n<\infty$.

To begin with, we define a class of maps $\mathbf{W}$ in $\mathbf{Mod}(\mathfrak{C})$ that consists of the maps $f\colon X\rightarrow Y$ such that every solid commutative square of the form:
\begin{equation}
\label{def of weak eq}
\bfig 
\morphism(0,0)|a|/@{>}@<0pt>/<500,0>[S^{k-1}` X;(\alpha,\beta)]
\morphism(0,0)|a|/@{>}@<0pt>/<0,-400>[S^{k-1}`D_k;j_k]
\morphism(0,-400)|a|/@{>}@<0pt>/<500,0>[D_k` Y ;\gamma]
\morphism(500,0)|r|/@{>}@<0pt>/<0,-400>[X` Y;f]

\morphism(0,-400)|a|/@{.>}@<0pt>/<500,400>[D_k` X;\Gamma]
\efig 
\end{equation}
admits an extension for the upper triangle which is a ``lift up to homotopy'' for the lower triangle. More precisely, there is a map $\Gamma$ such that $\Gamma \circ j_k = (\alpha,\beta)$ and there exists a $(k+1)$-cell $H$ in $Y$ with $H\colon f\circ \Gamma \rightarrow \gamma$.
\begin{thm}
	\label{semi model str characterization}
	Given a globular theory $\mathfrak{C}$, there exists a cofibrantly generated semi-model structure on the category of models $\mathbf{Mod}(\mathfrak{C})$ with weak equivalences given by the class $\mathbf{W}$, where every object is fibrant, globular sums are contractible and the set of generating cofibrations (resp. trivial cofibrations) consists of the boundary inclusions $\mathbf{I}\overset{def}{=}\{j_k \colon S^{k-1}\rightarrow D_k\}_{k\geq 0}$ (resp. source maps $\mathbf{J}\overset{def}{=}\{\sigma_k \colon D_k\rightarrow D_{k+1}\}_{k\geq 0}$) if and only if:
	\begin{itemize}
		\item each map $\sigma_k \colon D_k\rightarrow D_{k+1}$ admits a retraction;
		\item $D_0$ is contractible (i.e. the unique map $D_0 \rightarrow \ast$ is a trivial fibration);
		\item $\mathfrak{C}$ admits a system of composition and identities, as defined in Definition \ref{structure systems};
		\item for every cofibrant object $X$ in $\mathbf{Mod}(\mathfrak{C})$ there exists a fibration $\mathbf{ev}\colon \p X \rightarrow X \times X$ such that $\mathbf{ev}_i=\pi_i \circ \mathbf{ev}$ is a trivial fibration for $i=0,1$, where $\pi_i \colon X \times X \rightarrow X$ denote the product projections.
	\end{itemize}
\end{thm}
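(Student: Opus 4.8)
The plan is to prove the two implications separately. The \textbf{``only if''} direction extracts the four conditions from a homotopy theory already in hand, so it is the easier half. Assume the cofibrantly generated semi-model structure exists with the stated properties. Condition two is immediate since $D_0 = y(0)$ is a globular sum and globular sums are contractible by hypothesis. For condition one, each $\sigma_k$ is a generating trivial cofibration, so it has the left lifting property against fibrations out of cofibrant objects, and $D_k$ is cofibrant (it is built from $\mathbf{I}$); solving the lifting problem posed by $\mathrm{id}_{D_k}$ against the fibration $D_k \to \ast$ yields a retraction $r \colon D_{k+1} \to D_k$. For condition four I would run the standard path-object construction: for $X$ cofibrant the diagonal $X \to X \times X$ has cofibrant domain, so it factors as a trivial cofibration $i \colon X \to \p X$ followed by a fibration $\mathbf{ev}\colon \p X \to X \times X$; since $\pi_i$ is a fibration (a pullback of $X \to \ast$) and $\mathbf{ev}_i \circ i = \mathrm{id}_X$, two-out-of-three forces $\mathbf{ev}_i = \pi_i \circ \mathbf{ev}$ to be a weak equivalence, hence a trivial fibration. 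Condition three is then read off from the composition and lifting data carried by the two weak factorization systems, in the precise form demanded by Definition \ref{structure systems}.

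For the \textbf{``if''} direction, first I would note that $\mathbf{Mod}(\mathfrak{C})$ is locally presentable, being a category of product-preserving functors into $\mathbf{Set}$; hence it is bicomplete and the small object argument applies to both $\mathbf{I}$ and $\mathbf{J}$, producing weak factorization systems $(\mathbf{I}\text{-cof}, \mathbf{I}^{\pitchfork})$ and $(\mathbf{J}\text{-cof}, \mathbf{J}^{\pitchfork})$. I declare cofibrations to be $\mathbf{I}\text{-cof}$, fibrations to be $\mathbf{J}^{\pitchfork}$, and weak equivalences to be $\mathbf{W}$. A direct computation gives $\mathbf{J} \subseteq \mathbf{I}\text{-cof}$: each $\sigma_k$ factors through $S^k = D_k \amalg_{S^{k-1}} D_k$ as a pushout of $j_k$ followed by the generating cofibration $j_{k+1}$, so it is a cofibration; consequently $\mathbf{I}^{\pitchfork} \subseteq \mathbf{J}^{\pitchfork}$, i.e.\ trivial fibrations are fibrations. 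Condition one delivers fibrancy of every object, since the retraction $r$ of $\sigma_k$ solves every extension problem $D_k \to X$ along $\sigma_k$, placing $X \to \ast$ in $\mathbf{J}^{\pitchfork}$.

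The substance is to match $\mathbf{W}$ with these two factorization systems, that is to establish $\mathbf{I}^{\pitchfork} = \mathbf{W} \cap \mathbf{J}^{\pitchfork}$ and $\mathbf{J}\text{-cof} \subseteq \mathbf{W} \cap \mathbf{I}\text{-cof}$, together with closure of $\mathbf{W}$ under retracts and two-out-of-three. Here the path object of condition four is the indispensable tool: it defines a homotopy relation on maps out of cofibrant objects, allows composition and inversion of homotopies (using the identities and compositions of condition three, with the contractibility of $D_0$ from condition two anchoring the zero-dimensional case), and supplies the homotopy lifting needed to rectify a ``lift up to homotopy'' into a strict lift against a fibration. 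Concretely, to see that a fibration $p \in \mathbf{W}$ is a trivial fibration I would take a square against $j_k$, produce a homotopy lift from membership in $\mathbf{W}$ as in \eqref{def of weak eq}, and then correct it to an honest lift by transporting the connecting homotopy $H$ through $p$ via the path object on the cofibrant source; conversely a trivial fibration lies in $\mathbf{W}$ because a strict lift is a homotopy lift for the constant homotopy built from the identity operations. The same homotopy calculus then yields contractibility of all globular sums, since each inclusion $D_0 \to A$ is a transfinite composite of pushouts of the $\sigma_k$ and hence a weak equivalence, whence $A \to \ast$ is a weak equivalence and, being a fibration, a trivial fibration.

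The hard part will be the inclusion $\mathbf{J}\text{-cof} \subseteq \mathbf{W}$, namely that every relative $\mathbf{J}$-cell complex with cofibrant domain is a weak equivalence. By saturation this reduces to showing that a single pushout of a generating trivial cofibration $\sigma_k$ lies in $\mathbf{W}$ and that $\mathbf{W}$ is stable under transfinite composition. That $\sigma_k$ itself is a weak equivalence follows once its retraction $r$ (condition one) is upgraded to a homotopy inverse, which is exactly where $\p X$ and the homotopies of condition three are used; but propagating this through an arbitrary pushout---verifying that the lifting-up-to-homotopy property of \eqref{def of weak eq} survives the pushout, with the homotopies transported across and reassembled coherently---is the delicate step. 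This is precisely the analogue, in the present algebraic setting, of Henry's ``pushout lemma'', and it is the technical heart of the theorem. The verification of two-out-of-three for $\mathbf{W}$, which likewise rests on composing homotopies through the path object, is of comparable difficulty but becomes routine once this homotopy calculus is in place.
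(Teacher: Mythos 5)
Your overall architecture (set up the two weak factorization systems, show $\mathbf{J}\subseteq\mathrm{cof}(\mathbf{I})$, identify $\mathbf{I}^{\pitchfork}=\mathbf{W}\cap\mathbf{J}^{\pitchfork}$, deduce contractibility of globular sums from $\mathrm{cof}(\mathbf{J})\subseteq\mathbf{W}$) matches the paper's, but at the one step that carries all the weight you take a road that does not go through. You propose to prove $\mathrm{cof}(\mathbf{J})\subseteq\mathbf{W}$ by cellular induction: show that a single pushout of $\sigma_k$ is in $\mathbf{W}$ and that $\mathbf{W}$ is stable under transfinite composition. As you yourself note, this is exactly Henry's ``pushout lemma'' --- and that lemma is the open problem that the entire semi-model-structure strategy of this paper (and of Henry's) is designed to \emph{avoid}. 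Deferring it as ``the delicate step'' is not a proof sketch; it is the statement that you do not have a proof. Stability of the lifting-up-to-homotopy property of \eqref{def of weak eq} under pushout is precisely what nobody knows how to establish from the given hypotheses.

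The paper's argument sidesteps this entirely by exploiting the cofibrant-domain restriction globally rather than cell by cell. Given $f\colon X\to Y$ in $\mathrm{cof}(\mathbf{J})$ with $X$ cofibrant: since $X$ is fibrant, $f$ admits a retraction $r$; since $Y$ is cofibrant, the trivial fibration $\mathbf{ev}_0\colon\p Y\to Y$ admits a section $i_Y$, and $\alpha=\mathbf{ev}_1\circ i_Y$ is a weak equivalence by the closure properties of $\mathbf{W}$ (which are proved directly from the definition, with no pushout stability needed). One then lifts in the single square with top edge $\alpha\circ f$ and bottom edge $(1,\alpha\circ f\circ r)\colon Y\to Y\times Y$ against the fibration $\mathbf{ev}$, using only that $f\in\mathrm{cof}(\mathbf{J})$; the resulting diagonal $\Gamma$ satisfies $\mathbf{ev}_0\circ\Gamma=1_Y$, so two-of-three style bookkeeping in $\mathbf{W}$ forces $f\circ r\in\mathbf{W}$ and hence $f\in\mathbf{W}$. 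No induction over cells, no pushout lemma. A second, smaller divergence: you call two-out-of-three ``routine once the homotopy calculus is in place,'' but the left-cancellation case ($f$ and $g\circ f$ in $\mathbf{W}$ implies $g\in\mathbf{W}$) is not obtained from the homotopy calculus in the paper; it requires first knowing globular sums are contractible, so that models carry $\infty$-groupoid structure, and then invoking the homotopy-group characterization of $\mathbf{W}$ and invariance of basepoint from Ara's work. You should rework the key step along the retraction-plus-path-object lines before the rest of your outline can be completed.
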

If such a semi-model structure exists on $\mathbf{Mod}(\mathfrak{C})$, then clearly the four conditions are satisfied. Let us check that the converse also holds true.

The proof is a matter of checking that the recognition principle for cofibrantly generated model structures applies (mutatis mutandis, since we want to produce a semi-model structure) to this situation. The first condition clearly implies that all objects are fibrant. Moreover, $\mathbf{Mod}(\mathfrak{C})$ is complete and cocomplete, and both the domains of $\mathbf{I}$ and $\mathbf{J}$ permit the small object argument.
\begin{lemma}
	\label{W closed under retracts}
	$\mathbf{W}$ is closed under retracts.
	\begin{proof}
		Assume $f$ is a retract of $g\in \mathbf{W}$, so that we have a commutative diagram:
		\[
		\bfig 
		\morphism(0,0)|a|/@{>}@<0pt>/<400,0>[X` W;a]
		\morphism(0,0)|a|/@{>}@<0pt>/<0,-400>[X`Y;f]
		\morphism(0,-400)|a|/@{>}@<0pt>/<400,0>[Y` Z;c]
		\morphism(400,0)|r|/@{>}@<0pt>/<0,-400>[W`Z;g]
		\morphism(400,0)|a|/@{>}@<0pt>/<400,0>[W`X;b]
		\morphism(800,0)|r|/@{>}@<0pt>/<0,-400>[X`Y;f]
		\morphism(400,-400)|a|/@{>}@<0pt>/<400,0>[Z`Y;d]
		\efig 
		\] with $b\circ a=1_X$ and $ d \circ c =1_Y$. Given a $(k-1)$-sphere $(\alpha,\beta)$ in $X$ and a $k$-cell $H\colon f(\alpha) \rightarrow f(\beta)$ in $Y$, we get a $k$-cell $\phi\colon a(\alpha)\rightarrow a(\beta)$ together with a $(k+1)$-cell $\Gamma\colon g (\phi )\rightarrow c (H)$ in $Z$. If we consider the $k$-cell $\overline{H}\overset{def}{=}b(\phi)$ in $X$, we see that $\overline{H}\colon \alpha\rightarrow \beta$ and $d(\Gamma)=\colon d(g (\phi))=f(\overline{H})\rightarrow d(c(H))=H$.
	\end{proof}
\end{lemma}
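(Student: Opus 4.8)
The plan is to transport the (weak) lifting problem along the retract, exactly as one proves that any class defined by a right lifting property is closed under retracts, with the single extra care that here the lift is only required up to a homotopy, so the witnessing higher cell must be transported as well. Suppose $f\colon X \to Y$ is a retract of some $g\colon W \to Z$ in $\mathbf{W}$, displayed by maps $a\colon X \to W$, $b\colon W \to X$, $c\colon Y \to Z$, $d\colon Z \to Y$ with $b\circ a = 1_X$, $d\circ c = 1_Y$ and the commutativities $g\circ a = c\circ f$ and $d\circ g = f\circ b$.

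First I would take an arbitrary solid square testing $f$ against a boundary inclusion $j_k\colon S^{k-1}\to D_k$, with top map $(\alpha,\beta)\colon S^{k-1}\to X$ and bottom map $\gamma\colon D_k \to Y$ satisfying $f\circ(\alpha,\beta)=\gamma\circ j_k$. Pushing this square forward along $a$ and $c$ produces the square with top $a\circ(\alpha,\beta)$ and bottom $c\circ\gamma$ testing $g$; it commutes because $g\circ a\circ(\alpha,\beta) = c\circ f\circ(\alpha,\beta) = c\circ\gamma\circ j_k$, using $g\circ a = c\circ f$ and the commutativity of the original square.

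Next, since $g\in\mathbf{W}$, this pushed-forward square admits a weak lift: a map $\phi\colon D_k\to W$ with $\phi\circ j_k = a\circ(\alpha,\beta)$ together with a $(k+1)$-cell $\Gamma'$ in $Z$ realizing a homotopy $g\circ\phi \to c\circ\gamma$. I would then pull these data back into the $f$-picture by setting $\Gamma = b\circ\phi\colon D_k\to X$ and $H = d(\Gamma')$. The retraction identity $b\circ a = 1_X$ gives $\Gamma\circ j_k = b\circ a\circ(\alpha,\beta) = (\alpha,\beta)$, so $\Gamma$ solves the upper triangle on the nose. For the homotopy, $d$ is a morphism of models and hence preserves the globular source/target structure, so $H=d(\Gamma')$ is again a $(k+1)$-cell whose source is $d(g\circ\phi) = f\circ b\circ\phi = f\circ\Gamma$ (using $d\circ g = f\circ b$) and whose target is $d(c\circ\gamma)=\gamma$ (using $d\circ c = 1_Y$); thus $H\colon f\circ\Gamma \to \gamma$ is exactly the required witnessing $(k+1)$-cell.

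There is essentially no hard part: the argument is the routine transport of a lifting problem. The one point that deserves a word of care is the transport of the homotopy rather than merely of the strict lift --- one must use that $d$, being a morphism in $\mathbf{Mod}(\mathfrak{C})$, carries the $(k+1)$-cell $\Gamma'$ to a $(k+1)$-cell and commutes with taking source and target, so that the boundary identities $d\circ g = f\circ b$ and $d\circ c = 1_Y$ translate the source and target of $\Gamma'$ into precisely $f\circ\Gamma$ and $\gamma$. Once this is observed, closure under retracts follows immediately.
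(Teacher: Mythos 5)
Your proposal is correct and follows essentially the same route as the paper's own proof: push the weak lifting problem forward along $a$ and $c$, apply the defining property of $g\in\mathbf{W}$, and transport the lift and its witnessing $(k+1)$-cell back along $b$ and $d$ using the retract identities. The only difference is cosmetic (you phrase the data as maps out of $D_k$ rather than as cells), so nothing further is needed.
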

\begin{lemma}
	\label{closure prop of W}
	Let $f\colon X \rightarrow Y,g\colon Y \rightarrow Z$ be maps in $\mathbf{Mod}(\mathfrak{C})$. Then:
	\begin{enumerate}
		\item If $f$ and $g$ belong to $\mathbf{W}$ then so does $g\circ f$;
		\item  If $g$ and $g\circ f$ belong to $\mathbf{W}$, then so does $f$;
		\item If $g\circ f=1_X$ and $f\circ g$ belongs to $\mathbf{W}$, then both $f$ and $g$ belong to $\mathbf{W}$.
	\end{enumerate}
	\begin{proof}
		Firstly, assume $f$ and $g$ belong to $\mathbf{W}$, and assume given a $(k-1)$-sphere $(a,b)$ in $X$, together with a $k$-cell $\gamma \colon g\circ f (a) \rightarrow g\circ f(b)$ in $Z$. By assumption we get a $k$-cell $\beta \colon f(a)\rightarrow f(b)$ in $Y$ and a $(k+1)$-cell $H\colon g(\beta) \rightarrow \gamma$. Again by assumption we get a $k$-cell $\alpha\colon a \rightarrow b$ in $X$, together with a $(k+1)$-cell $H'\colon f(\alpha) \rightarrow \beta$. The composite $H\circ g(H')\colon g\circ f(a)\rightarrow \gamma$ (obtained using the system of composition on $\mathfrak{C}$) is the data we need to conclude the proof of the first statement.
		
		Turning to the second statement, assume $g$ and $g\circ f$ belong to $\mathbf{W}$ and consider a $(k-1)$-sphere $(a,b)$ in $X$, together with a $k$-cell $\alpha\colon f(a)\rightarrow f(b)$ in $Y$. We can lift the $(k-1)$-sphere $(g\circ f(a),g\circ f(b))$ in $Z$ along $g\circ f$ to get a $k$-cell in $X$ of the form $H\colon a \rightarrow b$, together with a $(k+1)$-cell $\Gamma \colon g\circ f(H) \rightarrow g(\alpha)$. We now have a $k$-sphere in $Y$ given by $(f(H),\alpha)$, and an extension to a $(k+1)$-cell in $Z$ between its image under $g$. By assumption, we get a lift to a $(k+1)$-cell $\overline{H}\colon f(H)\rightarrow \alpha$, which concludes the proof of the second statement.
		
		Finally, if $g\circ f=1_X$ then $g$ is a retract of $f\circ g$, and is thus a weak equivalence thanks to Lemma \ref{W closed under retracts}. Therefore, $f\in \mathbf{W}$ thanks to the second point of this lemma, since identities are weak equivalences thanks to Lemma \ref{tr fib are we}.
	\end{proof}
\end{lemma}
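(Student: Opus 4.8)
The plan is to verify the three closure properties in turn, in each case testing membership in $\mathbf{W}$ against the lifting-up-to-homotopy condition \eqref{def of weak eq} and using the system of composition and identities on $\mathfrak{C}$ to splice the witnessing cells together. For the first statement, given a $(k-1)$-sphere $(a,b)$ in $X$ and a $k$-cell $\gamma\colon g\circ f(a)\to g\circ f(b)$ in $Z$, I would first apply $g\in\mathbf{W}$ to obtain a $k$-cell $\beta\colon f(a)\to f(b)$ in $Y$ together with a homotopy $H\colon g(\beta)\to\gamma$, and then apply $f\in\mathbf{W}$ to the sphere $(a,b)$ and the cell $\beta$ to obtain $\alpha\colon a\to b$ in $X$ together with a homotopy $H'\colon f(\alpha)\to\beta$. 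Transporting $H'$ along $g$ and composing with $H$ through the system of composition then yields a $(k+1)$-cell from $g\circ f(\alpha)$ to $\gamma$, so that $\alpha$ witnesses $g\circ f\in\mathbf{W}$.

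For the second statement the key idea is a dimension shift. Starting from a $(k-1)$-sphere $(a,b)$ in $X$ and a $k$-cell $\alpha\colon f(a)\to f(b)$ in $Y$, I would apply $g\circ f\in\mathbf{W}$ to the sphere $(a,b)$ and the $k$-cell $g(\alpha)$ in $Z$, producing a genuine lift $H\colon a\to b$ in $X$ and a homotopy $\Gamma\colon g\circ f(H)\to g(\alpha)$ in $Z$. The pair $(f(H),\alpha)$ now constitutes a $k$-sphere in $Y$, and $\Gamma$ is exactly a $(k+1)$-cell joining its two images under $g$; applying the $\mathbf{W}$-property of $g$ one dimension higher then delivers a $(k+1)$-cell $\overline{H}\colon f(H)\to\alpha$ in $Y$, which together with the lift $H$ witnesses $f\in\mathbf{W}$.

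For the third statement I would first observe that the relation $g\circ f=1_X$ exhibits $g$ as a retract of $f\circ g$ in the arrow category: taking the identity of $Y$ for both top horizontal maps and the maps $f$ and $g$ for the bottom ones, the left square commutes trivially while the right square commutes precisely because $g\circ f\circ g=g$, and both horizontal composites are identities. Since $f\circ g\in\mathbf{W}$ and $\mathbf{W}$ is closed under retracts by Lemma \ref{W closed under retracts}, this forces $g\in\mathbf{W}$. Finally, the identity $g\circ f=1_X$ itself lies in $\mathbf{W}$, being a trivial fibration and hence a weak equivalence by Lemma \ref{tr fib are we}, so applying the already-established second statement to the composable pair $(f,g)$ yields $f\in\mathbf{W}$.

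The step I expect to be the main obstacle is the bookkeeping in the second statement: one must recognise that the homotopy $\Gamma$ produced by lifting against $g\circ f$ is precisely the input needed to invoke the $\mathbf{W}$-property of $g$ \emph{one dimension up}, on the $k$-sphere $(f(H),\alpha)$ rather than on a sphere of lower cells. Once this dimension shift is set up correctly the argument becomes formal. The only other delicate point is checking that the system of composition on $\mathfrak{C}$ genuinely permits composing the homotopies $H$ and $g(H')$ in the first statement and that the resulting $(k+1)$-cell has the asserted source and target, which is exactly where the third hypothesis of Theorem \ref{semi model str characterization} is brought to bear.
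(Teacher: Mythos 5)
Your proposal is correct and follows essentially the same route as the paper's own proof: the same two-step lifting-and-composing argument for closure under composition, the same dimension-shifted application of $g\in\mathbf{W}$ to the sphere $(f(H),\alpha)$ for the second statement, and the same retract argument combined with the second statement for the third. The only difference is that you spell out the retract diagram for $g$ as a retract of $f\circ g$ explicitly, which the paper leaves implicit.
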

Since relative $\mathbf{J}$-cell complexes relative to $D_0$ include all globular sums, if we prove that such maps are weak equivalences we then obtain for free the contractibility of globular sums, since $D_0$ is contractible by hypothesis. We actually prove a little bit more, namely the following result.
\begin{lemma}
	Let $f\in cof(\mathbf{J})$ have a cofibrant domain. Then $f\in \mathbf{W}$.
	\begin{proof}
		Let $f\colon X \rightarrow Y$ be as in the statement. Pick a section $i_Y$ of the trivial fibration $\mathbf{ev}_0\colon \p Y \rightarrow Y$, which exists since $Y$ is cofibrant, and denote by $\alpha$ the endomorphism $\mathbf{ev}_1\circ i_Y$, which is a weak equivalence thanks to Lemma \ref{closure prop of W} and Lemma \ref{tr fib are we}. Consider the following commutative square:
		\[
		\bfig 
		\morphism(0,0)|a|/@{>}@<0pt>/<600,0>[X` \p Y;\alpha\circ f]
		\morphism(0,0)|a|/@{>}@<0pt>/<0,-500>[X`Y;f]
		\morphism(0,-500)|b|/@{>}@<0pt>/<600,0>[Y` Y\times Y ;(1,\alpha\circ f \circ r)]
		\morphism(600,0)|r|/@{>}@<0pt>/<0,-500>[\p Y` Y\times Y;\mathbf{ev}]
		
		\morphism(0,-500)|a|/@{.>}@<0pt>/<600,500>[Y` \p Y;\Gamma]
		\efig 
		\] where $r$ denotes the choice of a retraction of $f$, which exists since $X$ is fibrant, and the lift $\Gamma$ exists by assumption, since $f\in cof(\mathbf{J})$, $X$ is cofibrant and $\mathbf{ev}$ is a fibration. We have $\mathbf{ev}_0 \circ \Gamma=1_Y$ which implies that $\Gamma$ is a weak equivalence, thanks to Lemma \ref{closure prop of W}. Therefore, thanks to the same lemma and Lemma \ref{tr fib are we}, we see that $\alpha\circ f \circ r=\mathbf{ev}_1 \circ \Gamma$ is also a weak equivalence. A final application of the previous lemma yields that $f\circ r$ belongs to $\mathbf{W}$, which in turn implies that $f$ is a weak equivalence thanks to Lemma \ref{closure prop of W} again, since $r\circ f =1_X$.
	\end{proof}
\end{lemma}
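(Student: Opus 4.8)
The plan is to reduce the statement to the formal closure properties of $\mathbf{W}$ established in Lemmas \ref{W closed under retracts} and \ref{closure prop of W}, feeding them the path object supplied by the fourth hypothesis of Theorem \ref{semi model str characterization}. Write $f\colon X\to Y$. Since $X$ is cofibrant and $f$, being a $\mathbf{J}$-cofibration, is in particular a cofibration, the codomain $Y$ is again cofibrant (cofibrant objects are closed under pushout along and transfinite composition of cofibrations); hence the hypotheses provide a fibration $\mathbf{ev}\colon\p Y\to Y\times Y$ whose components $\mathbf{ev}_0,\mathbf{ev}_1$ are trivial fibrations, and therefore weak equivalences by Lemma \ref{tr fib are we}.

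First I would manufacture a weak self-equivalence of $Y$ out of the path object. As $\mathbf{ev}_0$ is a trivial fibration and $Y$ is cofibrant, it admits a section $i_Y$; setting $\alpha\overset{def}{=}\mathbf{ev}_1\circ i_Y$ one checks that $\alpha\in\mathbf{W}$. Indeed, $\mathbf{ev}_0\circ i_Y=1_Y\in\mathbf{W}$ together with $\mathbf{ev}_0\in\mathbf{W}$ forces $i_Y\in\mathbf{W}$ by the second item of Lemma \ref{closure prop of W}, whence $\alpha$ is a composite of weak equivalences and lies in $\mathbf{W}$ by the first item. Next, since $f$ is a trivial cofibration and every object (in particular $X$) is fibrant, lifting $1_X$ against $X\to\ast$ yields a retraction $r\colon Y\to X$ with $r\circ f=1_X$.

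The heart of the argument is a single lifting problem. I would consider the square with top edge $i_Y\circ f\colon X\to\p Y$, left edge $f$, right edge $\mathbf{ev}$, and bottom edge $(1_Y,\alpha\circ f\circ r)\colon Y\to Y\times Y$; it commutes because $\mathbf{ev}_0\circ i_Y=1_Y$, $\mathbf{ev}_1\circ i_Y=\alpha$ and $r\circ f=1_X$. As $f\in cof(\mathbf{J})$ and $\mathbf{ev}$ is a fibration, a lift $\Gamma\colon Y\to\p Y$ exists. Reading off its two components gives $\mathbf{ev}_0\circ\Gamma=1_Y$ and $\mathbf{ev}_1\circ\Gamma=\alpha\circ f\circ r$. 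The first equality, together with $\mathbf{ev}_0\in\mathbf{W}$, shows $\Gamma\in\mathbf{W}$ by the second item of Lemma \ref{closure prop of W}, so the composite $\mathbf{ev}_1\circ\Gamma=\alpha\circ f\circ r$ is a weak equivalence.

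It remains to peel off the auxiliary maps. Since $\alpha\in\mathbf{W}$ and $\alpha\circ(f\circ r)\in\mathbf{W}$, the second item of Lemma \ref{closure prop of W} gives $f\circ r\in\mathbf{W}$; and because $r\circ f=1_X$ while $f\circ r\in\mathbf{W}$, the third item of the same lemma finally yields $f\in\mathbf{W}$. The only non-formal ingredient, and the step I expect to require the most care, is the construction of the square: one must precompose with the retraction $r$ and insert the self-equivalence $\alpha$ precisely so that the path object produces a lift whose $\mathbf{ev}_0$-component is the identity, which is exactly what converts the merely homotopical data of $\p Y$ into honest membership in $\mathbf{W}$. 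Checking that $Y$ is cofibrant, so that $\p Y$ is available at all, is a minor but necessary point underpinning the whole argument.
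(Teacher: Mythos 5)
Your proof is correct and follows essentially the same route as the paper: the same section $i_Y$, the same self-equivalence $\alpha=\mathbf{ev}_1\circ i_Y$, the same lifting square against $\mathbf{ev}$ with bottom edge $(1,\alpha\circ f\circ r)$, and the same final bookkeeping with the closure properties of $\mathbf{W}$. If anything, your version is slightly more careful: you write the top edge of the square as $i_Y\circ f$ (which is what makes the square commute; the paper's label $\alpha\circ f$ appears to be a typo) and you explicitly justify the cofibrancy of $Y$ and the existence of the retraction $r$.
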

\begin{lemma}
	\label{tr cof are cof}
	$cof(\mathbf{J})\subset cof (\mathbf{I})$.
	\begin{proof}
		Of course it is enough to check that $\mathbf{J}\subset cof (\mathbf{I})$. We thus have to prove that, for every $k\geq 0$, we have that $\sigma_k \colon D_k \rightarrow D_{k+1}$ belongs to $cof (\mathbf{I})$. We know by assumption that $S^{k-1} \rightarrow D_k$ is a cofibration, so that the colimit injection $i_0\colon D_k \rightarrow S^k\overset{def}{=}D_k\plus{S^{k-1}}D_k$ is also such, being a pushout of it. We can now compose that with the boundary inclusion $S^k \rightarrow D_{k+1}$ to conclude the proof. 
	\end{proof}
\end{lemma}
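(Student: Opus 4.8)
The plan is to exploit the fact that $cof(\mathbf{I})$ is a saturated class, i.e. closed under pushouts, transfinite composition, and retracts. Since $cof(\mathbf{J})$ is by definition the smallest such class containing $\mathbf{J}$, it suffices to verify the generators, namely to show that $\mathbf{J}\subset cof(\mathbf{I})$. Concretely, I must show that each source map $\sigma_k\colon D_k\rightarrow D_{k+1}$ lies in $cof(\mathbf{I})$ for every $k\geq 0$.

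The key idea is to factor $\sigma_k$ through the sphere $S^k$. Recalling the notation fixed after Example \ref{G is a direct cat}, the latching object $S^k=L_{k+1}(\mathbf{D}_{\bullet})$ fits into the inductive cocartesian square exhibiting it as the pushout $D_k\plus{S^{k-1}}D_k$, in which both legs $S^{k-1}\rightarrow D_k$ are the boundary inclusion $j_k\in\mathbf{I}$. Consequently the colimit injection $i_0\colon D_k\rightarrow S^k$ is a pushout of $j_k$, hence lies in $cof(\mathbf{I})$ because this class is closed under pushouts. On the other hand the boundary inclusion $j_{k+1}\colon S^k\rightarrow D_{k+1}$ is itself a generator in $\mathbf{I}$. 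Since $cof(\mathbf{I})$ is closed under composition, the composite $j_{k+1}\circ i_0$ belongs to $cof(\mathbf{I})$, and it only remains to identify this composite with $\sigma_k$.

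The one point that requires care — and the place where the argument could go wrong — is precisely the identification of the composite $j_{k+1}\circ i_0$ with $\sigma_k$, together with the verification that $i_0$ really is a pushout of $j_k$. Both facts are read off the cocartesian square of Example \ref{G is a direct cat}: the universal map $\hat{L}_{k+1}(!)=j_{k+1}$ satisfies $\hat{L}_{k+1}(!)\circ i_0=\mathbf{D}(\sigma_k)=\sigma_k$ (and symmetrically $\hat{L}_{k+1}(!)\circ i_1=\tau_k$) by the very way that square is constructed, so no further computation is needed. Thus $\sigma_k=j_{k+1}\circ i_0$ is a composite of two maps in $cof(\mathbf{I})$, which completes the reduction and hence the proof. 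I expect the main (and rather modest) obstacle to be purely bookkeeping: making sure the two copies of $D_k$ in $S^k=D_k\plus{S^{k-1}}D_k$ are correctly matched to $\sigma_k$ and $\tau_k$, so that the chosen injection $i_0$ composes with $j_{k+1}$ to the source map rather than the target map.
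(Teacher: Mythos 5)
Your proposal is correct and follows essentially the same route as the paper: reduce to showing $\mathbf{J}\subset cof(\mathbf{I})$, write $\sigma_k$ as the colimit injection $i_0\colon D_k\rightarrow S^k=D_k\plus{S^{k-1}}D_k$ (a pushout of $j_k$) followed by the boundary inclusion $S^k\rightarrow D_{k+1}$, and use closure of $cof(\mathbf{I})$ under pushouts and composition. The extra bookkeeping you supply, identifying $j_{k+1}\circ i_0$ with $\sigma_k$ via the cocartesian square of Example \ref{G is a direct cat}, is a harmless elaboration of what the paper leaves implicit.
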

\begin{lemma}
	\label{tr fib are we}
	$inj(\mathbf{I}) = inj(\mathbf{J}) \cap \mathbf{W}$.
	\begin{proof}
		We start by proving $inj(\mathbf{I}) \subset inj(\mathbf{J}) \cap \mathbf{W}$. Thanks to Lemma \ref{tr cof are cof} we only have to prove that $inj(\mathbf{I}) \subset  \mathbf{W}$, which is obvious, since a cell $f\rightarrow f$ exists for every cell in $Y$ thanks to the system of identities in $\mathfrak{C}$. Conversely, assume $f$ is both a fibration and a weak equivalence, and consider a $(k-1)$-sphere $(a,b)$ in $X$ together with a $k$-cell $H\colon f(a) \rightarrow f(b)$ in $Y$. Since $f$ belongs to $\mathbf{W}$, we find a $k$-cell $\overline{H}\colon a \rightarrow b$ in $X$, together with a $(k+1)$-cell $\Gamma\colon f(\overline{H})\rightarrow H$ in $Y$. Because $f$ is a fibration, we can lift $\Gamma$ to a cell $\gamma\colon \overline{H} \rightarrow \beta$, so that $f(\beta) = H$ and $\beta\colon a \rightarrow b$, since it is parallel to $\overline{H}$, and this concludes the proof.
	\end{proof}
\end{lemma}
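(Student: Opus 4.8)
The plan is to verify the two inclusions separately, exploiting the fact that the generating trivial cofibrations $\mathbf{J}=\{\sigma_k\}$ are already known to be cofibrations by Lemma \ref{tr cof are cof}.

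For the inclusion $inj(\mathbf{I}) \subseteq inj(\mathbf{J}) \cap \mathbf{W}$ I would first note that $inj(\mathbf{I}) \subseteq inj(\mathbf{J})$ follows at once from Lemma \ref{tr cof are cof}: any map in $inj(\mathbf{I})=\mathbf{I}^{\pitchfork}$ has the right lifting property against the whole saturation $cof(\mathbf{I})$, and since $\mathbf{J}\subseteq cof(\mathbf{J})\subseteq cof(\mathbf{I})$ it therefore lifts against $\mathbf{J}$. To see that such a map $f\colon X\to Y$ also lies in $\mathbf{W}$, I would take a $(k-1)$-sphere $(a,b)$ in $X$ together with a $k$-cell $\gamma\colon f(a)\to f(b)$ in $Y$ and solve the lifting problem for $j_k\colon S^{k-1}\to D_k$ against $f$. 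This yields a strict lift $\Gamma\colon D_k\to X$ with $\Gamma\circ j_k=(a,b)$ and $f\circ\Gamma=\gamma$ on the nose; the homotopy $f(\Gamma)\to\gamma$ demanded by the definition of $\mathbf{W}$ is then simply the identity $(k+1)$-cell on $\gamma$, supplied by the system of identities on $\mathfrak{C}$.

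The substantive direction is $inj(\mathbf{J})\cap\mathbf{W}\subseteq inj(\mathbf{I})$. Here I would start from a map $f$ that is both a fibration and a weak equivalence and from a lifting problem against $j_k$, encoded by a $(k-1)$-sphere $(a,b)$ in $X$ and a $k$-cell $H\colon f(a)\to f(b)$ in $Y$. Applying the weak-equivalence property of $f$ produces a $k$-cell $\overline{H}\colon a\to b$ in $X$ together with a $(k+1)$-cell $\Gamma\colon f(\overline{H})\to H$, so that $\overline{H}$ is only a lift up to the homotopy $\Gamma$. The crux is to rectify $\overline{H}$ into a strict lift by transporting it along $\Gamma$ using the fibration structure: reading $\Gamma$ as a map $D_{k+1}\to Y$ with source $f(\overline{H})$ and $\overline{H}$ as a map $D_k\to X$, these assemble into a commutative square over $\sigma_k\colon D_k\to D_{k+1}$, so a diagonal lift $\gamma\colon D_{k+1}\to X$ exists because $f\in inj(\mathbf{J})$. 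This $\gamma$ is a $(k+1)$-cell $\overline{H}\to\beta$ in $X$ with $f(\gamma)=\Gamma$, whence $f(\beta)=H$; and since $\beta$ is parallel to $\overline{H}$ it has source $a$ and target $b$, so $\beta$ is the required strict lift, satisfying $\beta\circ j_k=(a,b)$ and $f\circ\beta=H$.

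I expect this rectification step to be the main obstacle: one must arrange the square over $\sigma_k$ so that a single fibration lift simultaneously corrects the value, giving $f(\beta)=H$ through $f(\gamma)=\Gamma$, and preserves the boundary, giving $\beta\colon a\to b$ through parallelism with $\overline{H}$. Everything else reduces to formal bookkeeping with the lifting properties and the closure results already established.
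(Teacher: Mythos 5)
Your proposal is correct and follows essentially the same route as the paper: the forward inclusion reduces via Lemma \ref{tr cof are cof} to checking membership in $\mathbf{W}$, where the identity cell supplied by the system of identities serves as the required homotopy, and the converse rectifies the homotopy lift $\overline{H}$ by lifting $\Gamma$ against the fibration over $\sigma_k$ to obtain a strict lift $\beta$ parallel to $\overline{H}$ with $f(\beta)=H$. The only difference is that you spell out the bookkeeping slightly more explicitly than the paper does.
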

Since we have proven that globular sums are contractible in $\mathbf{Mod}(\mathfrak{C})$, we can endow models of $\mathfrak{C}$ with the structure of $\infty$-groupoids, and use the results in Section 4 of \cite{AR2} to obtain the missing piece: namely, the 2-out-of-3 property for $\mathbf{W}$. Indeed, the maps in $\mathbf{W}$ can be characterized as in Theorem 4.18 (ibid.) and we can use the invariance of basepoints (i.e. Corollary 4.14) to conclude that if $f$ and $g\circ f$ are weak equivalences then $g$ is also such. More precisely, let $y$ be a 0-cell of $Y$, we want to prove that $\pi_n(g)\colon \pi_n (Y,y)\rightarrow \pi_n (Z,g(y))$ is an isomorphism. Choose a 1-cell $f(x)\rightarrow y$, whose existence is ensured by the fact that $\pi_0(f)$ is bijective, so that we have an isomorphism $\pi_n (Y,y)\cong \pi_n (Y,f(x))$ as well as $\pi_n (Z,g(y))\cong \pi_n (Y,g(f(x)))$. Consider the following commutative diagram:
\[
\bfig
\morphism(0,0)|a|/@{>}@<0pt>/<800,0>[\pi_n (X,x)` \pi_n \left( Y,f(x) \right);\pi_n(f)]
\morphism(800,0)|a|/@{>}@<0pt>/<900,0>[\pi_n \left( Y,f(x) \right)` \pi_n \left( Z,g (f(x)) \right);\pi_n(g)]
\morphism(800,0)|a|/@{>}@<0pt>/<0,-400>[\pi_n \left( Y,f(x) \right)` \pi_n \left( Y, y \right);\cong]
\morphism(800,-400)|a|/@{>}@<0pt>/<900,0>[\pi_n \left( Y,y \right)` \pi_n \left( Z,g (y) \right);\pi_n(g)]
\morphism(1700,0)|r|/@{>}@<0pt>/<0,-400>[\pi_n \left( Z,g (f(x)) \right)` \pi_n \left( Z,g (y) \right);\cong]
\efig 
\]
By assumption, the upper horizontal arrow of the square is bijective, which implies that the bottom one is also such and this concludes the proof of Theorem \ref{semi model str characterization}.
\section{The semi-model structure}
\label{the semi model str section}
The part of Theorem \ref{semi model str characterization} that is hard to check in practice is that of the path object (trivial) fibration, i.e. the functorial construction of a fibration $\mathbf{ev}\colon\p X \rightarrow X \times X$ such that the composition with both projections is a trivial fibration. As we prove in Theorem \ref{semi model str on C_W}, it is enough to construct such map for a globular theory obtained from a coherator for $n$-categories (with $0\leq n \leq \infty$) by freely adjoining a left and a right inverse for each map. This appears to be quite easier than building one for a coherator for $n$-groupoids, since we can use the homogeneity property, and in the last section we will define such path object for the case $n=3$. 

The following definition is slightly different from the one given in \cite{EL}, namely we consider left and right inverses instead of two-sided inverses, and this is done in order to produce the correct homotopy type of globular sums in the corresponding category of models. More precisely, we are going to prove that the maps $\alpha_k$ defined in \ref{alpha_1} are trivial cofibrations of $\mathfrak{C}^{\mathbf{W}}$-models, which is false if one considers a theory with a two-sided inverse operation instead.
\begin{defi}
	\label{structure systems}
	A system of compositions in an $n$-globular theory $\mathfrak{C}$ consists of a family of maps $\{\mathbf{c}_k\colon D_k \rightarrow D_k \amalg_{D_{k-1}} D_k\}_{1\leq k \leq n}$ such that $\mathbf{c}_k \circ \sigma=i_1 \circ \sigma$ and $\mathbf{c}_k \circ \tau=i_2 \circ \tau$, where $i_1$ (resp. $i_2$) denotes the colimit inclusion onto the first (resp. second) factor.
	
	A system of identities (with respect to a chosen system of compositions) consists of a family of maps $\{\mathbf{id}_k\colon D_{k+1} \rightarrow D_k \}_{0\leq k \leq n-1}\cup \{\mathbf{l}_k,\mathbf{r}_k\colon D_k \rightarrow D_{k-1}\}_{2\leq k \leq n+1}$ such that $\mathbf{id}_k \circ \epsilon= 1_{D_k},$ for every $k\geq 0$ and $\epsilon=\sigma, \tau$, $\mathbf{l}_k \circ \sigma=1_{D_{k-1}}, \ \mathbf{l}_k\circ \tau = (1_{D_{k-1}},\tau \circ \mathbf{id}_{k-2} )\circ \mathbf{c}_{k-1}$ and $\mathbf{r}_k \circ \sigma=1_{D_{k-1}}, \ \mathbf{r}_k\circ \tau = (\sigma \circ \mathbf{id}_{k-2},1_{D_{k-1}})\circ \mathbf{c}_{k-1}$.
	
	A system of (left and right) inverses (with respect to chosen systems of compositions and identities) consists of a family of maps $\{\mathbf{i}^l_k,\mathbf{i}^r_k\colon D_k \rightarrow D_k\}_{1\leq k \leq n} \cup \{\mathbf{k}_k^l,\mathbf{k}_k^r\colon D_k \rightarrow D_{k-1} \}_{2\leq k \leq n+1}$ such that $\mathbf{i}^{\epsilon}_k\circ \sigma= \tau, \  \mathbf{i}_k\circ \tau = \sigma$ for $\epsilon=l,r$, $\mathbf{k}_k^l\circ \sigma =\sigma \circ \mathbf{id}_{k-2}, \ \mathbf{k}_k^l \circ \tau =(\mathbf{i}^l_{k-1},1_{D_{k-1}})\circ\mathbf{c}_{k-1}, \ \mathbf{k}_k^r\circ \sigma =\tau \circ \mathbf{id}_{k-2}$ and $\mathbf{k}_k^r\circ \tau =(\mathbf{i}^r_{k-1},1_{D_{k-1}}) \circ\mathbf{c}_{k-1} $.
	
	If $\mathfrak{C}$ admits a choice of such three systems, given a globular functor $\mathbf{F}\colon \mathfrak{C} \rightarrow \mathbf{Mod}(\mathfrak{G})$ we say that for every $\mathfrak{G}$-model $X$, the $\mathfrak{C}$-model $\mathfrak{G}(\mathbf{F},X)$ can be endowed with such systems.
\end{defi} 
\begin{rmk}
	\label{two-sided vs left and right}
In the presence of both left and right inverses for every cell, any of the two can be promoted to a two-sided one. For instance, assume $f$ is an $m$-cell with both a left inverse $k$ and a right inverse $g$, and let us show that $k$ is also a right inverse for $f$, the remaining case being similar. It is enough to provide a cell from $k$ to $g$, as follows (omitting bracketings for simplicity): \[\bfig \morphism(0,0)|a|/@{>}@<0pt>/<500,0>[k` kfg;k\mathbf{k}^r_m(f)] \morphism(500,0)|a|/@{>}@<0pt>/<700,0>[kfg` g;\mathbf{i}^r_{m+1}(\mathbf{k}^l_m(f))g]\efig\]
\end{rmk}
\begin{defi}
Given an $n$-coherator for categories $\mathfrak{C}$, we define a new globular theory $\mathfrak{C}^{\mathbf{W}}$ by means of the following pushout of globular theories:
\begin{equation}
\label{D_W def}
\bfig 
\morphism(0,0)|a|/@{>}@<0pt>/<-500,-500>[\Theta_0 [\mathbf{comp},\mathbf{id}]` \Theta_0 [\mathbf{comp},\mathbf{id}, \mathbf{inv}];]
\morphism(0,0)|a|/@{>}@<0pt>/<500,-500>[\Theta_0 [\mathbf{comp},\mathbf{id}]`\mathfrak{C};\mathbf{i}]
\morphism(500,-500)|a|/@{>}@<0pt>/<-500,-500>[\mathfrak{C}` \mathfrak{C}^{\mathbf{W}};]
\morphism(-500,-500)|r|/@{>}@<0pt>/<500,-500>[\Theta_0 [\mathbf{comp},\mathbf{id}, \mathbf{inv}]`\mathfrak{C}^{\mathbf{W}};]
\efig 
\end{equation}
\end{defi}
Here, we denote with $\Theta_0 [\mathbf{comp},\mathbf{id}]$ the free globular theory on a system of composition and identities, and with $\Theta_0 [\mathbf{comp},\mathbf{id}, \mathbf{inv}]$ the free globular theory on a system of composition, identities and inverses. There is a canonical map as depicted in the upper left of the square and the map denoted by $\mathbf{i}$ is defined as follows: first, we choose a binary composition of 1-cells operation (say, the $w$ defined in \eqref{w's maps}), and we set $\mathbf{i}(\mathbf{c}_k)=\Sigma^{k-1}(w)$. The action on identity operations is defined similarly.
We are going to prove that, given a coherator for $n$-categories $\mathfrak{C}$ (with $0\leq n \leq \infty$), one has that $\mathfrak{C}^{\mathbf{W}}$ is a coherator for $n$-groupoids and there is a semi-model structure on the category of $\mathfrak{C}^{\mathbf{W}}$-models $\mathbf{Mod}(\mathfrak{C}^{\mathbf{W}})$ (with (trivial) cofibrations and weak equivalences as in Theorem \ref{semi model str characterization}) provided there is a functor $\p \colon \mathbf{Mod}(\mathfrak{C}^{\mathbf{W}}) \rightarrow \mathbf{Mod}(\mathfrak{C}^{\mathbf{W}})$  together with a natural transformation $\mathbf{ev}\colon \p \Rightarrow \mathbf{Id}\times \mathbf{Id}$ which is a pointwise fibration with the property that $\mathbf{ev}_i\overset{def}{=}\pi_i \circ \mathbf{ev}$ is a pointwise trivial fibration.

In the case $n=3$, we are going to construct a globular functor $\cyl\colon \mathfrak{C}^{\mathbf{W}} \rightarrow \mathbf{Mod}(\mathfrak{C}^{\mathbf{W}})$ in Section \ref{S}, and by setting $\p X = \mathbf{Mod}(\mathfrak{C}^{\mathbf{W}})\left(\cyl(\bullet),X\right))$ we will obtain the endofunctor in the hypotheses of Theorem \ref{semi model str on C_W}. This, in turn, will produce Theorem \ref{model structure} as a corollary. We start with the general result.
\begin{thm}
	\label{semi model str on C_W}
	Let $\mathfrak{C}$ be a coherator for $n$-categories (with $0\leq n \leq \infty$), and suppose there is a functor $\p \colon \mathbf{Mod}(\mathfrak{C}^{\mathbf{W}}) \rightarrow \mathbf{Mod}(\mathfrak{C}^{\mathbf{W}})$ endowed with a natural transformation $\mathbf{ev}\colon \p \Rightarrow \mathbf{Id}\times \mathbf{Id}$ which is a pointwise fibration with the property that $\mathbf{ev}_i\overset{def}{=}\pi_i \circ \mathbf{ev}$ is a pointwise trivial fibration. Then $\mathfrak{C}^{\mathbf{W}}$ satisfies the hypotheses of Theorem \ref{semi model str characterization}, and therefore is a coherator for $n$-groupoids. Moreover, $\mathbf{Mod}(\mathfrak{C}^{\mathbf{W}})$ admits a semi-model structure as described in \ref{semi model str characterization}.
\end{thm}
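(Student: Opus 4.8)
The plan is to verify, one at a time, the four conditions in the statement of Theorem \ref{semi model str characterization} for the globular theory $\mathfrak{C}^{\mathbf{W}}$, and then to read off both the semi-model structure and the coherator property from the conclusion of that theorem. The fourth condition --- the existence of the path-object fibration $\mathbf{ev}\colon \p X \to X\times X$ with $\mathbf{ev}_0,\mathbf{ev}_1$ trivial fibrations --- is precisely what we are assuming, so nothing is to be done there. Two of the remaining three are immediate from the construction \eqref{D_W def}: by definition $\mathfrak{C}^{\mathbf{W}}$ carries a system of compositions, identities and inverses in the sense of Definition \ref{structure systems}, which in particular settles the requirement of a system of composition and identities; and the identity operations $\mathbf{id}_k\colon D_{k+1}\to D_k$, which satisfy $\mathbf{id}_k\circ\sigma = 1_{D_k}$, exhibit a retraction of each $\sigma_k\colon D_k \to D_{k+1}$.

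The real work is the second condition, the contractibility of $D_0$, i.e. that $D_0 \to \ast$ lies in $inj(\mathbf{I})$. Unwinding the Yoneda description $D_0 = y(0)$, its $k$-cells are the operations $D_k \to 0$ of $\mathfrak{C}^{\mathbf{W}}$, so a $(k-1)$-sphere in $D_0$ is exactly a parallel pair $f,g\colon D_{k-1}\to 0$, and the relevant lifting against the boundary inclusion $j_k$ amounts to producing an extension $h\colon D_k \to 0$. Since $\dim D_0 = 0 \leq k$, every such pair is admissible in the groupoidal sense, so this requirement is precisely the contractibility of $\mathfrak{C}^{\mathbf{W}}$ restricted to pairs with target $D_0$. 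I would establish it directly by induction on dimension: the system of identities supplies fillers over degenerate spheres, and the inverse and identity operations of Definition \ref{structure systems} then allow me to reduce the filling of an arbitrary admissible pair over $D_0$ to that of a degenerate one, which is in turn handled by the contractibility of the underlying coherator $\mathfrak{C}$ for categories (every category-admissible pair being in particular groupoid-admissible). I expect this to be the main obstacle: it is the one place where the genuinely groupoidal, rather than merely categorical, structure of $\mathfrak{C}^{\mathbf{W}}$ is used, and care is needed because the cells of $D_0$ over $\mathfrak{C}^{\mathbf{W}}$ are strictly more numerous than those over $\mathfrak{C}$, owing to the adjoined inverse operations.

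With all four conditions in hand, Theorem \ref{semi model str characterization} yields the cofibrantly generated semi-model structure on $\mathbf{Mod}(\mathfrak{C}^{\mathbf{W}})$, proving the ``moreover'' clause. It then remains to deduce that $\mathfrak{C}^{\mathbf{W}}$ is a coherator for $n$-groupoids, i.e. that it is cellular and contractible for the groupoidal admissible pairs. Cellularity is immediate from \eqref{D_W def}: $\mathfrak{C}$ is cellular over $\Tn$ through category-admissible adjunctions, a category-admissible pair $f,g\colon D_k\to A$ forces $\dim A \leq k+1$ and is therefore groupoid-admissible, and the free theory on inverses adjoins further groupoid-admissible operations, so the whole tower assembles $\mathfrak{C}^{\mathbf{W}}$ as a cellular groupoidal theory. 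For the remaining contractibility I would use the output of the model structure rather than a direct argument: Theorem \ref{semi model str characterization} guarantees that all globular sums are contractible objects of $\mathbf{Mod}(\mathfrak{C}^{\mathbf{W}})$, and unwinding this exactly as for $D_0$ above --- a contractible globular sum $A$ fills every sphere, hence every groupoid-admissible parallel pair $f,g\colon D_k \to A$ admits an extension (with the evident identification of parallel top-dimensional cells when $n<\infty$) --- gives contractibility of $\mathfrak{C}^{\mathbf{W}}$ for every target and in all dimensions. This completes the identification of $\mathfrak{C}^{\mathbf{W}}$ as a coherator for $n$-groupoids. Note there is no circularity: the contractibility of $D_0$ alone is proved by hand to \emph{feed} the recognition theorem, whereas the contractibility of arbitrary globular sums is only extracted \emph{afterwards}, from the model structure it produces.
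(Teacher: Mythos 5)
Your identification of which hypotheses of Theorem \ref{semi model str characterization} are immediate (the path object by assumption, the retractions of the $\sigma_k$ via $\mathbf{id}_k$, the system of compositions and identities via the pushout \eqref{D_W def}) agrees with the paper, which likewise isolates the contractibility of ${D_0}^{\mathfrak{C}^{\mathbf{W}}}$ as the only non-trivial point. But your treatment of that point is where the proposal breaks down. You propose a direct induction: fill degenerate spheres by identities, then ``reduce the filling of an arbitrary admissible pair over $D_0$ to that of a degenerate one'' using the inverse operations. No mechanism for this reduction is given, and it is precisely where the entire difficulty lives: the parallel pairs $f,g\colon D_{k-1}\to D_0$ in $\mathfrak{C}^{\mathbf{W}}$ include arbitrary formal composites of $\mathfrak{C}$-operations with the freely adjoined inverses, and showing any two such are connected by a $k$-cell is a genuine coherence problem --- essentially the problem (flagged as open in the paper's remark after Theorem \ref{model structure}) of turning a weak $n$-category with weak inverses into a Grothendieck $n$-groupoid, localized at $D_0$. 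The natural attempts at the reduction (e.g.\ cancelling $f$ against $g$ by an inverse and comparing with an identity) presuppose exactly the kind of divisibility/contractibility you are trying to establish. A strong warning sign is that your argument for this step never invokes the hypothesized path object $\p$ at all, whereas in the paper this is exactly the step where that hypothesis is consumed.

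Concretely, the paper proves contractibility of ${D_0}^{\mathfrak{C}^{\mathbf{W}}}=\mathbf{F}\mathbf{U}{D_0}^{\mathfrak{C}}$ by showing the counit $\mathbf{F}\mathbf{U}{D_0}^{\mathfrak{C}}\to {D_0}^{\mathfrak{C}}$ is a weak equivalence (Proposition \ref{counit is a we}) and using that ${D_0}^{\mathfrak{C}}$ is already contractible by Lemma \ref{vanishing of higher homotopy groups in globular sums}. That proposition in turn rests on three lemmas: the unit $\eta_{\mathbf{U}X}$ is a transfinite composite of pushouts of the maps $\alpha_k\colon D_k\to\mathbb{I}_k$ (a Nikolaus-style cell-complex argument requiring cofibrations to be monomorphisms), fibrations in $\mathbf{Mod}(\mathfrak{C}^{\mathbf{W}})$ lift against the target inclusions $\tau_k$, and the $\alpha_k$ are trivial cofibrations; these feed a lifting problem against the fibration $\mathbf{U}(\mathbf{ev})$ in the square \eqref{homotopy for epsilon}, whose filler (together with the triangle identities and 2-out-of-3) yields the weak equivalence. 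This is the content your proposal is missing. Your final paragraph --- cellularity of $\mathfrak{C}^{\mathbf{W}}$ from the defining tower, and contractibility of $\mathfrak{C}^{\mathbf{W}}$ extracted afterwards from the contractibility of globular sums supplied by Theorem \ref{semi model str characterization} --- is fine and is the intended reading of ``therefore is a coherator for $n$-groupoids,'' but it does not repair the gap in the $D_0$ step on which everything else depends.
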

\begin{proof}
	We denote by ${D_0}^{\mathfrak{C}^{\mathbf{W}}}$ the representable $\mathfrak{C}^{\mathbf{W}}$-model on $D_0$, and we adopt a similar convention for ${D_0}^{\mathfrak{C}}$.
	All the hypotheses of the theorem are trivially satisfied, except for the contractibility of ${D_0}^{\mathfrak{C}^{\mathbf{W}}}$. We know from Lemma \ref{vanishing of higher homotopy groups in globular sums} that ${D_0}^{\mathfrak{C}}$ is contractible, so it can be endowed with the structure of $\mathfrak{C}^{\mathbf{W}}$-model, that we still denote by ${D_0}^{\mathfrak{C}}$. The claim would then follow if we can prove that the counit of the adjunction \[
	\bfig
	\morphism(0,0)|a|/{@{>}@/^1em/}/<800,0>[\mathbf{Mod}(\mathfrak{C})`\mathbf{Mod}(\mathfrak{C}^{\mathbf{W}}) ;\mathbf{F}]
	\morphism(800,0)|b|/{@{>}@/^1em/}/<-800,0>[\mathbf{Mod}(\mathfrak{C}^{\mathbf{W}})`\mathbf{Mod}(\mathfrak{C}) ;\mathbf{U}]
	\morphism(350,-50)|a|/{@{}}/<100,0>[ `  ;\perp]
	\efig 
	\] is a weak equivalence at ${D_0}^{\mathfrak{C}}$, since $\mathbf{F}\mathbf{U} {D_0}^{\mathfrak{C}}={D_0}^{\mathfrak{C}^{\mathbf{W}}}$. This is a consequence of a more general result, proven in Proposition \ref{counit is a we}.
\end{proof}
\begin{prop}
	\label{counit is a we}
	Let $X$ be a $\mathfrak{C}^{\mathbf{W}}$-model such that $\mathbf{F}\mathbf{U} X$ is cofibrant. Then the counit $\epsilon$ of the adjunction 
	\[
	\bfig
	\morphism(0,0)|a|/{@{>}@/^1em/}/<800,0>[\mathbf{Mod}(\mathfrak{C})`\mathbf{Mod}(\mathfrak{C}^{\mathbf{W}}) ;\mathbf{F}]
	\morphism(800,0)|b|/{@{>}@/^1em/}/<-800,0>[\mathbf{Mod}(\mathfrak{C}^{\mathbf{W}})`\mathbf{Mod}(\mathfrak{C}) ;\mathbf{U}]
	\morphism(350,-50)|a|/{@{}}/<100,0>[ `  ;\perp]
	\efig 
	\] is a weak equivalence at $X$.
	\begin{proof}
		It is enough to show that $\mathbf{U}(\epsilon_X)$ is a weak equivalence of $\mathfrak{C}$-models. Let us consider the following commutative square in $\mathbf{Mod}(\mathfrak{C})$:
		\begin{equation}
		\label{homotopy for epsilon}
		\bfig 
		\morphism(0,0)|a|/@{>}@<0pt>/<1500,0>[\mathbf{U}X`\mathbf{U} \p  \mathbf{F}\mathbf{U}X ;\mathbf{U}(i)\circ \eta_{\mathbf{U}X}]
		\morphism(0,0)|a|/@{>}@<0pt>/<0,-500>[\mathbf{U}X`\mathbf{U} \mathbf{F}\mathbf{U}X;\eta_{\mathbf{U}X}]
		\morphism(0,-500)|a|/@{>}@<0pt>/<1500,0>[\mathbf{U} \mathbf{F}\mathbf{U}X` \mathbf{U} \mathbf{F}\mathbf{U}X \times  \mathbf{U} \mathbf{F}\mathbf{U}X;\left(\eta_{\mathbf{U}X} \circ \mathbf{U}(\epsilon_X),\mathbf{U}(\mathbf{ev}_1\circ i)\right)]
		\morphism(1500,0)|r|/@{>>}@<0pt>/<0,-500>[ \mathbf{U} \p  \mathbf{F}\mathbf{U}X`  \mathbf{U} \mathbf{F}\mathbf{U}X\times  \mathbf{U} \mathbf{F}\mathbf{U}X;\mathbf{U}(\mathbf{ev})]
		\efig 
		\end{equation}
		Here, $i$ denotes a choice of a section of the map $\mathbf{ev}_0\colon \p \mathbf{U} \mathbf{F}\mathbf{U}X \rightarrow \mathbf{U} \mathbf{F}\mathbf{U}X$, which is equal to $\mathbf{U}(\mathbf{ev}_0\colon \p \mathbf{F}\mathbf{U}X \rightarrow  \mathbf{F}\mathbf{U}X)$ and is therefore a trivial fibration. Hence, the existence of $i$ is ensured by the cofibrancy assumption on $\mathbf{F}\mathbf{U}X$. Suppose we manage to find a diagonal filler $\Gamma\colon \mathbf{U} \mathbf{F}\mathbf{U}X\rightarrow \p \mathbf{U} \mathbf{F}\mathbf{U}X$  for such square, we would then have that $\Gamma$ is a weak equivalence by the 2-out-of-3 property, since $\mathbf{ev}_1$ and $\mathbf{U}(\mathbf{ev}_1\circ i)$ both are, and by construction $\mathbf{ev}_1\circ \Gamma=\mathbf{U}(\mathbf{ev}_1\circ i) $. This, in turn, implies that $ \eta_{\mathbf{U}X} \circ \mathbf{U}(\epsilon_X)$ is also a weak equivalence, and moreover, thanks to the triangle identities, we have $ \mathbf{U}(\epsilon_X)\circ \eta_{\mathbf{U}X}=1_{\mathbf{U} \mathbf{F}\mathbf{U}X}$. This implies that $\mathbf{U}(\epsilon_X) $ is a weak equivalence and concludes the proof. Therefore, all is left to do is to find the filler $\Gamma$, and this is accomplished separately in the next two lemmas.
	\end{proof}
\end{prop}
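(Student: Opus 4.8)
The proof of Proposition \ref{counit is a we} is reduced, in the argument above, to producing the diagonal filler $\Gamma$ for the square \eqref{homotopy for epsilon}, so this is what I would supply. The first move is to observe that the right-hand map $\mathbf{U}(\mathbf{ev})$ is a fibration of $\mathfrak{C}$-models: since $\mathbf{F}$ carries the generating trivial cofibrations $\sigma_k$ of $\mathfrak{C}$ to the corresponding source maps of $\mathfrak{C}^{\mathbf{W}}$, we have $\mathbf{F}(\mathbf{J})\subseteq cof(\mathbf{J})$ in $\mathbf{Mod}(\mathfrak{C}^{\mathbf{W}})$, and hence its right adjoint $\mathbf{U}$ preserves fibrations by the usual adjunction argument for lifting problems. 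Consequently it suffices to show that the left-hand map $\eta_{\mathbf{U}X}\colon \mathbf{U}X\rightarrow \mathbf{U}\mathbf{F}\mathbf{U}X$ is a trivial cofibration, i.e. lies in $cof(\mathbf{J})$: the filler $\Gamma$ then exists automatically because $(cof(\mathbf{J}),\ \mathbf{J}^{\pitchfork})$ is the weak factorization system produced by the small object argument, and this lifting requires no cofibrancy hypothesis on the domain $\mathbf{U}X$. I would therefore split the remaining work exactly as announced, into a decomposition lemma and a recognition lemma for the maps $\alpha_k$.

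The plan for the first lemma is to present $\mathbf{F}\mathbf{U}X$, regarded as a $\mathfrak{C}$-model via $\mathbf{U}$, as the result of freely adjoining to $\mathbf{U}X$ the inverse data of Definition \ref{structure systems}. By construction of $\mathfrak{C}^{\mathbf{W}}$ through the pushout \eqref{D_W def}, this amounts to attaching, cell by cell, the operations $\mathbf{i}^l_k,\mathbf{i}^r_k,\mathbf{k}^l_k,\mathbf{k}^r_k$ together with their witnessing coherence cells. Thus $\eta_{\mathbf{U}X}$ factors as a transfinite composite of pushouts, one elementary step of which is a pushout of one of the maps $\alpha_k$ of \ref{alpha_1}. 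Because $cof(\mathbf{J})$ is closed under pushout and transfinite composition, the claim $\eta_{\mathbf{U}X}\in cof(\mathbf{J})$ reduces to the single assertion that every $\alpha_k$ is a trivial cofibration of $\mathfrak{C}^{\mathbf{W}}$-models.

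The second lemma is the real obstacle: proving that each $\alpha_k$ is a trivial cofibration. That it is a cofibration should follow routinely by exhibiting it as a relative $\mathbf{I}$-cell complex coming from the cellular presentation. The delicate point is membership in $\mathbf{W}$, and here I would exploit precisely the choice of \emph{one-sided} inverses over two-sided ones. By the relations of Definition \ref{structure systems}, the witnessing cell adjoined by $\mathbf{k}^l_k$ (resp. $\mathbf{k}^r_k$) has one of its boundaries governed by a source map $\sigma$, a generating trivial cofibration; this is what lets $\alpha_k$ be assembled from maps in $\mathbf{J}$ and hence be a weak equivalence, whereas a two-sided inverse would force the witnessing cell to be attached along a collapse $(1,1)\colon S^k\rightarrow D_k$, which is a cofibration but not a trivial one. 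Concretely I expect to verify the lifting-up-to-homotopy condition defining $\mathbf{W}$ by hand, building the required homotopy $H$ out of the coherence cells using the system of compositions and identities on $\mathfrak{C}$. This asymmetry between one-sided and two-sided inverses, flagged in the discussion preceding Definition \ref{structure systems}, is the crux on which the whole argument turns.
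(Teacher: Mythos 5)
Your overall skeleton matches the paper's: reduce the filler $\Gamma$ to the fact that $\eta_{\mathbf{U}X}$ is a transfinite composite of pushouts of the maps $\alpha_k$ of \eqref{alpha_1}, and then deal with the $\alpha_k$ one at a time, with the left/right (rather than two-sided) inverses carrying the weight. But there is a genuine gap in your second lemma. What the existence of $\Gamma$ actually requires is that each $\alpha_k$ have the \emph{left lifting property against fibrations} of $\mathfrak{C}^{\mathbf{W}}$-models (this transfers to the lifting problem for $\eta_{\mathbf{U}X}$ against $\mathbf{U}(\mathbf{ev})$ by adjunction, since $F(\alpha_k)=\alpha_k$). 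You propose instead to prove that $\alpha_k$ lies in $cof(\mathbf{I})\cap\mathbf{W}$. These are not interchangeable here: the implication ``cofibration and weak equivalence with cofibrant domain $\Rightarrow$ LLP against fibrations'' is one of the lifting axioms of the semi-model structure on $\mathbf{Mod}(\mathfrak{C}^{\mathbf{W}})$, which is precisely what is being constructed. The retract argument that would establish it needs the left-cancellation half of 2-out-of-3 for $\mathbf{W}$ (to conclude that the $inj(\mathbf{J})$-part of a factorization of $\alpha_k$ is a weak equivalence, hence a trivial fibration by Lemma \ref{tr fib are we}), and in the paper that half of 2-out-of-3 is only obtained \emph{after} globular sums, in particular $D_0$, are known to be contractible --- which is exactly what Proposition \ref{counit is a we} is feeding into via Theorem \ref{semi model str on C_W}. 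As written, your plan is circular. The paper avoids this by proving the LLP of $\alpha_k$ against fibrations directly: given a fibration $p$ and a diagram of the shape \eqref{alpha_1} in the base, it constructs the lift by hand using the operations $\mathbf{i}^{l},\mathbf{i}^{r},\mathbf{k}^{l},\mathbf{k}^{r}$, Remark \ref{two-sided vs left and right}, and an auxiliary lemma showing that fibrations also lift against the target maps $\tau_k$. That explicit construction is the real content, and it is absent from your proposal.

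Two smaller points. First, the decomposition of $\eta_{\mathbf{U}X}$ as an $\{\alpha_k\}$-cell complex is not purely formal: the argument of Nikolaus that the paper invokes requires the $\alpha_k$ to be monomorphisms, which the paper checks via the description of $\mathbf{Mod}(\mathfrak{C}^{\mathbf{W}})$ as a limit of iterated injectives; your sketch does not address this. Second, your explanation of why one-sided inverses are essential (the witnessing cell being ``attached along $\sigma$'' versus ``along $(1,1)$'') is not the actual mechanism; the point is that the object $\mathbb{I}_k$ built from separate left and right inverses is contractible over $D_k$ in the relevant sense, whereas the analogous object built from a single two-sided inverse is not, and this is detected precisely by the lifting argument against fibrations.
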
 
We define a set of maps $\alpha_k\colon D_k \rightarrow \mathbb{I}_k$, where  the codomain is obtained by freely adding a pair of $k$-cell going in the opposite direction as well as a pair of $(k+1)$-cells connecting the two possible composites with identities (with respect to the system of composition chosen to define \eqref{D_W def}). For example, if $k=1$, then $\mathbb{I}_k$ is the free $\mathfrak{C}$-model on the following pasting diagram:
\begin{equation}
\label{alpha_1}
\bfig 
\morphism(0,0)|a|/@{>}@<0pt>/<300,300>[0` 1;g]
\morphism(300,300)|a|/@{>}@<0pt>/<300,-300>[1` 0;f]
\morphism(0,0)|a|/@{=}@<0pt>/<600,0>[0` 0;]
\morphism(300,300)|a|/@{=}@<0pt>/<600,0>[1` 1;]
\morphism(600,0)|r|/@{>}@<0pt>/<300,300>[0` 1;k]
\morphism(300,50)|a|/@{=>}@<0pt>/<0,200>[` ;]
\morphism(600,300)|a|/@{=>}@<0pt>/<0,-200>[` ;]
\efig 
\end{equation} and $\alpha_1$ picks out $f$.
\begin{lemma}
	The map $\eta_{\mathbf{U}X}$ is obtained as a transfinite composite of pushouts of maps of the form $\alpha_k\colon D_k \rightarrow \mathbb{I}_k$ for $ k\geq 1$.
	\begin{proof}
		The claim follows from the same argument given in Proposition 2.2 in \cite{Nik}, which proves that the unit is a $\{\alpha_k\}_{k\geq 0}$-cell complex provided the maps $\alpha$'s are monomorphisms. These maps are cofibrations, so it suffices to show that cofibrations are monomorphisms. In the language of \cite{JB2}, we can view $\mathbf{Mod}(\mathfrak{C}^{\mathbf{W}})$ as the cofiltered limit of a tower of iterated injectives, starting from $\left([\G^{op},\mathbf{Set}],\mathbb{I}_0=\{S^{k-1}\rightarrow D_k\}_{k \geq 0}\right)$. Since maps in $\mathbb{I}_0$ are monomorphisms, we see that $F\mathbb{I}_0$-cell complexes in $\mathbf{Inj}(\mathbb{I}_1)$, where $F_0$ is the left adjoint to the forgetful functor into globular sets, are again monomorphisms since, by Proposition 2.18 (ibid.), these are $\mathbb{I}_0$-cell complexes. Therefore we can iterate this construction and get $\mathbb{I}\subset \mathbf{Mod}(\mathfrak{C}^{\mathbf{W}})$ as a filtered colimit of $F_i(\mathbb{I}_0)$, with $F_i$ being the left adjoint to the forgetful functor down to globular sets, where each set $F_i(\mathbb{I}_0)$ consists of monomorphisms by induction. It follows that $\mathbb{I}$ consists of monomorphisms.
	\end{proof}
\end{lemma}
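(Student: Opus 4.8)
The plan is to produce $\mathbf{F}$ explicitly as an iterated cell attachment and to read off $\eta$ from the colimit. Since $\mathfrak{C}^{\mathbf{W}}$ is obtained from $\mathfrak{C}$ by freely adjoining a left and a right inverse --- together with the $(k+1)$-cells witnessing that the two composites are identities --- for every cell, the left adjoint $\mathbf{F}$ is exactly the free completion of a $\mathfrak{C}$-model by this inverse data. The map $\alpha_k\colon D_k \to \mathbb{I}_k$ is the universal instance of ``equipping one $k$-cell with its inverse structure'': pushing out $\alpha_k$ along a map $D_k \to A$ naming a $k$-cell $f$ glues on a free copy $\mathbb{I}_k$ of the inverse data for $f$. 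I would therefore build $\mathbf{U}\mathbf{F}A$ for the $\mathfrak{C}$-model $A := \mathbf{U}X$ by transfinite induction: set $A_0 = A$; at a successor stage attach, via a single pushout of $\coprod_{k\geq 1}\coprod_{f}\alpha_k$ indexed by all cells $f$ of $A_m$ not yet carrying the full inverse structure, a free copy of $\mathbb{I}_k$ for each such $f$; and take colimits at limit stages. Because inverting a cell creates new cells (the inverses and the witnessing cells) that in turn demand inverses, the process does not stop in finitely many steps, which accounts for the transfinite composite. By the universal properties of pushout and of the free-forgetful adjunction, the colimit carries a canonical $\mathfrak{C}^{\mathbf{W}}$-model structure and is $\mathbf{U}\mathbf{F}A$, with $\eta_A$ its structural inclusion; this is the shape of the argument in Proposition 2.2 of \cite{Nik}.

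The single hypothesis this argument needs is that the generating maps $\alpha_k$ be \emph{monomorphisms}: otherwise a successor pushout could identify previously distinct cells, the colimit would no longer compute the free model, and $\eta$ would fail to be a cell complex. So I would reduce the lemma to showing that the $\alpha_k$ are monic. These maps are cofibrations --- each $\mathbb{I}_k$ is the free $\mathfrak{C}$-model on a finite pasting diagram containing $f$, hence is obtained from $D_k$ by a finite sequence of pushouts of boundary inclusions $j_m \in \mathbf{I}$, so $\alpha_k \in cof(\mathbf{I})$ --- and therefore it suffices to prove that every cofibration in $\mathbf{Mod}(\mathfrak{C}^{\mathbf{W}})$ is a monomorphism.

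The hard part will be exactly this last claim, since $\mathbf{Mod}(\mathfrak{C}^{\mathbf{W}})$ is not a topos and one cannot simply invoke closure of monos under the colimits that build the saturation $cof(\mathbf{I}) = {}^{\pitchfork}(\mathbf{I}^{\pitchfork})$. My approach is to exploit the cellular (iterated-injectives) presentation: following \cite{JB2}, I would write $\mathbf{Mod}(\mathfrak{C}^{\mathbf{W}})$ as a tower of categories of algebras beginning with $[\G^{op},\mathbf{Set}]$ equipped with the boundary inclusions $\mathbb{I}_0 = \{S^{k-1}\to D_k\}_{k\geq 0}$, each stage obtained by passing to injectives for the operations adjoined at that layer. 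In the base presheaf topos the maps of $\mathbb{I}_0$ are monomorphisms, and hence so is their entire saturation. At each higher layer the relevant free-algebra functor $F_i$ sends $\mathbb{I}_0$-cell complexes to $\mathbb{I}_0$-cell complexes (Proposition 2.18 of \cite{JB2}) and so preserves monomorphisms; as monos are stable under the pushouts and transfinite composites used to form cofibrations at that layer, every cofibration there is again monic. Iterating up the tower and passing to the colimit yields that all cofibrations of $\mathfrak{C}^{\mathbf{W}}$-models are monomorphisms, which closes the one gap; the cell-complex description of $\eta_A$ is then formal.
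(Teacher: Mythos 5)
Your proposal follows essentially the same route as the paper: reduce the cell-complex description of $\eta$ to the monomorphy of the $\alpha_k$ via Proposition 2.2 of \cite{Nik}, observe that the $\alpha_k$ are cofibrations, and then prove that all cofibrations in $\mathbf{Mod}(\mathfrak{C}^{\mathbf{W}})$ are monomorphisms using the iterated-injectives presentation of \cite{JB2} and its Proposition 2.18 to push the mono property up the tower from the presheaf category. The only difference is that you unpack the Nikolaus argument explicitly, which the paper leaves as a citation; the substance is the same.
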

\begin{lemma}
Let $p\colon E \rightarrow B$ be a fibration in $\mathbf{Mod}(\mathfrak{C}^{\mathbf{W}})$, then $p$ has the right lifting property with respect to the set of maps $\{\tau_k\colon D_k \rightarrow D_{k+1}\}_{k \geq 0}$.
\begin{proof}
Suppose given a $(k+1)$-cell $H\in B_{k+1}$ with $H\colon g \rightarrow p(f)$. By assumption there exists a $(k+1)$-cell $h_0 \in E_{k+1}$ with $h_0\colon f \rightarrow \bar{g}$ and $p(h_0)=H^{-1}$. We have $p({h_0}^{-1})=({H^{-1}})^{-1}$, so that there exists a $(k+2)$-cell $\gamma\colon p({h_0}^{-1}) \rightarrow H$ which we can lift to get a $(k+2)$-cell $\overline{\gamma}\colon{ h_0}^{-1} \rightarrow \overline{H}$. By construction, $p(\overline{H})=H$.
\end{proof}
\end{lemma}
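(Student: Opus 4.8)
The plan is to reduce the right lifting problem against the target maps $\tau_k$ to the defining lifting property of a fibration — namely the right lifting property against the source maps $\sigma_k$ recorded in Definition \ref{def cofi} — by exploiting the inverse operations carried by every $\mathfrak{C}^{\mathbf{W}}$-model. First I would unwind what a lifting problem against $\tau_k\colon D_k \to D_{k+1}$ means: a commutative square amounts to a $k$-cell $f$ in $E$ together with a $(k+1)$-cell $H\colon g\to p(f)$ in $B$ whose target is $p(f)$, and a solution is a $(k+1)$-cell $\overline{H}$ in $E$ with target $f$ and $p(\overline{H})=H$. The point of the argument is that the fibration hypothesis lets me lift a cell of $B$ to $E$ as soon as its \emph{source} has been prescribed as the image of a chosen cell of $E$, so the whole task is to convert the prescribed-target problem into prescribed-source problems.

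The mechanism for this is inversion. I would form the codimension-$1$ inverse $H^{-1}\colon p(f)\to g$, whose source is now $p(f)$; lifting it along $p$ against $\sigma_k$ yields a $(k+1)$-cell $h_0\colon f\to \bar g$ in $E$ with $p(h_0)=H^{-1}$. Inverting once more gives $h_0^{-1}\colon \bar g\to f$, and since $p$ commutes with the inverse operation (by naturality) we get $p(h_0^{-1})=(H^{-1})^{-1}$. Now $(H^{-1})^{-1}$ and $H$ are parallel $(k+1)$-cells $g\to p(f)$ in $B$; granting a $(k+2)$-cell $\gamma\colon (H^{-1})^{-1}\to H$, its source is $p(h_0^{-1})$, so a second application of the lifting property against $\sigma_{k+1}$ produces $\overline{\gamma}\colon h_0^{-1}\to \overline{H}$ in $E$ with $p(\overline{\gamma})=\gamma$. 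Then $p(\overline{H})=H$, and since $\overline{H}$ is parallel to $h_0^{-1}$ it has target $f$; this is the required lift.

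The step I expect to be the real content is the construction of the connecting $(k+2)$-cell $\gamma\colon (H^{-1})^{-1}\to H$ inside $B$, i.e. the coherence identifying the double inverse of $H$ with $H$ itself. This is exactly where the passage from categories to groupoids is used: two parallel $(k+1)$-cells in a model of a \emph{merely} category coherator need not be connected, so $\gamma$ cannot come from contractibility of $\mathfrak{C}$ alone. Instead I would build it from the weak-inverse structure, splicing together the cancellation coherences $\mathbf{k}^l_\bullet,\mathbf{k}^r_\bullet$ of Definition \ref{structure systems} with the unit and associativity coherences of the underlying category coherator $\mathfrak{C}$, along a chain of the shape $(H^{-1})^{-1}\to (H^{-1})^{-1}(H^{-1}H)\to ((H^{-1})^{-1}H^{-1})H\to H$. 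Beyond this, the only care needed is the routine bookkeeping that boundaries match at each lift — that $H^{-1}$ has source $p(f)$, that $\gamma$ has source $p(h_0^{-1})$, and that $\overline{H}$ inherits the target $f$ from $h_0^{-1}$ — all of which follow at once from the source and target equations for the inverse operations in Definition \ref{structure systems}.
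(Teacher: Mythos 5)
Your proposal is correct and follows essentially the same route as the paper: invert $H$, lift $H^{-1}$ along the fibration to get $h_0$, invert again, and then lift a connecting $(k+2)$-cell $\gamma\colon (H^{-1})^{-1}\to H$ whose source is $p(h_0^{-1})$. The only difference is that you spell out the construction of $\gamma$ from the cancellation coherences of Definition \ref{structure systems}, which the paper leaves implicit.
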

\begin{lemma}
	The commutative square \eqref{homotopy for epsilon} admits a diagonal filler $\Gamma \colon \mathbf{U} \mathbf{F}\mathbf{U}X \rightarrow \mathbf{U} \p \mathbf{F}\mathbf{U}X$.
	\begin{proof}
	Thanks to the previous result and to the fact that $F(\alpha_k)=\alpha_k$ (where, with a minor abuse of language, we have denoted with the same expression the interpretation of $\alpha_k$ in $\mathbf{Mod}(\mathfrak{C})$ on the left and in $\mathbf{Mod}(\mathfrak{C}^{\mathbf{W}})$ on the right) with $F$ being the left adjoint to the forgetful functor $\mathbf{Mod}(\mathfrak{C}^{\mathbf{W}})\rightarrow \mathbf{Mod}(\mathfrak{C})$, it is enought to show that $\alpha_k$ is a trivial cofibration in $\mathbf{Mod}(\mathfrak{C}^{\mathbf{W}})$, i.e. it has the left lifting properties with respect to fibrations. Suppose given a fibration $p\colon E \rightarrow B$ and a diagram of $k$-cells and $(k+1)$-cells in $B$ of the form:
	\[
	\bfig 
	\morphism(0,0)|a|/@{>}@<0pt>/<300,300>[p(x)` p(y);g]
	\morphism(300,300)|a|/@{>}@<0pt>/<300,-300>[p(y)` p(x);p(f)]
	\morphism(0,0)|a|/@{=}@<0pt>/<600,0>[p(x)` p(x);]
	\morphism(300,300)|a|/@{=}@<0pt>/<600,0>[p(y)` p(y);]
	\morphism(600,0)|r|/@{>}@<0pt>/<300,300>[p(x)` p(y);k]
	\morphism(300,50)|a|/@{=>}@<0pt>/<0,200>[` ;\gamma]
	\morphism(675,275)|r|/@{=>}@<0pt>/<0,-150>[` ;\beta]
	\efig 
	\]
%	We are going to provide a lift for $\gamma$, and leave the similar case of $\beta$ to the reader. Thanks to the previous lemma we can both lift cells with a prescribed lift for their domain or for their codomain. 
Since identities are preserved by any map, the domain of $\gamma$ is of the form $p(\mathbf{id}_{k-1}(x))$, therefore we can lift $\gamma$ to a $(k+1)$-cell $\gamma'\colon \mathbf{id}_{k-1}(x)  \rightarrow g_0$ in $E$. There exists a $(k+1)$-cell $\mathbf{k}^l_k(p(f)) g\colon p(f_l^{-1}g_0)=p(f)_l^{-1} p(f)g \simeq g$ in $B$ (where $(\cdot)_l^{-1}$ denotes the left inverse operation), which we can lift it to get a $(k+1)$-cell $\overline{\delta}\colon f^{-1}_lg_0\rightarrow \overline{g}$ in $E$, and in particular $p(\overline{g})=g$. Thanks to Remark \ref{two-sided vs left and right}, there exists a cell $\chi\colon g_0 \rightarrow f f_l^{-1} g_0$, and the composite $f \overline{\delta}\circ \chi \circ \gamma'\colon \mathbf{id}_{k-1}(x) \rightarrow f\overline{g} $ lives over a cell of the form $\mathbf{id}_{k-1}(p(x))  \rightarrow p(f)g$ which is homotopic to $\gamma$. Therefore, by lifting this homotopy and taking its target, we get a cell $\overline{\gamma}\colon f \overline{g} \rightarrow  \mathbf{id}_{k-1}(x) $ which is the lift we were looking for.
The case of $\beta$ is similar to the one we have just considered thanks to the previous lemma.
	\end{proof}
\end{lemma}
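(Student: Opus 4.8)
The plan is to obtain $\Gamma$ by a lifting argument that reduces the entire problem to a lifting property of the generating maps $\alpha_k$. By the lemma computing $\eta_{\mathbf{U}X}$, the left vertical map of \eqref{homotopy for epsilon} is a transfinite composite of pushouts of the maps $\alpha_k\colon D_k \rightarrow \mathbb{I}_k$ with $k\geq 1$, while the right vertical map $\mathbf{U}(\mathbf{ev})$ is the image under $\mathbf{U}$ of the fibration $\mathbf{ev}$ at $\mathbf{F}\mathbf{U}X$. Since the class of maps having the left lifting property against a fixed arrow is closed under pushout and transfinite composition, it suffices to solve the lifting problem of each $\alpha_k$ against $\mathbf{U}(\mathbf{ev})$ in $\mathbf{Mod}(\mathfrak{C})$. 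Transposing along the adjunction $\mathbf{F}\dashv \mathbf{U}$, and using the identification $\mathbf{F}(\alpha_k)=\alpha_k$ (the same pasting diagram, now read inside $\mathbf{Mod}(\mathfrak{C}^{\mathbf{W}})$), this is equivalent to lifting $\alpha_k$ against the fibration $\mathbf{ev}$ in $\mathbf{Mod}(\mathfrak{C}^{\mathbf{W}})$. Thus everything comes down to showing that each $\alpha_k$ is a trivial cofibration there, i.e. that it has the left lifting property against an arbitrary fibration $p\colon E \rightarrow B$.

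To do this I would unwind the lifting problem explicitly. A square with left leg $\alpha_k$ amounts to a $k$-cell $f$ in $E$ (the image of the generator of $D_k$ picked out by $\alpha_k$), together with a copy of the pasting diagram of \eqref{alpha_1} in $B$: two parallel $k$-cells (the $g$ and $k$ of that diagram), the $k$-cell $p(f)$ running the opposite way, and two coherence $(k+1)$-cells $\gamma,\beta$ relating the composites of $p(f)$ with $g,k$ to the appropriate identities. The task is to produce lifts of the two parallel cells and of $\gamma,\beta$ in $E$ while keeping $f$ fixed. I would build these one cell at a time using the two lifting properties available for $p$: its defining right lifting property against the source maps $\sigma_k$ (Definition \ref{def cofi}) and, by the preceding lemma, against the target maps $\tau_k$. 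Since identities are preserved by $p$, the source of $\gamma$ is forced to be $p(\mathbf{id}_{k-1}(x))$, so I can lift $\gamma$ to a $(k+1)$-cell $\gamma'$ emanating from $\mathbf{id}_{k-1}(x)$ in $E$; the chosen systems of composition, identities, and inverses in $\mathfrak{C}^{\mathbf{W}}$ then let me name the relevant composites and lift the inverse-coherence cells one after another.

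The crux, and the step I expect to be the main obstacle, is assembling these lifts coherently, i.e. producing a $(k+1)$-cell whose boundary is exactly the datum demanded at the next stage rather than merely something parallel to it. This is where I would invoke Remark \ref{two-sided vs left and right} to promote the one-sided inverses built into $\mathbb{I}_k$ to two-sided ones, manufacturing an intermediate cell $\chi$; composing $f\overline{\delta}\circ \chi \circ \gamma'$ then yields a cell lying over one homotopic to the prescribed $\gamma$, and lifting that homotopy and taking its target supplies the required filler. The coherence cell $\beta$ is handled symmetrically, again using the target-map lifting of the previous lemma. Everything outside this bookkeeping — the closure properties, the adjunction transposition, and the reduction to the $\alpha_k$ — is formal; the genuine work is keeping track of boundaries through the successive lifts so that the weak-inverse coherence of $\mathbb{I}_k$ is realized on the nose in $E$.
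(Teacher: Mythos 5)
Your proposal follows the paper's argument essentially step for step: the reduction via the cell-complex description of $\eta_{\mathbf{U}X}$ and the identification $\mathbf{F}(\alpha_k)=\alpha_k$ to the claim that each $\alpha_k$ is a trivial cofibration, the lift of $\gamma$ starting from $p(\mathbf{id}_{k-1}(x))$, the use of the left-inverse coherence cell and of Remark \ref{two-sided vs left and right} to build the composite $f\overline{\delta}\circ\chi\circ\gamma'$ and then correct it by lifting a homotopy, and the symmetric treatment of $\beta$ via the $\tau_k$-lifting lemma. This matches the paper's proof, so no further comment is needed.
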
	
As anticipated earlier, when $n=3$ we can use the results of Section \ref{S} in conjunction with those of Section 6 in \cite{EL} to obtain an endofunctor $\p$ on $\mathbf{Mod}(\mathfrak{C}^{\mathbf{W}})$ with the desired properties.
\begin{prop}
	\label{path object for Batanin 3-groupoids}
	Given a coherator $\mathfrak{C}$ for $\nCat{3}$, there exists a functor
	\[\p\colon \mathbf{Mod}(\mathfrak{C}^{\mathbf{W}}) \rightarrow \mathbf{Mod}(\mathfrak{C}^{\mathbf{W}})\] equipped with a natural transformation $\mathbf{ev}\colon \p \Rightarrow \mathbf{Id}\times \mathbf{Id}$ which is a pointwise fibration.
	
	Moreover the composites with the product projections $\mathbf{ev}_i\overset{def}{=}\pi_1\circ\mathbf{ev}$ are trivial fibrations for $i=0,1$.
\end{prop}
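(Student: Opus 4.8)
The plan is to deduce the statement from the construction of a single \emph{cylinder functor}, namely a globular-sum-preserving functor $\cyl\colon \mathfrak{C}^{\mathbf{W}} \to \mathbf{Mod}(\mathfrak{C}^{\mathbf{W}})$ equipped with two natural end-inclusions $e_0,e_1\colon y \Rightarrow \cyl$ and a natural projection $\pi\colon \cyl \Rightarrow y$ satisfying $\pi\circ e_i = 1$, where $y$ denotes the Yoneda embedding. Given such a functor one sets
\[
\p X = \mathbf{Mod}(\mathfrak{C}^{\mathbf{W}})(\cyl(-),X).
\]
Since $\cyl$ preserves globular sums, $\p X$ sends them to products and is therefore a $\mathfrak{C}^{\mathbf{W}}$-model by Proposition \ref{UP of models}; the assignment $X\mapsto \p X$ is manifestly functorial, which is the content of Corollary \ref{lift to C_W}. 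Precomposition with $e_0,e_1$ yields $\mathbf{ev}=(\mathbf{ev}_0,\mathbf{ev}_1)\colon \p \Rightarrow \mathbf{Id}\times\mathbf{Id}$, and precomposition with $\pi$ a natural section $s_i$ of each $\mathbf{ev}_i$. The whole statement thus reduces to (a) constructing $\cyl$ as an honest functor and (b) checking that $\mathbf{ev}$ is a pointwise fibration and each $\mathbf{ev}_i$ a pointwise trivial fibration.

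For (a) I would build $\cyl$ by induction along the cellular presentation of $\mathfrak{C}$ (Definition \ref{cell glob th}) and then transport it across the pushout defining $\mathfrak{C}^{\mathbf{W}}$ (Definition \ref{D_W def}). On $\Theta^{\leq 3}_0$ one takes $\cyl$ to be the cylinder on globular sums recalled from \cite{EL}. To extend $\cyl$ over a cellular step $\mathfrak{C}_{m+1}\cong\mathfrak{C}_m[X]$ one invokes Proposition \ref{univ prop of glob th}, which reduces the extension to choosing, for each admissible pair in $X$, a lift of its image under the previously constructed $\cyl$. The natural candidate value on a homogeneous operation $\rho\colon D_n\to A$ is the elementary interpretation $\hat\rho\colon \cyl(D_n)\to\cyl(A)$ of Section 5; the inverse operations of $\mathfrak{C}^{\mathbf{W}}$ are then interpreted using the inverse structure carried by the cylinder, and compatibility with the composition and identity operations is arranged by the same device.

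The main obstacle is that $\hat\rho$ is \emph{not} functorial on the nose: its restriction to the boundary sphere of $\cyl(D_n)$ need not match the source and target demanded by the globular theory, so the family $\{\hat\rho\}$ does not automatically assemble into a functor. The heart of the argument (the new content of Section \ref{S} combined with Section 6 of \cite{EL}) is an inductive \emph{boundary correction}: working up the globular dimension one replaces $\hat\rho$ by a homotopic interpretation whose boundary is forced to agree, filling the successive discrepancies with higher cells whose existence rests on contractibility, in the form of Proposition \ref{glob sums are contractible}. This is exactly where finite dimensionality is essential: for $n=3$ the tower of corrections terminates after finitely many stages, the top-dimensional discrepancies being equalized outright just as parallel operations of maximal dimension are equalized in a coherator. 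In the infinite-dimensional case one would instead have to correct coherently in all dimensions simultaneously, which is precisely the obstruction singled out in the introduction.

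For (b) I would argue pointwise through the representable description, in which a $k$-cell of $\p X$ is a cylinder map $\cyl(D_k)\to X$, so that the two lifting conditions become extension problems for such maps. That $\mathbf{ev}$ is a fibration amounts to the right lifting property against the source maps $\{\sigma_k\}$, i.e.\ to filling a cylinder when one end and the prescribed side are given; this holds because every model is fibrant and the cylinder is contractible relative to its base. That each $\mathbf{ev}_i$ is a trivial fibration amounts to the right lifting property against the boundary inclusions $\{S^{k-1}\to D_k\}$, and under the adjunction relating $\p$ to the realization functor determined by $\cyl$ this translates into the relative end-inclusions being trivial cofibrations — a property of the same nature as the trivial cofibrancy of the maps $\alpha_k$ of \eqref{alpha_1} established in the proof of Proposition \ref{counit is a we}, and arranged during the construction in (a). The sections $s_i$ coming from $\pi$ then account for surjectivity, completing the verification and, by Theorem \ref{semi model str on C_W}, the proof.
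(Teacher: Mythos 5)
Your overall architecture coincides with the paper's: one builds a globular functor $\cyl\colon \mathfrak{C}^{\mathbf{W}}\to\mathbf{Mod}(\mathfrak{C}^{\mathbf{W}})$ by induction on the cellular tower, sets $\p X=\mathbf{Mod}(\mathfrak{C}^{\mathbf{W}})(\cyl(\bullet),X)$, obtains $\mathbf{ev}$ and its sections from the end-inclusions $(\iota_0,\iota_1)$ and the projection $\mathbf{C}_\bullet$, and the real work is correcting the boundary of the elementary interpretation $\hat\rho$ so that the assignment becomes functorial, with the correction terminating in dimension $3$ exactly as you say (this is Theorem \ref{3 cat of paths} and Corollary \ref{lift to C_W}). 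You have also correctly located the heart of the matter in the non-functoriality of $\hat\rho$.

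There is, however, one step in your plan that would fail as written: you propose to produce the corrective higher cells from the contractibility of globular sums ``in the form of Proposition \ref{glob sums are contractible}''. That proposition concerns models of a \emph{groupoidal} coherator, and in the present setting it is not available: the contractibility of globular sums in $\mathbf{Mod}(\mathfrak{C}^{\mathbf{W}})$ (equivalently, that $\mathfrak{C}^{\mathbf{W}}$ is a coherator for $3$-groupoids) is precisely what is \emph{deduced from} the existence of $\p$ via Theorem \ref{semi model str on C_W} and Proposition \ref{counit is a we}, so invoking it during the construction of $\p$ is circular. The paper circumvents this with the relative lifting machinery of Section \ref{main constructions}: Lemmas \ref{filler of spheres}, \ref{extension of cellular maps} and \ref{extension of coglobular objects} reduce the needed extension problems along the structural functor $F\colon\mathfrak{C}\to\Theta$ to $\nCat{\omega}$, where they are solved by the dimension count of Lemma \ref{vanishing of higher homotopy groups in globular sums} and the homogeneity of the operations involved; the construction is carried out in $\mathbf{Mod}(\mathfrak{C})$ and only then transported to $\mathbf{Mod}(\mathfrak{C}^{\mathbf{W}})$ along the free functor. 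Relatedly, the actual boundary-correction device is not a generic ``replace by a homotopic interpretation'' but the modification calculus: one constructs modifications $\chi_\rho\colon\hat\rho\Rightarrow\tilde\rho$, uses Lemma \ref{extension of modifications} to fill $3$-dimensional operations, and proves $\tilde\phi\circ\tilde\rho=\widetilde{\phi\circ\rho}$ after quotienting the $4$-cells, which is what makes $\cyl(\rho)\overset{def}{=}\tilde\rho$ functorial. With that substitution your plan matches the paper's proof.
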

It follows that, in the situation of the previous proposition, $\mathfrak{C}^{\mathbf{W}}$ is a coherator for 3-groupoids, and we can now present the central result of this work.
\begin{thm}
	\label{model structure}
	There exists a cofibrantly generated semi-model structure on the category $\ngpd{3}\cong \mathbf{Mod}(\mathfrak{C}^{\mathbf{W}})$ of Grothendieck 3-groupoids of type $\mathfrak{C}^{\mathbf{W}}$, whose set of generating cofibrations (resp. trivial cofibrations) consists of boundary inclusions $\{S^{k-1}\rightarrow D_k\}_{0\leq k \leq 4}$ (resp. source maps $\{\sigma_k\colon D_k \rightarrow D_{k+1}\}_{0\leq k \leq 2})$, where by definition we set $S^3\rightarrow D_4$ equal to $(1,1)\colon S^3\rightarrow D_3$. The weak equivalences coincide with the class $\mathbf{W}$ defined in \eqref{def of weak eq}, and all the objects are fibrant.
	\begin{proof}
		Thanks to the previous corollary, we have that $\mathfrak{C}^{\mathbf{W}}$ satisfies all the hypotheses of Theorem \ref{semi model str characterization}, and this concludes the proof.
	\end{proof}
\end{thm}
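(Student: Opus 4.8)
The plan is to recognize Theorem \ref{model structure} as the specialization to $n=3$ of the recognition principle of Section 3, so that the real work has already been done upstream. Concretely, I would fix $\mathfrak{C}$ to be a coherator for $\nCat{3}$, so that $\mathbf{Mod}(\mathfrak{C}^{\mathbf{W}})$ is the category $\ngpd{3}$ of the statement, and then verify that the globular theory $\mathfrak{C}^{\mathbf{W}}$ meets the four hypotheses of Theorem \ref{semi model str characterization}. The strategy is to observe that three of those hypotheses are essentially formal consequences of the construction of $\mathfrak{C}^{\mathbf{W}}$ and of Theorem \ref{semi model str on C_W}, while the fourth — the path object fibration — is exactly the statement that has been isolated and proved separately.

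First I would dispatch the two structural hypotheses that hold by construction. Since $\mathfrak{C}^{\mathbf{W}}$ is built in \eqref{D_W def} from the free theory on a system of composition, identities and inverses, it carries a system of composition and identities in the sense of Definition \ref{structure systems}; in particular the identity operations $\mathbf{id}_k\colon D_{k+1}\to D_k$ satisfy $\mathbf{id}_k\circ\sigma_k=1_{D_k}$ and thus furnish the required retractions of the source maps. The one structural condition that is not immediate is the contractibility of ${D_0}^{\mathfrak{C}^{\mathbf{W}}}$, and this is precisely what Theorem \ref{semi model str on C_W} secures through Proposition \ref{counit is a we}: one transports the contractibility of ${D_0}^{\mathfrak{C}}$ across the adjunction $\mathbf{F}\dashv\mathbf{U}$ by showing the counit is a weak equivalence at the cofibrant object $\mathbf{F}\mathbf{U}{D_0}^{\mathfrak{C}}$, using the identity $\mathbf{F}\mathbf{U}{D_0}^{\mathfrak{C}}={D_0}^{\mathfrak{C}^{\mathbf{W}}}$.

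The remaining hypothesis is the existence, for every cofibrant $X$, of a fibration $\mathbf{ev}\colon \p X\to X\times X$ whose composites with the two projections are trivial fibrations. Rather than attempt this by hand, I would simply invoke Proposition \ref{path object for Batanin 3-groupoids}, which delivers such an endofunctor $\p$ together with the natural transformation $\mathbf{ev}$ in the case $n=3$. Feeding this $\p$ into Theorem \ref{semi model str on C_W} then yields at once that $\mathfrak{C}^{\mathbf{W}}$ satisfies all the hypotheses of Theorem \ref{semi model str characterization} and that $\mathfrak{C}^{\mathbf{W}}$ is a coherator for $3$-groupoids, so that $\mathbf{Mod}(\mathfrak{C}^{\mathbf{W}})$ really is the category of Grothendieck $3$-groupoids of type $\mathfrak{C}^{\mathbf{W}}$.

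It then remains only to match the generating data and the accompanying assertions. Theorem \ref{semi model str characterization} produces the structure with generating cofibrations the boundary inclusions and generating trivial cofibrations the source maps; specializing the sets $I_n,J_n$ of Definition \ref{def cofi} to $n=3$ gives exactly $\{S^{k-1}\to D_k\}_{0\le k\le 4}$ and $\{\sigma_k\colon D_k\to D_{k+1}\}_{0\le k\le 2}$, where by convention the top generator $S^3\to D_4$ is the collapse map $(1,1)\colon S^3\to D_3$ equalizing parallel $3$-cells. Fibrancy of every object and the coincidence of the weak equivalences with the class $\mathbf{W}$ of \eqref{def of weak eq} are read off directly from Theorem \ref{semi model str characterization}. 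I expect the present theorem to be formal once these inputs are in hand; the genuine obstacle in the whole chain lies not here but in Proposition \ref{path object for Batanin 3-groupoids}, where the three-dimensional coherence data for the path object must be assembled by hand from the cylinder constructions of Section \ref{S} and \cite{EL}.
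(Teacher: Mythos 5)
Your proposal is correct and follows exactly the paper's route: the paper's proof is a one-line appeal to the chain Corollary \ref{lift to C_W} $\Rightarrow$ Proposition \ref{path object for Batanin 3-groupoids} $\Rightarrow$ Theorem \ref{semi model str on C_W} $\Rightarrow$ hypotheses of Theorem \ref{semi model str characterization}, which is precisely the argument you spell out in more detail. Your additional remarks on the structural hypotheses (retractions from the identity operations, contractibility of $D_0$ via Proposition \ref{counit is a we}) are just an unpacking of what Theorem \ref{semi model str on C_W} already handles.
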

\begin{rmk}
	In the following section we will define a functor \[\p\colon\mathbf{Mod}(\mathfrak{C}^{\mathbf{W}})\rightarrow [\G^{op},\mathbf{Set}]\] where $\mathfrak{C}$ is a coherator for $\infty$-categories.
	It follows from the results of this and the previous section that if one proves the division lemma holds for $\mathfrak{C}^{\mathbf{W}}$-models then it is enough to extend the functor above to one of the form \[\p\colon\mathbf{Mod}(\mathfrak{C}^{\mathbf{W}})\rightarrow \mathbf{Mod}(\mathfrak{C})\simeq \nCat{\infty}\] (as we do in this paper in the 3-dimensional case) to prove that globular sums are contractible in $\mathfrak{C}^{\mathbf{W}}$, i.e. the latter is a coherator for $\infty$-groupoids, thus getting a semi-model structure on Grothendieck $\infty$-groupoids using the same strategy outlined in this section. This would also solve the open problem of making an $\infty$-groupoid à la Batanin, i.e. a $\mathfrak{C}^{\mathbf{W}}$-model (see \cite{BAT}), into a Grothendieck one. Moreover, this would also prove the homotopy hypothesis thanks to the main results in \cite{Hen}.
\end{rmk}
\section{Main constructions (revisited)}
\label{main constructions}
In this section we are going to adapt all the main constructions on the category of $\infty$-groupoids made in our previous work (\cite{EL}) to the context of $\mathfrak{C}$-models and $\mathfrak{C}^{\mathbf{W}}$-models. In what follows, $\mathfrak{C}_n$ will denote a fixed coherator for $n$-categories, sometimes denoted with just $\mathfrak{C}$ when there is no risk of ambiguity. 
\subsection{Relative lifting properties of $\mathbf{Mod}(\mathfrak{C})$}
%Before embarking in the abovementioned process, we record a couple of lemmas that will be used in what follows to provide fillers under the condition that they exist in the context of strict $\infty$-categories, also called $\omega$-categories, i.e. models of $\Theta$.
To obtain the desired results, we need some preliminary lemmas on relative liting properties of $\mathfrak{C}$-models with respect to $\Theta$-models, i.e. strict $\infty$-categories. These are needed since we used contractibility of globular sums in various steps of those constructions, and in this context globular sums are not going to be contractible in general. Recall that the structural functor $F\colon\mathfrak{C}\rightarrow \Theta$ of the homogeneous coherator $\mathfrak{C}$ gives rise to a cocontinuous functor $F\colon\mathbf{Mod}(\mathfrak{C})\rightarrow \mathbf{Mod}(\Theta)\simeq \nCat{\omega}$ by considering the following Kan extension:
\[
\bfig
\morphism(0,0)|a|/@{>}@<0pt>/<600,0>[\mathfrak{C}`\nCat{\omega};y\circ F]

\morphism(0,0)|l|/@{>}@<3pt>/<0,-400>[\mathfrak{C}`\mathbf{Mod}(\mathfrak{C});y]

\morphism(0,-400)|r|/@{-->}@<0pt>/<600,400>[\mathbf{Mod}(\mathfrak{C})`\nCat{\omega};F]
\efig 
\] where the $y$'s denote two (different) instances of the Yoneda embedding.
\begin{lemma}
	\label{filler of spheres}
	An extension problem in $\mathfrak{C}$ of the form:
	\[
	\bfig
	\morphism(0,0)|a|/@{>}@<0pt>/<600,0>[S^{n-1}` A;(f,g)]

	\morphism(0,0)|l|/@{>}@<3pt>/<0,-400>[S^{n-1}`D_{n};j_{n}]

	\morphism(0,-400)|r|/@{-->}@<0pt>/<600,400>[D_{n}` A;]
	\efig 
	\] admits solution if and only its image under $F\colon \mathfrak{C}\rightarrow \Theta$ does so, and moreover such extension can be chosen so as to live over the one in $\Theta$.
	\begin{proof}
		Let's prove the non-trivial implication. Suppose we have a map $H\colon D_n \rightarrow A$ in $\Theta$, with boundary $(F(f),F(g))$. By factoring $H$ into a homogeneous map $p\colon D_n \rightarrow A'$ followed by a globular map $i\colon A' \rightarrow A$, we see by inspection that the pair $(f',g')\overset{def}{=}p\circ j_n\colon S^{n-1}\rightarrow A'$ is admissible: indeed, such are the boundaries of homogeneous maps in $\Theta$. By uniqueness of homogeneous-globular factorizations in $\mathfrak{C}$ we see that $f$ and $g$ have to factor through $A'$ via an admissible pair $(\overline{f},\overline{g})\colon S^{n-1}\rightarrow A'$ that lives over $(f',g')$. It follows that there exists an extension of $(\overline{f},\overline{g})$ to a map $\overline{p}\colon D_n \rightarrow A'$, and therefore the composite $i\circ \overline{p}$ is the extension we are looking for, and lives over $H$ by construction.
	\end{proof}
\end{lemma}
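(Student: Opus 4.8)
The plan is to prove the nontrivial (``if'') implication, the other direction being immediate by applying $F$ to a given extension. The strategy is to transport the homogeneous--globular factorization of a solution living in $\Theta$ back along $F$ into $\mathfrak{C}$, and then to invoke contractibility of the coherator $\mathfrak{C}$ to manufacture the lift.

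Concretely, suppose we are handed $H\colon D_n \to A$ in $\Theta$ with boundary $(F(f),F(g))$. First I would factor $H$ in $\Theta$ as a homogeneous map $p\colon D_n \to A'$ followed by a globular map $i\colon A' \to A$. The key structural observation is that the resulting boundary $(f',g')\overset{def}{=}p\circ j_n\colon S^{n-1}\to A'$ is an \emph{admissible} pair in $\Theta$, since boundaries of homogeneous maps are always admissible. Because $F$ is a morphism of globular theories it is bijective on objects and preserves globular maps, so both $A'$ and $i$ lift to $\mathfrak{C}$; I will write $i\colon A' \to A$ again for the lifted globular map.

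Next I would transfer the pair $(f',g')$ into $\mathfrak{C}$. Using that $F$ detects homogeneous maps together with the uniqueness of homogeneous--globular factorizations in $\mathfrak{C}$, the factorizations of $f$ and $g$ are forced to route through $A'$, producing an admissible pair $(\overline f,\overline g)\colon S^{n-1}\to A'$ in $\mathfrak{C}$ that lives over $(f',g')$ and satisfies $i\circ\overline f = f$ and $i\circ \overline g = g$. Since $\mathfrak{C}$ is contractible, the admissible pair $(\overline f,\overline g)$ admits an extension $\overline p\colon D_n \to A'$, and the composite $i\circ\overline p\colon D_n \to A$ is then the desired extension of $(f,g)$. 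To verify that it lies over $H$, I would observe that $F(\overline p)$ is a filler of the admissible pair $(f',g')$ in $\Theta$; such fillers are unique by strictness of $\Theta$, whence $F(\overline p)=p$ and therefore $F(i\circ\overline p)=i\circ p = H$.

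I expect the main obstacle to be the transfer step, namely establishing that the admissible pair $(f',g')$ produced in $\Theta$ genuinely lifts to an admissible pair over $A'$ in $\mathfrak{C}$ in a way compatible with $f$ and $g$. This is precisely where the two defining features of a homogeneous coherator---detection of homogeneous maps by $F$, and existence and uniqueness of homogeneous--globular factorizations---must be combined with care, since neither the source nor the target of a homogeneous map need itself be homogeneous, so one has to keep track of exactly which intermediate objects the relevant factorizations pass through.
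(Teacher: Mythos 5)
Your proposal follows essentially the same route as the paper's own proof: factor the solution $H$ in $\Theta$ into a homogeneous map followed by a globular one, use detection of homogeneous maps and uniqueness of homogeneous--globular factorizations to lift the admissible boundary pair to $\mathfrak{C}$, and then apply contractibility to fill it. The extra verification that the resulting extension lives over $H$ (via uniqueness of fillers in $\Theta$) is a welcome clarification of a point the paper leaves implicit.
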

\begin{lemma}
	\label{extension of cellular maps}
	Let $i\colon X \rightarrow Y$ be an $\mathbf{I}$-cellular map in $\mathfrak{C}$ (i.e. the transfinite composite of pushouts of maps in $\mathbf{I}$), and consider the following extension problem, where $A$ is a globular sum:
	\[
	\bfig
	\morphism(0,0)|a|/@{>}@<0pt>/<600,0>[X` A;f]

	\morphism(0,0)|l|/@{>}@<3pt>/<0,-400>[X`Y;i]

	\morphism(0,-400)|r|/@{-->}@<0pt>/<600,400>[Y` A;]
	\efig 
	\]
	Then such an extension exists if and only if $F(f)$ admits an extension along $F(i)$. Moreover, if we fix an extension in $\nCat{\omega}$ then the one in $\mathfrak{C}$ can be chosen to live over that under $F$.
	\begin{proof}
		There is only one non-trivial implication, which follows from Lemma \ref{filler of spheres} and cocontinuity of $F$ by constructing the extension cell by cell.
	\end{proof}
\end{lemma}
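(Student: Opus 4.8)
The plan is to isolate the single nontrivial implication and obtain it by propagating the one-cell result of Lemma \ref{filler of spheres} along a cellular presentation of $i$. The ``only if'' direction needs no work: applying the cocontinuous functor $F$ to a solution $h\colon Y \to A$ yields a solution $F(h)$ of the image problem, since $F(h)\circ F(i)=F(f)$. Thus everything reduces to producing, from a chosen extension $g\colon F(Y)\to A$ of $F(f)$ along $F(i)$ in $\nCat{\omega}$, an extension $h\colon Y \to A$ in $\mathbf{Mod}(\mathfrak{C})$ that lies over $g$.

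First I would fix a presentation of $i$ as a transfinite composite $X=Z_0 \to Z_1 \to \cdots \to Z_\lambda = Y$ whose successor maps $Z_\beta \to Z_{\beta+1}$ are pushouts of coproducts $\coprod_s j_{n_s}\colon \coprod_s S^{n_s-1} \to \coprod_s D_{n_s}$ of boundary inclusions and whose limit stages are colimits. Then I would build $h$ by transfinite induction, constructing at each stage a map $h_\beta\colon Z_\beta \to A$ over $g|_{F(Z_\beta)}$ that restricts to $h_{\beta'}$ for $\beta'<\beta$; the base case is $h_0=f$, which lies over $F(f)=g|_{F(X)}$ by hypothesis. For the successor step, the universal property of the pushout defining $Z_{\beta+1}$ turns the problem of extending $h_\beta$ into that of solving, for each attached cell, the extension problem with boundary $S^{n_s-1}\to Z_\beta \xrightarrow{h_\beta} A$ along $j_{n_s}$ and target the globular sum $A$. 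Because $F$ is cocontinuous, $F(Z_{\beta+1})$ is the corresponding pushout, so $g$ restricts to a solution of the $F$-image of each such problem; Lemma \ref{filler of spheres} then provides a filler $D_{n_s}\to A$ living over it. Since the cells are attached freely along a coproduct, these fillers are independent and glue, through the pushout, into $h_{\beta+1}$ over $g|_{F(Z_{\beta+1})}$.

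At a limit ordinal I would take $h_\beta=\colim_{\beta'<\beta}h_{\beta'}$; cocontinuity of $F$ identifies $F(Z_\beta)$ with $\colim_{\beta'<\beta}F(Z_{\beta'})$, so the compatible family glues to a map over $g|_{F(Z_\beta)}$, and $h=h_\lambda$ is the desired extension. I do not expect a conceptual obstacle here: the one genuine point to watch is that the ``lies over $g$'' property is maintained at every stage, which is precisely what the ``moreover'' clause of Lemma \ref{filler of spheres} secures at successors and what cocontinuity of $F$ secures at limits, and that the hypotheses of that lemma hold cell by cell — the target stays the fixed globular sum $A$, and the required $\Theta$-filler is always the appropriate restriction of $g$.
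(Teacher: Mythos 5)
Your proposal is correct and is precisely the argument the paper intends: the one-line proof in the text ("follows from Lemma \ref{filler of spheres} and cocontinuity of $F$ by constructing the extension cell by cell") is exactly your transfinite induction, with Lemma \ref{filler of spheres} (including its ``moreover'' clause) handling the successor steps and cocontinuity of $F$ handling the pushout identifications and limit stages. You have simply written out the details the paper leaves implicit.
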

\begin{lemma}
	\label{extension of coglobular objects}
	Let $i\colon X \rightarrow Y$ be a map in $\mathbf{Mod}(\mathfrak{C})^{\G}$, such that its latching maps $\hat{L}_n(i)$ are $\mathbf{I}$-cellular maps for every $n\geq 0$. Then an extension problem of the form:
	\[
	\bfig
	\morphism(0,0)|a|/@{>}@<0pt>/<600,0>[X` A;f]

	\morphism(0,0)|l|/@{>}@<3pt>/<0,-400>[X`Y;i]

	\morphism(0,-400)|r|/@{-->}@<0pt>/<600,400>[Y` A;]
	\efig 
	\] where $A$ factors through ${\mathfrak{C}}^{\G}$ (i.e. is pointwise a globular sum), admits a solution if and only if its image under $F^{\G}$ does so in $\nCat{\omega}^{\G}$.
	\begin{proof}
		The non-trivial implication follows from the observation that $F(\hat{L}_n(i))\cong \hat{L}_n(F(i))$ by cocontinuity of $F$, so that one can construct an extension using the usual inductive argument for Reedy categories and the previous lemmas.
	\end{proof}
\end{lemma}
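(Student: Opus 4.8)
The forward implication is immediate: applying the cocontinuous functor $F^{\G}$ to any solution of the extension problem in $\mathbf{Mod}(\mathfrak{C})^{\G}$ produces a solution of its image in $\nCat{\omega}^{\G}$. My plan for the converse is to fix a solution $\bar{h}\colon F^{\G}(Y)\rightarrow F^{\G}(A)$ in $\nCat{\omega}^{\G}$ of the transported problem, and then to build a solution $h\colon Y \rightarrow A$ of the original problem by induction on the degree of the direct category $\G$ (as in Example \ref{G is a direct cat}), maintaining throughout the invariant that $h$ lies over $\bar{h}$ under $F$. Since the degree function takes values in a well-ordered set, this induction is well-founded.

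The inductive step is where the hypothesis on the latching maps enters. Suppose $h$ has been defined, and lies over $\bar{h}$, on all objects of degree strictly less than $m$. By the standard Reedy decomposition, extending $h$ to the object $m\in\G$ is equivalent to solving a single extension problem along the latching map $\hat{L}_m(i)\colon X(m)\amalg_{L_m(X)}L_m(Y)\rightarrow Y(m)$, whose domain receives the map induced jointly by $f_m$ and by the already-constructed values of $h$ in lower degrees, and whose target is $A(m)$. By hypothesis $\hat{L}_m(i)$ is an $\mathbf{I}$-cellular map, and $A(m)$ is a globular sum since $A$ is pointwise a globular sum; thus this is exactly the type of problem governed by Lemma \ref{extension of cellular maps}.

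The bridge between the two categories is the identity $F(\hat{L}_m(i))\cong \hat{L}_m(F(i))$, which holds because latching objects are colimits and $F$ is cocontinuous, so that $F(L_m(X))\cong L_m(F(X))$ and the pushouts defining $\hat{L}_m$ are preserved. Consequently the $m$-th component of the fixed solution $\bar{h}$ is precisely an extension of the $F$-image of our problem along $F(\hat{L}_m(i))$. Lemma \ref{extension of cellular maps} then produces an extension $h_m\colon Y(m)\rightarrow A(m)$ that lies over $\bar{h}_m$, and the compatibility built into the domain of $\hat{L}_m(i)$ guarantees that $h_m$ agrees with $f_m$ and assembles with the lower-degree data into a genuine map of functors. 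This closes the induction.

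I expect the main obstacle to be not any single step but the coherence of the induction: each cellular extension is highly non-unique, and without a way to pin down the choice the partial maps $h$ would fail to be compatible with the fixed $\nCat{\omega}$-solution, so the induction could not proceed. This is exactly why the refined form of Lemma \ref{extension of cellular maps} — the clause that the extension in $\mathfrak{C}$ can be chosen to live over a prescribed one under $F$ — will be indispensable: it lets me propagate the invariant ``$h$ lies over $\bar{h}$'' from one degree to the next, so that at every stage the correct $\nCat{\omega}$-datum is available as the input needed to invoke the lemma again.
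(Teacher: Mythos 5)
Your proposal is correct and follows exactly the route the paper intends: the isomorphism $F(\hat{L}_m(i))\cong \hat{L}_m(F(i))$ from cocontinuity, the degreewise Reedy induction reducing each step to an extension along an $\mathbf{I}$-cellular latching map into the globular sum $A(m)$, and the ``lives over'' clause of Lemma \ref{extension of cellular maps} to keep the partial solution compatible with the fixed one in $\nCat{\omega}^{\G}$. Your expanded write-up merely makes explicit what the paper compresses into ``the usual inductive argument for Reedy categories and the previous lemmas''.
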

Let us conclude this section with a very useful lemma on fillers of spheres in globular sums.
\begin{lemma}
	\label{vanishing of higher homotopy groups in globular sums}
	Let $A$ be a globular sum in $\mathfrak{C}$ with $n=\dim(A)$. Then every $k$-sphere in $A$ with $k\geq n$ admits a filler. In particular, $D_0$ is contractible, i.e. the unique map $D_0\rightarrow \ast$ has the right lifting property with respect to all boundary inclusions $S^{k-1}\rightarrow D_k$.
	\begin{proof}
		Thanks to Lemma \ref{filler of spheres}, it is enough to prove the statement in $\nCat{\omega}$. If $k>n$ then the only sphere $S^k\rightarrow A$ is given by a pair of identities on the same cell, and therefore it surely admits a filler. If $k=n$ the restriction along one of the inclusions $D_k \rightarrow S^k$ is an identity cell, then the other must be as well, since globular sums in $\Theta$ admits no non-trivial endomorphisms of cells. In this case too, a filler exists. Finally, if we have an $n$-sphere in $A$ consisting of a pair of parallel $n$-cells none of which is an identity, then the claim follows from the fact that an $n$-cell in an $n$-dimensional globular sum in $\Theta$ is uniquely determined by its boundary, as can easily be proven using the combinatorial description of $\Theta$ in terms of trees given in Section 3.3 of \cite{AR1}.
	\end{proof}
\end{lemma}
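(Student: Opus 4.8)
The plan is to reduce the entire statement to the strict setting and then invoke the rigidity of globular pasting diagrams. A $k$-sphere in $A$ is a parallel pair of $k$-cells, i.e.\ a map $S^k \to A$, and a filler is an extension to a $(k+1)$-cell $D_{k+1}\to A$ along the boundary inclusion $S^k \to D_{k+1}$. Taking the ``$n$'' of Lemma \ref{filler of spheres} to be $k+1$, that lemma tells me such an extension exists in $\mathfrak{C}$ precisely when its image exists in $\Theta$. So first I would replace $A$ by the strict $\infty$-category $F(A)$ and prove that every $k$-sphere with $k\geq n=\dim(A)$ admits a filler in $\nCat{\omega}$.

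The key claim I would isolate is that in a globular sum of dimension $n$, any two parallel $k$-cells with $k\geq n$ coincide. Granting this, a $k$-sphere $(f,g)$ with $k\geq n$ necessarily has $f=g$, and then the identity $(k+1)$-cell on $f$ is the desired filler. I would prove the claim in two cases. For $k>n$ there are no non-degenerate cells above level $n$, so every $k$-cell is an iterated identity and is therefore determined by the cell it is an identity on; since $f$ and $g$ are parallel they are identities on the same $(k-1)$-cell, hence equal. For $k=n$ the statement is that a top-dimensional cell of a globular sum is uniquely determined by its boundary, and this is where I expect the real work to lie.

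The main obstacle is thus the top-dimensional rigidity. Here I would appeal to the combinatorial description of $\Theta$ in terms of globular pasting schemes (trees) from Section 3.3 of \cite{AR1}: the free strict $\omega$-category on such a scheme is ``thin'' in its top dimension, in the sense that distinct top cells have distinct boundaries. Consequently parallel $n$-cells must be equal. I would also record the degenerate subcase separately, namely that if one of the two top cells is an identity on an $(n-1)$-cell $c$, then its parallel partner is an endomorphism $c\to c$; since globular sums in $\Theta$ admit no non-trivial endomorphisms of a cell, this partner is forced to be the same identity. This handles the degenerate situation more cleanly than the generic argument and isolates the genuinely combinatorial input to the non-degenerate case.

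Finally, the ``in particular'' clause is immediate from the main statement applied to $A=D_0$: since $\dim(D_0)=0$, every $k$-sphere in $D_0$ with $k\geq 0$ admits a filler, and this is exactly the assertion that $D_0\to\ast$ has the right lifting property against every boundary inclusion $S^{k-1}\to D_k$, i.e.\ that $D_0$ is contractible.
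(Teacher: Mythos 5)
Your proposal is correct and follows essentially the same route as the paper: reduce to $\nCat{\omega}$ via Lemma \ref{filler of spheres}, observe that parallel $k$-cells with $k>n$ are forced to be identities on the same cell, handle the $k=n$ degenerate case via the absence of non-trivial endomorphisms, and settle the non-degenerate top-dimensional case by the boundary-determinacy of $n$-cells in an $n$-dimensional globular sum, using the tree description of $\Theta$ from \cite{AR1}. The only cosmetic difference is that you isolate the unifying claim that parallel cells in degrees $\geq n$ coincide, which the paper leaves implicit.
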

\subsection{Suspension-loop space adjunction}
We recall the construction of the suspension-loop space adjunction performed in Section 4 of \cite{EL}. In this case, given $X\in \mathbf{Mod}({\mathfrak{C}^{\mathbf{W}}_n})$ and two $0$-cells $a,b \in X_0$ we produce the $\mathfrak{C}^{\mathbf{W}}_{n-1}$-model of morphisms from $a$ to $b$, denoted by $\Omega (X,a,b)$. For sake of simplicity we only consider the case $n=\infty$ and omit the subscript, leaving the task of modifying this to fit into the finite-dimensional case to the interested reader.

This functor will be then extended to an adjunction of the form
\[\xymatrixcolsep{1pc}
\vcenter{\hbox{\xymatrix{
			**[l]\mathbf{Mod}(\mathfrak{C}^{\mathbf{W}}_{n-1}) \xtwocell[r]{}_{\Omega}^{\Sigma}{'\perp}& **[r] S^0 \downarrow \mathbf{Mod}(\mathfrak{C}^{\mathbf{W}}_{n})
}}}
\] where the category on the right is the slice category under $S^0$.

Using the language of trees it is straightforward to construct a functor \[\Sigma \colon \nCat{\omega} \rightarrow S^0\downarrow \nCat{\omega}\] where $\nCat{\omega}$ denotes the category $\mathbf{Mod}(\Theta)$ of strict $\infty$-categories. As previously done, we will construct $\Sigma \colon \mathbf{Mod}(\mathfrak{C})\rightarrow S^0\downarrow \mathbf{Mod}(\mathfrak{C})$ as the cocontinuous globular extension of a functor $\Sigma \colon \mathfrak{C}\rightarrow S^0 \downarrow \mathbf{Mod}(\mathfrak{C})$ by induction on the defining tower of $\mathfrak{C}$, assuming that at each step the following square commutes:
\[
\bfig 
\morphism(0,0)|a|/@{>}@<0pt>/<400,0>[\mathfrak{C}_{\alpha}`\mathfrak{A} ;\Sigma]
\morphism(0,0)|a|/@{>}@<0pt>/<0,-400>[\mathfrak{C}_{\alpha}`\Theta;F]
\morphism(0,-400)|a|/@{>}@<0pt>/<400,0>[\Theta` \Theta ;\Sigma]
\morphism(400,0)|r|/@{>}@<0pt>/<0,-400>[\mathfrak{A}` \Theta;]
\efig 
\] Here, $\mathfrak{A}$ is obtained by taking a bijective on objects-fully faithful factorization of the map $\Sigma\colon\Theta_0 \rightarrow S^0\downarrow \mathbf{Mod}(\mathfrak{C})$, which is defined as before, and so it clearly comes endowed with a functor down to $\Theta$. Implicitly, we are assuming that $\Sigma$ factors through $\mathfrak{C}$. The case $\Theta_0=\mathfrak{C}_0$ has already been discussed, and the limit ordinal case is trivial. Let us then suppose we have the construction on $\mathfrak{C}_{\alpha}$, and that $\mathfrak{C}_{\alpha+1}$ is obtained by adding an operation $\rho \colon D_n \rightarrow A$ with boundary an admissible pair $(f,g)$. It is easy to see that $\Sigma\colon \Theta \rightarrow \Theta$ preserves admissible pairs, so that we can define $\Sigma(\rho)$ as the choice of an extension to the following diagram in $\mathfrak{A}\subset S^0\downarrow \mathbf{Mod}(\mathfrak{C})$ (which exists in $\mathfrak{C}$ and it is automatically under $S^0$):
\[
\bfig
\morphism(0,0)|a|/@{>}@<0pt>/<600,0>[S^{n}`\Sigma A;(\Sigma(f),\Sigma(g))]

\morphism(0,0)|l|/@{>}@<3pt>/<0,-400>[S^n`D_{n+1};j_{n+1}]

\morphism(0,-400)|r|/@{-->}@<0pt>/<600,400>[D_{n+1}`\Sigma A;\Sigma (\rho)]
\efig 
\] which again satisfies the inductive hypothesis.

If we want to adapt this construction to the case of $\mathbf{Mod}(\mathfrak{C}^{\mathbf{W}})$, we simply consider the pushout \eqref{D_W def} that defines this globular theory. The definition of $\mathfrak{A}$ is the same as above, and now constructing a functor $\Sigma\colon \mathfrak{C}^{\mathbf{W}} \rightarrow \mathfrak{A}$ amounts to define an action on $\mathfrak{C}$ and on inverses in a compatible way. More precisely, we define a map $\Sigma\colon\mathfrak{C} \rightarrow S^0\downarrow \mathbf{Mod}(\mathfrak{C}^{\mathbf{W}})$ by composing the functor $\Sigma\colon\mathfrak{C} \rightarrow S^0\downarrow \mathbf{Mod}(\mathfrak{C})$ defined above with the natural map $S^0\downarrow \mathbf{Mod}(\mathfrak{C}) \rightarrow S^0\downarrow \mathbf{Mod}(\mathfrak{C}^{\mathbf{W}})$. In addition, we define a map $\Sigma\colon\Theta_0 [\mathbf{comp},\mathbf{id}, \mathbf{inv}] \rightarrow S^0\downarrow \mathbf{Mod}(\mathfrak{C}^{\mathbf{W}})$ by setting $\Sigma(\mathbf{i}^{\epsilon}_k)=\Sigma^{k}(\mathbf{i}_1)$ for $\epsilon=l,r$ and similarly for the maps ${\mathbf{k}_m}^{\epsilon}$. The universal property of pushouts yields the desired map $\Sigma\colon \mathfrak{C}^{\mathbf{W}} \rightarrow \mathfrak{A}\subset \mathbf{Mod}\left(\mathfrak{C}^{\mathbf{W}}\right) $, and consequently a functor \[\Sigma\colon \mathbf{Mod}(\mathfrak{C}^{\mathbf{W}}) \rightarrow \mathbf{Mod}(\mathfrak{C}^{\mathbf{W}}) \] by left Kan-extension.
By adjunction, the underlying globular set of $\Omega (X,a,b)$ is given by 
\[\Omega (X,a,b)_k\colon =\{x \in X_{k+1}| \ s_0^{k+1}(x)=a, \ t_0^{k+1}(x)=b \} \]
We will often denote $\Omega(X,a,b)$ simply by $X(a,b)$.
\begin{rmk}
	\label{cocontinuity of UoSIgma}
	If we compose $\Sigma$ with the forgetful functor $U\colon S^0 \downarrow \mathbf{Mod}(\mathfrak{C}^{\mathbf{W}}_n) \rightarrow \mathbf{Mod}(\mathfrak{C}^{\mathbf{W}}_n) $, we get a functor which is no longer cocontinuous.
	Nevertheless, it is well known that $U$ creates connected colimits, therefore $U\circ \Sigma$ preserves all such.
	Because $\Sigma(I_{n-1})\subset I_n$, where $I_{k}$ is the set of maps defined in Definition \ref{def cofi}, we therefore have that $U\circ \Sigma$ preserves cofibrations (i.e. it sends maps in $\mathbb{I}_{n-1}$ to maps in $\mathbb{I}_{n}$, the respective saturations of $I_{n-1}$ and $I_n$). A similar situation is treated in Lemma 1.3.52 of \cite{CIS}.
\end{rmk}
Given a map $(\alpha,\beta)\colon S^{k} \rightarrow X$, seen as a map $(\hat{\alpha}, \hat{\beta})\colon S^{k-1} \rightarrow  X(a,b)$, where $a=s_0^{k} (\alpha)$ and $b=t_0^{k} (\beta)$, then it holds true that
\[\pi_{k-1}(X(a,b),\hat{\alpha}, \hat{\beta}) \cong \pi_{k}(X,\alpha,\beta)\] (see also \cite{AR2}, Definition 4.11).
%\begin{prop}
%	\label{omega preserves contractibles}
%	Let $(X,(a,b))$ be an object in $S^0 \downarrow\ngpd{n}$. Assume that $X$ is a contractible $n$-groupoid. Then $\Omega (X,a,b)$ is a contractible $(n-1)$-groupoid.
%	\begin{proof}
%		Diagrams of the form:
%		\[ \bfig
%		\morphism(0,0)|a|/@{>}@<0pt>/<800,0>[S^{k-1}`\Omega (X,a,b);]
%		\morphism(0,0)|a|/@{>}@<0pt>/<0,-500>[S^{k-1}`D_k;]
%		\morphism(0,-500)|a|/@{-->}@<0pt>/<800,500>[D_k`\Omega (X,a,b);]
%		\efig \]
%		correspond, under the adjunction $\Sigma \dashv \Omega$, to diagrams under $S^0$ of the form
%		
%		\[ \bfig
%		\morphism(0,0)|a|/@{>}@<0pt>/<800,0>[S^k`X;]
%		\morphism(0,0)|a|/@{>}@<0pt>/<0,-500>[S^k`D_{k+1};]
%		\morphism(0,-500)|a|/@{-->}@<0pt>/<800,500>[D_{k+1}`X;]
%		\efig \]
%	Also, diagrams of the form:
%	\[ \bfig
%	\morphism(0,0)|a|/@{>}@<0pt>/<800,0>[S^{n-1}`\Omega (X,a,b);]
%	\morphism(0,0)|a|/@{>}@<0pt>/<0,-500>[S^{n-1}`D_{n-1};(1,1)]
%	\morphism(0,-500)|a|/@{-->}@<0pt>/<800,500>[D_{n-1}`\Omega (X,a,b);]
%	\efig \]
%	correspond, under the adjunction $\Sigma \dashv \Omega$, to diagrams under $S^0$ of the form
%	
%	\[ \bfig
%	\morphism(0,0)|a|/@{>}@<0pt>/<800,0>[S^n`X;]
%	\morphism(0,0)|a|/@{>}@<0pt>/<0,-500>[S^n`D_{n};(1,1)]
%	\morphism(0,-500)|a|/@{-->}@<0pt>/<800,500>[D_{n}`X;]
%	\efig \]
%		By assumption, all such diagrams admit an extension, which concludes the proof.
%	\end{proof}
%\end{prop}

The following lemma will be used quite frequently in the forthcoming sections. Its proof is straightforward and it is thus left to the reader.
\begin{lemma}
	\label{decomposition of glob sum}
	For every globular sum $A$ there exist unique globular sums $\alpha_1,\ldots , \alpha_q$ such that 
	\begin{equation}
	A\cong \Sigma \alpha_1 \plus{ D_0} \Sigma \alpha_2\plus{ D_0} \ldots \plus{ D_0} \Sigma \alpha_q
	\end{equation} the colimit being taken over the maps
	\[
	\bfig 
	\morphism(0,0)|a|/@{>}@<0pt>/<-400,-400>[D_0`\Sigma \alpha_{i};\top]
	\morphism(0,0)|a|/@{>}@<0pt>/<400,-400>[D_0`\Sigma \alpha_{i+1};\perp]
	\efig 
	\] where we denote by $(\Sigma B, \perp,\top)$ the image under the functor $\Sigma\colon \ngpd{(n-1)} \rightarrow S^0\downarrow \ngpd{n}$ of any globular sum $B$ .
\end{lemma}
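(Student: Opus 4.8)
The plan is to argue purely combinatorially, exploiting the bijection between globular sums and tables of dimensions together with the explicit description of $\Sigma$ on globular sums. First I would record the two translation facts that carry the whole argument. By Remark \ref{cocontinuity of UoSIgma} the composite $U\circ\Sigma$ preserves connected colimits, and since every globular sum is the connected colimit of its globes and $U\Sigma D_k=D_{k+1}$, the functor $\Sigma$ sends the globular sum with table $\begin{pmatrix} a_1 & \cdots \\ & a'_1 & \cdots \end{pmatrix}$ to the one with table $\begin{pmatrix} a_1+1 & \cdots \\ & a'_1+1 & \cdots \end{pmatrix}$; in particular every entry of a suspended table is $\geq 1$, so $\Sigma\alpha$ has exactly two $0$-cells, namely its poles $\perp,\top$. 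Dually, a pushout $\Sigma\alpha_i \amalg_{D_0} \Sigma\alpha_{i+1}$ formed along $\top$ and $\perp$ corresponds, on tables, to concatenating the two tables and inserting a single $0$ in the bottom row at the junction.

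Granting these, for a globular sum $A$ of positive dimension with table $\begin{pmatrix} i_1 & \cdots & i_m \\ & i'_1 & \cdots & i'_{m-1} \end{pmatrix}$ I would let $k_1<\cdots<k_{q-1}$ be exactly the indices with $i'_{k}=0$, i.e. the $0$-dimensional gluings. These cut the row of globes into $q$ consecutive blocks in which every internal gluing satisfies $i'_k\geq 1$, and consequently every globe dimension is $\geq 1$ (a globe of dimension $0$ would force both adjacent gluings to be negative, which is impossible unless $A=D_0$). Decrementing every entry of the $j$-th block by $1$ yields a valid table of dimensions, since the defining inequalities are invariant under $a\mapsto a-1$, and thus defines a globular sum $\alpha_j$ with $\Sigma\alpha_j$ equal to the $j$-th block. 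Reassembling, the $q-1$ junctions of $A$ carry $i'_{k_j}=0$, so $A$ is the iterated pushout over $D_0$ of the $\Sigma\alpha_j$ glued $\top$-to-$\perp$, which is precisely the asserted isomorphism.

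For uniqueness I would use that the $0$-dimensional gluings of $A$ are an intrinsic feature of its table. In any decomposition of the stated form the $D_0$-pushouts produce exactly the $0$-gluings, whereas each factor $\Sigma\alpha_j$ contributes none at all (every entry of a suspended table is $\geq 1$). Hence the cut points must coincide with $\{k_1,\ldots,k_{q-1}\}$, which fixes $q$ and the blocks, and injectivity of $\Sigma$ on globular sums (it is the inverse shift $a\mapsto a-1$) then pins down each $\alpha_j$.

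The only real obstacle is the bookkeeping: verifying that the identifications \emph{suspension $\leftrightarrow$ increment} and \emph{pushout over $D_0$ $\leftrightarrow$ concatenate-with-a-$0$} are compatible with the pole maps $\perp,\top$, and that the degenerate case $A=D_0$ (which is not a suspension) is excluded or handled by convention. Once those identifications are in place, both existence and uniqueness fall out of the combinatorics of tables of dimensions, which is exactly why the statement is genuinely routine.
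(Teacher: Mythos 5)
Your proof is correct and is exactly the intended argument: the paper explicitly leaves this lemma as an exercise ("its proof is straightforward and it is thus left to the reader"), and the standard route is precisely your combinatorial one — identify $\Sigma$ with incrementing the table of dimensions (via $U\circ\Sigma$ preserving connected colimits and $\Sigma D_k\cong D_{k+1}$), cut the table of $A$ at the $0$-dimensional gluings, and read off uniqueness from the fact that suspended tables contain no $0$ entries. Your flag on the degenerate case $A=D_0$ is also apt; the lemma is only invoked in the paper for globular sums of positive dimension, so that case is implicitly excluded.
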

\subsection{Cylinders}
We now recall the important features of cylinders defined in \cite{EL}, this time in the context of $\mathfrak{C}^{\mathbf{W}}_n$-models. As before, we only provide details for the case $n=\infty$ and we drop subscripts.

 Cylinders should be thought as homotopies between cells that are not parallel, so that one needs to provide first homotopies between the 0-dimensional boundary, then between the 1-dimensional boundary adjusted using those homotopies, and so on. This is the right notion of (pseudo)-natural transformation in this context.

\begin{ex}
	Let $n\geq 2$. By definition, $\cyl (D_0)$ is the free model on a $1$-cell. Therefore, giving a $0$-cylinder in a $\mathfrak{C}^{\mathbf{W}}$-model $X$ is equivalent to specifying one of its $1$-cells.
	
	If we go one dimension up, we have that a $1$-cylinder $C\colon \cyl(D_1) \rightarrow X$ consists of the following data
	\[
	\bfig
	\morphism(0,0)|a|/@{>}@<0pt>/<300,0>[a`b;\alpha]
	\morphism(0,0)|l|/@{>}@<0pt>/<0,-300>[a`c;f]
	\morphism(300,0)|r|/@{>}@<0pt>/<0,-300>[b`d;g]
	\morphism(0,-300)|b|/@{>}@<0pt>/<300,0>[c`d; \beta]
	\morphism(250,-75)|a|/@{=>}@<0pt>/<-150,-150>[`; C]
	\efig\]
	Following the notation in the next section, we have that $f=C \circ \cyl (\sigma)$ and  $g=C \circ \cyl (\tau)$. Moreover, $\alpha=C \circ \iota_0$ and $\beta=C \circ \iota_1$.
	
	This cylinder represents the fact that to give a ``homotopy'' from $\alpha$ to $\beta$ we first have to give one between $a$ and $c$, and one from $b$ to $d$. Only then can we compose these with the cells we want to compare, and consider the 2-cells such as $C$ that fill the resulting square, thus giving us the homotopy we are looking for.
\end{ex}
\begin{defi}
		\label{cyl defi}
	Given a $\mathfrak{C}^{\mathbf{W}}$-model $X$, we inductively define $k$-cylinders $F\colon A \curvearrowright B$ in $X$ for a pair of $k$-cells $A,B$ in $X$, with a simultaneous induction on $k$.
	
	A $0$-cylinder $F\colon A \curvearrowright B$ in $X$ is just a $1$-cell $F\colon A \curvearrowright B$. Given $k>0$, a $k$-cylinder $F$ in $X$ consists of a pair of 1-cells $F_s,F_t$ together with a $(k-1)$-cylinder $\overline{F}\colon F_t A \curvearrowright BF_s$ in the $\mathfrak{C}^{\mathbf{W}}$-model  $ X\left(s(F_s),t(F_t) \right)$, where juxtaposition of cells stands for the result of composing them using the maps $w$ defined in Definition \ref{whiskering w}. Given a $k$-cylinder $C$ in $X$, with $k>0$, we get by induction source and target cylinders denoted, respectively, by $s(C)$ and $t(C)$. In fact, if $k=1$ we set $s(C)=C_s, t(C)=C_t$, and the inductive step is straightforward.
\end{defi}
It is possible to construct a coglobular object $\cyl(D_{\bullet})\colon \G_n \rightarrow \mathbf{Mod}(\mathfrak{C}^{\mathbf{W}})$ that corepresents these cylinders, and their coglobular structure, in the sense that the set of $k$-cylinders in a $\mathfrak{C}^{\mathbf{W}}$-model $X$ is given by $\mathbf{Mod}(\mathfrak{C}^{\mathbf{W}})(\cyl(D_k),X)$, and the source and target cylinders are induced by precomposition with the structural map of this $n$-coglobular object.

Moreover, this comes endowed with a map $(\iota_0,\iota_1)\colon D_{\bullet}\coprod D_{\bullet}\rightarrow \cyl(D_{\bullet})$ in $\mathbf{Mod}(\mathfrak{C}^{\mathbf{W}})^{\G_n}$, where $D_{\bullet}$ is the canonical $n$-coglobular $n$-groupoid of globes, which is a direct cofibration in the sense of \ref{def cofi} (i.e. it belongs to the class $\mathbf{I}^{\G}$). If $F\colon A \curvearrowright B$ is a $k$-cylinder in $X$, represented by a map $F\colon \cyl(D_k)\rightarrow X$, then precomposition with $\iota=(\iota_0,\iota_1)$ gives back the pair of $k$-cells $(A,B)$.

Finally, the source of the latching map $\hat{L}_k(\iota_0,\iota_1)$ will be denoted by $\partial \cyl (D_k)$. This can be constructed as the following pushout 
\begin{equation}
\label{2}
\bfig 
\morphism(0,0)|a|/@{>}@<0pt>/<750,0>[S^{k-1}\coprod S^{k-1}` \cyl(S^{k-1});(\iota_0,\iota_1)]
\morphism(0,0)|a|/@{>}@<0pt>/<0,-500>[S^{k-1}\coprod S^{k-1}`D_{k} \coprod D_{k};]
\morphism(0,-500)|a|/@{>}@<0pt>/<750,0>[D_{k} \coprod D_{k}` \partial \cyl(D_{k});]
\morphism(750,0)|a|/@{>}@<0pt>/<0,-500>[\cyl(S^{k-1})`  \partial \cyl(D_{k});]
\efig
\end{equation}
Therefore, we get a cofibration of $\mathfrak{C}^{\mathbf{W}}$-models
 \begin{equation}
 \label{boundary cyl cofi}
\hat{L}_n(\iota_0,\iota_1)\colon\partial(\cyl(D_k))\rightarrow \cyl_n(D_k)
\end{equation}
\begin{ex}
	Let $X$ be an $\mathfrak{C}^{\mathbf{W}}$-model for $n\geq 3$. A $2$-cylinder $C\colon A \curvearrowright B$ in $X$ consists of a pair of $1$-cells $f=C_{s_0}, g=C_{t_0}$ and a $1$-cylinder $\bar{C}\colon gA\curvearrowright Bf$ in $  X\left( s(f),t(g)  \right) $. It can also be represented as the following data in $X$
	\[
	\xymatrix{s(f) \ar[dd]_{f} \rrtwocell{A}  \Dtwocell{ddrr}{} &&s(g) \ar[dd]^{g}  \\ & \\
		t(f)\ar[rr]^{t(B)} && t(g) }
	\quad \xymatrix{ \\ \Rrightarrow}\quad
	\xymatrix{s(f) \ar[rr]^{s(A)} \ar[dd]_{f}  \Dtwocell{ddrr}{} &&s(g) \ar[dd]^{g} \\ \\
		t(f) \rrtwocell{B} &&t(g)}
	\]
	Or, in a way that better justifies its name, as 
	\begin{equation}
	\cd[@+3em]{
		A \ar@/^1em/[r]^-{} \ar@/_1em/[r]_-{} \ar[d]_-{f}
		\Dtwocell{r}{A} \Dtwocell[0.33]{dr}{}
		\Dtwocell[0.67]{dr}{} \ulthreecell{dr}{} &
		B \ar[d]^-{g} \\
		A' \ar@/^1em/[r]^-{} \ar@/_1em/[r]_-{} \Dtwocell{r}{B}  & B'
	}
	\end{equation}
	where the front face is the square (i.e. $1$-cylinder) given by $t(C)$, and the back one is $s(C)$.
\end{ex}
\begin{defi}
	We call $\partial \cyl (D_k)$ the boundary of the $k$-cylinder.
	Given a $k$-cylinder in $X$ $C\colon\cyl (D_k) \rightarrow X$, we call the boundary of $C$, denoted by $\partial C$, the following composite
	\[
	\bfig
	\morphism(0,0)|a|/@{>->}@<0pt>/<600,0>[ \partial \cyl_n (D_k)`    \cyl_n (D_k);] \morphism(600,0)|a|/@{>}@<0pt>/<500,0>[ \cyl_n (D_k)`   X;C]
	\efig
	\]
\end{defi}
Thanks to \eqref{2}, we know that specifying the boundary of an $k$-cylinder in an $n$-groupoid $X$ is equivalent to providing the following data:
\begin{itemize}
	\item a pair of parallel $(k-1)$-cylinders $C\colon A\curvearrowright B,D\colon	A'\curvearrowright B'$ in $X$;
	\item a pair of $k$-cells $\alpha\colon A \rightarrow A', \beta \colon B \rightarrow B'$ in $X$.
\end{itemize}
We can define a map of $n$-coglobular $n$-groupoids $\mathbf{C}_{\bullet}\colon \cyl(D_{\bullet})\rightarrow D_{\bullet}$ that fits into the following factorization of the codiagonal map
\begin{equation}
\label{codiag fact}
\bfig
\morphism(0,0)|a|/@{>->}@<0pt>/<750,0>[ D_{\bullet} \coprod D_{\bullet}`\cyl_n (D_{\bullet});(\iota_0,\iota_1)]
\morphism(750,0)|a|/@{>}@<0pt>/<500,0>[ \cyl_n (D_{\bullet})`D_{\bullet};\mathbf{C}_{\bullet}]
\efig
\end{equation}
by solving the following lifting problem in $\mathbf{Mod}(\mathfrak{C}^{\mathbf{W}})^{\G}$:
\[\bfig
\morphism(0,0)|a|/@{>}@<0pt>/<500,0>[ D_{\bullet} \coprod D_{\bullet}`D_{\bullet};\nabla]
\morphism(0,0)|a|/@{>->}@<0pt>/<0,-400>[D_{\bullet} \coprod D_{\bullet}`\cyl_n(D_{\bullet});(\iota_0,\iota_1)]
\morphism(0,-400)|r|/@{-->}@<0pt>/<500,400>[\cyl_n(D_{\bullet})`D_{\bullet};\mathbf{C}_{\bullet}]
\efig
\] This is done using \ref{extension of coglobular objects} and the fact that such an extension exists in $\nCat{\omega}$ to construct a solution in $\mathbf{Mod}(\mathfrak{C})$, and then applying the left adjoint to the forgetful functor $\mathbf{U}\colon \mathbf{Mod}(\mathfrak{C}^{\mathbf{W}}) \rightarrow \mathbf{Mod}(\mathfrak{C})$ to get the the extension in $\mathbf{Mod}(\mathfrak{C}^{\mathbf{W}})$.

A degenerate $k$-cylinder $F\colon \cyl^p_q(D_k)\rightarrow X$ is a $k$-cylinder in $X$ whose $p$-iteration of the source and $q$-iteration of the target are collapsed. For instance, this is the picture of a 1-cylinder with degenerate source:
 	\[ 
 \bfig
 \morphism(0,400)|a|/@{>}@<0pt>/<400,0>[a` b;\alpha]
 \morphism(0,400)|a|/@{=}@<0pt>/<0,-400>[a` a;]
 \morphism(0,0)|b|/@{>}@<0pt>/<400,0>[a` c;\beta]
 \morphism(400,400)|r|/@{>}@<0pt>/<0,-400>[b` c;g]
 
 \morphism(200,300)|a|/@{=>}@<0pt>/<0,-200>[`;C]
 \efig
 \] In this case, a map $\cyl^0 (D_1) \rightarrow X$ corresponds to 1-cells $g,\alpha, \beta$ and a 2-cell $C\colon g \alpha \rightarrow \beta$ in $X$. This will also be denoted by $F\colon \alpha \curvearrowright^0 \beta$. See Definition 9.1 in \cite{EL} for a detailed description of the general case.
 
In \cite{EL} we defined the vertical composition of a compatible stack of an $m$-tuple of (possibly degenerate) $k$-cylinders in an $\infty$-groupoid.
This operation takes as input a sequence of $k$-cylinders $F_i\colon A_i \curvearrowright^{p_i}_{q_i} A_{i+1}$ in an $\infty$-groupoid $X$, and produces a $k$-cylinder denoted by \[F_m \otimes F_{m-1} \otimes \ldots \otimes F_1 \colon A_1 \curvearrowright^p_q A_{m+1}\] with $p=\min \{p_i\}_{1\leq i \leq m}$ and $q=\min \{q_j\}_{1\leq j \leq m}$. Moreover, it has the property that \[\epsilon(F_m \otimes F_{m-1} \otimes \ldots \otimes F_1 )=\epsilon(F_m) \otimes \epsilon(F_{m-1}) \otimes \ldots \otimes \epsilon(F_1)\] for $\epsilon=s,t$. This definition can be easily adapted to the case of $\mathfrak{C}$-models, and to get one for $\mathfrak{C}^{\mathbf{W}}$-models we simply apply the free functor to the map of $\mathfrak{C}$-models we are about to construct.
The construction makes use of coherence cylinders which are defined using the contractibility of globular sums in $\ngpd{\infty}$, and so we need to examine this construction more carefully in the context of a categorical coherator. This is really the only issue that needs to be addressed to get such operation.

In detail, we need to solve the following extension problems, as defined in Definition 8.3 of \cite{EL}:
\begin{equation}
\label{coherence cyls}
\bfig
\morphism(0,0)|a|/@{>}@<0pt>/<1200,0>[\Sigma D_{\bullet} \coprod \Sigma D_{\bullet} `D_1^{\otimes m}\plus{D_0}\Sigma D_{\bullet}\plus{D_0} D_1^{\otimes k} ;\psi^{m,k}]
\morphism(0,0)|a|/@{>}@<0pt>/<0,-500>[\Sigma D_{\bullet} \coprod \Sigma D_{\bullet} `\Sigma \cyl (D_{\bullet});\Sigma(\iota)]
\morphism(0,-500)|r|/@{-->}@<0pt>/<1200,500>[\Sigma \cyl (D_{\bullet}) `D_1^{\otimes m}\plus{D_0}\Sigma D_{\bullet}\plus{D_0} D_1^{\otimes k};\Psi^{m,k}]

\morphism(0,-1000)|a|/@{>}@<0pt>/<1400,0>[\Sigma^m D_{\bullet} \coprod \Sigma^m D_{\bullet} `D_1^{\otimes q}\plus{D_0} D_{m+1}\plus{D_{m}}\Sigma^{m}D_{\bullet}\plus{D_0}D_1^{\otimes k};\phi^{q,m,k}]
\morphism(0,-1000)|a|/@{>}@<0pt>/<0,-500>[\Sigma^m D_{\bullet} \coprod \Sigma^m D_{\bullet} `\Sigma^m \cyl (D_{\bullet});\Sigma^m(\iota)]
\morphism(0,-1500)|r|/@{-->}@<0pt>/<1400,500>[\Sigma^m \cyl (D_{\bullet}) `D_1^{\otimes q}\plus{D_0} D_{m+1}\plus{D_{m}}\Sigma^{m}D_{\bullet}\plus{D_0}D_1^{\otimes k};\Phi^{m,k}]

\morphism(0,-2000)|a|/@{>}@<0pt>/<1400,0>[\Sigma^m D_{\bullet} \coprod \Sigma^m D_{\bullet} ` D_1^{\otimes q}\plus{D_0}\Sigma^{m}D_{\bullet} \plus{D_{m}}D_{m+1}\plus{D_0}D_1^{\otimes k};\theta^{q,m,k}]
\morphism(0,-2000)|a|/@{>}@<0pt>/<0,-500>[\Sigma^m D_{\bullet} \coprod \Sigma^m D_{\bullet} `\Sigma^m \cyl (D_{\bullet});\Sigma^m(\iota)]
\morphism(0,-2500)|r|/@{-->}@<0pt>/<1400,500>[\Sigma^m \cyl (D_{\bullet}) ` D_1^{\otimes q}\plus{D_0}\Sigma^{m}D_{\bullet} \plus{D_{m}}D_{m+1}\plus{D_0}D_1^{\otimes k};\Theta^{q,m,k}]
\efig
\end{equation}
Let us explain in detail how $\psi^{m,k}$ is defined and how to get the extension $\Psi^{m,k}$: the other cases can be treated similarly and are thus left to the interested reader. In what follows, we assume we have chosen composition operations $\gamma\colon D_n \rightarrow D_1^{\otimes m}\plus{D_0} D_{n}\plus{D_0} D_1^{\otimes k}$ for $k,m,n>0$, which are compatible with the source and target maps, i.e. \[\gamma \circ \epsilon=\left( 1_{D_1^{\otimes m}}\plus{D_0}\epsilon\plus{D_0}1_{D_1^{\otimes k}}\right) \circ \gamma\] There is no risk of confusion in referring to all such maps as $\gamma$, because the codomain uniquely determines such $\gamma$. Let us restrict, for sake of conciseness, to the case $m,k \neq 0$, and define maps 
\[ \bfig
\morphism(0,0)|a|/@{>}@<0pt>/<1000,0>[\Sigma D_{\bullet} \coprod \Sigma D_{\bullet} `D_1^{\otimes m}\plus{D_0}\Sigma D_{\bullet}\plus{D_0} D_1^{\otimes k} ;\psi^{m,k}]
\efig \]
by setting the first component in dimension $n$ to be given by the composite
\[
\bfig 
\morphism(0,0)|a|/@{>}@<0pt>/<1000,0>[D_{n+1}`D_1^{\otimes m-1}\plus{D_0} D_{n+1}\plus{D_0} D_1^{\otimes k} ;\gamma]
\morphism(1000,0)|a|/@{>}@<0pt>/<2000,0>[D_1^{\otimes m-1}\plus{D_0} D_{n+1}\plus{D_0} D_1^{\otimes k} `D_1^{\otimes m}\plus{D_0} D_{n+1}\plus{D_0} D_1^{\otimes k} ;1_{D_1^{\otimes m-1}}\coprod w \coprod 1_{D_1^{\otimes k}}]	
\efig 
\] 
and the second one to be
\[
\bfig 
\morphism(0,0)|a|/@{>}@<0pt>/<1000,0>[D_{n+1}`D_1^{\otimes m}\plus{D_0} D_{n+1}\plus{D_0} D_1^{\otimes k-1} ;\gamma]
\morphism(1000,0)|a|/@{>}@<0pt>/<2000,0>[D_1^{\otimes m}\plus{D_0} D_{n+1}\plus{D_0} D_1^{\otimes k-1} `D_1^{\otimes m}\plus{D_0} D_{n+1}\plus{D_0} D_1^{\otimes k} ;1_{D_1^{\otimes m}}\coprod w \coprod 1_{D_1^{\otimes k-1}}]	
\efig 
\] 
This means that given an $\infty$-groupoid $X$ and a map \[(f_1,\ldots,f_m,\alpha,g_1,\ldots,g_k)	\colon D_1^{\otimes m}\plus{D_0} D_{n+1}\plus{D_0} D_1^{\otimes k} \rightarrow X\] we get a pair of $(n+1)$-cells in $X$ of the form $g_k\ldots g_1 (\alpha f_m)f_{m-1}\ldots f_1$ and $g_k\ldots g_2 (g_1 \alpha) f_mf_{m-1}\ldots f_1$, where juxtaposition is the result of composition using the appropriate $\gamma$.

The extension problem at hand satisfies the hypotheses of Lemma \ref{extension of coglobular objects}, and thus such an extension exists if and only if the corresponding extension problem in $\nCat{\omega}$ admits a solution. The latching map $\hat{L}_n\left( \Sigma(\iota) \right)\cong \Sigma \left( \hat{L}_n(\iota)\right) \colon \Sigma \partial \cyl(D_n)\rightarrow \Sigma \cyl (D_{n})$ is a pushout of the sphere inclusion $\Sigma(S^{n}\rightarrow D_{n+1})\cong S^{n+1}\rightarrow D_{n+2}$, and therefore the extension problem admits a solution thanks to Lemma \ref{vanishing of higher homotopy groups in globular sums}, since $\dim(D_1^{\otimes m}\plus{D_0} D_{n+1}\plus{D_0} D_1^{\otimes k})=n+1$.

After having solved the other extension problems in a similar fashion, we have all the tools we need to define the operation of vertical composition of cylinders inside $\mathbf{Mod}(\mathfrak{C})$.
%If a sensible Gray tensor product of $n$-groupoids $\bigotimes\colon\ngpd{n} \times \ngpd{n}\rightarrow \ngpd{n}$ (possibly $n=\infty$) existed, then $\cyl_n(D_k)$ would correspond to $D_1\otimes D_k$. Similarly, the notion of modifications that we are going to introduce correspond to tensoring with $D_2$, so that the $n$-coglobular object $\mathbf{M}_{\bullet}$ that we define below would then be isomorphic to the functor $D_2 \otimes (\bullet)$.
\subsection{Modifications}
In what follows, we define modifications in $\mathbf{Mod}(\mathfrak{C})$, which induce modifications in $\mathbf{Mod}(\mathfrak{C}^{\mathbf{W}})$ upon applying the free functor $F\colon \mathbf{Mod}(\mathfrak{C}) \rightarrow \mathbf{Mod}(\mathfrak{C}^{\mathbf{W}})$ (i.e. the left adjoint to the obvious forgetful functor) to the coglobular object representing them.

Given $k$-cells $A,B$ and 2-cells $b,c$ in a given $\mathfrak{C}$-model $X$ with $t^2(b)=s^k(A)$ and $t^k(B)=s^2(c)$, we make use of $(k-1)$-cylinders in $  X\left(s^2(b),t^2(c) \right)$ of the form $\Gamma(b,A)\colon As(b)\curvearrowright At(b)$ or $\Upsilon(B,c)\colon s(c)B\curvearrowright t(c)B$. These were obtained in \cite{EL} by using contractibility of globular sums and the fact that $(\iota_0,\iota_1)\colon D_{\bullet} \coprod D_{\bullet}  \rightarrow \cyl_n(D_{\bullet})$ is a direct cofibration, therefore this needs some extra justification in this context. We only construct $\Gamma$, the other case being entirely analogous. 

We have to find an extension in the following diagram:
\[
\bfig 
\morphism(0,0)|a|/@{>}@<0pt>/<800,0>[\Sigma D_{\bullet} \coprod \Sigma D_{\bullet} `D_2\plus{D_0}\Sigma D_{\bullet};(a,b)]
\morphism(0,0)|a|/@{>}@<0pt>/<0,-400>[\Sigma D_{\bullet} \coprod \Sigma D_{\bullet} `\Sigma \cyl (D_{\bullet});\Sigma(\iota)]
\morphism(0,-400)|r|/@{-->}@<0pt>/<800,400>[\Sigma \cyl (D_{\bullet}) `D_2\plus{D_0}\Sigma D_{\bullet};\Gamma]
\efig 
\] where $a=(\sigma_1 \amalg_{ D_0}1)\circ w, \ b=(\tau_1 \amalg_{ D_0}1)\circ w$ (see Definition \ref{whiskering w}). At this point it is enough to observe that $\dim(D_2\plus{D_0}\Sigma D_{n})=n+1$ and that the latching map of the vertical arrow at $n$ is a pushout of the boundary inclusion $S^{n+1}\rightarrow D_{n+2}$ to conclude that an extension exists thanks to Lemma \ref{vanishing of higher homotopy groups in globular sums}.
%The proof of Lemma \ref{extension of modifications} still makes sense in $\mathfrak{C}^{\mathbf{W}}$, if $\mathfrak{C}$ is a coherator for $\infty$-categories. Moreover, the lemma in Section 4 that do not involve inverses all hold true in $\mathbf{Mod}(\mathfrak{C})$, by constructing fillers using the methods shown so far. The rest of the results in that section either follows from inverting modifications (see Lemma \ref{inverse of modifications} in the Appendix) or from Lemma \ref{extension of modifications}.
\begin{defi}
Given a $\mathfrak{C}$-model $X$, a modification in $X$ between $k$-cylinders $\Theta\colon C \Rightarrow D$ amounts to a pair of $2$-cells $\Theta_s \colon s^k(C)\rightarrow s^k (D)$, $\Theta_t \colon t^k (D) \rightarrow t^k(C)$ together with a modification of $(k-1)$-cylinders in $X(x,y)$  \[\bar{\Theta}\colon\Upsilon(\iota_0 C,\Theta_t)\otimes \bar{C}\otimes\Gamma(\Theta_s,\iota_1 C)\Rightarrow \bar{D}\]
where $x=s^k(C)\circ \sigma, \ y=t^k(C)\circ \tau$.
\end{defi}

If $k=1$ then we can depict $C$ and $D$ as, respectively
\[
\bfig
\morphism(0,0)|a|/@{>}@<0pt>/<300,0>[a`b;\alpha]
\morphism(0,0)|l|/@{>}@<0pt>/<0,-300>[a`c;f]
\morphism(300,0)|r|/@{>}@<0pt>/<0,-300>[b`d;g]
\morphism(0,-300)|b|/@{>}@<0pt>/<300,0>[c`d; \beta]
\morphism(250,-75)|a|/@{=>}@<0pt>/<-150,-150>[`; \Gamma]

\morphism(600,0)|a|/@{>}@<0pt>/<300,0>[a`b;\alpha]
\morphism(600,0)|l|/@{>}@<0pt>/<0,-300>[a`c;f']
\morphism(900,0)|r|/@{>}@<0pt>/<0,-300>[b`d;g']
\morphism(600,-300)|b|/@{>}@<0pt>/<300,0>[c`d; \beta]
\morphism(850,-75)|a|/@{=>}@<0pt>/<-150,-150>[`; \Delta]
\efig\] 
Therefore, a modification $\Theta\colon C \Rightarrow D$ corresponds to the data of a pair of $2$-cells in $X$ $S\colon f \rightarrow f'$, $T\colon g' \rightarrow g$ and a modification $\bar{\Theta}\colon\Upsilon(\iota_0 C,\Theta_t)\otimes \bar{C}\otimes\Gamma(\Theta_s,\iota_1 C)\Rightarrow \bar{D}$, which is easily seen to correspond to a $3$-cell in $X$ $\tilde{\Theta}\colon (\beta \Theta_s)(\Gamma(\Theta_t \alpha)) \rightarrow \Delta$, where we denote by juxtaposition the result of the appropriate operations $w$ involved in the definition. Notice that if $f=f'$ and $g=g'$, then a modification $\Theta\colon C \Rightarrow D$ such that $\Theta_s$ and $\Theta_t$ are identities can be equivalently thought of as a $3$-cell between the $2$-cells $\Gamma$ and $\Delta$.

% If $n<3$ then replace every occurrence of a $k$-cell for $k>n$ with an identity in this example.
Similarly to the case of cylinders, it turns out that modifications are representable by a coglobular object which we denote with $\mathbf{M}_{\bullet}\colon \G_n \rightarrow \mathbf{Mod}(\mathfrak{C})$.  This can be endowed with a direct cofibration of the form 
\begin{equation}
\label{cyl into mod cof}
\cyl_n(D_{\bullet}) \ast \cyl_n(D_{\bullet}) \rightarrow \mathbf{M}_{\bullet}
\end{equation}
where $\cyl_n(D_{\bullet}) \ast \cyl_n(D_{\bullet})$ denotes the colimit of the diagram below:
\[\bfig
\morphism(0,0)|a|/@{>}@<0pt>/<500,0>[D_{\bullet}` \cyl(D_{\bullet});\iota_0 ]
\morphism(0,0)|a|/@{>}@<0pt>/<0,-500>[D_{\bullet}`\cyl(D_{\bullet});\iota_0]
\morphism(500,-500)|a|/@{>}@<0pt>/<0,500>[D_{\bullet}`\cyl(D_{\bullet});\iota_1]
\morphism(500,-500)|b|/@{>}@<0pt>/<-500,0>[D_{\bullet}`\cyl(D_{\bullet});\iota_1]
\efig\]
Note that, by extending $ \mathbf{M}_{\bullet}$ to $\Theta_0$, we can make sense of modifications of the form $\Theta \colon C \Rightarrow D$ for $C,D\colon \cyl(A) \rightarrow X$ in an $\mathfrak{C}$-model $X$.

As an immediate consequence of \eqref{cyl into mod cof} we get the following result.
\begin{lemma}
	\label{modifications in contractible groupoids}
	Let $X$ be a contractible $\mathfrak{D}$-model, with $\mathfrak{D}=\mathfrak{C}$ or $\mathfrak{D}=\mathfrak{C}^{\mathbf{W}}$. Given a pair of $n$-cylinders $C,D\colon A \curvearrowright B$ in $X$, there exists a modification $\Theta\colon C \Rightarrow D$ in $X$. Moreover, the statement remains true if we also require the boundary of $\Theta$ to agree with a prescribed parallel pair of $n$-modifications $\left(\Sigma\colon s(C)\Rightarrow s(D),\Delta\colon t(C) \Rightarrow t(D)\right)$
\end{lemma}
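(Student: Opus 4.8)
The plan is to recast both assertions as lifting problems against the trivial fibration $X \to \ast$ and to feed them to the direct cofibration \eqref{cyl into mod cof}. First I would fix the bookkeeping dictionary. By the colimit presentation of $\cyl_n(D_{\bullet}) \ast \cyl_n(D_{\bullet})$, a map out of its value at $n$ is exactly a pair of $n$-cylinders $C,D$ sharing a common source globe $A$ and target globe $B$, that is, a parallel pair $C,D\colon A \curvearrowright B$; and a map $\mathbf{M}_n \to X$ is exactly a modification between the two cylinders it restricts to. Under this dictionary the component at $n$ of \eqref{cyl into mod cof}, which I will call $\phi_n\colon (\cyl_n(D_{\bullet}) \ast \cyl_n(D_{\bullet}))_n \to \mathbf{M}_n$, is the forgetful map sending a modification to its underlying pair of cylinders.

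For the bare existence statement the key point is that a map in $\mathbf{I}^{\G}$ has cofibrant components. Writing $X=\cyl_n(D_{\bullet}) \ast \cyl_n(D_{\bullet})$ and $Y=\mathbf{M}_{\bullet}$, I would factor $\phi_n$ as $X_n \to X_n \cup_{L_n X} L_n Y \xrightarrow{\hat{L}_n(\phi)} Y_n$: the first map is a cobase change of $L_n X \to L_n Y$, which is a cofibration by induction on degree, and the second is the latching map $\hat{L}_n(\phi)$, which is a cofibration because \eqref{cyl into mod cof} is a direct cofibration; hence $\phi_n$ is a cofibration. Since $X$ is contractible, $X \to \ast$ is a trivial fibration and therefore has the right lifting property against $\phi_n$, so the given map $(C,D)\colon X_n \to X$ extends along $\phi_n$ to a map $\mathbf{M}_n \to X$, i.e. to a modification $\Theta\colon C \Rightarrow D$.

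For the refined statement I would instead lift directly against the relative latching map $\hat{L}_n(\phi)$. Prescribing the parallel pair $(\Sigma,\Delta)$ amounts to a map $L_n(\mathbf{M}_{\bullet}) \to X$ (the latching object $L_n(\mathbf{M}_{\bullet})$ classifies parallel pairs of source/target modifications, just as $L_m(\mathbf{D}_{\bullet})=S^{m-1}$ classifies parallel pairs of cells in Example \ref{G is a direct cat}), whose restriction along $L_n X \to L_n(\mathbf{M}_{\bullet})$ agrees with the source and target cylinders of $C$ and $D$. Together with $(C,D)$ this is exactly a map out of the pushout $X_n \cup_{L_n X} L_n(\mathbf{M}_{\bullet})$, the domain of $\hat{L}_n(\phi)$; since \eqref{cyl into mod cof} is a direct cofibration, $\hat{L}_n(\phi)$ is again a cofibration, and contractibility of $X$ supplies the extension $\mathbf{M}_n \to X$, now a modification with the prescribed boundary $(\Sigma,\Delta)$. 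The case $\mathfrak{D}=\mathfrak{C}^{\mathbf{W}}$ then follows by applying the free functor $F\colon \mathbf{Mod}(\mathfrak{C}) \to \mathbf{Mod}(\mathfrak{C}^{\mathbf{W}})$, which is cocontinuous and sends cofibrations to cofibrations (Lemma \ref{preservation of cofs^D}), hence carries \eqref{cyl into mod cof} and its latching maps to a direct cofibration and to cofibrations over $\mathfrak{C}^{\mathbf{W}}$, and by repeating the same lifting argument. The one genuinely load-bearing identification, and the place I would be most careful, is verifying that the domain of $\hat{L}_n(\phi)$ is precisely \emph{a compatible pair of cylinders together with a prescribed boundary}, i.e. that $L_n(\mathbf{M}_{\bullet})$ really classifies parallel pairs of source/target modifications with matching boundary; granting this, everything else is a formal consequence of the weak factorization system and of \eqref{cyl into mod cof}.
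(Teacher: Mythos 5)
Your proof is correct and is precisely the argument the paper intends: the paper states the lemma as an ``immediate consequence'' of the direct cofibration \eqref{cyl into mod cof}, and your lifting of $(C,D)$ (resp.\ of $(C,D,\Sigma,\Delta)$) against the component $\phi_n$ (resp.\ the relative latching map $\hat{L}_n(\phi)$) into the trivial fibration $X\rightarrow \ast$ is exactly the content of that remark, spelled out. Your identification of the domain of $\hat{L}_n(\phi)$ with a parallel pair of cylinders equipped with prescribed source and target modifications matches the coglobular structure of $\mathbf{M}_{\bullet}$ as used throughout the paper, so no gap remains.
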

\subsection{The elementary interpretation $\hat{\rho}$}
We now want to refine a result proven in \cite{EL}, namely the construction that takes a homogeneous map $\rho\colon D_k \rightarrow A$ in a homogeneous coherator for $\infty$-categories $\mathfrak{C}$ (i.e. equipped with a globular map $F\colon \mathfrak{C} \rightarrow \Theta$ that detects homogeneous maps) and gives back its ``elementary interpretation'' $\hat{\rho}\colon \cyl(D_k) \rightarrow \cyl(A)$ in $\ngpd{\infty}$. Recall that this map satisfies the following two important properties:
\begin{equation}
\label{hat properties}
\bfig
\morphism(-1000,0)|a|/@{>}@<0pt>/<750,0>[D_k \coprod D_k`A \coprod A;\rho \coprod \rho]
\morphism(-1000,-500)|a|/@{>}@<0pt>/<750,0>[\cyl(D_k)`\cyl(A);\hat{\rho}]
\morphism(-250,0)|r|/@{>}@<0pt>/<0,-500>[A \coprod A`\cyl(A);\iota_0 \coprod \iota_1]
\morphism(-1000,0)|l|/@{>}@<0pt>/<0,-500>[D_k \coprod D_k`\cyl(D_k);\iota_0 \coprod \iota_1]

\morphism(500,0)|a|/@{>}@<0pt>/<1000,0>[\cyl(S^{k-1})`\cyl(A);(\widehat{\rho \circ \sigma},\widehat{\rho \circ \tau})]
\morphism(500,0)|a|/@{>}@<0pt>/<0,-500>[\cyl(S^{k-1})`\cyl(D_k);]
\morphism(500,-500)|r|/@{>}@<0pt>/<1000,500>[\cyl(D_k)`\cyl(A);\hat{\rho}]
\efig
\end{equation}
 We will show that it is actually possible to construct $\hat{\rho}\colon \cyl(D_k) \rightarrow \cyl(A)$ in $\mathbf{Mod}(\mathfrak{C})$. Moreover, there is a (non-canonical) globular map $\mathfrak{C} \rightarrow \mathfrak{G}$, where $\mathfrak{G}$ is the coherator for $\infty$-groupoids we are using to model $\ngpd{\infty}$, which then induces a cocontinuous functor $J \colon \mathbf{Mod}(\mathfrak{C}) \rightarrow \mathbf{Mod}(\mathfrak{G})$ by left Kan extension: the previous construction of $\hat{\rho}$ is then nothing but the image of this refined version of $\hat{\rho}$ under the functor $J$, as will be clear from what follows. The version for $\mathfrak{C}^{\mathbf{W}}$ is obtained in a similar way. Observe that having this definition for homogeneous maps implies its extension to the whole category $\mathfrak{C}$, thanks to the homogeneous-globular factorization system on it.
 
The construction was performed inductively, i.e. assuming that we already have a construction for $\rho_{\epsilon}\colon D_{k-1}\rightarrow A'$, where we denote by $\rho_{\epsilon}$ the homogeneous part of the factorization of $\rho \circ \epsilon$ into a homogeneous map followed by a globular one, for $\epsilon=\sigma,\tau$. The map $\hat{\rho}\colon \cyl(D_k)\rightarrow \cyl(A)$ was obtained by vertically composing a stack of (possibly degenerate) $(k-1)$-cylinders in $\cyl(A)(x,y)$ for $x=\iota_0\circ \partial_{\sigma}^{\dim(A)},y=\iota_1\circ \partial_{\tau}^{\dim(A)}$, and each cylinder in the stack was the transpose of a map of the form $\Sigma(\cyl^{r_i}_{q_i}(D_{k-1}))\rightarrow B$, where $B$ is a globular sum endowed with a map $i_B\colon B \rightarrow \cyl(A)$, as we are going to describe later in this section. This map was obtained by solving an extension problem of the following form, exploiting the contractibiliy of the globular sum $B$:
\[ \bfig
\morphism(0,0)|a|/@{>}@<0pt>/<700,0>[\Sigma\partial \cyl^{r_B}_{q_B}(D_{k-1})`B;]
\morphism(0,0)|a|/@{>}@<0pt>/<0,-400>[\Sigma\partial \cyl^{r_B}_{q_B}(D_{k-1})`\Sigma\cyl^{r_B}_{q_B}(D_{k-1});\Sigma\partial]
\morphism(0,-400)|r|/@{-->}@<0pt>/<700,400>[\Sigma\cyl^{r_B}_{q_B}(D_{k-1})`B;]
\efig \] where the horizontal map is defined by induction. Thanks to Lemma \ref{extension of cellular maps}, it is enough to verify the existence of an extension in $\nCat{\omega}$. The vertical map is a pushout of the boundary inclusion $S^{k}\rightarrow D_{k+1}$, so that such an extension always exist if $\dim(B)\leq n$, thanks to Lemma \ref{vanishing of higher homotopy groups in globular sums}. By construction, the dimension of any $B\in \mathcal{L}(A)$ is at most $n+1$, where $n=\dim(A)\leq k$ (as $\rho$ is homogeneous). Therefore the only case left out is when $n=k$ and $\dim(B)=n+1$, i.e. the new vertex has been added at maximal height. In this case we have $r_B=q_B=k-2$, so that the extension problem is of the form:
\[ \bfig
\morphism(0,0)|a|/@{>}@<0pt>/<400,0>[S^k`B;(f,g)]
\morphism(0,0)|a|/@{>}@<0pt>/<0,-300>[S^k`D_{k+1};j_{k+1}]
\morphism(0,-300)|r|/@{-->}@<0pt>/<400,300>[D_{k+1}`B;]
\efig \]
From the explicit description of the stack of cylinders given in Section 9.3 of \cite{EL}, it is clear that $A=\partial B$ and $f=\partial_{\sigma}\circ p, \ g =\partial_{\tau}\circ p$ for $p$ equal the unique homogeneous map $D_k \rightarrow A$ in $\Theta$. Therefore, the existence of the extension is granted by the fact that $\Theta$ admits extensions of admissible pairs.

As anticipated earlier, we will need some more details of how the cylinders are obtained besides the mere existence of this map. To begin with, we associate an ordered set $\mathscr{L}(A)$ to the globular sum $A$, which is defined by considering all the possible globular sums obtained from $A$ by adjoining a new vertex to the tree associated to $A$ (see Chapter 2 of \cite{AR1} for the correspondence between globular sums and trees). The order is obtained by letting the new edge traverse the tree counterclockwise, starting from the bottom right corner. For instance, this is the (ordered) set $\mathscr{L}(A)$ for $A=D_2 \plus{D_1}D_2 \plus{D_0} D_1$, whose associated tree is $\begin{tikzpicture}
\tikzstyle{every node}=[circle, draw,
inner sep=0pt, minimum width=2pt]
\node[] (A) at (0,-3) {};

\node[] (B) at (-0.5,-2.5) {};
\node[] (C) at (0,-2.5) {};

\node[] (E) at (-1,-2) {};
\node[] (F) at (0,-2) {};

\path [] (A) edge (B);
\path [] (A) edge (C);

\path [] (B) edge (E);
\path [] (B) edge (F);
\end{tikzpicture}$
\[
\begin{tikzpicture}
\label{list of trees}

\tikzstyle{every node}=[circle, draw,
inner sep=0pt, minimum width=2pt]
\node[] (A) at (0,-3) {};

\node[] (B) at (-0.5,-2.5) {};
\node[] (C) at (0.5,-2.5) {};
\node[] (D) at (0,-2.5) {};

\node[] (E) at (-1,-2) {};
\node[] (F) at (0,-2) {};

\path [] (A) edge (B);
\path [draw=red, very thick] (A) edge (C);
\path [] (A) edge (D);
\path [] (B) edge (E);
\path [] (B) edge (F);

\node[] (A') at (2,-3) {};

\node[] (B') at (1.5,-2.5) {};
\node[] (C') at (2,-2) {};
\node[] (D') at (2,-2.5) {};

\node[] (E') at (1,-2) {};
\node[] (F') at (1.5,-2) {};

\path [] (A') edge (B');
\path [draw=red, very thick] (D') edge (C');
\path [] (A') edge (D');
\path [] (B') edge (E');
\path [] (B') edge (F');

\node[] (A'') at (4,-3) {};

\node[] (B'') at (3.5,-2.5) {};
\node[] (C'') at (4,-2.5) {};
\node[] (D'') at (4.5,-2.5) {};

\node[] (E'') at (3,-2) {};
\node[] (F'') at (4,-2) {};

\path [] (A'') edge (B'');
\path [draw=red, very thick] (A'') edge (C'');
\path [] (A'') edge (D'');
\path [] (B'') edge (E'');
\path [] (B'') edge (F'');

\node[] (a) at (6,-3) {};

\node[] (b) at (5.5,-2.5) {};
\node[] (c) at (6,-2) {};
\node[] (d) at (6.5,-2.5) {};

\node[] (e) at (5,-2) {};
\node[] (f) at (5.5,-2) {};

\path [] (a) edge (b);
\path [draw=red, very thick] (b) edge (c);
\path [] (a) edge (d);
\path [] (b) edge (e);
\path [] (b) edge (f);

\node[] (a'') at (8,-3) {};

\node[] (b'') at (7.5,-2.5) {};
\node[] (c'') at (8,-2) {};
\node[] (d'') at (8.5,-2.5) {};
\node[] (f'') at (7,-2) {};
\node[] (e'') at (8,-1.5) {};

\path [] (a'') edge (b'');
\path [draw=red, very thick] (c'') edge (e'');
\path [] (a'') edge (d'');
\path [] (b'') edge (c'');
\path [] (b'') edge (f'');
\end{tikzpicture}\]
\[\begin{tikzpicture}
\tikzstyle{every node}=[circle, draw,
inner sep=0pt, minimum width=2pt]
\node[] (a) at (10,-3) {};

\node[] (b) at (9.5,-2.5) {};
\node[] (c) at (10,-2) {};
\node[] (d) at (10.5,-2.5) {};

\node[] (e) at (9,-2) {};
\node[] (f) at (9.5,-2) {};

\path [] (a) edge (b);
\path [draw=red, very thick] (b) edge (f);
\path [] (a) edge (d);
\path [] (b) edge (e);
\path [] (b) edge (c);

\node[] (a'') at (12,-3) {};

\node[] (b'') at (11.5,-2.5) {};
\node[] (c'') at (12,-2) {};
\node[] (d'') at (12.5,-2.5) {};
\node[] (f'') at (11,-2) {};
\node[] (e'') at (11,-1.5) {};

\path [] (a'') edge (b'');
\path [draw=red, very thick] (f'') edge (e'');
\path [] (a'') edge (d'');
\path [] (b'') edge (c'');
\path [] (b'') edge (f'');

\node[] (a) at (14,-3) {};

\node[] (b) at (13.5,-2.5) {};
\node[] (c) at (14,-2) {};
\node[] (d) at (14.5,-2.5) {};

\node[] (e) at (13,-2) {};
\node[] (f) at (13.5,-2) {};

\path [] (a) edge (b);
\path [draw=red, very thick] (b) edge (e);
\path [] (a) edge (d);
\path [] (b) edge (f);
\path [] (b) edge (c);

\node[] (a''') at (16,-3) {};

\node[] (b''') at (16,-2.5) {};
\node[] (c''') at (16.5,-2) {};
\node[] (f''') at (15.5,-2) {};
\node[] (d''') at (16.5,-2.5) {};

\node[] (e''') at (15.5,-2.5) {};

\path [] (a''') edge (b''');
\path [draw=red, very thick] (a''') edge (e''');
\path [] (a''') edge (d''');
\path [] (b''') edge (c''');
\path [] (b''') edge (f''');
\end{tikzpicture}\]
%Then, we construct a zig-zag diagram in $\mathbf{Mod}(\mathfrak{C})$, which factors through the coherator $\mathfrak{C}$, and whose colimit is precisely (isomorphic to) $\cyl(A)$. In particular, this implies that we get colimit inclusions $i_B\colon B\rightarrow \cyl(A)$ for every $B\in \mathscr{L}(A)$. We will need an explicit description of (part of) this zig-zag diagram to describe the vertical stack of cylinders we get out of it, at least in the case $n=3$.
Then, we construct a zig-zag diagram involving these globular sums $B\in \mathcal{L}(A)$, whose colimit is precisely $\cyl(A)$. This endows each globular sum $B$ in the list with a structural map $i_B\colon B \rightarrow \cyl(A)$ given by a colimit inclusion.

We will now describe the stack of cylinders we get in the case $k=2$, and we assume (without loss of generality) that $\rho$ is a homogeneous operation, which forces $\dim (A)\leq 2$. Also, we describe this stack representably, i.e. we assume given a map $C\colon \cyl(A) \rightarrow X$, with $C\colon U \curvearrowright V$, and we describe the stack of 1-cylinders in $X(x,y)$ that we get out of that.

To each of the globular sums in $\mathscr{L}(A)$ we associate a (possibly degenerate) 1-cylinder in $X(x,y)$, where $x=s^2 \left( X(\rho)(U)\right), \ y=t^2 \left( X(\rho)(V)\right)$. These 1-cylinders will be vertically composable in the order induced by that of $\mathscr{L}(A)$, and the composite will produce a 2-cylinder $\hat{\rho}$ upon transposing along the adjunction $\Sigma\dashv \Omega$ (here, we make use of the fact that an $n$-cylinder is defined to be an $(n-1)$-cylinder in the hom-$\mathfrak{C}$-model between two objects, with the appropriate top and bottom $(n-1)$-cells, see Definition \ref{cyl defi}). The first square, i.e. the one associated with the globular sum $B\in \mathscr{L}(A)$ where the new vertex $*_B$ has been added at height 1 as the maximal element over the root (i.e. the right most one) is given by:
\begin{equation}
\label{cyl 1}
\begin{tikzpicture}
\tikzstyle{every node}=[circle, draw,
inner sep=0pt, minimum width=2pt]
\node[] (A) at (-4,-3) {};

\node[] (B) at (-4.5,-2.5) {};
\node[] (C) at (-3.5,-2.5) {};
\node[] (D) at (-4,-2.5) {};

\node[] (E) at (-5,-2) {};
\node[] (F) at (-4,-2) {};

\path [] (A) edge (B);
\path [draw=red, very thick] (A) edge (C);
\path [] (A) edge (D);
\path [] (B) edge (E);
\path [] (B) edge (F);
\end{tikzpicture} \ \ \ \ \ \ \ \ \ 
\bfig
\morphism(-1000,0)|a|/@{>}@<0pt>/<600,0>[\cdot`\cdot;C_t\rho (U)]
\morphism(-1000,-400)|b|/@{>}@<0pt>/<600,0>[\cdot`\cdot;\rho\left(U_{< p}, C_t U_p\right)]
\morphism(-400,0)|r|/@{>}@<0pt>/<0,-400>[\cdot`\cdot;\simeq]
\morphism(-1000,0)|l|/@{>}@<0pt>/<0,-400>[\cdot `\cdot;\simeq]

\morphism(-650,-150)|l|/@{=>}@<0pt>/<-100,-100>[  ` ; ]
\efig 
\end{equation}

Here, $U_{< p}$ and $U_p$ respectively denote the restriction of $U$ to $\Sigma \alpha_1\plus{D_0}\ldots\plus{D_0} \Sigma \alpha_{p-1}$ and $\Sigma \alpha_p$, where we have considered the decomposition of $A$ as $\Sigma \alpha_1 \plus{D_0}\ldots \plus{D_0} \Sigma \alpha_p$ as in Lemma \ref{decomposition of glob sum}. Furthermore, juxtaposition is the result of composing using the maps introduced in Definition \eqref{whiskering w}, and given a map $W:A\rightarrow X$, which we think as an $A$-shaped pasting diagram in $X$, we denote $X(\rho)(W)$ with $\rho(W)$. Finally, we denoted $C\circ \cyl(\partial^2_{\tau})$ with $C_t$. Both the sides and the interior of the square are obtained by solving extension problems in the globular sum $B$, using the tools developed in this section, and the same holds for all the other cases to follow.

Dually, the last square in the stack is associated to the globular sum $B'$ obtained from $A$ by adjoining a new vertex at height 1 as the minimal element over the root (i.e. the left most one). This time, the square is given by:
\begin{equation}
\label{cyl 2}
\begin{tikzpicture}
\tikzstyle{every node}=[circle, draw,
inner sep=0pt, minimum width=2pt]
\node[] (a''') at (16,-3) {};

\node[] (b''') at (16,-2.5) {};
\node[] (c''') at (16.5,-2) {};
\node[] (f''') at (15.5,-2) {};
\node[] (d''') at (16.5,-2.5) {};

\node[] (e''') at (15.5,-2.5) {};

\path [] (a''') edge (b''');
\path [draw=red, very thick] (a''') edge (e''');
\path [] (a''') edge (d''');
\path [] (b''') edge (c''');
\path [] (b''') edge (f''');
\end{tikzpicture}\ \ \ \ \ \ \ \ \
\bfig
\morphism(-1000,0)|a|/@{>}@<0pt>/<600,0>[\cdot`\cdot;\rho\left(V_1 C_s, V_{>1}\right)]
\morphism(-1000,-400)|b|/@{>}@<0pt>/<600,0>[\cdot`\cdot;\rho(V) C_s]
\morphism(-400,0)|r|/@{>}@<0pt>/<0,-400>[\cdot`\cdot;\simeq]
\morphism(-1000,0)|l|/@{>}@<0pt>/<0,-400>[\cdot `\cdot;\simeq]

\morphism(-650,-150)|l|/@{=>}@<0pt>/<-100,-100>[  ` ; ]
\efig
\end{equation}

Suppose now the new vertex in $B\in \mathscr{L}(A)$ is adjoined at height 1 over the $q$-th 0-cell of $A$, with $q\neq 0,p$. 
%This determines a splitting of the globular sum $A$ into two sub-globular sums $A_<$ and $A_>$, which can be informally described as the subtrees at the left (resp. right) of the new edge. Precomposing $U$ (resp. $V$) with the inclusion of $A_<$ (resp. $A_>$) we get an $A_<$-shaped (resp. $A_>$-shaped) pasting diagram in $X$ which we call $U_B$ (resp.$V_B$).
The associated square then looks like the one depicted here below:
\begin{equation}
\label{cyl 3}
\begin{tikzpicture}
\tikzstyle{every node}=[circle, draw,
inner sep=0pt, minimum width=2pt]
\node[] (A'') at (4,-3) {};

\node[] (B'') at (3.5,-2.5) {};
\node[] (C'') at (4,-2.5) {};
\node[] (D'') at (4.5,-2.5) {};

\node[] (E'') at (3,-2) {};
\node[] (F'') at (4,-2) {};

\path [] (A'') edge (B'');
\path [draw=red, very thick] (A'') edge (C'');
\path [] (A'') edge (D'');
\path [] (B'') edge (E'');
\path [] (B'') edge (F'');
\end{tikzpicture} \ \ \ \ \ \ \ \ \
\bfig
\morphism(-1000,0)|a|/@{>}@<0pt>/<600,0>[\cdot`\cdot;\rho\left(U_{\leq q},V_q a, V_{> q}\right)]
\morphism(-1000,-400)|b|/@{>}@<0pt>/<600,0>[\cdot`\cdot;\rho\left(U_{< q},a U_q , V_{\geq q} \right)]
\morphism(-400,0)|r|/@{>}@<0pt>/<0,-400>[\cdot`\cdot;\simeq]
\morphism(-1000,0)|l|/@{>}@<0pt>/<0,-400>[\cdot `\cdot;\simeq]

\morphism(-650,-150)|l|/@{=>}@<0pt>/<-100,-100>[  ` ; ]
\efig
\end{equation}

 Here, we have used $a$ to denote the 1-cell in $X$ corresponding to the restriction of the composite map $C\circ i_B\colon B\rightarrow \cyl(A) \rightarrow X$ to the 1-cell in $B$ associated with the newly added vertex.

Suppose now the vertex has been added to $A$ at height 2, to get a globular sum $B \in \mathscr{L}(A)$. We need to consider all the vertices over the one at which the new edge has been adjoined, and again we distinguish according to the position of the newly added vertex.
Firstly, let's consider the case in which it has been added over a copy of $D_1$ (i.e. it is the only vertex above the one to which the new edge is attached). This determines two sub-globular sums of $A$, $A_<$ and $A_>$, which can be informally described as being obtained by removing the 1-cell over which we have attached the new vertex ($A_<$ being the one on the left). Precomposing $U$ (resp. $V$) with the inclusion of $A_<$ (resp. $A_>$) we get an $A_<$-shaped (resp. $A_>$-shaped) pasting diagram in $X$ which we call $U_<$ (resp.$V_>$). The square is then given by:
\begin{equation}
\label{cyl 4}
\begin{tikzpicture}
\tikzstyle{every node}=[circle, draw,
inner sep=0pt, minimum width=2pt]
\node[] (A') at (2,-3) {};

\node[] (B') at (1.5,-2.5) {};
\node[] (C') at (2,-2) {};
\node[] (D') at (2,-2.5) {};

\node[] (E') at (1,-2) {};
\node[] (F') at (1.5,-2) {};

\path [] (A') edge (B');
\path [draw=red, very thick] (D') edge (C');
\path [] (A') edge (D');
\path [] (B') edge (E');
\path [] (B') edge (F');
\end{tikzpicture}  \ \ \ \ \ \ \ \ \
\bfig
\morphism(-1000,0)|a|/@{>}@<0pt>/<700,0>[\cdot`\cdot;\rho\left(U_{<},V_{>} s(F)\right)]
\morphism(-1000,-400)|b|/@{>}@<0pt>/<700,0>[\cdot`\cdot;\rho\left(U_{<},V_{>} t(F)\right)]
\morphism(-300,0)|r|/@{>}@<0pt>/<0,-400>[\cdot`\cdot;\rho^{\ast}_{\tau}\left(\partial_{\tau}U_{<},\partial_{\tau}V_{>} F\right)]
\morphism(-1000,0)|l|/@{>}@<0pt>/<0,-400>[\cdot `\cdot;\rho^{\ast}_{\sigma}\left(\partial_{\sigma}U_{<},\partial_{\sigma}V_{>} F\right)]

\morphism(-600,-150)|l|/@{=>}@<0pt>/<-100,-100>[  ` ; ]
\efig
\end{equation}
Here, $\partial_{\epsilon}\circ \rho_{\epsilon}$ is the homogeneous-globular factorization of $\rho\circ \epsilon$ for $\epsilon=\sigma,\tau$, $F$ is the 2-cell that fills the 1-cylinder corresponding to the image via the functor $\cyl(\cdot)$ of the inclusion $D_1 \rightarrow A$ of the copy of $D_1$ over which we have added a new vertex, and $\partial_{\epsilon} W$ denotes, given a map $W\colon A\rightarrow X$, the precomposition of $W$ with $\partial_{\epsilon}\colon \partial A \rightarrow A$. Finally, $\rho_{\epsilon}^{\ast}$ is obtained as an extension of the following form:
\[
\bfig 
\morphism(0,0)|a|/@{>}@<0pt>/<700,0>[S^1`\partial A^+;(\partial_{\sigma}\circ \rho_{\epsilon},\partial_{\tau}\circ \rho_{\epsilon})]
\morphism(0,0)|b|/@{>}@<0pt>/<0,-400>[S^1`D_2;]
\morphism(0,-400)|b|/@{>}@<0pt>/<700,400>[D_2`\partial A^+;\rho_{\epsilon}^{\ast}]
\efig
\]where $\partial A^+$ is obtained from $\partial A$ by adjoining a new vertex in the same position as the one that was added to $A$ in order to get $B$.

If the new vertex $*_B$ is not the only one over the vertex $z$ to which the new edge has been added to, then we have to distinguish according to the order of the set of vertices over $z$. If $*_B$ is the maximal element, and it has been added to $\Sigma \alpha_q$, then the 1-cylinder we get has degenerate source, and can be depicted as follows:
\begin{equation}
\label{cyl 5}
\begin{tikzpicture}
\tikzstyle{every node}=[circle, draw,
inner sep=0pt, minimum width=2pt]
\node[] (a) at (6,-3) {};

\node[] (b) at (5.5,-2.5) {};
\node[] (c) at (6,-2) {};
\node[] (d) at (6.5,-2.5) {};

\node[] (e) at (5,-2) {};
\node[] (f) at (5.5,-2) {};

\path [] (a) edge (b);
\path [draw=red, very thick] (b) edge (c);
\path [] (a) edge (d);
\path [] (b) edge (e);
\path [] (b) edge (f);
\end{tikzpicture} \ \ \ \ \ \ \ \ \
\bfig
\morphism(-1000,0)|a|/@{>}@<0pt>/<700,0>[\cdot`\cdot;\rho\left(U_{<q},a U_q, V_{>q} \right)]
\morphism(-1000,-400)|b|/@{>}@<0pt>/<700,0>[\cdot`\cdot;\rho\left(U_{<q},\alpha U_q, V_{>q} \right)]
\morphism(-300,0)|r|/@{>}@<0pt>/<0,-400>[\cdot`\cdot;\rho^{\ast}_{\tau}\left(\partial_{\tau}U_{<q},\alpha,\partial_{\tau}V_{>q} \right)]
\morphism(-1000,0)|l|/@{=}@<0pt>/<0,-400>[\cdot `\cdot;degenerate ]

\morphism(-600,-150)|l|/@{=>}@<0pt>/<-100,-100>[  ` ; ]
\efig
\end{equation}
Here, we have denoted with $\alpha$ the 2-cell of the 1-cylinder $C_{\vert \Sigma \alpha_q}\circ \cyl(\partial_{\tau})$, and with $a$ its target 0-cylinder (viewed as a 1-cell).
Dually, if it is the minimal element, the 1-cylinder has degenerate target, and is of the following form:
\begin{equation}
\label{cyl 6}
\begin{tikzpicture}
\tikzstyle{every node}=[circle, draw,
inner sep=0pt, minimum width=2pt]
\node[] (a) at (14,-3) {};

\node[] (b) at (13.5,-2.5) {};
\node[] (c) at (14,-2) {};
\node[] (d) at (14.5,-2.5) {};

\node[] (e) at (13,-2) {};
\node[] (f) at (13.5,-2) {};

\path [] (a) edge (b);
\path [draw=red, very thick] (b) edge (e);
\path [] (a) edge (d);
\path [] (b) edge (f);
\path [] (b) edge (c);
\end{tikzpicture} \ \ \ \ \ \ \ \ \
\bfig
\morphism(-1000,0)|a|/@{>}@<0pt>/<700,0>[\cdot`\cdot;\rho\left(U_{<q},V_q \alpha, V_{>q} \right)]
\morphism(-1000,-400)|b|/@{>}@<0pt>/<700,0>[\cdot`\cdot;\rho\left(U_{<q}, V_q a, V_{>q} \right)]
\morphism(-300,0)|r|/@{=}@<0pt>/<0,-400>[\cdot`\cdot;degenerate]
\morphism(-1000,0)|l|/@{>}@<0pt>/<0,-400>[\cdot `\cdot; \rho^{\ast}_{\sigma}\left(\partial_{\sigma}U_{<q},\alpha,\partial_{\sigma}V_{>q} \right)]

\morphism(-600,-150)|l|/@{=>}@<0pt>/<-100,-100>[  ` ; ]
\efig
\end{equation}
Here, we have denoted with $\alpha$ the 2-cell of the 1-cylinder $C_{\vert \Sigma \alpha_q}\circ \cyl(\partial_{\sigma})$, and with $a$ its source 0-cylinder (viewed as a 1-cell).
Finally, if the new vertex has been added as the $r$-th element over the vertex the new edge is attached to, then we get sub-globular sum of $\Sigma \alpha_q\cong \Sigma D_1^{\otimes m}$ of the form $\Sigma D_1^{\otimes r}$ and $\Sigma D_1^{\otimes m-r}$. Corresponding to this subdivision we have a $\Sigma D_1^{\otimes r}$-shaped diagram in $X$ induced by $U$, that we denote with $U_q^{\leq r}$, and, similarly, a $\Sigma D_1^{\otimes m-r}$-shaped diagram induced by $V$, that we denote with $V_q^{\geq r}$.
The corresponding 1-cylinder is essentially a 2-cell in $X(x,y)$, since its source and target are degenerate, as depicted here below:
\begin{equation}
\label{cyl 7}
\begin{tikzpicture}
\tikzstyle{every node}=[circle, draw,
inner sep=0pt, minimum width=2pt]
\node[] (a) at (10,-3) {};

\node[] (b) at (9.5,-2.5) {};
\node[] (c) at (10,-2) {};
\node[] (d) at (10.5,-2.5) {};

\node[] (e) at (9,-2) {};
\node[] (f) at (9.5,-2) {};

\path [] (a) edge (b);
\path [draw=red, very thick] (b) edge (f);
\path [] (a) edge (d);
\path [] (b) edge (e);
\path [] (b) edge (c);
\end{tikzpicture} \ \ \ \ \ \ \ \ \
\bfig
\morphism(-1000,0)|a|/@{>}@<0pt>/<1000,0>[\cdot`\cdot;\rho\left(U_{<q}, {U_q}^{\leq r},{V_q}^{\geq r}\alpha, V_{>q} \right)]
\morphism(-1000,-400)|b|/@{>}@<0pt>/<1000,0>[\cdot`\cdot;\rho\left(U_{<q},\alpha {U_q}^{\leq r},{V_q}^{\geq r}, V_{>q} \right)]
\morphism(0,0)|r|/@{=}@<0pt>/<0,-400>[\cdot`\cdot;degenerate]
\morphism(-1000,0)|l|/@{=}@<0pt>/<0,-400>[\cdot `\cdot;\ \ \ \ \ \ \ \ degenerate]

\morphism(-450,-150)|l|/@{=>}@<0pt>/<-100,-100>[  ` ; ]
\efig 
\end{equation}
Here, we have denoted with $\alpha$ the 2-cell of the 1-cylinder given by the target of the $r$-th 2-cylinder in the image of $C_{\vert \Sigma \alpha_q}$.

The last case is that of a globular sum $B\in \mathscr{L}(A)$ in which the new vertex $*_B$ has been added to $A$ at height 3. Say the 2-cell the new edge has been attached to is the $r$-th in $\Sigma \alpha_q\cong \Sigma D_1^{\otimes m}$, then the associated 1-cylinder has degenerate source and target, and is of the following form:
\begin{equation}
\label{cyl 8}
\begin{tikzpicture}
\tikzstyle{every node}=[circle, draw,
inner sep=0pt, minimum width=2pt]
\node[] (a'') at (8,-3) {};

\node[] (b'') at (7.5,-2.5) {};
\node[] (c'') at (8,-2) {};
\node[] (d'') at (8.5,-2.5) {};
\node[] (f'') at (7,-2) {};
\node[] (e'') at (8,-1.5) {};

\path [] (a'') edge (b'');
\path [draw=red, very thick] (c'') edge (e'');
\path [] (a'') edge (d'');
\path [] (b'') edge (c'');
\path [] (b'') edge (f'');
\end{tikzpicture}
\bfig
\morphism(-1000,0)|a|/@{>}@<0pt>/<1000,0>[\cdot`\cdot;\rho\left(U_{<q}, {U_q}^{< r},s(F),{V_q}^{> r}a, V_{>q} \right)]
\morphism(-1000,-400)|b|/@{>}@<0pt>/<1000,0>[\cdot`\cdot;\rho\left(U_{<q}, {U_q}^{< r},t(F),{V_q}^{> r}a, V_{>q} \right)]
\morphism(0,0)|r|/@{=}@<0pt>/<0,-400>[\cdot`\cdot;degenerate]
\morphism(-1000,0)|l|/@{=}@<0pt>/<0,-400>[\cdot `\cdot;\ \ \ \ \ \ \ \ degenerate]

\morphism(-450,-150)|l|/@{=>}@<0pt>/<-100,-100>[  ` ; ]
\efig 
\end{equation}
Here, $F$ denotes the 3-cell of the 2-cylinder in $X$, whose 0-dimensional source we denoted by $a$, picked out by precomposing $C$ with $\cyl(D_2 \rightarrow A)$, where the copy of $D_2$ in question is the one that corresponds to the vertex in $A$ of height 2 over which $*_B$ has been added. 

So far, we have described a stack of $\vert \mathscr{L}(A)\vert$ vertically composable (possibly degenerate) 1-cylinders in $X(x,y)$. Its (vertical) composite is a 1-cylinder $C_t \rho(U)\curvearrowright \rho(V) C_s$ in $X(x,y)$, that transposes under the adjunction $\Sigma \dashv \Omega$ to give the desired 2-cylinder $C\circ\hat{\rho}\colon \rho(U) \curvearrowright \rho(V)$.
\subsection{The Division Lemma}
The proof of the crucial fact that $\mathbf{ev}_i=\pi_i \circ \mathbf{ev}$ is a trivial fibration for $i=0,1$ (where $\pi_i$ denotes the product projection onto the $i$-th factor) relies on the division lemma, i.e. the fact that given a pair of parallel $n$-cells $A,B$ and a $1$-cell $f$ in an $\infty$-groupoid $X$, any $(n+1)$-cell $H\colon fA \rightarrow fB$ (where juxtaposition denotes the choice of a whiskering operation) is homotopic to one of the form $f\overline{H}$ for some $\overline{H}\colon A \rightarrow B$ (this is essentially the content of Lemma 4.12 in \cite{AR2}). 

The proof of this lemma requires contractibility, and we were not able to generalize it to $\mathfrak{C}^{\mathbf{W}}$ (as defined in Definition \ref{D_W def}) in the case where $\mathfrak{C}$ is a coherator for $\infty$-categories. The three dimensional case can still be proven by hands, as follows. Note that, in the presence of both a left and a right inverse for every cell, any of them can be promoted to a two-sided inverse, therefore we will use the notation $f^{-1}$ with no reference to left or right.

If $n=1$ and we have a 2-cell in $X$ of the form:
\[\bfig 
\morphism(0,0)|a|/{@{>}@/^1em/}/<400,0>[a`c;fA]
\morphism(0,0)|r|/{@{>}@/^-1em/}/<400,0>[a`c;fB]

\morphism(200,100)|a|/@{=>}/<0,-150>[`;H]
\efig \] for $A,B\colon a \rightarrow b$ and $f\colon b\rightarrow c$, then we can define $\overline{H}$ as the following composite:
\[
\bfig
\morphism(0,0)|a|/@{>}@<0pt>/<400,0>[A`f^{-1}fA;\simeq]
\morphism(400,0)|a|/@{>}@<0pt>/<600,0>[f^{-1}fA`f^{-1}fB;f^{-1}H]
\morphism(1000,0)|a|/@{>}@<0pt>/<400,0>[f^{-1}fB`B;\simeq]
\efig 
\] where ``$\simeq$'' denotes coherence constraints that exist in $\mathfrak{C}^{\mathbf{W}}$. It is a routine exercise to check that $f\overline{H}$ is homotopic to $H$.

Turning to $n=2$, we assume we have a 3-cell $H\colon fA\rightarrow fB$. We define $\overline{H}\colon A \rightarrow B$ as the following composite of 3-cells:
\[
\bfig
\morphism(0,0)|a|/{@{>}@/^1em/}/<400,0>[a`b;s(A)]
\morphism(0,0)|r|/{@{>}@/^-1em/}/<400,0>[a`b;t(A)]

\morphism(200,100)|a|/@{=>}/<0,-150>[`;A]
\efig
\Rrightarrow
\bfig
\morphism(0,0)|a|/{@{>}@/^3.5em/}/<800,0>[a`b;s(A)]
\morphism(0,0)|r|/{@{>}@/^-3.5em/}/<800,0>[a`b;t(A)]

\morphism(0,0)|a|/{@{>}@/^1em/}/<350,0>[a`b;]
\morphism(0,0)|r|/{@{>}@/^-1em/}/<350,0>[a`b;]
\morphism(350,0)|a|/@{>}@<0pt>/<170,0>[b`c;f]
\morphism(520,0)|a|/@{>}@<0pt>/<280,0>[c`b;f^{-1}]
\morphism(400,300)|l|/@{=>}/<0,-150>[`;\simeq]
\morphism(400,-100)|l|/@{=>}/<0,-150>[`;\simeq]
\morphism(175,100)|l|/@{=>}/<0,-175>[`;A]
\efig 
\Rrightarrow
\bfig
\morphism(0,0)|a|/{@{>}@/^3.5em/}/<800,0>[a`b;s(B)]
\morphism(0,0)|r|/{@{>}@/^-3.5em/}/<800,0>[a`b;t(B)]

\morphism(0,0)|a|/{@{>}@/^1em/}/<350,0>[a`b;]
\morphism(0,0)|r|/{@{>}@/^-1em/}/<350,0>[a`b;]
\morphism(350,0)|a|/@{>}@<0pt>/<170,0>[b`c;f]
\morphism(520,0)|a|/@{>}@<0pt>/<280,0>[c`b;f^{-1}]
\morphism(400,300)|l|/@{=>}/<0,-150>[`;\simeq]
\morphism(400,-100)|l|/@{=>}/<0,-150>[`;\simeq]
\morphism(175,100)|l|/@{=>}/<0,-175>[`;B]
\efig 
\Rrightarrow
\bfig
\morphism(0,0)|a|/{@{>}@/^1em/}/<400,0>[a`b;s(B)]
\morphism(0,0)|r|/{@{>}@/^-1em/}/<400,0>[a`b;t(B)]

\morphism(200,100)|a|/@{=>}/<0,-150>[`;B]
\efig
\] Here, the 2-cells denoted with ``$\simeq$'' denote coherence constraints that exist in $\mathfrak{C}^{\mathbf{W}}$, and the first and last 3-cells are also composite of constraints, whereas the one in the middle is a whiskering of $H$ with the other cells depicted there. Again, it is a tedious but straightforward exercise to check that $f\overline{H}$ is homotopic (i.e. equal,for dimensionality reasons) to $\overline{H}$.

Finally, if $n=3$ we have to prove that $fA=fB$ implies $A=B$, which is entirely analogous to the arguments given so far.
\section{A path object in $\mathbf{Mod}({\mathfrak{C}_3}^{\mathbf{W}})$}
\label{S}
Given a coherator for 3-categories $\mathfrak{C}_3$, we are going to endow the globular set 
\[ (\p X)_k = \mathbf{Mod}({\mathfrak{C}_3}^{\mathbf{W}})\left(\cyl(D_k),X\right) \] with the structure of a $\mathfrak{C}_3$-model, which we can then extend to a ${\mathfrak{C}_3}^{\mathbf{W}}$-model thanks to the result of Section 6 in \cite{EL}, thus providing a proof of Proposition \ref{path object for Batanin 3-groupoids}. From now on we will drop the subscript and simply denote this coherator by $\mathfrak{C}$.
%It is clear from the previous section that, given an $n$-groupoid $X$, we can organize the collection of $k$-cylinders in $X$ as $k$ varies from 0 to $n$ into an $n$-globular set $\p X$ defined as follows:
%\[ (\p X)_k = \mathbf{Mod}(\mathfrak{G})\left(\cyl_n(D_k),X\right) \]
%We now want to show how to endow this $n$-globular set with the structure of an $n$-category when $n=3$. We will drop subscripts from the notation for cylinders, e.g. a 2-cylinder in a 3-groupoid $X$ will be represented by a map $F\colon\cyl(D_2)\rightarrow X$. $\mathfrak{G}$ will denote a chosen coherator for 3-groupoids.

It follows from \ref{fact of maps of gpds} that in the cellularity condition for a coherator for $n$-categories $\mathfrak{C}$, we can assume that all the ``basic'' operations of dimension $k$ are added at the $k$-th step of the tower that defines $\mathfrak{C}$. More precisely, we can assume that $\mathfrak{C}_{k+1}=\mathfrak{C}_{k}[X_k]$ with $X_k=\{(h_1,h_2)\colon D_k \rightarrow A\}$ for every $0\leq k \leq n$. In particular, if $n=3$, we can assume without loss of generality that $\mathfrak{C}\cong\mathfrak{C}_4$ fits into the diagram displayed below:
\begin{equation}
\label{tower for D}
\bfig 
\morphism(0,0)|a|/@{>}@<0pt>/<600,0>[ \Theta^{\leq 3}_0` \mathfrak{C}_1\cong \Theta^{\leq 3}_0 [X_0] ;i_0]
\morphism(600,0)|a|/@{>}@<0pt>/<800,0>[ \mathfrak{C}_1\cong \Theta^{\leq 3}_0 [X_0] ` \mathfrak{C}_2\cong \mathfrak{C}_1 [X_1] ;i_1]
\morphism(1400,0)|a|/@{>}@<0pt>/<800,0>[\mathfrak{C}_2\cong \mathfrak{C}_1 [X_1]` \mathfrak{C}_3\cong \mathfrak{C}_2 [X_2];i_2]
\morphism(2200,0)|a|/@{>}@<0pt>/<800,0>[\mathfrak{C}_3\cong \mathfrak{C}_2 [X_2]` \mathfrak{C}_4\cong \mathfrak{C}_3 [X_3];i_3]
\efig
\end{equation} 
We can adapt the argument used in Proposition 11.4 of \cite{EL} to find a lifting of the functor $\p$ as depicted below
\[\bfig
\morphism(0,-400)|a|/@{-->}@<0pt>/<600,400>[ \mathbf{Mod}(\mathfrak{C}^{\mathbf{W}})`\mathbf{Mod}(\mathfrak{C}_2);\p]
\morphism(0,-400)|a|/@{>}@<0pt>/<600,0>[\mathbf{Mod}(\mathfrak{C}^{\mathbf{W}})`[\G_3^{op},\mathbf{Set}];\p]
\morphism(600,0)|r|/@{>}@<0pt>/<0,-400>[\mathbf{Mod}(\mathfrak{C}_2)`[\G_3^{op},\mathbf{Set}];\mathbf{U}]
\efig
\]
It turns out that extending along $i_3$ is automatic, thanks to the following result.
\begin{lemma}
Suppose given a pair of $n$-cylinders $(F,G)$ in an $\mathfrak{C}^{\mathbf{W}}$-model $X$, such that $s(F)=s(G)$, $t(F)=t(G)$ and $F_0=G_0=A$, $F_1=G_1=B$. Then $F=G$.
\begin{proof}
If $n=0$ the result is clear. Assume $n>0$, then we get $(n-1)$-cylinders $\overline{F},\overline{G}$ in $ X\left(s(f),t(g)\right)$ where $f=F_s=G_s$ and $g=F_t=G_t$. By definition, we have $\overline{F},\overline{G}\colon gA \curvearrowright Bf$ and $\epsilon(\overline{F})=\epsilon(\overline{G})$ for $\epsilon=s,t$. Therefore, by inductive assumption we get that $\overline{F}=\overline{G}$, which concludes the proof.
\end{proof}
\end{lemma}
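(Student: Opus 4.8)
The plan is to argue by induction on the cylinder dimension $n$, exploiting the recursive shape of Definition \ref{cyl defi}: for $n>0$ an $n$-cylinder $F\colon A\curvearrowright B$ is nothing but a pair of $1$-cells $F_s,F_t$ together with an $(n-1)$-cylinder $\overline{F}\colon F_t A\curvearrowright BF_s$ living in the hom-model $X\left(s(F_s),t(F_t)\right)$. In the case of interest $X$ is a $\mathfrak{C}^{\mathbf{W}}$-model with $n$ equal to its top dimension, so the cylinder sits in maximal dimension; since passing from $X$ to a hom-model $\Omega$ drops both the cylinder dimension and the truncation level of the ambient groupoid by one, this top-dimensional situation is preserved at every stage of the recursion, and this is exactly what will make the base case usable.

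For the inductive step I would first record how the boundary of $F$ decomposes. From the inductive description of the source and target cylinders one reads off that, for $n>1$, the $(n-1)$-cylinder $s(F)$ has outer $1$-cells $F_s,F_t$ and underlying $(n-2)$-cylinder $s(\overline{F})$, and symmetrically for $t(F)$, while for $n=1$ one simply has $s(F)=F_s$ and $t(F)=F_t$. Hence the hypothesis $s(F)=s(G)$, $t(F)=t(G)$ unpacks into the equalities $F_s=G_s=:f$, $F_t=G_t=:g$, together with $s(\overline{F})=s(\overline{G})$ and $t(\overline{F})=t(\overline{G})$. Because the endpoints of $\overline{F}$ are the whiskered cells $gA$ and $Bf$, which depend only on the already-matched data $f,g$ and on $F_0=G_0=A$, $F_1=G_1=B$, the two $(n-1)$-cylinders $\overline{F}$ and $\overline{G}$ also share the same pair of endpoints. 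Thus $\overline{F}$ and $\overline{G}$ are parallel $(n-1)$-cylinders in $X\left(s(f),t(g)\right)$ with identical source cylinder, target cylinder and endpoints, so the inductive hypothesis forces $\overline{F}=\overline{G}$; since an $n$-cylinder is completely determined by the triple $(F_s,F_t,\overline{F})$, this yields $F=G$.

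The step I expect to be the real obstacle is the base of the induction, which is precisely where the truncation must be invoked. Unwinding the recursion applied to a top-dimensional cylinder shows that the only datum of $F$ not already contained in its boundary is a single cell one dimension above the top dimension of the ambient (truncated) model; such a cell carries no genuine freedom, since in a $\mathfrak{C}^{\mathbf{W}}$-model the only cells beyond the truncation are the forced equalities between parallel top-dimensional cells. Concretely, I would phrase the base case as the assertion that a $0$-cylinder valued in a $0$-truncated hom-model is determined by its two endpoints, which is immediate because the only available $1$-cells there are identities (so the cylinder exists only when the endpoints coincide and is then unique). The two points demanding care are auxiliary: one must check that the source/target–cylinder operation genuinely commutes with the inductive layering, so that the decomposition of $s(F)$ and $t(F)$ used above is legitimate, and that whiskering along $f$ and $g$ is compatible with taking sources and targets, so that the endpoints $gA$ and $Bf$ are indeed functions of the fixed boundary data alone; both are routine verifications from Definitions \ref{whiskering w} and \ref{cyl defi}.
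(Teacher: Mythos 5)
Your proof is correct and follows essentially the same route as the paper's: induct on $n$ using the recursive definition of a cylinder, extract $F_s=G_s$ and $F_t=G_t$ from the equality of the source and target cylinders, and apply the inductive hypothesis to $\overline{F},\overline{G}$ in the hom-model. The only difference is that you spell out the base case --- which the paper simply declares ``clear'' --- by observing that the truncation level drops in step with the cylinder dimension, so the bottom of the recursion is a cell above the top dimension and hence forced; that is indeed the implicit reason the paper's base case (and hence the lemma, read for top-dimensional cylinders) holds.
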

We can now apply this lemma to the situation where we have a pair of parallel operations $\alpha,\beta \colon D_3 \rightarrow A$ in $X_3$, so that $\alpha=\beta$ in $\mathfrak{C}$, and interpretations $\cyl(\alpha),\cyl(\beta)\colon\cyl(D_3) \rightarrow \cyl(A)$ which are compatible with the map $\iota \colon\D_3 \coprod D_3 \rightarrow \cyl(D_3)$. We want to prove that $\cyl(\alpha)=\cyl(\beta)$, and we do so representably. Given $H\colon \cyl(A) \rightarrow Y$, with $Y \in \mathbf{Mod}(\mathfrak{C}^{\mathbf{W}})$, we see that \[(H\circ \cyl(\alpha))_{\epsilon}=H_{\epsilon}\circ \alpha=H_{\epsilon}\circ \beta=(H\circ \cyl(\beta))_{\epsilon}\] for $\epsilon=0,1$. Moreover, $s(H\circ \cyl(\alpha))=H \circ \cyl(\alpha\circ \sigma)=H \circ \cyl(\beta\circ \sigma)=s(H\circ \cyl(\beta))$, and similarly for the target. This implies $\cyl(\alpha)=\cyl(\beta)$.

We are now left with the problem of finding a lift of the form:
\begin{equation}
\label{extension to D3}
\bfig
\morphism(0,-400)|a|/@{-->}@<0pt>/<600,400>[ \mathbf{Mod}(\mathfrak{C}^{\mathbf{W}})`\mathbf{Mod}(\mathfrak{C}_3);\p]
\morphism(0,-400)|a|/@{>}@<0pt>/<600,0>[\mathbf{Mod}(\mathfrak{C}^{\mathbf{W}})`\mathbf{Mod}(\mathfrak{C}_2);\p]
\morphism(600,0)|r|/@{>}@<0pt>/<0,-400>[\mathbf{Mod}(\mathfrak{C}_3)`\mathbf{Mod}(\mathfrak{C}_2);\mathbf{U}]
\efig
\end{equation}
 and this, in turn, amounts to defining a map $\cyl(\rho)\colon \cyl(D_3)\rightarrow \cyl(A)$ for every $\rho \colon D_3 \rightarrow A$ added as a filler of a pair $(h_1,h_2) \in X_2$, in such a way that $\cyl(\rho)\circ \cyl(\sigma)=\cyl(h_1)$ and $\cyl(\rho)\circ \cyl(\tau)=\cyl(h_2)$. Note that these last 2 equations make sense, since $h_1,h_2 \in \mathfrak{C}_2$.

The strategy for constructing such maps will be the same as the one used to get the extension to $\mathfrak{C}_2$, namely to prove that we can endow every interpretation of a 2-dimensional operation $\cyl(\phi)\colon \cyl(D_2)\rightarrow \cyl(A)$ with a modification
\begin{equation}
\label{(1)}
\Theta_{\phi}\colon \widehat{\phi} \Rightarrow \cyl(\phi)
\end{equation}
in a way that is compatible with source and target (as will be explained in more detail later on), so that we can then use the following lemma to produce the map we are after.
\begin{lemma}
	\label{extension of modifications}
	Given a $\mathfrak{C}^{\mathbf{W}}$-model $X$, an $m$-cylinder $C\colon A \curvearrowright B$ in $X$, a pair of parallel $(m-1)$-cylinders $D_{s},D_t\colon\cyl_n(D_{m-1}) \rightarrow X$ and parallel modifications $\Theta_1 \colon s(C)\Rightarrow D_s, \ \Theta_2\colon t(C)\Rightarrow D_t$ there exists an $m$-cylinder $D\colon \cyl_n(D_m)\rightarrow X$ such that $s(D)=D_s, \ t(D)=D_t$ and a modification $\Theta \colon C \Rightarrow D$ such that $s(\Theta)=\Theta_1$ and $t(\Theta)=\Theta_2$.
\begin{proof}
The proof is just a word-by-word copy and paste of that of Lemma 10.5 in \cite{EL}, promoting left or right inverses to two-sided ones when necessary.
\end{proof}
\end{lemma}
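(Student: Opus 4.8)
The plan is to run the induction on $m$ underlying the recursive definition of modifications, realizing the construction of the pair $(D,\Theta)$ as the solution of an extension problem for the coglobular object $\mathbf{M}_\bullet$. The given data — the source cylinder $C\colon\cyl_n(D_m)\to X$ together with the source and target modifications $\Theta_1=s(\Theta)$ and $\Theta_2=t(\Theta)$, which are maps $\mathbf{M}_{m-1}\to X$ whose source cylinders are the boundary cylinders $s(C),t(C)$ of $C$ and whose targets are the prescribed $D_s,D_t$ — assemble into a map out of the latching-type subobject of $\mathbf{M}_m$ glued from $\cyl_n(D_m)$ and two copies of $\mathbf{M}_{m-1}$ along $s(C),t(C)$. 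Extending this map to all of $\mathbf{M}_m$ simultaneously produces the modification $\Theta$ and, as its target cylinder, the sought $m$-cylinder $D$, with $s(\Theta)=\Theta_1$ and $t(\Theta)=\Theta_2$ holding by construction.

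The base case is immediate. For the inductive step I would unwind the definition of a modification between $m$-cylinders into the pair of $2$-cells $\Theta_s,\Theta_t$ and a modification $\bar\Theta$ of $(m-1)$-cylinders in the hom-model $X(x,y)$, where $x=s^m(C)\circ\sigma$ and $y=t^m(C)\circ\tau$. The $2$-cells $\Theta_s,\Theta_t$ and the source and target of $\bar\Theta$ are read off from the prescribed boundary $\Theta_1,\Theta_2$, so the inductive hypothesis, applied in $X(x,y)$, furnishes $\bar\Theta$ as soon as its own top-dimensional boundary has been filled. The single remaining datum is therefore the top coherence cell witnessing $\bar\Theta\colon\Upsilon(\iota_0 C,\Theta_t)\otimes \bar C\otimes\Gamma(\Theta_s,\iota_1 C)\Rightarrow \bar D$.

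This top cell is the only point at which the argument departs from Lemma 10.5 of \cite{EL}, where it was produced from contractibility of globular sums — a property that fails for $\mathfrak{C}^{\mathbf{W}}$-models. Following the pattern already used in this section for $\Gamma$, $\Upsilon$ and the coherence cylinders, I would instead obtain the corepresenting filler by setting it up as an extension along a latching map that is a pushout of a boundary inclusion $S^{j}\to D_{j+1}$ with target a globular sum of dimension at most $j$, invoke Lemma \ref{extension of coglobular objects} to reduce its existence to the corresponding problem in $\nCat{\omega}$, and conclude by Lemma \ref{vanishing of higher homotopy groups in globular sums}. At every point where the construction of the composite $\Upsilon(\iota_0 C,\Theta_t)\otimes \bar C\otimes\Gamma(\Theta_s,\iota_1 C)$ or of the filler requires inverting a cell, the inverse is taken from the chosen system of one-sided inverses and, whenever a two-sided inverse is needed, promoted to one via Remark \ref{two-sided vs left and right}.

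The principal obstacle I expect is bookkeeping rather than conceptual. One must verify at each stage that the pair of cells to be filled is admissible and parallel, so that the reduction to $\nCat{\omega}$ and the dimension count of Lemma \ref{vanishing of higher homotopy groups in globular sums} genuinely apply; since $C$ is an $m$-cylinder the globular sums arising are of dimension at most $m+1$ and the cells filled are correspondingly of bounded dimension, so this holds. One must also check that the substitutions of one-sided for two-sided inverses are carried out coherently, so that the whiskered composite $\Upsilon(\iota_0 C,\Theta_t)\otimes \bar C\otimes\Gamma(\Theta_s,\iota_1 C)$ keeps the correct source and target through every level of the recursion. Granting this, the extension exists at each stage, and assembling the tower yields the desired $D$ and $\Theta$.
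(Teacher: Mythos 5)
Your proposal is correct and matches the paper's (extremely terse) proof, which simply declares the argument to be a word-by-word copy of Lemma 10.5 of \cite{EL} with one-sided inverses promoted to two-sided ones via Remark \ref{two-sided vs left and right}. Your reconstruction — induction along the recursive definition of modifications, with the contractibility-based fillers of \cite{EL} replaced by extensions along latching maps reduced to $\nCat{\omega}$ via Lemmas \ref{extension of coglobular objects} and \ref{vanishing of higher homotopy groups in globular sums} — is exactly the adaptation the paper carries out throughout Section \ref{main constructions} and implicitly invokes here.
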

In fact, we can apply this lemma to the situation depicted in the diagram below, thus getting the desired extension:
\[
\bfig
\morphism(0,0)|a|/{@{>}@/^1.5em/}/<1700,0>[\cyl(S^{2})`X;(\widehat{h_1},\widehat{h_2})]
\morphism(0,0)|b|/{@{>}@/^-1.5em/}/<1700,0>[\cyl(S^{2})`X;(\cyl(h_1), \cyl(h_2))\ \ \ \ ]
\morphism(850,125)|r|/@{=>}@<0pt>/<0,-250>[`; (\Theta_{h_1},\Theta_{h_2})]
\morphism(0,0)|a|/{@{>}@//}/<0,-1200>[\cyl(S^{2})`\cyl(D_3);(\cyl(\sigma),\cyl(\tau))]
\morphism(0,-1200)|a|/{@{>}@/^1.2em/}/<1700,1200>[\cyl(D_3)`X;\widehat{\rho}]
\morphism(0,-1200)|b|/{@{-->}@/^-1.2em/}/<1700,1200>[\cyl(D_3)`X;\cyl(\rho)]
\morphism(775,-525)|r|/@{=>}@<0pt>/<150,-150>[`; \Theta]
\efig
\]
We are going to prove several lemmas to obtain the modification in \eqref{(1)}. To simplify some arguments, we will sometimes assume (without loss of generality) that our computations happen in dimension $n=\infty$, thus replacing identities with appropriate cells. The result we are looking for can then be obtained by simply quotienting out these higher cells. Also, all the results that hold true in $\mathbf{Mod}(\mathfrak{C})$ will be proven in that context using the techniques illustrated in Section \ref{main constructions}. The case of $\mathfrak{C}^{\mathbf{W}}$-models then follows, as usual, by applying the free functor $F\colon \mathbf{Mod}(\mathfrak{C}) \rightarrow \mathbf{Mod}(\mathfrak{C}^{\mathbf{W}})$, that is easily seen to preserve cylinders, vertical composites of them and modifications. In the previous section we recalled the salient features of the construction of the map $\hat{\rho}\colon \cyl (D_k)\rightarrow \cyl(A)$ associated with a homogeneous map $\rho \colon D_k \rightarrow A$ in $\mathfrak{C}$. We refer the reader to Definition 9.15 of \cite{EL} for a fully detailed version. 

We start with a lemma that allows us to ``plug'' modifications of globular sums of cylinders into the elementary interpretation of a 2-dimensional operation.
\begin{lemma}
	\label{(3)}
Assume given a homogeneous operation $\rho \colon D_2 \rightarrow A$ in $\mathfrak{C}$, a $\mathfrak{C}$-model X and a pair of cylinders $C,D\colon\cyl(A)\rightarrow X$ that agree on the 0-cells of $A$ (i.e. each inclusion $\cyl\left( D_0\rightarrow A\right)$ equalizes these maps), with $C,D\colon U \curvearrowright V$. Given a modification $\Theta\colon C \Rightarrow D$ such that for each globular map $D_1\rightarrow A$, the induced modification $\bfig  \morphism(0,0)|b|/@{>}@<0pt>/<300,0>[\mathbf{M}_1` \mathbf{M}_A;]  \morphism(300,0)|a|/@{>}@<0pt>/<300,0>[\mathbf{M}_A` X;\Theta] \efig $ is essentially a 3-cell, we get an induced modification of the form $\Theta \circ \hat{\rho} \colon C\circ \hat{\rho} \Rightarrow D \circ \hat{\rho}$.
\begin{proof}
The proof is structured in the following manner: since both cylinders $C \circ \hat{\rho} $ and $ D \circ \hat{\rho}$ are built as the vertical composite of a stack of cylinders, we will construct compatible modifications from each of the cylinders that compose the stack associated with $C \circ \hat{\rho} $ towards the corresponding ones in the stack associated with $D \circ \hat{\rho} $. We will then conclude by using the bicategorical structure described in part B of the appendix to compose up these modifications thus getting the desired map $\Theta\circ \hat{\rho}$.
Let \[\begin{pmatrix}
i_1 &&i_2 & \ldots&i_{m-1} & &i_m\\
& i'_1 & &\ldots&& i'_{m-1}
\end{pmatrix}\] be the table of dimensions of $A$. Since $\rho$ is homogeneous, we have $\dim(A)\leq 2$, and therefore $i_k=1,2$ for every $1\leq k \leq m$. By precomposing with the appropriate colimit inclusions we thus get cylinders $C_k,D_k\colon \cyl(D_{i_k}) \rightarrow X$. 

The cylinders associated with case \eqref{cyl 1} to \eqref{cyl 3} in both stacks coincide thanks to the assumptions, thus we can use identity modifications in these cases.
We now consider case \eqref{cyl 4}: i.e globular sums $B\in \mathscr{L}(A)$ in which we added a new vertex $*_B$ to $A$ at height $\height(*_B)=2$, in such a way that this new vertex is the unique element of the fiber over the vertex of height one that is right below it (this is  in the explicit description of cylinders outlined in the previous section). Fix a globular sum $B$ in this family, such that the vertex $*_B$ has been added to $A$ over $D_{i_r}=D_1$, and consider the vertical stacks of 1-cylinders whose composites are the transpose of $C\circ \hat{\rho}$ and $ D \circ \hat{\rho}$ respectively. The 2-cell in $B$ corresponding to the vertex $*_B$ picks out the 2-cell associated with the 1-cylinder $C_r$ (resp.$D_r$) via the composites 
\[\
\bfig 
\morphism(0,0)|a|/@{>}/<400,0>[B`\cyl(A);i_B]
 \morphism(400,0)|a|/@{>}/<400,0>[\cyl(A)`X;C]
 
 \morphism(1000,0)|a|/@{>}/<400,0>[B`\cyl(A);i_B]
 \morphism(1400,0)|a|/@{>}/<400,0>[\cyl(A)`X;D]
\efig 
\]
We can use the components of $\Theta$ to construct the following boundary of a 1-modification in $B(x,y)$ (for $x=s^{i_1}(C_1)_0, \ y=t^{i_m}(C_m)_1$), where the 1-cylinders $\Gamma_B$ and $\Delta_B$ are the ones associated with the globular sum $B$ in the two stacks, as follows (using the notation established in the previous section):
\[
\bfig 
\morphism(-1000,0)|a|/@{>}@<0pt>/<2300,0>[a`b;\rho\left(U_{< r},V_{> r} s(C_r)\right)]
\morphism(-1000,-700)|b|/@{>}@<0pt>/<2300,0>[a'`b';\rho\left(U_{< r},V_{> r} t(C_r)\right)]
\morphism(1300,0)|r|/{@{>}@/^3em/}/<0,-700>[b`b';\rho^{\ast}_{\tau}\left(U^{\tau}_{<r},V^{\tau}_{>r} \overline{C_r}\right)]
\morphism(-1000,0)|l|/{@{>}@/^-3em/}/<0,-700>[a`a';\rho^{\ast}_{\sigma}\left(U^{\sigma}_{<r},V^{\sigma}_{>r} \overline{C_r}\right)]
\morphism(1300,0)|l|/{@{>}@/^-3em/}/<0,-700>[b`b';\rho^{\ast}_{\tau}\left(U^{\tau}_{<r},V^{\tau}_{>r} \overline{D_r}\right)]
\morphism(-1000,0)|r|/{@{>}@/^3em/}/<0,-700>[a`a';\rho^{\ast}_{\sigma}\left(U^{\sigma}_{< r},V^{\sigma}_{>r} \overline{D_r}\right)]
\morphism(-900,-350)|a|/@{=>}/<-200,0>[ ` ;U^{\sigma}_{<r},V^{\sigma}_{>r} \overline{\Theta_r}]
\morphism(1400,-350)|a|/@{=>}/<-200,0>[` ;U^{\tau}_{<r},V^{\tau}_{>r} \overline{\Theta_r}]

\morphism(250,-350)|a|/{@{=>}@/^-1em/}/<-200,0>[ ` ;\Gamma_B]
\morphism(250,-350)|b|/{@{=>}@/^1em/}/<-200,0>[` ;\Delta_B]
\efig 
\] Here, we have committed a minor abuse of language in denoting by $U^{\sigma}$ what we normally denote with $\partial_{\sigma} U$, and with $U^{\sigma}_{<r},V^{\sigma}_{>r} \overline{\Theta_r}$ the result of composing that pasting diagram with a chosen operation whose boundary is given by $\left(\partial_{\sigma}
,\partial_{\tau} \right)\circ \rho^{\ast}_{\sigma}$, and similarly for the analogues with $\tau$.
%Here, juxtaposition represents the result of composing cells using the appropriate operations (it should be clear from the context which operations we are using and where in the diagram). 
	We can now use the fact that a filler certainly exists in $\nCat{\omega}$ to extend this to a modification of 1-cylinders, and this concludes the construction for the first case.

Let us now address the case of globular sums $B\in \mathscr{L}(A)$ of case \eqref{cyl 5} to \eqref{cyl 8} that appear consecutively in $\mathcal{L}(A)$. We will build a modification involving the sub-stack associated with this subset of $\mathcal{L}(A)$, all at once rather than cylinder by cylinder.
%This time, we focus not on a single one but on the set of such globular sums where these new vertices project down to the same vertex at height 1 in $A$, and as we let these vertices vary we clearly recover the family of all these globular sums.
Let $A\cong \Sigma \alpha_1\plus{D_0}\ldots \plus{D_0}\Sigma \alpha_p$ be the decomposition of $A$. We can consider the maximal sub-globular sum of $A$ of the form $D_2\plus{D_1}\ldots\plus{D_1}D_2\cong \Sigma D_1^{\otimes k}\cong \Sigma \alpha_q$ for some $q$ that contains the copy of $D_2$ to which the new edges have been adjoined. The globular inclusion $\Sigma D_1^{\otimes k}\rightarrow A$ picks out $k$ composable 2-cylinders $\Gamma_1,\ldots, \Gamma_k $ via $C$ and $\Delta_1,\ldots,\Delta_k$ via $D$. Notice that there exists an integer $r$ such that $\Gamma_i=C_{r+i}$, and the same holds if we replace $C$ and $\Gamma$ with $D$ and $\Delta$, with the same $r$. Consider the vertical stacks of 1-cylinders whose composites are the transpose of $C\circ \hat{\rho}$ and $ D \circ \hat{\rho}$ respectively. 
%The 3-cell in $B$ corresponding to the vertex $*_B$ picks out the 3-cell associated with the 2-cylinder $C_p$ (resp.$D_p$) via the composites 
%\[\
%\bfig 
%\morphism(0,0)|a|/@{>}/<400,0>[B`\cyl(A);i_B]
%\morphism(400,0)|a|/@{>}/<400,0>[\cyl(A)`X;C]
%
%\morphism(1000,0)|a|/@{>}/<400,0>[B`\cyl(A);i_B]
%\morphism(1400,0)|a|/@{>}/<400,0>[\cyl(A)`X;D]
%\efig 
%\]
The sub-stack associated with the globular sums in the ordered set $\mathscr{L}(A)$ comprised between the one in which the new edge has been added at the far right of the corolla represented by $\Sigma D_1^{\otimes k}$ and the one in which it has been added at the far left is mapped in $X(x,y)$ under $C$ to a pasting diagram of the form:
\[
\bfig 
\morphism(0,0)|a|/@{>}/<2000,0>[a`b;\rho\left(U_{<q},d U_q, V_{>q} \right)]	
\morphism(0,0)|a|/@{>}/<0,-2000>[a`c;\rho^{\ast}_{\sigma}\left(\partial_{\sigma}U_{<q},s(\Gamma_1),\partial_{\sigma}V_{>q} \right) ]
\morphism(0,-2000)|b|/@{>}/<2000,0>[c`d;\rho\left(U_{<q}, V_q e, V_{>q} \right)]
\morphism(2000,0)|r|/@{>}/<0,-2000>[b`d;\rho^{\ast}_{\tau}\left(\partial_{\tau}U_{<q},t(\Gamma_k),\partial_{\tau}V_{>q} \right)]

\morphism(0,0)|a|/{@{>}@/^7.5em/}/<2000,-2000>[a`d;h_1]
\morphism(0,0)|a|/{@{>}@/^5em/}/<2000,-2000>[a`d;h_2]
\morphism(0,0)|a|/{@{>}@/^2.5em/}/<2000,-2000>[a`d;h_3]
\morphism(0,0)|r|/{@{>}@/^-2.5em/}/<2000,-2000>[a`d;h_{2k-2}]
\morphism(0,0)|r|/{@{>}@/^-7.5em/}/<2000,-2000>[a`d;h_{2k}]
\morphism(0,0)|r|/{@{>}@/^-5em/}/<2000,-2000>[a`d; h_{2k-1}]
\morphism(1600,-150)|a|/@{=>}/<-150,-150>[ ` ;\simeq]
\morphism(1350,-400)|a|/@{=>}/<-100,-100>[ ` ;\alpha_1]
\morphism(1200,-600)|a|/@{=>}/<-100,-100>[ ` ;\alpha_2]
\morphism(1100,-750)|a|/@{=>}/<-100,-100>[ ` ;\alpha_3]
\morphism(1000,-950)|a|/@{}/<0,0>[ ` ;\ldots]
\morphism(950,-950)|a|/@{=>}/<-100,-100>[ ` ;\alpha_{2k-3}]
\morphism(750,-1100)|a|/@{=>}/<-100,-100>[ ` ;\alpha_{2k-2}]
\morphism(590,-1300)|a|/@{=>}/<-100,-100>[ ` ; \ \ \ \alpha_{2k-1}]
\morphism(350,-1550)|a|/@{=>}/<-150,-150>[ ` ;\simeq]
\efig 
\]
where $d=t^2(\Gamma_1)$, $e=s^2(\Gamma_1)$. Obviously we get a similar one replacing every occurrence of $C$ with $D$ and of $\Gamma$ with $\Delta$. Here, we have set: \[h_{2m+1}=\rho\left(U_{<q}, {U_q}^{< r},s(\underline{\Gamma_{k-m}}),{V_q}^{> r}a, V_{>q} \right)\]  \[h_{2m}=\rho\left(U_{<q}, {U_q}^{< r},t(\underline{\Gamma_{k-m}}),{V_q}^{> r}a, V_{>q} \right)\] where we have used $\underline{F}$ to denote the underlying 3-cell of a 2-cylinder $F$, and $t(\Gamma_0)$ is defined to be $s(\Gamma_1)$. The 2-cells in $X(x,y)$ labelled with $\alpha$'s represent 1-cylinders whose source and target are degenerate. In particular, each $\alpha_{2m+1}$ is a whiskering of the 3-cell in $\Gamma_{k-m}$ and each $\alpha_{2m}$ is an associativity constraint, for every $0\leq m \leq k-1$, as explained in detail at the end of the previous section.
We can use the components of $\Theta$ to find a modification between the vertical composites of these (degenerate) cylinders using the following lemma, which concludes the proof. 
\end{proof}
\end{lemma}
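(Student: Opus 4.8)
The plan is to exploit the explicit description of $\hat{\rho}$ recalled above. Since $\rho$ is homogeneous we have $\dim(A)\le 2$, and after transposing along $\Sigma\dashv\Omega$ both $C\circ\hat{\rho}$ and $D\circ\hat{\rho}$ are realised as the vertical composite of a stack of (possibly degenerate) $1$-cylinders in $X(x,y)$ indexed by the \emph{same} ordered set $\mathscr{L}(A)$. Writing $\Gamma_B$ and $\Delta_B$ for the $1$-cylinders attached to $B\in\mathscr{L}(A)$ in the two stacks, the idea is to produce, for each $B$, a modification $\Gamma_B\Rightarrow\Delta_B$ in $X(x,y)$ whose source and target data match along the boundaries dictated by vertical composability, and then to assemble these into the single modification $\Theta\circ\hat{\rho}$ using the bicategorical composition of cylinders and modifications recorded in the appendix. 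Throughout, I would reduce every existence question for a filler to the corresponding one in $\nCat{\omega}$ via Lemma \ref{extension of coglobular objects}, where it is settled by Lemma \ref{vanishing of higher homotopy groups in globular sums} since all the globular sums involved have dimension at most $3$.

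First I would dispose of the easy cases. For the globular sums $B$ arising in cases \eqref{cyl 1}--\eqref{cyl 3}, where the new vertex sits at height one, the two attached $1$-cylinders involve only the action of $C$ and $D$ on the low-dimensional skeleton of $A$, which is controlled by the hypotheses (agreement on $0$-cells and the requirement that $\Theta$ restrict to an essentially $3$-dimensional datum on each globular $D_1\to A$); hence $\Gamma_B=\Delta_B$ and I take identity modifications. The first genuine case is \eqref{cyl 4}, a vertex adjoined at height two as the unique element over a height-$1$ vertex, say over the copy $D_{i_r}=D_1$. Here the attached $1$-cylinders are built from $\rho^{\ast}_{\sigma},\rho^{\ast}_{\tau}$ together with the $2$-cell coming from $\cyl(D_1\to A)$, and by assumption the restriction of $\Theta$ along this globular $D_1\to A$ is essentially a $3$-cell $\overline{\Theta_r}$. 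I would then write down the boundary of a $1$-modification $\Gamma_B\Rightarrow\Delta_B$ whose vertical sides are the whiskered cells $U^{\sigma}_{<r},V^{\sigma}_{>r}\,\overline{\Theta_r}$ and $U^{\tau}_{<r},V^{\tau}_{>r}\,\overline{\Theta_r}$, and fill it in $\nCat{\omega}$.

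For the block of consecutive globular sums of cases \eqref{cyl 5}--\eqref{cyl 8}, I would not proceed cylinder by cylinder but treat the whole sub-stack at once. Using the decomposition $A\cong\Sigma\alpha_1\plus{D_0}\cdots\plus{D_0}\Sigma\alpha_p$ of Lemma \ref{decomposition of glob sum}, these cases come from vertices added inside a maximal corolla $\Sigma D_1^{\otimes k}\cong\Sigma\alpha_q$; the inclusion $\Sigma D_1^{\otimes k}\to A$ selects composable $2$-cylinders $\Gamma_1,\dots,\Gamma_k$ via $C$ (resp.\ $\Delta_1,\dots,\Delta_k$ via $D$), and the sub-stack assembles into a pasting diagram in $X(x,y)$ of whiskerings of the underlying $3$-cells $\underline{\Gamma_i}$ interspersed with associativity constraints. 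Restricting $\Theta$ to $\Sigma\alpha_q$ furnishes $3$-cells $\underline{\Theta_i}\colon\underline{\Gamma_i}\Rightarrow\underline{\Delta_i}$, and these, together with the coherence constraints (matched by naturality), give the boundary of a modification between the two pasting diagrams; I would fill it using a dedicated gluing lemma, to be stated next.

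The main obstacle is not the existence of any single filler, which is always available by the reduction to $\nCat{\omega}$, but the \emph{compatibility} of the piecewise modifications: to glue the family $\{\Gamma_B\Rightarrow\Delta_B\}_{B\in\mathscr{L}(A)}$ into one modification $\Theta\circ\hat{\rho}$ I must ensure that the source and target $2$-cells chosen at each $B$ coincide with those of the adjacent cylinder in the stack, so that the vertical composites line up. This forces the boundary data in case \eqref{cyl 4} and the boundary $3$-cells of the block \eqref{cyl 5}--\eqref{cyl 8} to be selected coherently with the whiskering operations $\rho^{\ast}_{\epsilon}$, and it is precisely this bookkeeping---rather than any deep homotopical input---that the bicategorical composition in the appendix is designed to absorb, allowing the individually constructed modifications to be composed along the stack into the desired $\Theta\circ\hat{\rho}\colon C\circ\hat{\rho}\Rightarrow D\circ\hat{\rho}$.
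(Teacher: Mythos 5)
Your proposal follows essentially the same route as the paper's own proof: the same case division of $\mathscr{L}(A)$ (identity modifications for cases \eqref{cyl 1}--\eqref{cyl 3}, a filled boundary built from the whiskered $\overline{\Theta_r}$ for case \eqref{cyl 4}, and a single block treatment of cases \eqref{cyl 5}--\eqref{cyl 8} via the maximal corolla $\Sigma D_1^{\otimes k}$), the same reduction of fillers to $\nCat{\omega}$, and the same reliance on a subsequent whiskering/gluing lemma together with the bicategorical structure of the appendix to assemble the pieces. The argument is correct and matches the paper's.
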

The following is a result that is needed in the proof of the previous lemma, but we only concern ourselves with a small simplification of it, leaving the (straightforward) proof of the generalization of the result to the interested reader. The simplification consists of restricting to the case $k=2$, following the notation established above. Nevertheless, the proof of the general case is entirely similar and has no more genuine content than the one we present.
\begin{lemma}
Assume given 2-cylinders $C,D\colon A_0 \curvearrowright B_0$ and $G,H\colon A_1 \curvearrowright B_1$ in a $\mathfrak{C}$-model $X$, with $t(C)=s(G), t(D)=s(H)$, together with 2-dimensional pasting diagrams $\epsilon\colon E\rightarrow X,\phi\colon F \rightarrow X$ with $s^2(\epsilon)=t^2(B_0), t^2(\phi)=s^2(A_0)$ and an operation $\rho \colon D_2 \rightarrow F\plus{D_0} D_2\plus{D_1}D_2\plus{ D_0}E$ in $\mathfrak{C}$. This implies that, in particular, $t(A_0)=s(A_1)$ and $t(B_0)=s(B_1)$. Also, assume given modifications $\Theta\colon C\Rightarrow D, \Theta'\colon G \Rightarrow H$, with $t(\Theta)=s(\Theta')$, whose sources and targets, denoted respectively with $S\colon s(C)\Rightarrow s(D),S'\colon s(G)\Rightarrow s(H)$ and $T\colon t(D) \Rightarrow t(C),T'\colon t(G) \Rightarrow t(H)$, are essentially represented by 3-cells (i.e. they have trivial 0-dimensional boundary). Then we get an induced modification $\epsilon\Theta'\Theta \phi$ between the vertical composite depicted below (where we have used $\underline{F}$ to denote the underlying 3-cell of a 2-cylinder $F$, $\alpha_2$ is simply an associativity constraint, and the 2-cells labelled with ``$\simeq$'' are also given by coherence constraints) and the one obtained by replacing each occurrence of $C$ with $D$ and of $G$ with $H$, with corresponding $\beta$'s in place of $\alpha$'s and $g$'s in place of $h$'s.
\[
\bfig 
\morphism(0,0)|a|/@{>}/<2200,0>[x`y;\epsilon(C_t A_1)(C_t A_0)\phi]	
\morphism(0,0)|a|/@{>}/<0,-2200>[x`w;\epsilon s(C)\phi]
\morphism(0,-2200)|b|/@{>}/<2200,0>[w`z;\epsilon(B_1 C_s)(B_0 C_s)\phi]
\morphism(2200,0)|r|/@{>}/<0,-2200>[y`z;\epsilon t(G) \phi]

\morphism(0,0)|a|/{@{>}@/^7em/}/<2200,-2200>[x`z;h_1]
\morphism(0,0)|l|/{@{>}@/^-7em/}/<2200,-2200>[x`z;h_4]
\morphism(0,0)|a|/{@{>}@/^3em/}/<2200,-2200>[x`z;h_2]
\morphism(0,0)|l|/{@{>}@/^-3em/}/<2200,-2200>[x`z;h_3]
\morphism(1700,-200)|r|/@{=>}/<-200,-200>[ ` ;\simeq]
\morphism(1450,-500)|a|/@{=>}/<-200,-200>[ ` ;\alpha_1]

\morphism(1100,-900)|a|/@{=>}/<-200,-200>[ ` ;\alpha_2]
\morphism(750,-1250)|a|/@{=>}/<-200,-200>[ ` ;\alpha_3]
\morphism(450,-1550)|a|/@{=>}/<-200,-200>[ ` ;\simeq]
\efig 
\] The notation is defined as follows: \begin{itemize}
\item $h_1=\epsilon(t(G)(C_t A_1))(C_t A_0)\phi$, $h_2=\epsilon((B_1C_s)s(G))(C_t A_0)\phi$, $\alpha_1= \epsilon\underline{G}(C_t A_0)\phi$
\item $h_3=\epsilon(B_1C_s)(s(G)(C_t A_0))\phi$, $h_4=\epsilon(B_1C_s)((B_0 C_s )s(C))\phi$, $\alpha_3=\epsilon(B_1 C_s)\underline{C}\phi$
\end{itemize}
where juxtaposition is either the result of composing using $\rho$ or using the composition operations that appear in the definition of cylinders, as should be clear from the context.
%\[
%\bfig 
%\morphism(0,0)|a|/@{>}/<2000,0>[x`y;E(D_t A)]	
%\morphism(0,0)|a|/@{>}/<0,-1000>[x`w;Es(D)]
%\morphism(0,-1000)|b|/@{>}/<2000,0>[w`z;E(BD_s)]
%\morphism(2000,0)|r|/@{>}/<0,-1000>[y`z;Et(D)]
%
%\morphism(0,0)|a|/{@{>}@/^2.5em/}/<2000,-1000>[x`z;E(t(D)(D_t A))]
%\morphism(0,0)|l|/{@{>}@/^-2.5em/}/<2000,-1000>[x`z;E((BD_s)s(D))]
%
%\morphism(1700,-150)|r|/@{=>}/<-200,-200>[ ` ;\Psi]
%\morphism(1100,-350)|a|/@{=>}/<-200,-200>[ ` ;E \hat{D}]
%\morphism(500,-650)|a|/@{=>}/<-200,-200>[ ` ;\chi]
%\efig 
%\] where $\Psi_C,\chi_C,\Psi_D,\chi_D$ are obtained by contractibility of the appropriate globular sums involved, and $\hat{C},\hat{D}$ denote the 3-cells associated with the 2-cylinders $C$ and $D$ respectively.
\begin{proof}
To begin with, we observe that the hypotheses imply $C_s=D_s$ and $C_t=D_t$, and we denote these 1-cells with $a$ and $b$ respectively. We consider the following pasting diagram in $\Omega_2(X,x,z)$ with $x,z$ being the appropriate 1-cells of $X$ depicted in the diagram above:
\[
\bfig 
\morphism(0,0)|a|/@{>}/<2000,0>[(\epsilon t(H)\phi)(\epsilon((bA_1)(bA_0))\phi)`(\epsilon t(G)\phi)(\epsilon((bA_1)(bA_0))\phi);(\epsilon T'\phi)(\epsilon((bA_1)(bA_0))\phi)]	
\morphism(0,0)|l|/@{>}/<0,-400>[(\epsilon t(H)\phi)(\epsilon((bA_1)(bA_0))\phi)`\epsilon((t(H)(bA_1))(bA_0))\phi;\simeq]	
\morphism(2000,0)|r|/@{>}/<0,-400>[(\epsilon t(G)\phi)(\epsilon((bA_1)(bA_0))\phi)`\epsilon((t(G)(bA_1))(bA_0))\phi;\simeq]	
\morphism(0,-400)|a|/@{>}/<2000,0>[\epsilon((t(H)(bA_1))(bA_0))\phi`\epsilon((t(G)(bA_1))(bA_0))\phi;\epsilon((T'(bA_1))(bA_0))\phi]
	
\morphism(2000,-400)|r|/@{>}/<0,-400>[\epsilon((t(G)(bA_1))(bA_0))\phi`\epsilon(((B_1 a)s(G))(bA_0))\phi;\epsilon(\underline{G}(bA_0))\phi]	
\morphism(2000,-800)|a|/@{>}/<-2000,0>[\epsilon(((B_1 a)s(G))(bA_0))\phi`\epsilon(((B_1 a)s(H))(bA_0))\phi;\epsilon(((B_1a)S')(bA_0))\phi]	
\morphism(0,-400)|a|/{@{>}@/^-2.5em/}/<0,-400>[\epsilon((t(H)(bA_1))(bA_0))\phi`\epsilon(((B_1 a)s(H))(bA_0))\phi;\epsilon(\underline{H}(bA_0))\phi]	
\morphism(0,-400)|r|/{@{>}@/^2.5em/}/<0,-400>[\epsilon((t(H)(bA_1))(bA_0))\phi`\epsilon(((B_1 a)s(H))(bA_0))\phi;\epsilon(s(\Theta')(bA_0))\phi]	
\morphism(0,-800)|a|/@{>}/<0,-400>[\epsilon(((B_1 a)s(H))(bA_0))\phi`\epsilon((B_1 a)(s(H)(bA_0)))\phi;\simeq]	
\morphism(2000,-800)|r|/@{>}/<0,-400>[\epsilon(((B_1 a)s(G))(bA_0))\phi`\epsilon((B_1 a)(s(G)(bA_0)))\phi; \simeq]	
\morphism(2000,-1200)|a|/@{>}/<-2000,0>[\epsilon((B_1 a)(s(G)(bA_0)))\phi`\epsilon((B_1 a)(s(H)(bA_0)))\phi; \epsilon((B_1 a)(S'(bA_0)))\phi]	
\morphism(2000,-2000)|a|/@{>}/<-2000,0>[(\epsilon((B_1 a)(B_0 a))\phi)(\epsilon s(C)\phi)`(\epsilon((B_1 a)(B_0 a))\phi)(\epsilon s(D)\phi);(\epsilon((B_1 a)(B_0 a))\phi)(\epsilon S \phi)]
\morphism(2000,-1600)|a|/@{>}/<-2000,0>[\epsilon((B_1 a)((B_0 a)s(C)))\phi`\epsilon((B_1 a)((B_0 a)s(D)))\phi; \epsilon((B_1 a)((B_0 a)S))\phi]
\morphism(2000,-1600)|r|/@{>}/<0,-400>[\epsilon((B_1 a)((B_0 a)s(C)))\phi`(\epsilon((B_1 a)(B_0 a))\phi)(\epsilon s(C)\phi); \simeq]
\morphism(0,-1600)|l|/@{>}/<0,-400>[\epsilon((B_1 a)((B_0 a)s(D)))\phi`(\epsilon((B_1 a)(B_0 a))\phi)(\epsilon s(D)\phi); \simeq]
\morphism(2000,-1200)|r|/@{>}/<0,-400>[\epsilon((B_1 a)(s(G)(bA_0)))\phi`\epsilon((B_1 a)((B_0 a)s(C)))\phi;\epsilon((B_1 a)\overline{C})\phi]	
\morphism(0,-1200)|a|/{@{>}@/^-2.5em/}/<0,-400>[\epsilon((B_1 a)(s(H)(bA_0)))\phi`\epsilon((B_1 a)((B_0 a)s(D)))\phi;\epsilon((B_1 a)(\underline{D}))\phi]	
\morphism(0,-1200)|r|/{@{>}@/^2.5em/}/<0,-400>[\epsilon((B_1 a)(s(H)(bA_0)))\phi`\epsilon((B_1 a)((B_0 a)s(D)))\phi;\epsilon((B_1 a) (s(\Theta')))\phi]	
\morphism(1000,-50)|a|/@{=>}/<0,-200>[ ` ; \simeq]
\morphism(1000,-450)|a|/@{=>}/<0,-200>[ ` ; \simeq]
\morphism(100,-650)|a|/@{=>}/<-200,0>[ ` ;\epsilon(\underline{\Theta'}(bA_0))\phi]
\morphism(100,-1450)|a|/@{=>}/<-200,0>[ ` ;\epsilon((B_1 a)\underline{\Theta})\phi]
\morphism(1000,-850)|a|/@{=>}/<0,-200>[ ` ; \simeq]
\morphism(1000,-1250)|a|/@{=>}/<0,-200>[ ` ; \simeq]
\morphism(1000,-1650)|a|/@{=>}/<0,-200>[ ` ; \simeq]
\efig 
\] in which all the cells labelled by ``$\simeq$'' are obtained by verifying their existence in $\nCat{\omega}$, since their boundaries factor through appropriate globular sums, and $\underline{\Theta},\underline{\Theta}'$ are the underlying 4-cell of the modifications. The composite of this pasting diagram is the modification $\epsilon\Theta\Theta ' \phi$ of the statement.
\end{proof}
\end{lemma}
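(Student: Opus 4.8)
The plan is to realise the modification $\epsilon\Theta'\Theta\phi$ as the composite of a single explicitly described pasting diagram inside the hom-$\mathfrak{C}$-model $\Omega_2(X,x,z)$, where $x$ and $z$ are the iterated source and target $1$-cells read off from the two vertical composites in the statement. Passing to $\Omega_2(X,x,z)$ via the suspension--loop adjunction $\Sigma\dashv\Omega$ lowers dimension by two: the $2$-cylinders $C,D,G,H$ become $1$-cylinders, and, since the sources and targets $S,S',T,T'$ are assumed to have trivial $0$-dimensional boundary, the modifications $\Theta,\Theta'$ are carried to their underlying cells $\underline{\Theta},\underline{\Theta'}$, which appear as $2$-cells there. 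In this way the whole problem reduces to pasting together finitely many $2$-cells in one $\mathfrak{C}$-model and checking that the boundary of the result is the required pair of composites.

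First I would lay out the diagram column by column, reading from the $(C,G)$-composite toward the $(D,H)$-composite. Each principal $2$-cell is a whiskering of one of the underlying cells $\underline{\Theta},\underline{\Theta'}$, or of the $3$-cells $\underline{C},\underline{D},\underline{G},\underline{H}$ underlying the cylinders, by the fixed contextual data $\epsilon,\phi,B_1a,bA_0$ and by the binary composition furnished by $\rho$. Between consecutive whiskerings one inserts reassociation and interchange constraints, accounting for the bracketings introduced both by $\rho$ and by the whiskering maps $w$ that enter the very definition of a cylinder (Definition \ref{cyl defi}). By design the left- and right-hand edges of the diagram are the whiskered images of $S,S'$ and of $T,T'$, so that the outer boundary of the whole pasting is exactly the pair of vertical composites to be compared.

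The existence of each constraint cell---those labelled ``$\simeq$'' together with the associativity cell $\alpha_2$---is then settled by the standard device of Section \ref{main constructions}: every such cell is a filler for a sphere whose boundary factors through a globular sum, so a filler exists in $\nCat{\omega}$ by Lemma \ref{vanishing of higher homotopy groups in globular sums}, and by Lemma \ref{filler of spheres} (with Lemma \ref{extension of cellular maps} for the parts built cell by cell) it lifts to $\mathfrak{C}$, living over its strict counterpart. This is precisely the substitute for contractibility of globular sums, which is unavailable for a categorical coherator. Once all constraint cells are present, the composite of the diagram is a single $2$-cell of $\Omega_2(X,x,z)$ whose source and target transpose, under $\Sigma\dashv\Omega$, to the two vertical composites of the statement; that transpose is the sought modification $\epsilon\Theta'\Theta\phi$.

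I expect the main difficulty to be organisational rather than conceptual: the bookkeeping of bracketings, and of which juxtaposition denotes composition via $\rho$ and which denotes a cylinder-composition map $w$, must be carried out so that the top and bottom rows of the diagram literally equal the two composites and the side edges literally equal the whiskered $S,S',T,T'$. All the genuine content sits in the two fills $\epsilon(\underline{\Theta'}(bA_0))\phi$ and $\epsilon((B_1a)\underline{\Theta})\phi$, where the underlying $4$-cells of the modifications are whiskered in; everything else is coherence, supplied by the $\nCat{\omega}$-to-$\mathfrak{C}$ transfer above. Finally, the corresponding statement for $\mathfrak{C}^{\mathbf{W}}$-models follows at once by applying the free functor $F\colon\mathbf{Mod}(\mathfrak{C})\to\mathbf{Mod}(\mathfrak{C}^{\mathbf{W}})$, which preserves cylinders, their vertical composites and modifications.
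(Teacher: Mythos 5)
Your proposal matches the paper's proof in all essentials: both work in $\Omega_2(X,x,z)$, build one explicit pasting diagram whose only substantive cells are the whiskerings of $\underline{\Theta}$ and $\underline{\Theta'}$ (the paper first notes $C_s=D_s$ and $C_t=D_t$, which you use implicitly via the contexts $B_1a$ and $bA_0$), and obtain every coherence cell by checking its boundary factors through a globular sum and transferring a filler from $\nCat{\omega}$. The approach and the key steps are the same, so no further comparison is needed.
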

Given an operation $\rho\colon D_2 \rightarrow A$ in $\mathfrak{C}$, we can consider the map $\tilde{\rho}\colon \cyl(D_2)\rightarrow \cyl(A)$ obtained by applying Lemma \ref{extension of modifications} to the following diagram:
\[
\bfig
\morphism(0,0)|a|/{@{>}@/^1.5em/}/<1700,0>[\cyl(S^{1})`\cyl(A);(\widehat{\rho \circ \sigma},\widehat{\rho \circ \tau})]
\morphism(0,0)|b|/{@{>}@/^-1.5em/}/<1700,0>[\cyl(S^{1})`\cyl(A);(\cyl(\rho \circ \sigma), \cyl(\rho \circ \tau))\ \ \ \ ]
\morphism(850,125)|r|/@{=>}@<0pt>/<0,-250>[`; (\theta_{\rho \circ \sigma},\theta_{\rho \circ \tau})]
\morphism(0,0)|a|/{@{>}@//}/<0,-1200>[\cyl(S^{1})`\cyl(D_2);(\cyl(\sigma),\cyl(\tau))]
\morphism(0,-1200)|a|/{@{>}@/^1.2em/}/<1700,1200>[\cyl(D_2)`\cyl(A);\widehat{\rho}]
\morphism(0,-1200)|b|/{@{-->}@/^-1.2em/}/<1700,1200>[\cyl(D_2)`\cyl(A);\tilde{\rho}]
\morphism(775,-525)|r|/@{=>}@<0pt>/<150,-150>[`; \chi_{\rho}]
\efig
\] By construction, there is a modification $\chi_{\rho}\colon \hat{\rho} \Rightarrow \tilde{\rho}$. Also, note that $\cyl(\rho)$ and $\tilde{\rho}$, although potentially different, are parallel 2-cylinders.
\begin{lemma}
In the situation of Lemma \ref{(3)} and in the context of $\mathfrak{C}^{\mathbf{W}}$-models, we can replace $\hat{\rho}$ with $\tilde{\rho}$.
\begin{proof}
Consider the following diagram:
\[
\bfig 
\morphism(0,0)|a|/{@{>}@/^1.5em/}/<1700,0>[\cyl(A)`\cyl\left( B\right);\left(C_j \right)_{1\leq j \leq m}]
\morphism(0,0)|b|/{@{>}@/^-1.5em/}/<1700,0>[\cyl(A)`\cyl\left( B\right) ;\left(D_j \right)_{1\leq j \leq m}]
\morphism(-800,0)|a|/{@{>}@/^1.5em/}/<800,0>[\cyl(D_2)`\cyl(A);\tilde{\rho}]
\morphism(-800,0)|b|/{@{>}@/^-1.5em/}/<800,0>[\cyl(D_2)`\cyl(A);\hat{\rho}]
\morphism(-400,125)|r|/@{=>}@<0pt>/<0,-200>[`; \chi^{-1}_{\rho}]
\efig 
\] where we denote by $\chi^{-1}_{\rho}$ the modification obtained by applying Lemma \ref{inverse of modifications} to the modification $\chi_{\rho}$.
It induces a modification $\left(C_j \right)_{1\leq j \leq m} \circ {\chi_{\rho}}^{-1}\colon \left(C_j \right)_{1\leq j \leq m}\circ \tilde{\rho}\Rightarrow \left(C_j \right)_{1\leq j \leq m} \circ \hat{\rho}$, which we can compose with the modification $\Theta \circ \hat{\rho}\colon \left(C_j \right)_{1\leq j \leq m}\circ \hat{\rho} \Rightarrow \left(D_j \right)_{1\leq j \leq m}\circ \hat{\rho}$ obtained in Lemma \ref{extension of modifications}. Finally, we compose the result with $\left(D_j \right)_{1\leq j \leq m}\circ \chi_{\rho}\colon\left(D_j \right)_{1\leq j \leq m}\circ \hat{\rho} \Rightarrow \left(D_j \right)_{1\leq j \leq m}\circ \tilde{\rho}$ to get the desired modification.
\end{proof}
\end{lemma}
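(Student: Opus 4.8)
The plan is to exploit the modification $\chi_\rho\colon \hat{\rho}\Rightarrow \tilde{\rho}$ that comes packaged with the construction of $\tilde{\rho}$, and to transport the modification $\Theta\circ\hat{\rho}$ already produced in Lemma \ref{(3)} across it. Since $\hat{\rho}$ and $\tilde{\rho}$ are parallel maps $\cyl(D_2)\to\cyl(A)$ joined by $\chi_\rho$, the natural strategy is a conjugation: I would sandwich the transported modification between $\chi_\rho^{-1}$ on one side and $\chi_\rho$ on the other, so that the outer source and target become $C\circ\tilde{\rho}$ and $D\circ\tilde{\rho}$ respectively, while the middle step is supplied verbatim by Lemma \ref{(3)}.

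Concretely, first I would invert $\chi_\rho$ by means of Lemma \ref{inverse of modifications}, obtaining $\chi_\rho^{-1}\colon\tilde{\rho}\Rightarrow\hat{\rho}$. Next I would whisker $\chi_\rho^{-1}$ and $\chi_\rho$ by the fixed families of cylinders representing $C$ and $D$, yielding modifications $(C_j)_{1\le j\le m}\circ\chi_\rho^{-1}\colon C\circ\tilde{\rho}\Rightarrow C\circ\hat{\rho}$ and $(D_j)_{1\le j\le m}\circ\chi_\rho\colon D\circ\hat{\rho}\Rightarrow D\circ\tilde{\rho}$. Finally, using the vertical composition of modifications afforded by the bicategorical structure recalled in the appendix, I would compose, in order, $(C_j)\circ\chi_\rho^{-1}$, then the modification $\Theta\circ\hat{\rho}$ of Lemma \ref{(3)}, then $(D_j)\circ\chi_\rho$, to obtain the sought modification $\Theta\circ\tilde{\rho}\colon C\circ\tilde{\rho}\Rightarrow D\circ\tilde{\rho}$.

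The routine bookkeeping to confirm is that the three pieces are composable: the target $C\circ\hat{\rho}$ of $(C_j)\circ\chi_\rho^{-1}$ is the source of $\Theta\circ\hat{\rho}$, and the target $D\circ\hat{\rho}$ of the latter is the source of $(D_j)\circ\chi_\rho$; these matchings are forced by $\chi_\rho$ being a modification $\hat{\rho}\Rightarrow\tilde{\rho}$ and by the fact that whiskering with a fixed cylinder preserves source and target data. The main obstacle, and precisely the reason the hypothesis $\mathfrak{C}^{\mathbf{W}}$ cannot be dropped, is the inversion step: over a mere categorical coherator $\mathfrak{C}$ the modification $\chi_\rho$ need not be invertible, so the conjugation has no left-hand factor and the argument collapses. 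It is exactly the system of left and right inverses in $\mathfrak{C}^{\mathbf{W}}$, together with their promotion to two-sided inverses of Remark \ref{two-sided vs left and right}, that makes Lemma \ref{inverse of modifications} applicable and hence lets the whole construction go through.
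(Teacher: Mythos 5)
Your proposal is correct and follows essentially the same route as the paper: invert $\chi_{\rho}$ via Lemma \ref{inverse of modifications}, whisker by the cylinder families, and vertically compose $(C_j)\circ\chi_{\rho}^{-1}$, then $\Theta\circ\hat{\rho}$ from Lemma \ref{(3)}, then $(D_j)\circ\chi_{\rho}$. Your remark that the inversion step is precisely where the passage to $\mathfrak{C}^{\mathbf{W}}$-models is needed matches the paper's intent.
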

In what follows, we consider a homogeneous map $\phi\colon A \rightarrow B$ and we use the notation $\tilde{\phi}$ to denote the map $\cyl(A)\rightarrow\cyl(B)$ obtained by glueing the various maps $\tilde{\phi_j}\colon \cyl(D_{i_j})\rightarrow \cyl(B_j)$ for $1\leq j \leq m$, where $\phi_j$ is the homogeneous part of the composite $D_{i_j}\rightarrow A \rightarrow B$ for every globe $D_{i_j}$ in the globular decomposition of $A$, and $\colim_k B_k\cong B$.
\begin{lemma}
Assume given homogeneous operations $\rho\colon D_2\rightarrow A$ and $\phi\colon A \rightarrow B$. There is a modification \[\Lambda\colon\tilde{\phi}\circ \tilde{\rho} \Rightarrow \widetilde{\phi\circ \rho}\] in $\mathfrak{C}^{\mathbf{W}}$-models that is essentially given by a 4-cell (which is possible as both cylinders are parallel).
\end{lemma}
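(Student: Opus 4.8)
The plan is to build $\Lambda$ inside $\mathbf{Mod}(\mathfrak{C})$ and then push it forward along the free functor $F\colon \mathbf{Mod}(\mathfrak{C}) \rightarrow \mathbf{Mod}(\mathfrak{C}^{\mathbf{W}})$, which preserves cylinders, their vertical composites and modifications; as throughout this section, I would work at $n=\infty$ and recover the statement in dimension $3$ by quotienting out the higher cells. Since $\phi$ and $\rho$ are homogeneous, their composite $\phi\circ\rho\colon D_2\rightarrow B$ is again homogeneous, so $\widetilde{\phi\circ\rho}$ is defined, and both $\tilde{\phi}\circ\tilde{\rho}$ and $\widetilde{\phi\circ \rho}$ are maps $\cyl(D_2)\rightarrow\cyl(B)$. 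The first task is to confirm that they are parallel $2$-cylinders: using $s(\tilde{\rho})=\cyl(\rho\circ\sigma)$, $t(\tilde{\rho})=\cyl(\rho\circ\tau)$ (which hold by the very construction of $\tilde{\rho}$), together with the defining gluing of $\tilde{\phi}$ from the maps $\tilde{\phi_j}$ and the one-dimensional instance of the present compatibility, I would identify $s(\tilde{\phi}\circ\tilde{\rho})$ with $s(\widetilde{\phi\circ \rho})$ and likewise for the targets. This is exactly the parallelism invoked in the statement; granting it, a modification $\Lambda$ with trivial boundary amounts to a single $4$-cell filler.

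To produce that filler I would follow the template of Lemma \ref{(3)} and of the long lemma preceding it. Both $\tilde{\phi}\circ\tilde{\rho}$ and $\widetilde{\phi\circ \rho}$ can be written as vertical composites of stacks of (possibly degenerate) $1$-cylinders, each constituent of which is the transpose of a map out of a suspended cylinder into a globular sum $B'$ equipped with a structural inclusion $i_{B'}\colon B'\rightarrow\cyl(B)$. I would then construct compatible modifications between corresponding constituents of the two stacks: each such modification is obtained from an extension problem whose codomain is, via $i_{B'}$, one of these globular sums, so by Lemma \ref{extension of cellular maps} and Lemma \ref{extension of coglobular objects} it reduces to the analogous problem in $\nCat{\omega}$, which is solvable by Lemma \ref{vanishing of higher homotopy groups in globular sums} since the relevant dimensions are bounded by $\dim(B')\le \dim(B)+1$. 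Finally I would assemble these piecewise modifications into the single modification $\Lambda$ using the bicategorical composition of modifications recorded in part B of the appendix, exactly as in the construction of $\Theta\circ\hat{\rho}$.

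The main obstacle is not the existence of the individual fillers — these are routine once the set-up is in place — but the reconciliation of the two stack decompositions. The stack underlying $\tilde{\phi}\circ\tilde{\rho}$ arises from feeding the $\mathscr{L}(A)$-indexed stack of $\tilde{\rho}$ through the glued map $\tilde{\phi}$, whereas $\widetilde{\phi\circ \rho}$ is governed directly by the $\mathscr{L}(B)$-indexed stack; matching the two cylinder by cylinder is a coherence statement, namely a functoriality (or ``associativity'') of the elementary interpretation under composition, whose combinatorial bookkeeping is delicate. The genuinely hard point is to verify that the piecewise modifications agree along the degenerate source and target boundaries, so that they compose to one well-defined modification rather than to an unrelated family of $4$-cells. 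Once this matching is organised the dimension bound guarantees the fillers exist and the construction closes up; transporting the result along $F$ then yields $\Lambda$ in $\mathfrak{C}^{\mathbf{W}}$-models.
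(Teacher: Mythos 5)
There is a genuine gap, and it starts with the objects you propose to decompose. The stack-of-cylinders description applies to the \emph{elementary} interpretations $\hat{\rho}$, $\hat{\phi}$, $\widehat{\phi\circ\rho}$, not to the corrected maps $\tilde{\rho}$, $\tilde{\phi}$, $\widetilde{\phi\circ\rho}$: the latter are produced abstractly by Lemma \ref{extension of modifications} (as the target of the boundary-correcting modification $\chi_{\rho}\colon\hat{\rho}\Rightarrow\tilde{\rho}$) and come with no stack presentation. So your central step --- ``both $\tilde{\phi}\circ\tilde{\rho}$ and $\widetilde{\phi\circ\rho}$ can be written as vertical composites of stacks'' --- does not apply to the maps in the statement. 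The paper instead conjugates by the $\chi$'s: it factors $\Lambda$ as the composite $\tilde{\phi}\circ\tilde{\rho}\Rightarrow\tilde{\phi}\circ\hat{\rho}\Rightarrow\hat{\phi}\circ\hat{\rho}\Rightarrow\widehat{\phi\circ\rho}\Rightarrow\widetilde{\phi\circ\rho}$, where the outer arrows are whiskerings of $\chi_{\rho}$, $\chi_{\phi}^{-1}$ and $\chi_{\phi\circ\rho}$ (the inverse requiring Lemma \ref{inverse of modifications}, hence the weak inverses of $\mathfrak{C}^{\mathbf{W}}$), and only the middle arrow $\eta\colon\hat{\phi}\circ\hat{\rho}\Rightarrow\widehat{\phi\circ\rho}$ is built by the stack-versus-stack comparison you describe --- moreover with its boundary prescribed to be $\cyl(j)\circ\delta_{(\phi\circ i)_{\epsilon},\rho_{\epsilon}}$ so that the pieces glue. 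Your diagnosis that reconciling the $\mathscr{L}(A)$-indexed and $\mathscr{L}(B)$-indexed stacks is the hard combinatorial point is correct, but it must be carried out for the hatted maps.

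The second gap is the final clause of the lemma. Parallelism of the two cylinders does not by itself make a modification between them ``essentially a 4-cell'': a modification still carries boundary $2$-cells $\Theta_s,\Theta_t$ and lower modification data, and the composite built above has a priori nontrivial boundary (coming from the $\delta$'s and the $\chi$'s). The paper needs Lemma \ref{reduction of modif's}, together with explicit $(n+2)$-cells $\eta_s,\eta_t$ trivializing the source and target of the composite modification, to replace it by one consisting of a single $4$-cell. Your proposal asserts this collapse from parallelism alone, which skips a necessary (and nontrivial) step.
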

The idea of the proof is to consider the following diagram of composable modifications, where the solid ones have already been constructed:
\begin{equation}
\label{tilde modif}
\bfig
\morphism(0,0)|a|/@{=>}@<0pt>/<600,0>[\tilde{\phi}\circ \tilde{\rho}`\tilde{\phi}\circ \hat{\rho}; \tilde{\phi}\circ{\chi_{\rho}}]
\morphism(600,0)|a|/@{=>}@<0pt>/<600,0>[\tilde{\phi}\circ \hat{\rho}`\hat{\phi}\circ \hat{\rho}; {\chi_{\phi}}^{-1}\circ\hat{\rho}]
\morphism(1200,0)|a|/@{==>}@<0pt>/<600,0>[\hat{\phi}\circ \hat{\rho}`\widehat{\phi \circ \rho}; \eta]
\morphism(1800,0)|a|/@{=>}@<0pt>/<600,0>[\widehat{\phi \circ \rho}`\widetilde{\phi \circ \rho}; \chi_{\phi\circ \rho}]
\efig
\end{equation} 
 We then have to construct the modification denoted with $\eta$, and then prove that the resulting composite can be adjusted so as to consist of a 4-cell. This is accomplished by making use of the following lemma, once we have proven that its assumptions are satisfied in this case.
\begin{lemma}
	\label{reduction of modif's}
Assume given a pair of parallel $n$-cylinders $C,D\colon A \curvearrowright B$ in a $\mathfrak{C}^{\mathbf{W}}$-model $X$, together with a modification $\Theta\colon C\Rightarrow D$ between them. Assume further that $s(\Theta)$ and $t(\Theta)$ are essentially given by $(n+1)$-cells between the $(n-1)$-cylinders involved, and that there are $(n+2)$-cells $\eta_s\colon s(\Theta)\rightarrow 1_{\widehat{s(C)}}, \ \eta_t\colon t(\Theta)\rightarrow 1_{\widehat{t(C)}}$, where $\widehat{E}$ denotes the $n$-cell filling an $(n-1)$-cylinder. Then there exists a modification $\Theta'\colon C \Rightarrow D$ which essentially consists of an $(n+2)$-cell between $n$-cylinders.
\begin{proof}
We prove the statement by induction on $n>0$. Let $n=1$, and set $\Theta_{\epsilon}=\epsilon(\Theta)$ for $\epsilon=s,t$. Consider the following pasting diagram in $X\left(s(C_s),t(C_t)\right)$, where the unlabelled 2-cell comes from unitality of composition in $\mathfrak{C}^{\mathbf{W}}$:
\[
\bfig
\morphism(0,0)|a|/{@{>}@/^3.5em/}/<2000,0>[C_t A`B C_s;\hat{C}]
\morphism(0,0)|a|/{@{>}@/^1em/}/<800,0>[C_t A`C_t A;1]
\morphism(0,0)|b|/{@{>}@/^-1em/}/<800,0>[C_t A`C_t A;\Theta_s A]
\morphism(800,0)|a|/@{>}/<400,0>[C_t A`B C_s;\hat{C}]
\morphism(1200,0)|a|/{@{>}@/^1em/}/<800,0>[BC_s`BC_s;1]
\morphism(1200,0)|b|/{@{>}@/^-1em/}/<800,0>[BC_s`BC_s;B\Theta_t]
\morphism(0,0)|b|/{@{>}@/^-3.5em/}/<2000,0>[C_t A`B C_s;\hat{D}]

\morphism(1000,325)|a|/@{=>}@<0pt>/<0,-200>[ ` ; ]
\morphism(1000,-75)|a|/@{=>}@<0pt>/<0,-200>[ ` ;\Theta ]
\morphism(500,100)|a|/@{=>}@<0pt>/<0,-150>[ ` ;(\eta_s)^{-1} A ]
\morphism(1700,100)|a|/@{=>}@<0pt>/<0,-150>[ ` ;B (\eta_t)^{-1} ]
\efig 
\]
The composite of this pasting diagram is the modification $\Theta'$ we are looking for.

Now let $n>1$, we have a modification of $(n-1)$-cylinders $\overline{\Theta}\colon \overline{C}\Rightarrow \overline{D}$ in $X\left(s(C_s),t(C_t)\right)$. For $\epsilon=s,t$ we have $\epsilon(\overline{C})=\overline{\epsilon(C)}=\overline{\epsilon(D)}=\epsilon(\overline{D})$ and $\epsilon(\overline{\Theta})$ is an $n$-cell between $(n-2)$-cylinders. Also, we can view $\eta_s,\eta_t$ as $(n+1)$-cells in $X\left(s(C_s),t(C_t)\right)$, so that we can apply the inductive hypothesis to get a modification $\overline{\Theta'}\colon \overline{C}\Rightarrow\overline{D}$ which consists of an $(n+1)$-cell between $(n-1)$-cylinders in $X\left(s(C_s),t(C_t)\right)$. Its transpose $\Theta'\colon C \Rightarrow D$ is the modification we are looking for and this concludes the proof.
\end{proof}
\end{lemma}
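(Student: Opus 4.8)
The plan is to argue by induction on $n \geq 1$, exploiting the recursive structure of both cylinders and modifications coming from Definition \ref{cyl defi}: an $n$-cylinder in $X$ is, after fixing its source and target $1$-cells, an $(n-1)$-cylinder in a hom-$\mathfrak{C}^{\mathbf{W}}$-model, and correspondingly a modification of $n$-cylinders unpacks one dimension down into a pair of boundary $2$-cells together with a modification of $(n-1)$-cylinders. The two hypotheses — that $s(\Theta)$ and $t(\Theta)$ are essentially $(n+1)$-cells, and that they are contracted by the $(n+2)$-cells $\eta_s,\eta_t$ — are exactly what is needed to absorb the troublesome boundary data into the top cell, the parallelism of $C$ and $D$ guaranteeing that the relevant cells are indeed parallel so that $\eta_s,\eta_t$ make sense.

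For the base case $n=1$, I would use the explicit description of a modification between $1$-cylinders: since $C,D$ are parallel, $C_s=D_s$ and $C_t=D_t$, and the modification $\Theta$ reduces to $2$-cells $\Theta_s=s(\Theta)$, $\Theta_t=t(\Theta)$ together with an essentially $3$-dimensional piece relating $\hat C$ and $\hat D$ inside the hom-model $X(s(C_s),t(C_t))$. Because $\mathfrak{C}^{\mathbf{W}}$ carries left and right inverses, which may be promoted to two-sided ones by Remark \ref{two-sided vs left and right}, the $3$-cells $\eta_s,\eta_t$ are invertible; I would whisker $(\eta_s)^{-1}$ with $A$ and $(\eta_t)^{-1}$ with $B$ and paste these against the $3$-cell of $\Theta$ in $X(s(C_s),t(C_t))$, the unit coherence of composition supplying the remaining filler. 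The composite of this pasting diagram is a $3$-cell from $\hat C$ to $\hat D$ whose source and target $2$-cells are now identities, which is precisely a modification $\Theta'$ that is essentially a single $3$-cell.

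For the inductive step $n>1$, I would descend into the hom-model $X(s(C_s),t(C_t))$, where $\Theta$ becomes a modification $\bar\Theta\colon \bar C \Rightarrow \bar D$ of $(n-1)$-cylinders. One checks that $s(\bar\Theta)=\overline{s(\Theta)}$ and $t(\bar\Theta)=\overline{t(\Theta)}$ are essentially $n$-cells between $(n-2)$-cylinders, and that $\eta_s,\eta_t$ reinterpret as $(n+1)$-cells in this hom-model contracting them. Thus the inductive hypothesis applies to $\bar\Theta$ in dimension $n-1$ and yields a modification $\overline{\Theta'}$ essentially given by an $(n+1)$-cell between $(n-1)$-cylinders; transposing back under the recursive definition of cylinders (equivalently, along the adjunction $\Sigma \dashv \Omega$) produces the desired $\Theta'$ between the original $n$-cylinders.

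The main obstacle I anticipate is bookkeeping rather than conceptual content: in the base case one must be careful about the sides on which the inverses $(\eta_s)^{-1},(\eta_t)^{-1}$ are whiskered and verify that the resulting composite genuinely has trivial $0$-dimensional boundary, while in the inductive step the real point is confirming that passing to the hom-model lowers the dimension of every piece of data by exactly one, so that $s(\bar\Theta),t(\bar\Theta)$ and the reinterpreted $\eta$'s still satisfy the standing assumptions of the lemma. Since every construction takes place in a fixed $\mathfrak{C}^{\mathbf{W}}$-model and uses only whiskering, inverses and coherence constraints already available there, no appeal to contractibility of globular sums is needed.
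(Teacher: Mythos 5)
Your proposal is correct and follows essentially the same route as the paper: the same base-case pasting diagram in $X\left(s(C_s),t(C_t)\right)$ whiskering $(\eta_s)^{-1}$ with $A$ and $(\eta_t)^{-1}$ with $B$ against the cell of $\Theta$ (with unit constraints filling the rest), and the same inductive descent into the hom-model followed by transposition. No gaps to report.
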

We now recall Lemma 11.10 of \cite{EL}, since the modification we want to construct has to be compatible with the one obtained in that lemma in a sense that will be made precise in what follows.
\begin{lemma}
	Given compatible operations $\rho \colon D_1 \rightarrow D_1^{\otimes k}$, $\phi_j\colon D_i \rightarrow D_1^{\otimes q_j}$ for $1\leq j \leq k$ similarly to the previous lemma, there is an induced modification of $\mathfrak{C}$-models
	\begin{equation}
	\label{coherence mod}
	\bfig 
	\morphism(0,400)|a|/{@{>}@/^0em/}/<1700,-400>[\cyl(D_{1}^{\otimes k})`\cyl\left( D_{1}^{\otimes (\sum_j q_j) }\right);\plus{1\leq j \leq k}\widehat{\phi_j}]
	%\morphism(0,0)|b|/{@{>}@/^-1.5em/}/<1700,0>[\cyl(D_{1}^{\otimes k})`\cyl\left( D_{1}^{\otimes (\sum_j q_j) +k}\right) ;\plus{1\leq j \leq k}D_j]
	\morphism(-800,0)|a|/@{>}@<0pt>/<800,400>[\cyl(D_{1})`\cyl(D_{1}^{\otimes k});\hat{\rho}]
	\morphism(-800,0)|b|/@{>}@<0pt>/<2500,0>[\cyl(D_{1})`\cyl\left( D_{1}^{\otimes (\sum_j q_j) }\right);{ (\left(\phi_j\right)_{1\leq j \leq k}\circ \rho)}^{\wedge}]
	\morphism(150,250)|a|/{@{=>}@/^0em/}/<150,-150>[`;\delta_{\phi,\rho}]
	\efig 
	\end{equation}
\end{lemma}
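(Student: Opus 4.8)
The plan is to push everything into the strict setting $\nCat{\omega}$, where the interpretation of an operation is forced to be unique, and then transport the resulting coherence back to $\mathbf{Mod}(\mathfrak{C})$ by the lifting technology of Section \ref{main constructions}; the statement for $\mathfrak{C}^{\mathbf{W}}$-models then follows by applying the free functor $F\colon \mathbf{Mod}(\mathfrak{C}) \rightarrow \mathbf{Mod}(\mathfrak{C}^{\mathbf{W}})$, which preserves cylinders, their vertical composites and modifications. First I would check that the two $1$-cylinders $\left(\plus{1\leq j\leq k}\widehat{\phi_j}\right)\circ\hat{\rho}$ and $\left((\phi_j)_{1\leq j\leq k}\circ\rho\right)^{\wedge}$ are genuinely parallel, so that a modification between them is meaningful: their $\iota_0,\iota_1$-components agree by the left-hand square of \eqref{hat properties}, which makes $\hat{(-)}$ strictly compatible with boundary inclusions, while the agreement of their source and target $0$-cylinders follows from the right-hand square of \eqref{hat properties} applied to $\rho\circ\sigma$ and $\rho\circ\tau$.

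The key observation is that the cocontinuous functor $F\colon \mathbf{Mod}(\mathfrak{C})\rightarrow\nCat{\omega}$ sends both cylinders to \emph{the same} map. Indeed, in $\nCat{\omega}$ all the extension problems defining these interpretations have unique solutions, since a cell of a globular sum in $\Theta$ of dimension at or above the dimension of the sum is determined by its boundary (this is exactly the content exploited in the proof of Lemma \ref{vanishing of higher homotopy groups in globular sums}). Hence the elementary interpretation is strictly functorial there, and both $F\!\left(\left(\plus{1\leq j\leq k}\widehat{\phi_j}\right)\circ\hat{\rho}\right)$ and $F\!\left(\left((\phi_j)_{1\leq j\leq k}\circ\rho\right)^{\wedge}\right)$ are the unique interpretation of the composite operation $(\phi_j)_{1\leq j\leq k}\circ\rho$. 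In particular, the identity modification solves the corresponding extension problem in $\nCat{\omega}$.

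It then remains to lift this identity to an honest modification $\delta_{\phi,\rho}$ in $\mathbf{Mod}(\mathfrak{C})$ lying over it, and I would carry this out stack by stack, in close analogy with the proof of Lemma \ref{(3)}. Both cylinders are presented as vertical composites of stacks of (possibly degenerate) cylinders, each constituent being built by solving an extension problem into a globular sum $B$ equipped with its structural map $i_B$. For each such piece I would produce a modification inside the corresponding $B$: its existence is guaranteed because a filler exists in $\nCat{\omega}$ by the previous paragraph and $B$ has vanishing higher homotopy above its dimension, i.e. Lemma \ref{vanishing of higher homotopy groups in globular sums} (equivalently, one applies Lemma \ref{modifications in contractible groupoids} to the contractible pieces). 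These piece-wise modifications are then assembled into $\delta_{\phi,\rho}$ using the cofibration \eqref{cyl into mod cof} that represents modifications, whose latching maps are $\mathbf{I}$-cellular, together with the bicategorical composition of modifications.

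The main obstacle is the combinatorial bookkeeping in this assembly. The two composites arrange the \emph{same} strict cylinder into two a priori different stacks: one indexed by $\mathscr{L}(D_1^{\otimes k})$ followed by the stacks of the individual $\widehat{\phi_j}$, the other by $\mathscr{L}\!\left(D_1^{\otimes(\sum_j q_j)}\right)$. The genuine work is therefore to identify, via the decomposition of Lemma \ref{decomposition of glob sum}, which pieces of one stack correspond to which pieces of the other, and to verify that the piece-wise modifications agree on their shared source and target $0$-cylinders so that they concatenate into a single well-defined modification. Once this matching is in place, compatibility with source and target—hence the fact that $\delta_{\phi,\rho}$ is the correct coherence datum filling \eqref{coherence mod}—is forced by construction, exactly as in \cite{EL}, Lemma 11.10.
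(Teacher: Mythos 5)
The paper does not actually prove this statement: it is recalled verbatim from Lemma 11.10 of \cite{EL}, with the understanding that the constructions of Section \ref{main constructions} make that proof go through for $\mathfrak{C}$-models. So there is no in-paper argument to compare against line by line; judged on its own terms, your outline follows exactly the paper's general template for such coherence lemmas (build each constituent inside a globular sum $B$ with its structural map $i_B$, verify existence in $\nCat{\omega}$ via Lemma \ref{filler of spheres}/\ref{extension of cellular maps} and Lemma \ref{vanishing of higher homotopy groups in globular sums}, assemble using the cofibration \eqref{cyl into mod cof} and the Appendix B composition of modifications), and the parallelism check via \eqref{hat properties} is correct.

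Two points need more care than you give them. First, your claim that $F$ sends both cylinders to \emph{the same} map in $\nCat{\omega}$ does not follow merely from uniqueness of fillers of spheres: that uniqueness lives in the globular sums $B$, whereas the two composites are $2$-cells of $F(\cyl(D_1^{\otimes\sum_j q_j}))$, which is not a globular sum. What you actually need is that the two vertical composites are two pastings of the same family of uniquely determined generating squares, hence coincide by strict interchange/the pasting theorem; this is true but is a separate argument. Second, the "combinatorial bookkeeping" you defer is not peripheral — it is the entire content of the cited proof. The two stacks are indexed by different ordered sets ($\mathscr{L}(D_1^{\otimes k})$ together with the stacks of the $\widehat{\phi_j}$, versus $\mathscr{L}(D_1^{\otimes\sum_j q_j})$), so the comparison cannot be made cylinder-by-cylinder; one must group sub-stacks (via the inclusions of the $\mathscr{L}(D_1^{\otimes q_j})$ into $\mathscr{L}(D_1^{\otimes\sum_j q_j})$, as the paper does explicitly in the analogous two-dimensional lemma with the families $\{D_{j_1}\plus{D_{j'_1}}\cdots E\cdots\}_{E\in\mathscr{L}(G_k)}$), check that the boundary $2$-cells of these grouped sub-composites match, and only then invoke a filler whose boundary factors through a single globular sum. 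Without exhibiting that matching, the proposal is a correct strategy but not yet a proof.
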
 
The proof of the following lemma is quite technical, but crucial to get the missing piece for this section.
\begin{lemma}
Assume given homogeneous operations $\rho\colon D_2\rightarrow A$ and $\phi\colon A \rightarrow B$. There is a modification  of $\mathfrak{C}$-models
\[\Delta\colon\widehat{\phi}\circ \widehat{\rho} \Rightarrow \widehat{\phi\circ \rho}\] with source and target given by the modifications $\cyl(j)\circ \delta_{(\phi \circ i)_{\epsilon},\rho_{\epsilon}}$ for $\epsilon=\sigma, \tau$, where $i\circ \rho_{\epsilon}$ is the homogeneous-globular factorization of $\rho \circ \epsilon$ and, similarly, $j\circ (\phi \circ i )_{\epsilon}$ is the homogeneous-globular factorization of $\phi \circ i$.
\[\bfig
\morphism(-1000,0)|a|/@{>}@<0pt>/<500,0>[D_2`A ;\rho]
\morphism(-1000,-500)|l|/@{>->}@<0pt>/<0,500>[D_1`D_2;\epsilon]
\morphism(-1000,-500)|a|/@{>>}@<0pt>/<500,0>[D_1`A';\rho_{\epsilon}]
\morphism(-500,-500)|r|/@{>->}@<0pt>/<0,500>[A'`A;i]
\morphism(-500,0)|a|/@{>}@<0pt>/<500,0>[A`B ;\phi]
\morphism(-500,-500)|a|/@{>>}@<0pt>/<500,0>[A'`B';(\phi\circ i)_{\epsilon}]
\morphism(0,-500)|a|/@{>->}@<0pt>/<0,500>[B'`B;j]
\efig 
\] Here, we have used the arrow $\twoheadrightarrow$ to denote homogeneous maps and $\rightarrowtail$ for globular ones.
\begin{proof}
The proof proceeds very similarly as to that of Lemma \ref{(3)}, i.e. we construct the modification $\Delta$ as the composite of modifications from substacks of the stack defining $\widehat{\phi}\circ \widehat{\rho}$ towards substacks of the one defining $\widehat{\phi \circ \rho}$, parametrized by the globular sums in $\mathcal{L}(A)$ in an exhaustive fashion.
We prove this representably, and we let \[\begin{pmatrix}
i_1 &&i_2 & \ldots&i_{m-1} & &i_m\\
& i'_1 & &\ldots&& i'_{m-1}
\end{pmatrix} \ \ \  \ \ \begin{pmatrix}
j_1 &&i_2 & \ldots&j_{q-1} & &j_q\\
& j'_1 & &\ldots&& j'_{q-1}
\end{pmatrix}\] be the table of dimensions of $A$ and $B$ respectively. This means that we are given a map $C\colon \cyl(B)\rightarrow X$, with $C\colon U\curvearrowright V$ and $X$ a 3-groupoid. From this, we get cylinders $C_1,\ldots,C_q$ in $X$, where $C_k$ is a $j_k$-cylinder. Notice that, by assumption on the homogeneity of $\rho$ and $\phi$, we have $i_k,j_r\leq 2$ for every $1\leq k \leq m$ and $1\leq r\leq q$. The two cylinders involved are both obtained as vertical composites of (different) stacks of 1-cylinders in $X(x,y)$ for $x=s^{i_1}(C_1)_0,y=s^{i_m}(C_m)_1$. Therefore, we need to provide a filler for this pair of composite 1-cells in the bicategory $\mathbf{hom}\left( D_1,X(x,y)\right) $, and we do so by decomposing it into some sub-composite, and we then explain how to find fillers for each such piece. As in Lemma \ref{(3)}, we firstly consider the cases \eqref{cyl 1} to \eqref{cyl 3}, where modifications can be constructed by using the fact that the corresponding cylinders factor through appropriate globular sums, and in $\nCat{\omega}$ the boundary data of these modifications admits fillers in a way that is compatible with the modifications we already have for the boundary. We now address case \eqref{cyl 4}, i.e. globular sums $D\in \mathscr{L}(A)$ which have been obtained by adding a new vertex $*_D$ to $A$ at height $\height(*_D)=2$, in such a way that this new vertex is the unique element of the fiber over $D_{i_k}=D_1$. Given such globular sum $D$, the 2-cell in $D$ represented by $*_D$ picks out the 2-cell associated with the 1-cylinder $F_k=\widehat{\phi_k}(C_{n_k},\ldots,C_{n_{k+1}-1})$, where $\phi=(\phi_i)_{1\leq i \leq m}$, according to the globular decomposition of $A$, and each of the $\phi_k$ has the sub-globular sum $G_k\subset B$ spanned by $D_{j_{n_k}},\ldots,D_{j_{n_{k+1}}-1}$ as codomain. Corresponding to such $D$, we have a cylinder in the stack associated with $\hat{\phi}\circ \hat{\rho}$ of the form:
%\[
%\bfig 
%\morphism(-1000,0)|a|/@{>}@<0pt>/<2200,0>[a`b;{\phi_m(C)}_1\ldots{\phi_{k+1}(C)}_1 ((C_{n_{k+1}-1})_t{\phi_{k}(C)}_0){\phi_{k-1}(C)}_0\ldots {\phi_{1}(C)}_0]
%\morphism(-1000,-700)|a|/@{>}@<0pt>/<2200,0>[a'`b';{\phi_m(C)}_1\ldots{\phi_{k+1}(C)}_1 ({\phi_{k}(C)}_1(C_{n_{k+1}-1})_s){\phi_{k-1}(C)}_0\ldots {\phi_{1}(C)}_0]
%\morphism(1200,0)|r|/@{>}@<0pt>/<0,-700>[b`b';\ldots\overline{F_k}\ldots]
%\morphism(-1000,0)|l|/@{>}@<0pt>/<0,-700>[a`a';\ldots \overline{F_k}\ldots]
%\morphism(100,-200)|a|/@{=>}/<0,-200>[ ` ; ]
%\efig 
%\]
\[
\bfig 
\morphism(-1000,0)|a|/@{>}@<0pt>/<1350,0>[a`b;\rho\left(\phi_{>k}\left(U_{\vert G_{>k}}\right),   s\left(\underline{F_k}\right),  \phi_{<k}\left(V_{\vert G_{<k}}\right)\right)]
\morphism(-1000,-500)|b|/@{>}@<0pt>/<1350,0>[a'`b';\rho\left(\phi_{>k}\left(U_{\vert G_{>k}}\right),   t\left(\underline{F_k}\right), \phi_{<k}\left(V_{\vert G_{<k}}\right)\right)]
\morphism(350,0)|r|/@{>}@<0pt>/<0,-500>[b`b';\rho^{\ast}_{\tau}\left({(\phi_{>k})}_{\tau}\left(U_{\vert G_{>k}}\right),   \underline{F_k},  {(\phi_{<k})}_{\tau}\left(V_{\vert G_{<k}}\right)\right)]
\morphism(-1000,0)|l|/@{>}@<0pt>/<0,-500>[a`a';\rho^{\ast}_{\tau}\left((\phi_{>k})_{\tau}\left(U_{\vert G_{>k}}\right),  \underline{F_k}, (\phi_{<k})_{\tau}\left(V_{\vert G_{<k}}\right)\right)]
\morphism(-300,-150)|a|/@{=>}/<0,-200>[ ` ; ]
\efig 
\]
% Here, we denoted the 2-cell $\phi_i((C_{n_i})_{\epsilon},\ldots,(C_{n_{i+1}-1})_{\epsilon}) $ of $X$ with $\phi_i(C)_{\epsilon}$ for $\epsilon=0,1$. 
We have used $\phi_{>k} \left(U_{\vert G_{>k}}\right)$ to denote the result of composing, using $(\phi_i)_{i\geq k}$, the restriction of $U$ to the union of the sub-globular sum of $B$ given by $G_i$ for $i>k$. Similarly for the other piece of notation involving the indices smaller than $k$.
 We want to produce a modification having this cylinder as source, and having as target a sub-composite of the vertical stack of 1-cylinders whose composite is the transpose of $\widehat{\phi\circ \rho}$ under the adjunction $\Sigma \dashv \Omega$. This sub-stack is the one parametrized by the family of globular sums of the form \[\{D_{j_1}\plus{D_{j_1'}}\ldots\plus{D_{j'_{n_k-1}}}E\plus{D_{j_{n_{k+1}}-1}}\ldots\plus{D_{j'_{q-1}}}D_{j_q}\}_{E \in \mathscr{L}(G_k)}\subset \mathscr{L}(B)\] 
Notice that the respective boundaries of these cylinders are of the same form as the ones appearing in the proof of Lemma 11.10 in \cite{EL}, and therefore we can use the modifications we produced there to compare the boundaries. These constitute the boundary of the modification we want to construct, whose existence follows, finally, from the fact that this boundary factors trough the globular sum \[D_{j_1}\plus{D_{j_1'}}\ldots\plus{D_{j'_{n_k-1}}}\plus{D_{j'_{n_k-1}}} \Sigma(D_1^{\otimes \vert \mathscr{L}(D_1^{\otimes n_{k+1}-n_k}) \vert})\plus{D_{j_{n_{k+1}}-1}} \ldots\plus{D_{j'_{q-1}}}D_{j_q}\] and a filler for it certainly exists in $\nCat{\omega}$.
We now turn to the case of globular sums $C\in \mathscr{L}(A)$ corresponding to case \eqref{cyl 5} to \eqref{cyl 8}. We can thus consider the maximal sub-globular sum of $A$ of the form $D_2\plus{D_1}\ldots\plus{D_1}D_2\cong \Sigma D_1^{\otimes k}$ that contains the copy of $D_2$ to which the new edges have been adjoined. The globular inclusion $\Sigma D_1^{\otimes k}\rightarrow A$ picks out $k$ composable 2-cylinders $\Gamma_1,\ldots, \Gamma_k $ in $X$, where we have $\Gamma_i=\phi_{r+i}(C_{n_{r+i}},\ldots,C_{n_{r+i+1}-1})$ for a unique integer $r$.
We have to construct a modification whose source is given by a stack of (collapsed) cylinders of the form:
\[
\bfig 
\morphism(0,0)|a|/@{>}/<2000,0>[a`b;\rho\left(U_{<q},d U_q, V_{>q} \right)]	
\morphism(0,0)|a|/@{>}/<0,-2000>[a`c;\rho^{\ast}_{\sigma}\left(\partial_{\sigma}U_{<q},s(\Gamma_1),\partial_{\sigma}V_{>q} \right)  ]
\morphism(0,-2000)|b|/@{>}/<2000,0>[c`d;\rho\left(U_{<q}, V_q e, V_{>q} \right)]
\morphism(2000,0)|r|/@{>}/<0,-2000>[b`d;\rho^{\ast}_{\tau}\left(\partial_{\tau}U_{<q},t(\Gamma_k),\partial_{\tau}V_{>q} \right)]

\morphism(0,0)|a|/{@{>}@/^7.5em/}/<2000,-2000>[a`d;h_1]
\morphism(0,0)|a|/{@{>}@/^5em/}/<2000,-2000>[a`d;h_2]
\morphism(0,0)|a|/{@{>}@/^2.5em/}/<2000,-2000>[a`d;h_3]
\morphism(0,0)|r|/{@{>}@/^-2.5em/}/<2000,-2000>[a`d;h_{2k-2}]
\morphism(0,0)|r|/{@{>}@/^-7.5em/}/<2000,-2000>[a`d;h_{2k}]
\morphism(0,0)|r|/{@{>}@/^-5em/}/<2000,-2000>[a`d; h_{2k-1}]
\morphism(1600,-150)|a|/@{=>}/<-150,-150>[ ` ;\simeq]
\morphism(1350,-400)|a|/@{=>}/<-100,-100>[ ` ;\alpha_1]
\morphism(1200,-600)|a|/@{=>}/<-100,-100>[ ` ;\alpha_2]
\morphism(1100,-750)|a|/@{=>}/<-100,-100>[ ` ;\alpha_3]
\morphism(1000,-950)|a|/@{}/<0,0>[ ` ;\ldots]
\morphism(950,-950)|a|/@{=>}/<-100,-100>[ ` ;\alpha_{2k-3}]
\morphism(750,-1100)|a|/@{=>}/<-100,-100>[ ` ;\alpha_{2k-2}]
\morphism(590,-1300)|a|/@{=>}/<-100,-100>[ ` ; \ \ \ \alpha_{2k-1}]
\morphism(350,-1550)|a|/@{=>}/<-150,-150>[ ` ;\simeq]
\efig 
\]
%\[
%\bfig 
%\morphism(0,0)|a|/@{>}/<2200,0>[a`b;{\phi_m(C)}_1\ldots{\phi_{p+1}(C)}_1 ((C_{n_{p+1}-1})_t{\phi_{p}(C)}_0){\phi_{p-1}(C)}_0\ldots {\phi_{1}(C)}_0]	
%\morphism(0,0)|a|/@{>}/<0,-1000>[a`c;h']
%\morphism(0,-1000)|b|/@{>}/<2200,0>[c`d;{\phi_m(C)}_1\ldots{\phi_{p+1}(C)}_1 ((C_{n_{p+1}-1})_t{\phi_{p}(C)}_0){\phi_{p-1}(C)}_0\ldots {\phi_{1}(C)}_0]
%\morphism(2200,0)|r|/@{>}/<0,-1000>[b`d;h]
%
%\morphism(0,0)|a|/{@{>}@/^2.5em/}/<2200,-1000>[a`d;H]
%\morphism(0,0)|l|/{@{>}@/^-2.5em/}/<2200,-1000>[a`d;H']
%
%\morphism(1700,-150)|r|/@{=>}/<-200,-200>[ ` ;\simeq]
%\morphism(1100,-350)|a|/@{=>}/<-200,-200>[ ` ;\Gamma]
%\morphism(500,-650)|a|/@{=>}/<-200,-200>[ ` ;\simeq]
%\efig 
%\] 
%where we define $h={\phi_m(C)}_1\ldots{\phi_{p+1}(C)}_1 \tilde{t}(\widehat{\phi_p}((C_{n_p}),\ldots,(C_{n_{p+1}-1}))){\phi_{p-1}(C)}_0\ldots {\phi_{1}(C)}_0$ (here, we denoted with $\tilde{t}(F)$ the 2-cell associated with the target 1-cylinder of a given 2-cylinder $F$), also we set \[H={\phi_m(C)}_1\ldots{\phi_{p+1}(C)}_1 (\tilde{t}(\widehat{\phi_p}((C_{n_p}),\ldots,(C_{n_{p+1}-1})))((C_{n_{p+1}-1})_t{\phi_{p}(C)}_0)){\phi_{p-1}(C)}_0\ldots {\phi_{1}(C)}_0\] and similarly for $h', H'$. Finally, the cell labelled with $\Gamma$ is given by 
%\[{\phi_m(C)}_1\ldots{\phi_{p+1}(C)}_1 {\hat{\phi_p}((C_{n_p}),\ldots,(C_{n_{p+1}-1}))}^{\sim} {\phi_{p-1}(C)}_0\ldots {\phi_{1}(C)}_0\] where we used the symbol $\sim$ to denote the 3-cell associated with a 2-cylinder.
Here, we set $d=t^2(\Gamma_1)$, $e=s^2(\Gamma_1)$, and: \[h_{2m+1}=\rho\left(U_{<q}, {U_q}^{< r},s(\underline{\Gamma_{k-m}}),{V_q}^{> r}a, V_{>q} \right)\]  \[h_{2m}=\rho\left(U_{<q}, {U_q}^{< r},t(\underline{\Gamma_{k-m}}),{V_q}^{> r}a, V_{>q} \right)\] where we have used $\underline{F}$ to denote the underlying 3-cell of a 2-cylinder $F$, and $t(\Gamma_0)$ is defined to be $s(\Gamma_1)$. The 2-cells in $X(x,y)$ labelled with $\alpha$'s represent 1-cylinders whose source and target are degenerate. In particular, $\alpha_{2m+1}$ is a whiskering of the 3-cell in $\Gamma_{k-m}$ and $\alpha_{2m}$ is an associativity constraint.
The target of the modification we want to construct is a composite of a sub-stack of the one associated with $\widehat{\phi \circ \rho}$, parametrized by the family of globular sums given by 
\[\{D_{j_1}\plus{D_{j_1'}}\ldots\plus{D_{j'_{n_k-1}}}E\plus{D_{j_{n_{k+1}}-1}}\ldots\plus{D_{j'_{q-1}}}D_{j_q}\}_{E \in \mathscr{L}(G_p)}\subset \mathscr{L}(B)\]  
To finish this construction, we introduce an intermediate step in this modification by taking into consideration Lemma \ref{pseudo nat transf} in the Appendix, and we focus on the square \eqref{pseudonat square} that originates from it. By applying this to each of the (possibly degenerate) 1-cylinders in the sub-stack we are considering, we obtain a new stack of the same shape where all the new 1-cylinders are whiskering of the previous ones in the appropriate sense. The respective boundaries of these stacks we are comparing are of the same form as the ones appearing in the proof of Lemma 11.10 in \cite{EL}, and therefore we can use the modifications we produced there to compare the boundaries. In the same way, the boundary of this new ``whiskered'' stack and the one of the source of the modification we want to build can also be compared using the modification of Lemma 11.10 in \cite{EL} (which was indeed the composite of two such). After having adjusted the boundaries, filling in the rest of the modification follows from a straightforward application of the classical result of coherence for pseudofunctors and bicategories. 
\end{proof}
\end{lemma}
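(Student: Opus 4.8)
The plan is to prove this exactly along the lines of the proof of Lemma \ref{(3)}: work representably and assemble $\Delta$ as a composite of local modifications indexed by the ordered set $\mathscr{L}(A)$. Concretely, I would fix a test map $C\colon \cyl(B)\rightarrow X$ into a $3$-groupoid, with $C\colon U\curvearrowright V$, so that both $C\circ(\widehat{\phi}\circ\widehat{\rho})$ and $C\circ\widehat{\phi\circ\rho}$ become $2$-cylinders in $X$; transposing along $\Sigma\dashv\Omega$, each is realised as a vertical composite of a stack of (possibly degenerate) $1$-cylinders in the hom-model $X(x,y)$, with $x=s^{i_1}(C_1)_0$ and $y=s^{i_m}(C_m)_1$. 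The first stack is parametrized by $\mathscr{L}(A)$ and the second by $\mathscr{L}(B)$, and the task is to produce a filler of the resulting pair of composite $1$-cells in the bicategory $\mathbf{hom}\left(D_1,X(x,y)\right)$ whose $\sigma$- and $\tau$-boundaries are the prescribed $\cyl(j)\circ\delta_{(\phi\circ i)_\epsilon,\rho_\epsilon}$. I would obtain this by subdividing the composite and matching substacks of the first stack against substacks of the second.

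I would then run through the cases \eqref{cyl 1}--\eqref{cyl 8} in the order fixed by $\mathscr{L}(A)$. For the vertices added at height $1$, i.e.\ cases \eqref{cyl 1}--\eqref{cyl 3}, the two sides factor through honest globular sums, so the comparison modification exists by checking that its boundary admits a filler in $\nCat{\omega}$ (Lemma \ref{extension of coglobular objects} together with Lemma \ref{vanishing of higher homotopy groups in globular sums}), compatibly with the modifications already fixed on the boundary. For case \eqref{cyl 4}, a vertex of height $2$ sitting alone over a copy of $D_1$, the distinguished $2$-cell picks out the $1$-cylinder $F_k=\widehat{\phi_k}(C_{n_k},\dots,C_{n_{k+1}-1})$; here I would compare against the substack of the $\widehat{\phi\circ\rho}$-stack indexed by $\{\dots\plus{}E\plus{}\dots\}_{E\in\mathscr{L}(G_k)}$, observing that the boundaries have exactly the shape handled by the coherence modification $\delta_{\phi,\rho}$ of \eqref{coherence mod}, and filling the interior because it factors through a globular sum of dimension $\le 3$.

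The substantive work is in cases \eqref{cyl 5}--\eqref{cyl 8}, where the new vertex lies over a maximal sub-globular sum $\Sigma D_1^{\otimes k}$ and the two stacks have genuinely different shapes: the $\widehat{\phi}\circ\widehat{\rho}$-stack involves the underlying $3$-cells of the $2$-cylinders $\Gamma_i=\phi_{r+i}(C_{n_{r+i}},\dots)$, whereas the $\widehat{\phi\circ\rho}$-stack is indexed by $\mathscr{L}(G_p)$. I would treat the whole block at once: insert an intermediate \emph{whiskered} stack obtained by applying the pseudonaturality square \eqref{pseudonat square} of Lemma \ref{pseudo nat transf} to each degenerate $1$-cylinder in the block, so that the new cylinders are whiskerings of the old ones; then align the boundary of the whiskered stack both with the source block and with the $\widehat{\phi\circ\rho}$-substack, using in each case the modification of Lemma 11.10 of \cite{EL} (which was itself a composite of two $\delta$'s). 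Once the boundaries are matched, the remaining interior is supplied by the classical coherence theorem for pseudofunctors and bicategories.

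The main obstacle will be this last block: making the pseudonaturality interchange compatible with the already-fixed boundary modifications, and verifying that the composite of all the local comparison modifications has source and target equal to $\cyl(j)\circ\delta_{(\phi\circ i)_\sigma,\rho_\sigma}$ and $\cyl(j)\circ\delta_{(\phi\circ i)_\tau,\rho_\tau}$ respectively. Existence of each individual filler is never really at stake — every interior boundary factors through a globular sum of dimension at most $3$, so Lemma \ref{vanishing of higher homotopy groups in globular sums} provides a filler in $\nCat{\omega}$ and Lemma \ref{extension of coglobular objects} lifts it to $\mathbf{Mod}(\mathfrak{C})$ — so the entire difficulty is the combinatorial bookkeeping of gluing the pieces coherently along $\mathscr{L}(A)$, with the $\mathfrak{C}^{\mathbf{W}}$-model version then following from the standing convention via the free functor $F$.
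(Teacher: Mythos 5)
Your proposal follows the paper's own proof essentially verbatim: the representable reduction to stacks of $1$-cylinders indexed by $\mathscr{L}(A)$, the case split along \eqref{cyl 1}--\eqref{cyl 8}, the use of the coherence modifications of Lemma 11.10 of \cite{EL} to match boundaries, the intermediate whiskered stack via Lemma \ref{pseudo nat transf} and \eqref{pseudonat square}, and the final appeal to bicategorical coherence are all exactly the steps the paper takes. No substantive difference to report.
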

 By construction, it is clear that the boundary of the composite modification in \ref{tilde modif} satisfies the assumptions of Lemma \ref{reduction of modif's}. Hence, when we quotient out the 4-cells, we find that the following identity holds:
 \[\tilde{\phi}\circ \tilde{\rho}=\widetilde{\phi \circ \rho}\] Therefore, we see that an extension to $\mathfrak{C}_2$ is equivalently obtained by setting $\cyl(\rho)=\tilde{\rho}$ for all homogeneous operations $\rho\colon D_2 \rightarrow A$. Finally, we recall that, by definition, $\mathfrak{C}_3\cong \mathfrak{C}_2[X_2]$ and so we can obtain the desired extension depicted in \ref{extension to D3} by defining $\cyl(\Phi)$, for every $\Phi\colon D_3 \rightarrow A$ added as a filler of $(\phi_0,\phi_1)\in X_2$, in the following manner:
 \[
 \bfig 
 \morphism(0,0)|a|/@{>}@<3pt>/<500,0>[D_2`A;\phi_0]
 \morphism(0,0)|b|/@{>}@<-3pt>/<500,0>[D_2`A;\phi_1]
 
 \morphism(0,0)|r|/@{>}@<3pt>/<0,-400>[D_2`D_{3};\tau_2]
 \morphism(0,0)|l|/@{>}@<-3pt>/<0,-400>[D_2`D_{3};\sigma_2]
 
 \morphism(0,-400)|r|/@{>}@<0pt>/<500,400>[D_{3}`A;\Phi]
 \efig  \ \ \ \ \
 \rightsquigarrow \ \ \ \ \
 \bfig
 \morphism(0,0)|a|/{@{>}@/^1.5em/}/<1700,0>[\cyl(S^{2})`\cyl(A);(\widehat{\phi_0},\widehat{\phi_1})]
 \morphism(0,0)|b|/{@{>}@/^-1.5em/}/<1700,0>[\cyl(S^{2})`\cyl(A);(\widetilde{\phi_0},\widetilde{\phi_1})\ \ \ \ ]
 \morphism(850,125)|r|/@{=>}@<0pt>/<0,-250>[`; (\chi_{\phi_0},\chi_{\phi_1})]
 \morphism(0,0)|a|/{@{>}@//}/<0,-1000>[\cyl(S^{2})`\cyl(D_3);(\cyl(\sigma),\cyl(\tau))]
 \morphism(0,-1000)|a|/{@{>}@/^1.2em/}/<1700,1000>[\cyl(D_3)`\cyl(A);\widehat{\Phi}]
 \morphism(0,-1000)|b|/{@{-->}@/^-1.2em/}/<1700,1000>[\cyl(D_3)`\cyl(A);\cyl(\Phi)]
 \morphism(725,-425)|r|/@{=>}@<0pt>/<150,-150>[`; ]
 \efig
 \]
  This concludes the construction of the path 3-category of a 3-groupoid, as we record here below.
 \begin{thm}
 	\label{3 cat of paths}
Let $\mathfrak{C}$ be a 3-coherator for categories, and denote with $\nCat{3}$ the category of weak 3-categories modeled by it. Then there is a lift of the form:
\[\bfig
\morphism(0,-400)|a|/@{-->}@<0pt>/<600,400>[ \mathbf{Mod}(\mathfrak{C}^{\mathbf{W}})`\mathbf{Mod}(\mathfrak{C})\cong \nCat{3};\p]
\morphism(0,-400)|a|/@{>}@<0pt>/<600,0>[\mathbf{Mod}(\mathfrak{C}^{\mathbf{W}})`[\G_3^{op},\mathbf{Set}];\p]
\morphism(600,0)|r|/@{>}@<0pt>/<0,-400>[\mathbf{Mod}(\mathfrak{C})\cong \nCat{3}`[\G_3^{op},\mathbf{Set}];\mathbf{U}]
\efig
\] where we set $\p X=\mathbf{Mod}(\mathfrak{C}^{\mathbf{W}})\left(\cyl(D_{\bullet}),X\right) $.
 \end{thm}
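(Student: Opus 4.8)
The plan is to produce the lift by upgrading the object that represents $\p$. Since $(\p X)_k = \mathbf{Mod}(\mathfrak{C}^{\mathbf{W}})(\cyl(D_k),X)$ is representable by the $3$-coglobular object $\cyl(D_\bullet)$, it suffices to extend the latter, currently defined on $\G_3$, to a globular functor $\cyl\colon \mathfrak{C}\to\mathbf{Mod}(\mathfrak{C}^{\mathbf{W}})$ (one preserving globular sums). Given such an extension, the presheaf $\mathbf{Mod}(\mathfrak{C}^{\mathbf{W}})(\cyl(-),X)\colon \mathfrak{C}^{op}\to\mathbf{Set}$ sends globular sums, being colimits in $\mathfrak{C}$, to the corresponding globular products in $\mathbf{Set}$, hence is an object of $\mathbf{Mod}(\mathfrak{C})\cong\nCat{3}$; its underlying globular set is by construction $\p X$, so the assignment $X\mapsto\mathbf{Mod}(\mathfrak{C}^{\mathbf{W}})(\cyl(-),X)$ is the desired lift of $\p$ along $\mathbf{U}$, with functoriality in $X$ immediate from representability.

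The construction of $\cyl\colon \mathfrak{C}\to\mathbf{Mod}(\mathfrak{C}^{\mathbf{W}})$ proceeds by induction along the cellular tower \eqref{tower for D}, $\Theta^{\leq 3}_0 \to \mathfrak{C}_1 \to \mathfrak{C}_2 \to \mathfrak{C}_3 \to \mathfrak{C}_4\cong\mathfrak{C}$. On $\Theta^{\leq 3}_0$ one uses the already available extension $A\mapsto\cyl(A)$ to globular sums. Over the stages adjoining operations of dimension at most $2$, I would adapt the argument of Proposition 11.4 of \cite{EL}: first form the elementary interpretation $\hat\rho\colon\cyl(D_k)\to\cyl(A)$ of a homogeneous operation $\rho$, whose existence rests on Lemma~\ref{extension of coglobular objects} and the contractibility input of Lemma~\ref{vanishing of higher homotopy groups in globular sums}, and then pass to its rectification $\tilde\rho$ obtained from Lemma~\ref{extension of modifications} together with the comparison modification $\chi_\rho\colon\hat\rho\Rightarrow\tilde\rho$. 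Setting $\cyl(\rho)=\tilde\rho$ defines $\cyl$ on operations, and the point to verify is that this respects composition, i.e. the identity $\tilde\phi\circ\tilde\rho=\widetilde{\phi\circ\rho}$; this follows by composing the chain of modifications \eqref{tilde modif} and then collapsing it to an identity by Lemma~\ref{reduction of modif's}, whose hypotheses hold by construction.

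For the stage $\mathfrak{C}_3\cong\mathfrak{C}_2[X_2]$ I would appeal to the universal property of Proposition~\ref{univ prop of glob th}: it is enough to supply, for each filler $\Phi\colon D_3\to A$ of a pair $(\phi_0,\phi_1)\in X_2$, an interpretation $\cyl(\Phi)$ whose source and target recover $\cyl(\phi_0)$ and $\cyl(\phi_1)$, and such a $\cyl(\Phi)$ is produced by a final application of Lemma~\ref{extension of modifications} to the square of \eqref{extension to D3} filled by the boundary modification $(\chi_{\phi_0},\chi_{\phi_1})$. The last stage $\mathfrak{C}_4\cong\mathfrak{C}_3[X_3]$ merely equalizes parallel $3$-dimensional operations, and here the extension is forced: if $\alpha=\beta$ in $\mathfrak{C}$ then $\cyl(\alpha)=\cyl(\beta)$, as one checks representably using the uniqueness lemma stating that two $n$-cylinders sharing source, target, and both bounding cells coincide. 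Assembling these stages yields the globular functor $\cyl$ and hence the theorem.

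The genuine obstacle is the $2$-dimensional functoriality identity $\tilde\phi\circ\tilde\rho=\widetilde{\phi\circ\rho}$. Mere existence of the interpretations $\hat\rho$ is cheap, relying only on contractibility of globular sums in $\nCat{\omega}$; what is expensive is that the rectifications $\tilde\rho$ be coherently comparable under composition, and this is precisely where the whole modification apparatus is spent --- Lemma~\ref{(3)} for plugging modifications into an elementary interpretation, the comparison modification $\Delta\colon\hat\phi\circ\hat\rho\Rightarrow\widehat{\phi\circ\rho}$, and the reduction Lemma~\ref{reduction of modif's} that turns the resulting chain into an identity upon discarding the $4$-cells. Once this identity is secured, the remaining bookkeeping along the tower is routine.
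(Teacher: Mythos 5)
Your proposal follows the paper's own proof essentially step for step: the same induction along the tower \eqref{tower for D}, the same rectification $\cyl(\rho)=\tilde\rho$ with the functoriality identity $\tilde\phi\circ\tilde\rho=\widetilde{\phi\circ\rho}$ secured by the chain \eqref{tilde modif} and Lemma \ref{reduction of modif's}, the same use of Lemma \ref{extension of modifications} with boundary data $(\chi_{\phi_0},\chi_{\phi_1})$ for the stage $\mathfrak{C}_3\cong\mathfrak{C}_2[X_2]$, and the same uniqueness-of-cylinders argument making the final stage automatic. You also correctly locate the real difficulty in the coherent comparability of the rectified interpretations under composition, which is exactly where the paper spends its modification machinery.
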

Thanks to Theorem 6.5 of \cite{EL}, it is possible to extend the codomain a bit further, thus landing in a category whose objects posses a richer algebraic structure. We recall here the definition that is needed to formulate this extension result. In what follows, an operation $\phi\colon D_{n+1}\rightarrow A$ in an $n$-globular theory means an equality of the form $\phi\circ \sigma =\phi \circ \tau$.
\begin{cor}
	\label{lift to C_W}
Let $\mathfrak{C}$ be a 3-coherator for categories, then there exists a lift of the form:
\[\bfig
\morphism(0,-400)|a|/@{-->}@<0pt>/<600,400>[ \mathbf{Mod}(\mathfrak{C}^{\mathbf{W}})`\mathbf{Mod}(\mathfrak{C}^{\mathbf{W}});\p]
\morphism(0,-400)|a|/@{>}@<0pt>/<600,0>[\mathbf{Mod}(\mathfrak{C}^{\mathbf{W}})`[\G_3^{op},\mathbf{Set}];\p]
\morphism(600,0)|r|/@{>}@<0pt>/<0,-400>[\mathbf{Mod}(\mathfrak{C}^{\mathbf{W}})`[\G_3^{op},\mathbf{Set}];\mathbf{U}]
\efig
\]
\begin{proof}
Such an extension amounts to endow the $\mathfrak{C}$-model $\p X$ obtained in the previous theorem with a system of inverses with respect to the chosen systems of compositions and identities. This was done in Theorem 6.5 of \cite{EL} in the case of two-sided inverses: therefore, we can interpret both the left inverse operation and the right inverse one by means of that, or simply adapt the proof given there to produce a left and a right inverse.
\end{proof}
\end{cor}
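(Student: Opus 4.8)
The plan is to recognise the desired lift as the promotion of the corepresenting globular functor from $\mathfrak{C}$ to $\mathfrak{C}^{\mathbf{W}}$. By Theorem \ref{3 cat of paths} the globular set $\p X$ already carries a canonical $\mathfrak{C}$-model structure, corepresented by a globular functor $\cyl\colon \mathfrak{C}\rightarrow \mathbf{Mod}(\mathfrak{C}^{\mathbf{W}})$ extending the coglobular object $\cyl(D_{\bullet})$. First I would observe that it suffices to extend this to a globular functor $\cyl\colon \mathfrak{C}^{\mathbf{W}}\rightarrow \mathbf{Mod}(\mathfrak{C}^{\mathbf{W}})$: setting $\p X=\mathbf{Mod}(\mathfrak{C}^{\mathbf{W}})(\cyl(\bullet),X)$ then produces a $\mathfrak{C}^{\mathbf{W}}$-model structure that is automatically natural in $X$ (so $\p$ is a functor) and whose underlying globular set is unchanged, so that it lies over the globular set at the bottom of the triangle. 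Thus both the functoriality and the commutativity of the triangle are free consequences of corepresentability, and the whole problem reduces to the extension of $\cyl$.

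Next I would reduce that extension to the construction of inverses by invoking the universal property of the defining pushout \eqref{D_W def}. Since $\mathfrak{C}^{\mathbf{W}}$ is obtained from $\mathfrak{C}$ by freely adjoining a system of inverses over the shared system of compositions and identities $\Theta_0[\mathbf{comp},\mathbf{id}]$, a globular functor out of $\mathfrak{C}^{\mathbf{W}}$ is the same datum as a globular functor out of $\mathfrak{C}$ together with a compatible globular functor out of $\Theta_0[\mathbf{comp},\mathbf{id},\mathbf{inv}]$ agreeing on $\Theta_0[\mathbf{comp},\mathbf{id}]$. The restriction to the compositions and identities is already pinned down by $\cyl|_{\mathfrak{C}}$ composed with the map $\mathbf{i}$ of \eqref{D_W def}, so the only genuinely new data is the interpretation of the inverse generators. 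Concretely this means endowing the $\mathfrak{C}$-model $\p X$ with a system of left and right inverses in the sense of Definition \ref{structure systems}, compatibly with the chosen systems of compositions and identities; equivalently, producing self-maps $\cyl(\mathbf{i}^{l}_k),\cyl(\mathbf{i}^{r}_k)$ of the cylinder object $\cyl(D_k)$ together with connecting maps $\cyl(\mathbf{k}^{l}_k),\cyl(\mathbf{k}^{r}_k)$ satisfying the source/target equations recorded there.

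The inverses themselves are then constructed by reversing cylinders: because $X$ is a $\mathfrak{C}^{\mathbf{W}}$-model every cell of $X$ admits inverses, so a $k$-cylinder $F\colon A\curvearrowright B$ can be turned into one running in the opposite globular direction, and this reversal is precisely the content of Theorem 6.5 of \cite{EL}, which builds a two-sided inverse structure on the path object. Following the proof sketch, I would use this single two-sided operation to interpret both the left-inverse generator $\mathbf{i}^{l}$ and the right-inverse generator $\mathbf{i}^{r}$, reading off the witnessing cells $\mathbf{k}^{l}$ and $\mathbf{k}^{r}$ from the coherence data supplied there (alternatively, adapting that proof verbatim to produce a left and a right inverse directly). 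The main obstacle is exactly this transfer: the cited result is phrased for two-sided inverses and, in its native form, for $\infty$-groupoids, so one must check both that the construction descends to the truncated, finite-dimensional $\mathfrak{C}^{\mathbf{W}}$-model setting and that the single reversal operation can be reconciled with the two distinct generators, whose source/target equations in Definition \ref{structure systems} are not identical. The finite dimension $n=3$ works in our favour, bounding the coherence cells that must be exhibited; once the boundary equations are matched, the remaining fillers are produced exactly as in \cite{EL}, which completes the lift and, via Theorem \ref{semi model str on C_W}, the verification that $\mathfrak{C}^{\mathbf{W}}$ is a coherator for $3$-groupoids.
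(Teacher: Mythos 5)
Your proposal is correct and follows essentially the same route as the paper: reduce the lift, via the pushout presentation of $\mathfrak{C}^{\mathbf{W}}$, to equipping the $\mathfrak{C}$-model $\p X$ with a system of inverses compatible with the chosen compositions and identities, and then obtain those inverses from Theorem 6.5 of \cite{EL}, interpreting both the left and right inverse generators through the two-sided construction there (or adapting it directly). Your extra care about matching the boundary equations of $\mathbf{i}^l,\mathbf{i}^r,\mathbf{k}^l,\mathbf{k}^r$ is a reasonable elaboration of what the paper leaves implicit, but it is not a different argument.
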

%To be precise, if we denote by $\mathfrak{C}_{\mathbf{W}}$ the globular theory obtained from $\mathfrak{C}$ by adjoining two-sided inverse operation similarly to what we have done in Definition \ref{D_W def}, then Theorem 6.5 in \cite{EL} provides an extension of $\p$ of the form $\p \colon \mathbf{Mod}(\mathfrak{C}_{\mathbf{W}}) \rightarrow \mathbf{Mod}(\mathfrak{C}_{\mathbf{W}})$. Thanks to Remark \ref{two-sided vs left and right}, there are maps of globular theories $\phi\colon\mathbf{Mod}(\mathfrak{C}^{\mathbf{W}})\rightarrow \mathbf{Mod}(\mathfrak{C}_{\mathbf{W}})$ and $\psi\colon\mathbf{Mod}(\mathfrak{C}_{\mathbf{W}}) \rightarrow \mathbf{Mod}(\mathfrak{C}^{\mathbf{W}})$, so that we can define $\p\colon \mathbf{Mod}(\mathfrak{C}^{\mathbf{W}}) \rightarrow \mathbf{Mod}(\mathfrak{C}^{\mathbf{W}})$ as the following composite:
%\[
%\bfig 
%\morphism(0,0)|a|/@{>}@<0pt>/<700,0>[\mathbf{Mod}(\mathfrak{C}^{\mathbf{W}})`\mathbf{Mod}(\mathfrak{C}_{\mathbf{W}});\psi^{\ast}]
%\morphism(700,0)|a|/@{>}@<0pt>/<700,0>[\mathbf{Mod}(\mathfrak{C}_{\mathbf{W}})`\mathbf{Mod}(\mathfrak{C}_{\mathbf{W}});\p]
%\morphism(1400,0)|a|/@{>}@<0pt>/<700,0>[\mathbf{Mod}(\mathfrak{C}_{\mathbf{W}})`\mathbf{Mod}(\mathfrak{C}^{\mathbf{W}});\phi^{\ast}]
%\efig 
%\] where $\phi^{\ast}$ and $\psi^{\ast}$ denote the appropriate restriction functors.
This proves Theorems \ref{path object for Batanin 3-groupoids} and \ref{model structure} and concludes this section.
\newpage
\appendix
\section{Pseudofunctoriality of whiskerings}
In this section of the appendix we want to record some results and constructions that involve the mapping 2-groupoid of morphisms $X(x,y)$, where $X$ is a 3-groupoid and $x,y$ are 0-cells in $X$.
Having in mind that 2-groupoids essentially corresponds to unbiased bicategories with weak inverses, we will treat them as such.

Let's consider the following situation, we are given 1-dimensional globular pasting diagrams in $X$ of the form $\alpha\colon A \rightarrow X, \beta\colon B \rightarrow X$, with $\partial_{\sigma}(\alpha)\overset{def}{=}\alpha\circ \partial_{\sigma}=w,\ \partial_{\tau}(\alpha)=x,\ \partial_{\sigma}(\beta)=y,\ \partial_{\tau}(\beta)=z$. Moreover, we are given a homogeneous operation $\rho\colon D_1 \rightarrow A\plus{D_0}D_1\plus{D_0}B$. We then have the following result:
\begin{lemma}
	The data above extend to a pseudofunctor of bicategories of the form \[\left( \alpha,-,\beta\right)\circ \rho\colon X(x,y)\rightarrow X(w,z)\] in $\ngpd{2}$.
	\begin{proof}
		Choose operations $\rho^2,\rho^3$ as depicted in the following diagrams:
		\[
		\bfig 
		\morphism(0,0)|a|/@{>}@<3pt>/<800,0>[D_1`A\plus{D_0}D_2\plus{D_0}B;(1,\sigma,1)\circ \rho]
		\morphism(0,0)|b|/@{>}@<-3pt>/<800,0>[D_1`A\plus{D_0}D_2\plus{D_0}B;(1,\tau,1)\circ \rho]
		
		\morphism(0,0)|r|/@{>}@<3pt>/<0,-400>[D_1`D_{2};\tau]
		\morphism(0,0)|l|/@{>}@<-3pt>/<0,-400>[D_1`D_{2};\sigma]
		
		\morphism(0,-400)|r|/@{>}@<0pt>/<800,400>[D_{2}`A\plus{D_0}D_2\plus{D_0}B;\rho^2]
		\efig 
		\bfig 
		\morphism(200,0)|a|/@{>}@<3pt>/<800,0>[D_2`A\plus{D_0}D_3\plus{D_0}B;(1,\sigma,1)\circ \rho^2]
		\morphism(200,0)|b|/@{>}@<-3pt>/<800,0>[D_2`A\plus{D_0}D_3\plus{D_0}B;(1,\tau,1)\circ \rho^2]
		
		\morphism(200,0)|r|/@{>}@<3pt>/<0,-400>[D_2`D_{3};\tau]
		\morphism(200,0)|l|/@{>}@<-3pt>/<0,-400>[D_2`D_{3};\sigma]
		
		\morphism(200,-400)|r|/@{>}@<0pt>/<800,400>[D_{3}`A\plus{D_0}D_3\plus{D_0}B;\rho^3]
		\efig 
		\] Next, define the underlying map of globular sets to be given by $\left( \alpha,-,\beta\right)\circ \rho^{k+1}\colon X(x,y)_k\rightarrow X(w,z)_k$ on $k$-cells, where we implicitly use the isomorphism of sets $ X(a,b)_k\cong \{f\in X_{k+1}\vert s^k(f)=a,t^k(f)=b\}$. The fact that this extends to a pseudofunctor is a simple exercise using contractibility of globular sums, and is therefore left to the interested reader.
	\end{proof}
\end{lemma}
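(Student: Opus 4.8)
The plan is to build the pseudofunctor in three stages: its underlying action on cells, the coherence data (compositor and unitor), and the coherence axioms. First I would promote $\rho$ to a family of operations $\rho^{k+1}\colon D_{k+1}\rightarrow A\plus{D_0}D_{k+1}\plus{D_0}B$ with $\rho^1=\rho$, defining $\rho^{k+1}$ as a filler of the admissible pair $\bigl((1,\sigma,1)\circ\rho^{k},(1,\tau,1)\circ\rho^{k}\bigr)$, exactly as in the two displayed extension problems. Each such pair is admissible: since $\dim(A),\dim(B)\leq 1$ one has $\dim\bigl(A\plus{D_0}D_{k+1}\plus{D_0}B\bigr)=k+1$, so a filler exists by contractibility of $\mathfrak{C}$ (Definition \ref{contr glob th}). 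Because $X(x,y)$ is a $2$-groupoid, only cells up to dimension $2$ matter, i.e. up to $3$-cells of $X$, so it is enough to produce $\rho^1,\rho^2,\rho^3$. The identities $\rho^{k+1}\circ\epsilon=(1,\epsilon,1)\circ\rho^{k}$ built into the construction guarantee that the induced maps $(\alpha,-,\beta)\circ\rho^{k+1}$ are compatible with source and target, and hence assemble into a morphism of underlying $2$-globular sets which restricts, on each hom, to a functor between hom-categories.

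Next I would construct the coherence data. The compositor (which in the unbiased setting compares $F$ applied to an $m$-fold composite with the $m$-fold composite of the images) and the unitor are $2$-cells of $X(w,z)$, equivalently $3$-cells of $X$, between \emph{parallel} $1$-cells: both the image of a composite and the composite of images, and likewise $1_{F(a)}$ and $F(1_a)$, arise from the same data by composing through $\rho$ and the chosen whiskering maps $w$, so they share a boundary. Each of these filling problems has boundary factoring through a globular sum of dimension at most $3$, so a filler exists by Lemma \ref{vanishing of higher homotopy groups in globular sums}; equivalently, by Lemma \ref{filler of spheres} existence reduces to the strict picture in $\nCat{\omega}$, where the corresponding operation is simply the identity. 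Naturality of the compositor, and invertibility of both coherence cells (automatic in the groupoidal setting), are checked in the same representable fashion.

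The hard part will be the coherence axioms, which are equalities of $2$-cells in the $2$-groupoid $X(w,z)$, i.e. of $3$-cells of $X$. I would dispatch them by reduction to $\nCat{\omega}$: applying the cocontinuous functor $F\colon\mathbf{Mod}(\mathfrak{C})\rightarrow\nCat{\omega}$, the whole construction becomes the strict $2$-functor $(\alpha,-,\beta)\circ\rho$, whose coherence cells are identities, so the two sides of every axiom have the same image under $F$. Their two composites are then parallel $3$-cells of $X$ whose common boundary factors through a globular sum of dimension $\leq 3$, and Lemma \ref{vanishing of higher homotopy groups in globular sums} forces parallel top-dimensional cells over such a globular sum to agree; this yields the required equalities. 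The genuine obstacle is organisational rather than conceptual: one must choose the fillers for the compositor and unitor \emph{coherently}, namely as lifts living over the trivial strict data under $F$, so that all the axiom diagrams hold simultaneously, while keeping track of the many boundary identifications introduced by the operations $w$. Once this bookkeeping is in place each individual verification is routine, which is exactly why the statement can be relegated to contractibility.
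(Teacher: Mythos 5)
Your proposal follows the paper's proof exactly: the same extension problems defining $\rho^2$ and $\rho^3$, the same definition of the underlying map on $k$-cells via $(\alpha,-,\beta)\circ\rho^{k+1}$, and the same appeal to contractibility of globular sums for the coherence data and axioms (which the paper explicitly leaves as an exercise and you spell out). No discrepancies worth noting.
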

If we go one dimension up, we can consider the following situation: we are given globular sums $A,B$ with $\max\{\dim(A),\dim(B)\}=2$, and maps $\alpha\colon A \rightarrow X, \beta\colon B \rightarrow X$, with  $\partial^{\dim(A)}_{\sigma}(\alpha)\overset{def}{=}\alpha\circ \partial^{\dim(A)}_{\sigma}=w,\ \partial^{\dim(A)}_{\tau}(\alpha)=x,\ \partial^{\dim(B)}_{\sigma}(\beta)=y,\ \partial^{\dim(B)}_{\tau}(\beta)=z$. Furthermore, assume given a homogeneous operation of the form $\rho\colon D_2 \rightarrow A+D_2+B$, where $+$ denotes either $\plus{D_0}$ or $\plus{D_1}$. We then have the following result, whose proof is analogous to that of the previous one.
\begin{lemma}
	\label{pseudo nat transf}
	The previous data determine a pseudo-natural transformation of the form:
	\[
	\bfig 
	\morphism(0,0)|a|/{@{>}@/^2em/}/<1200,0>[X(x,y)`X(w,z);\left(\partial^{\dim(A)-1}_{\sigma}(\alpha),-,\partial^{\dim(B)-1}_{\sigma}(\beta)\right)\circ \rho_\sigma]
	\morphism(0,0)|r|/{@{>}@/^-2em/}/<1200,0>[X(x,y)`X(w,z);\left(\partial^{\dim(A)-1}_{\tau}(\alpha),-,\partial^{\dim(B)-1}_{\tau}(\beta)\right)\circ \rho_\tau]
	
	\morphism(600,100)|a|/@{=>}/<0,-200>[`;\left( \alpha,-,\beta\right)\circ \rho]
	\efig 
	\] where $\rho_{\epsilon}$ denotes the homogeneous part of the composite $\rho \circ \epsilon$ for $\epsilon=\sigma,\tau$.
\end{lemma}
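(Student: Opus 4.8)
The plan is to follow the template of the preceding lemma verbatim, replacing the construction of a pseudofunctor by that of a pseudo-natural transformation and shifting the dimension of every operation up by one. First I would observe that the two boundary pseudofunctors
\[
F = \left(\partial^{\dim(A)-1}_{\sigma}(\alpha),-,\partial^{\dim(B)-1}_{\sigma}(\beta)\right)\circ \rho_\sigma, \qquad
G = \left(\partial^{\dim(A)-1}_{\tau}(\alpha),-,\partial^{\dim(B)-1}_{\tau}(\beta)\right)\circ \rho_\tau
\]
are already supplied by the previous lemma, since $\rho_\sigma$ and $\rho_\tau$ are homogeneous $1$-dimensional operations landing in the source, resp.\ target, boundary of $A+D_2+B$. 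It then remains only to produce the $1$-cell components and the naturality $2$-cells of a pseudo-natural transformation $F\Rightarrow G$ in $\ngpd{2}$ (treating $X(x,y)$ and $X(w,z)$ as bicategories) and to check the two coherence axioms.

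For the components, given an object $b$ of $X(x,y)$ (a $1$-cell of $X$) I would set $\eta_b=(\alpha,1_b,\beta)\circ\rho$, the $2$-cell of $X$ obtained by feeding the degenerate $2$-cell $1_b$ into the middle slot of $\rho$ and whiskering with $\alpha,\beta$; inspecting its source and target faces, which compute exactly $\rho\circ\sigma$ and $\rho\circ\tau$ on the boundary of the inputs, shows $\eta_b\colon Fb\to Gb$ in $X(w,z)$. For the naturality data, given a $1$-cell $g\colon b\to b'$ of $X(x,y)$ (a $2$-cell of $X$) the element $(\alpha,g,\beta)\circ\rho$ is a $1$-cell $Fb\to Gb'$ of $X(w,z)$ playing the role of a diagonal, and I would obtain the comparison $3$-cell $\eta_g\colon Gg\ast\eta_b\Rightarrow\eta_{b'}\ast Fg$ by choosing an auxiliary operation $\rho^{2}\colon D_3\to A+D_3+B$ whose two faces recover the two legs factoring through this diagonal. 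Such an operation exists because $A+D_3+B$ is a $3$-dimensional globular sum, hence contractible by Proposition \ref{glob sums are contractible}, so the relevant admissible $2$-sphere admits a filler.

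Finally I would verify the two pseudo-naturality axioms (naturality with respect to $2$-cells of $X(x,y)$, and compatibility with horizontal composition and with units). Each of these is an equality of $3$-cells of $X$ obtained by applying iterated chosen operations to the data $\alpha,\beta,g,g'$; both sides are parallel and arise as values of morphisms of $\mathfrak{C}$ landing in a contractible $3$-dimensional globular sum, and in such a globular sum any two parallel $3$-cells are equal, this being precisely the lifting property against the collapse map $(1,1)\colon S^3\to D_3$ encoded in contractibility (cf.\ Definition \ref{def cofi}). Hence the axioms hold automatically, which is exactly why the argument is a dimension-shifted copy of the previous one.

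The main obstacle, such as it is, lies not in producing the individual fillers but in organising the bookkeeping so that the chosen operation $\rho^{2}$ and the composition/unit constraints of $F$ and $G$ are mutually compatible. Once one records that every required cell is the value of a morphism of $\mathfrak{C}$ into a contractible globular sum, the equality of top-dimensional parallel cells forces all coherence equations simultaneously, so no genuine choices remain to be reconciled and the pseudo-natural transformation $\left(\alpha,-,\beta\right)\circ\rho$ is well defined.
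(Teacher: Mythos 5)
Your proposal is correct and matches the paper, whose entire proof of this lemma is the remark that it is ``analogous to that of the previous one'': you carry out exactly that dimension-shifted analogy, producing the boundary pseudofunctors from the preceding lemma, the components and naturality cells by filling admissible spheres in the (contractible) globular sums $A+D_k+B$, and the coherence axioms automatically from equality of parallel top-dimensional operations.
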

Finally, we observe that given bicategories $\mathcal{K},\mathscr{L}$, a square in $\mathcal{K}$ of the form:
\[\bfig 
\morphism(0,0)|a|/@{>}@<0pt>/<400,0>[A` B;h]
\morphism(0,0)|a|/@{>}@<0pt>/<0,-400>[A`C;f]
\morphism(0,-400)|b|/@{>}@<0pt>/<400,0>[C` D;k]
\morphism(400,0)|r|/@{>}@<0pt>/<0,-400>[B`D;g]

\morphism(200,-100)|a|/@{=>}/<0,-200>[`;\Theta]
\efig \] and a pseudo-natural transformation
\[\bfig 
\morphism(0,0)|a|/{@{>}@/^1.5em/}/<600,0>[\mathcal{K}`\mathscr{L};F]
\morphism(0,0)|r|/{@{>}@/^-1.5em/}/<600,0>[\mathcal{K}`\mathscr{L};G]

\morphism(300,100)|a|/@{=>}/<0,-200>[`;\alpha]
\efig \] we get a filler for the square
\begin{equation}
\label{pseudonat square}
\bfig 
\morphism(0,0)|a|/@{>}@<0pt>/<600,0>[FA` GB;G(h)\alpha_A]
\morphism(0,0)|a|/@{>}@<0pt>/<0,-400>[FA`FC;F(f)]
\morphism(0,-400)|b|/@{>}@<0pt>/<600,0>[FC` GD;F(k)\alpha_D]
\morphism(600,0)|r|/@{>}@<0pt>/<0,-400>[GB`GD;G(g)]

\morphism(300,-100)|a|/@{==>}/<0,-200>[`;\exists\Phi]
\efig
\end{equation}
Indeed, it is enough to consider the following composite:
\[
\bfig
\morphism(0,0)|a|/@{>}@<0pt>/<700,0>[G(g)(G(h)\alpha_A)` G(gh)\alpha_A;\cong]
\morphism(700,0)|a|/@{>}@<0pt>/<800,0>[G(gh)\alpha_A`G(kf)\alpha_A;G(\Theta)\alpha_A]
\morphism(1500,0)|a|/@{>}@<0pt>/<600,0>[G(kf)\alpha_A`\alpha_D F(kf);\alpha_{kf}]
\morphism(2100,0)|a|/@{>}@<0pt>/<700,0>[\alpha_D F(kf)`(\alpha_D F(k)) F(f);\cong]
\efig 
\] It is clear that an analogous statement holds if we replace squares with commutative triangles or even just a 2-cell, thus covering the case of all possible degenerate 1-cylinders. 
\section{A bicategory of cylinders and modifications}
Given an $\infty$-groupoid $X$ and an integer $n\geq 0$, we want to organize the collection of $n$-cells, $n$-cylinders and modifications between $n$-cylinders into an algebraic structure that allows us to perform calculations with them and encode the low dimensional structure of a yet to be defined internal hom between $\infty$-groupoids.
This is going to be a truncation of an $\infty$-groupoid that results from the existence of a Gray tensor product $\bigotimes\colon\wgpd \times \wgpd\rightarrow \wgpd$.

To simplify things, the bicategory that we define will be of a special kind, as defined below.
\begin{rmk}
Everything that follows can be proven to hold true also in $\mathbf{Mod}(\mathfrak{C})$ for any given coherator for $\infty$-categories $\mathfrak{C}$. Indeed, all the fillers obtained using contractibility can be obtained using the methods we described in Section 5, once we observe that the latching map of $\Xi\colon\cyl(\bullet)\ast \cyl(\bullet)\rightarrow \mathbf{M}_{\bullet}$ is a pushout of the boundary inclusion $S^{n+1}\rightarrow D_{n+2}$, as proven in Lemma 10.3 of \cite{EL}.
\end{rmk}
\begin{defi}
	A bicategory $\mathcal{C}$ is called locally posetal if the hom-category $\mathcal{C}(A,B)$ is a poset for every pair of objects $(A,B)$ in $\mathcal{C}$.
\end{defi}
Suppose given a $2$-truncated globular set $X\colon \G_{\leq 2}^{op} \rightarrow \mathbf{Set}$, we want to get a locally posetal bicategory $\chi(X)$ from it by setting $X_0$ as its set of objects and defining the underlying graph of each of its hom-categories to be
\[\chi(X)(a,b)_k=\begin{cases}
X_1(a,b)& k=1\\
\coprod_{(f,g)}\{*\}&X_2(f,g)\neq \emptyset
\end{cases}\]
In words, we are saying that there is a $2$-cell $\alpha\colon f \rightarrow g$ if and only if the set $X_2(f,g)$ is non-empty.

What extra structure do we need to define, and what conditions should it satisfy in order to get a locally posetal bicategory? The properties not encoded by the structures, i.e. the axioms for a bicategory, all concern equality between $2$-cells, and therefore are trivially satisfied.
Thus we only need to define the following operations:
\begin{enumerate}
	\item composition of $1$-cells $\chi(X)_1 \times_{\chi(X)_0}\chi(X)_1 \rightarrow \chi(X)_1$;
	\item \label{vert comp of modif} vertical composition of $2$-cells $\chi(X)_2 \times_{\chi(X)_1}\chi(X)_2 \rightarrow \chi(X)_2$;
	\item whiskerings $\chi(X)_2 \times_{\chi(X)_0}\chi(X)_1 \rightarrow \chi(X)_2$ and $\chi(X)_1 \times_{\chi(X)_0}\chi(X)_2 \rightarrow \chi(X)_2$;
	\item identity $1$-cells $1_a \in \chi(X)(a,a)$ for every $a \in \ob(\chi(X))$;
	\item identity $2$-cells $1_f\colon f \Rightarrow f$ for every $f\in X_1$;
	\item unit constraints, which amount to check that $X_2(f \circ 1_{s(f)},f)\neq \emptyset \neq X_2(1_{t(f)}\circ f,f)$ and $X_2(f,f \circ 1_{s(f)})\neq \emptyset \neq X_2(f,1_{t(f)}\circ f)$;
	\item associators, which amount to check that \[X_2((h\circ g)\circ f,h\circ (g \circ f)\neq \emptyset \neq X_2(h\circ (g \circ f),(h\circ g)\circ f)\]
\end{enumerate}
Given an $\infty$ groupoid $X$, we define a $2$-truncated globular set out of it, for each $n\geq 0$, as follows:
\[ \mathbf{hom}(D_n,X)_k \begin{cases}
\wgpd(D_n,X)=X_n & k=0\\
\wgpd (\cyl(D_n),X)& k=1\\
\wgpd(\M_n,X)&k=2
\end{cases} \]
where, the globular structure is induced by precomposition with the structural maps \[\iota=(\iota_0,\iota_1)\colon D_n\coprod D_n \rightarrow \cyl(D_n) \ \text{and} \ \Xi=(\Xi_0,\Xi_1)\colon \cyl (D_n)+\cyl (D_n)\rightarrow \M_n\]
All the proof and construction that follow, can be adapted to the more general case of (possibly) degenerate cylinders as 1-cells and (possibly) degenerate modifications as 2-cells. The latter can be defined in a straightforward way by mimicking the changes made in going from normal cylinders to degenerate ones.

We already have some of the operations required to get a locally posetal bicategory out of it: composition of $1$-cells is given by vertical composition of cylinders, and the identity $1$-cell on an $n$-cell $A\in X_n$ is the trivial cilinder $\mathbf{C}_A$ defined as the composite $\bfig 	\morphism(0,0)|a|/@{>}/<500,0>[\cyl(D_n)`D_n;\mathbf{C}_n] 	\morphism(500,0)|a|/@{>}/<300,0>[D_n`X;A]\efig$.

The existence of the rest of the structure in the case $n=0$ is straightforward, and follows directly from the contractibility of the coherator $\mathfrak{C}$.
In what follows, we fix an integer $n > 0$ and we assume as inductive hypothesis that $\mathbf{hom}(D_k,X)$ is a locally posetal bicategory for each $k< n$.

Let us now address point \eqref{vert comp of modif}, i.e. vertical composition of modifications.
From here onwards, until the end of this section, whenever a $1$-cell is labelled with $\Theta, \Psi$ or $\Phi$, that refers to the coherence cylinders considered in Definition \ref{coherence cyls}.
\begin{lemma}
	Given a pair of composable modifications $\Theta\colon F \Rightarrow G, \ \Psi \colon G \Rightarrow H$ between $n$-cylinders $F,G,H\colon A \curvearrowright B$ in $X$, there exists a composite modification \[\Psi \circ \Theta\colon F \Rightarrow H \]
	\begin{proof}
		Define the $2$-cells $(\Psi \circ \Theta)_{\epsilon}=\Psi_{\epsilon}\circ \Theta_{\epsilon}$ for $\epsilon=s,t$, using the same operation representing vertical composition of $2$-cells in $X $ that has been chosen for $\mathbf{hom}(D_0,X)$ (e.g. the one used in the definition of cylinders).
		Consider the following $2$-dimensional pasting diagram in the bicategory $\mathbf{hom}(D_{n-1},X(x,y)) $, where $x=s^2(\Theta_s)$ and $y=t^2(\Theta_t)$:
		\[
		\bfig
		\morphism(0,0)|l|/@{>}/<-800,-400>[t^n(H)A`t^n(F)A;\Gamma(A,(\Psi \circ \Theta)_t)] 
		\morphism(-800,-400)|l|/@{>}/<0,-400>[t^n(F)A`Bs^n(F);\bar{F}] 
		\morphism(-800,-800)|b|/@{>}/<800,-400>[Bs^n(F)`Bs^n(H);\Upsilon((\Psi \circ \Theta)_s,B) \ \ \ ] 
		\morphism(0,0)|r|/@{>}/<0,-400>[t^n(H)A`t^n(G)A;\Gamma(A, \Psi_t)]
		\morphism(0,-400)|a|/@{>}/<-800,0>[t^n(G)A`t^n(F)A;\Gamma(A, \Theta_t)]
		\morphism(0,-400)|r|/@{>}/<0,-400>[t^n(G)A`Bs^n(G);\bar{G}]
		\morphism(-800,-800)|a|/@{>}/<800,0>[Bs^n(F)`Bs^n(G);\Upsilon( \Theta_s,B)]
		\morphism(0,-800)|r|/@{>}/<0,-400>[Bs^n(G)`Bs^n(H);\Upsilon( \Psi_s,B)]
		
		\morphism(0,0)|r|/{@{>}@/^7.5em/}/<0,-1200>[t^n(H)A`Bs^n(H);\bar{H}] 
		
		\morphism(-300,-250)|a|/@{=>}/<200,0>[`;\alpha]
		\morphism(-500,-600)|a|/@{=>}/<200,0>[`;\bar{\Theta}]
		\morphism(-300,-950)|r|/@{=>}/<200,0>[`;\beta]
		\morphism(300,-600)|r|/@{=>}/<200,0>[`;\bar{\Psi}]
		\efig
		\]
		Here, the existence of $\alpha$ (resp. $\beta$) follows by an application of 	 Lemma \ref{modifications in contractible groupoids} to the contractible $\infty$-groupoid $D_n\coprod_{ D_0} D_2\coprod_{D_1}D_2$ (resp. $D_2\coprod_{D_1}D_2\coprod_{ D_0}D_n$).
		The composite of this pasting diagram defines the modification claimed in the statement, thus concluding the proof.
	\end{proof}
\end{lemma}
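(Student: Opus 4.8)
The plan is to run an induction on the cylinder dimension $n$, using the recursive shape of a modification (a pair of boundary $2$-cells together with a reduced modification one dimension down) and the inductive hypothesis that each hom-bicategory $\mathbf{hom}(D_{n-1},X(x,y))$ is locally posetal. The case $n=0$ is just the already-chosen vertical composition of $2$-cells in $X$, so I would assume $n>0$ and build the composite from that datum upward.

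First I would fix the boundary, which is forced: set $(\Psi\circ\Theta)_{\epsilon}=\Psi_{\epsilon}\circ\Theta_{\epsilon}$ for $\epsilon=s,t$, computed with the chosen operation for vertical composition of $2$-cells in $X$ (the one already used to define cylinders and to equip $\mathbf{hom}(D_0,X)$). It then remains to produce the reduced modification $\overline{\Psi\circ\Theta}$ between $(n-1)$-cylinders in $X(x,y)$, with $x=s^2(\Theta_s)$ and $y=t^2(\Theta_t)$, whose source is $\Upsilon(\iota_0F,(\Psi\circ\Theta)_t)\otimes\bar{F}\otimes\Gamma((\Psi\circ\Theta)_s,\iota_1F)$ and whose target is $\bar{H}$.

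The heart of the argument is to realize this reduced modification as the composite of a $2$-dimensional pasting diagram in the locally posetal bicategory $\mathbf{hom}(D_{n-1},X(x,y))$. I would assemble the diagram from four $2$-cells: the two given reduced modifications $\bar{\Theta}\colon\Gamma(A,\Theta_t)\otimes\bar{F}\otimes\Upsilon(\Theta_s,B)\Rightarrow\bar{G}$ and $\bar{\Psi}\colon\Gamma(A,\Psi_t)\otimes\bar{G}\otimes\Upsilon(\Psi_s,B)\Rightarrow\bar{H}$, stacked so that the middle $1$-cell $\bar{G}$ is shared, together with two coherence $2$-cells $\alpha$ and $\beta$. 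Here $\alpha$ witnesses that the coherence cylinder $\Gamma(A,-)$ carries the vertical composite $(\Psi\circ\Theta)_t$ to the composite $\Gamma(A,\Theta_t)\otimes\Gamma(A,\Psi_t)$, and dually $\beta$ witnesses that $\Upsilon(-,B)$ carries $(\Psi\circ\Theta)_s$ to $\Upsilon(\Theta_s,B)\otimes\Upsilon(\Psi_s,B)$. Reading the composite off the diagram then passes from $\Gamma(A,(\Psi\circ\Theta)_t)\otimes\bar{F}\otimes\Upsilon((\Psi\circ\Theta)_s,B)$, via $\alpha$ and $\beta$, to the two-step stack, then through $\bar{\Theta}$ and $\bar{\Psi}$, landing on $\bar{H}$; transposing up one dimension delivers $\Psi\circ\Theta$.

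The step I expect to be the real obstacle (and the only place contractibility enters) is the production of $\alpha$ and $\beta$. I would obtain $\alpha$ by applying Lemma \ref{modifications in contractible groupoids} to the contractible $\mathfrak{C}$-model $D_n\amalg_{D_0}D_2\amalg_{D_1}D_2$, and $\beta$ symmetrically to $D_2\amalg_{D_1}D_2\amalg_{D_0}D_n$; these globular sums encode exactly the datum of whiskering $A$ (resp. $B$) against a composite of two $2$-cells, so the lemma yields the required comparison cylinders in the stated directions. Once $\alpha$ and $\beta$ exist, local posetality eliminates every coherence condition—there is nothing to verify about associativity or interchange among the constituent $2$-cells—so the pasting composite is automatically well defined and equal, as a modification, to the prescribed $\overline{\Psi\circ\Theta}$. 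The only remaining care is purely bookkeeping: matching the boundary $(n-1)$-cylinders of $\alpha,\beta,\bar{\Theta},\bar{\Psi}$ so that the outer boundary of the diagram coincides on the nose with the source and target of $\overline{\Psi\circ\Theta}$.
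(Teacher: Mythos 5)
Your proposal is correct and follows essentially the same route as the paper: the same forced boundary $(\Psi\circ\Theta)_{\epsilon}=\Psi_{\epsilon}\circ\Theta_{\epsilon}$, the same pasting diagram in $\mathbf{hom}(D_{n-1},X(x,y))$ built from $\bar{\Theta}$, $\bar{\Psi}$ and the two comparison cells $\alpha,\beta$ obtained from Lemma \ref{modifications in contractible groupoids} applied to $D_n\amalg_{D_0}D_2\amalg_{D_1}D_2$ and $D_2\amalg_{D_1}D_2\amalg_{D_0}D_n$. The explicit inductive framing and the appeal to local posetality are already set up in the surrounding text of the paper, so nothing of substance differs.
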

Let us now address the problem of constructing identity $2$-cells in $\mathbf{hom}(D_n,X)$.
\begin{lemma}
	Given an $n$-cylinder $F\colon A \curvearrowright B$ in $X$, there exists a modification of $n$-cylinders in $X$ of the form $1_F\colon F \Rightarrow F$.
	\begin{proof}
		Define a pair of $2$-cells $(1_F)_{\epsilon}=1_{\epsilon^n(F)}$ for $\epsilon=s,t$, where $1_f$ denotes the choice of an identity $2$-cell on $f$, when $f$ is a $1$-cell of $X$.
		Consider the following $2$-dimensional pasting diagram in $\mathbf{hom}\left( D_{n-1}, X(x,y)\right)$, with $x=s^n(A)$ and $y=t^n(B)$:
		\[
		\bfig
		\morphism(0,0)|l|/{@{>}@/^-2em/}/<-800,-400>[t^n(F)A`t^n(F)A;\Gamma(A,{(1_F)}_t)]
		\morphism(0,0)|b|/{@{>}@/^2em/}/<-800,-400>[t^n(F)A`t^n(F)A;\mathbf{C}_{t^n(f)A}] 
		\morphism(-800,-400)|l|/@{>}/<0,-400>[t^n(F)A`Bs^n(F);\bar{F}] 
		\morphism(-800,-800)|b|/{@{>}@/^-2em/}/<800,-400>[Bs^n(F)`Bs^n(F);\Upsilon({(1_F)}_s,B) \ \ \ ] 
		\morphism(-800,-800)|a|/{@{>}@/^2em/}/<800,-400>[Bs^n(F)`Bs^n(F);\mathbf{C}_{Bs^n(F)}]
		\morphism(0,0)|r|/{@{>}@/^2em/}/<0,-1200>[t^n(F)A`Bs^n(F);\bar{F}]
		
		\morphism(-500,-100)|a|/@{=>}/<200,-200>[`;\alpha]
		\morphism(-500,-1100)|a|/@{=>}/<200,200>[`;\beta]
		\morphism(-500,-600)|a|/@{=>}/<280,0>[`;\gamma]
		\efig
		\]
		Here, $\alpha$ (resp. $\beta$) is obtained by applying Lemma \ref{modifications in contractible groupoids} to the contractible $\infty$-groupoid $D_n\coprod_{ D_0}D_1$ (resp. $D_1 \coprod_{ D_0} D_n$), and $\gamma$ is a pasting of unit constraints in the bicategory $\mathbf{hom}(D_{n-1},X(x,y))$.
		The composite of this pasting diagram provides the modification we are looking for, and thus we conclude the proof.
	\end{proof}
\end{lemma}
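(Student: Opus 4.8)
The plan is to exhibit $1_F$ by the same inductive mechanism that produced cylinders and their modifications, using the standing inductive hypothesis that $\mathbf{hom}(D_{n-1},X(x,y))$ is already a locally posetal bicategory (the case $n=0$ being immediate). Following the definition of a modification, I would first declare the boundary data to be identities, setting $(1_F)_s=1_{s^n(F)}$ and $(1_F)_t=1_{t^n(F)}$ for a chosen identity $2$-cell $1_f$ on each $1$-cell $f$ of $X$. With these choices fixed, what remains is to build a modification of $(n-1)$-cylinders
\[ \overline{1_F}\colon \Upsilon(\iota_0 F,(1_F)_t)\otimes \bar{F}\otimes\Gamma((1_F)_s,\iota_1 F)\Rightarrow \bar{F} \]
living in the bicategory $\mathbf{hom}(D_{n-1},X(x,y))$, where $x=s^n(A)$ and $y=t^n(B)$.

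The guiding idea is that whiskering a cylinder by an identity $2$-cell is coherently trivial, so the two outer factors above are really copies of the trivial cylinder $\mathbf{C}$ up to a canonical modification. Concretely, I would first produce two coherence modifications $\alpha$ and $\beta$ identifying $\Gamma((1_F)_s,\iota_1 F)$ and $\Upsilon(\iota_0 F,(1_F)_t)$ with the appropriate trivial cylinders $\mathbf{C}$. Both source and target here are parallel $(n-1)$-cylinders, and the ambient globular sums $D_n\coprod_{D_0}D_1$ and $D_1\coprod_{D_0}D_n$ are contractible, so Lemma \ref{modifications in contractible groupoids} supplies $\alpha$ and $\beta$ with exactly the prescribed boundaries. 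Whiskering $\bar{F}$ by $\alpha$ on one side and $\beta$ on the other rewrites the composite as $\mathbf{C}\otimes\bar{F}\otimes\mathbf{C}$, and then a pasting $\gamma$ of the unit constraints of $\mathbf{hom}(D_{n-1},X(x,y))$ contracts this to $\bar{F}$. The composite of the resulting two-dimensional pasting diagram is the desired $\overline{1_F}$, and assembling it with the identity boundary $2$-cells yields $1_F$.

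Because the bicategory under construction is locally posetal, every axiom that would constrain equalities of $2$-cells holds vacuously, so once $1_F$ has been exhibited there is nothing further to check; in particular I need not verify separately that $1_F$ behaves as a unit. The only genuine content, and hence the step I expect to be the main (if mild) obstacle, is securing the coherence modifications $\alpha$ and $\beta$: this is precisely where contractibility of the small pasting shapes $D_n\coprod_{D_0}D_1$ and $D_1\coprod_{D_0}D_n$ enters through Lemma \ref{modifications in contractible groupoids}, and one must take care that the boundaries of $\alpha$ and $\beta$ are chosen to agree with $(1_F)_s$ and $(1_F)_t$ so that the whiskered pasting diagram is well-formed and composable.
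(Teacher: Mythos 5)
Your proposal is correct and follows essentially the same route as the paper: identity $2$-cells on the boundary, coherence modifications $\alpha$ and $\beta$ obtained from Lemma \ref{modifications in contractible groupoids} applied to the contractible globular sums $D_n\coprod_{D_0}D_1$ and $D_1\coprod_{D_0}D_n$ to collapse the $\Gamma$ and $\Upsilon$ factors to trivial cylinders, and a pasting $\gamma$ of unit constraints in $\mathbf{hom}(D_{n-1},X(x,y))$ to finish. Nothing is missing.
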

We prove the next two lemmas by a simultaneous induction on $n$.
\begin{lemma}
	\label{modif vert comp+whiskering 1-cell}
	Let $F\colon A \curvearrowright B$, $G\colon B \curvearrowright C$ be $n$-cylinders in $\Omega_m (X,\phi_1,\phi_2)$ where $(\phi_1,\phi_2)=\phi\colon S^{m-1} \rightarrow X$.
	Given a $1$-cell $h\colon a \rightarrow s^{n+m}(A)=s^{n+m}(B)=s^{n+m}(C)$ in $X$, we get a modification 
	\[\Theta^{g,f,h}\colon (Gh)\circ( Fh)\Rightarrow (G\circ F)h\colon Ah \curvearrowright Ch \]
	where $(\bullet)h$ denotes the operation of whiskering defined in Section \ref{subsect vert comp of cyls} and $\circ$ is the vertical composition of cylinders.
	\begin{proof}
		We prove the statement by induction, the case $n=0$ being straightforward.
		
		To begin with, we have to define a pair of $2$-cells $\Theta^{g,f,h}_s\colon (s^n(G)h) (s^n(F)h) \rightarrow (s^n(G)s^n(F))h$ and $\Theta^{g,f,h}_t\colon (t^n(G)t^n(F))h \rightarrow (t^n(G)h) (t^n(F)h) $ in $\Omega_m  (X,\phi_1 h, \phi_2 h)$. These are easily obtained from the contractibility of the globular sum $D_1\plus{ D_{0}} D_{m+1}\plus{D_{m}}D_{m+1}$. Indeed, one has the following string of equalities: \[s^{n+m}(A)=s^m(s^n(A))=s^m(t^n(A))=s^m(s(t^n(F)))=s^m(t(t^{n}(F)))=s^m(s(t^n(G)))\] which implies that there is a map $(h,t^n(F),t^n(G))\colon D_1\coprod_{ D_0} D_{m+1}\coprod_{D_m}D_{m+1} \rightarrow X$.
		
		For sake of simplicity, we denote by $f_{\epsilon}$ the $1$-cell $\epsilon^n(F)$ in $\Omega_m  (X,\phi_1,\phi_2)$ for $\epsilon=\sigma, \tau$, and similarly for $G$.
		
		We have the following diagram in the bicategory $\mathbf{hom}(D_{n-1}, \Omega_{m+1}(X,s^n(Ah),t^n(Ch)))$
		\[
		\bfig
		\morphism(0,0)|l|/@{>}/<-800,-400>[((g_t f_t)h)(Ah)`((g_t h) (f_t h)) (Ah) ;\Gamma(Ah,\Theta^{g,f,h}_t) \ \ ] 
		\morphism(-800,-400)|l|/@{>}/<0,-400>[((g_t h) (f_t h)) (Ah)`(g_t h) ((f_t h) (Ah)) ;\Psi] 
		\morphism(-800,-800)|l|/@{>}/<-800,-400>[(g_t h) ((f_t h) (Ah))`(g_t h)((Bh)(f_s h)) ;(g_t h)\overline{Fh}]
		\morphism(-800,-800)|r|/@{>}/<800,-400>[(g_t h) ((f_t h) (Ah))`(g_t h)((f_t A)h) ;(g_t h)\Phi]
		\morphism(0,-1200)|r|/@{>}/<-800,-400>[(g_t h)((f_t A)h)`(g_t h)((B f_s)h) ;(g_t h)(\overline{F}h)]
		\morphism(-800,-1600)|l|/@{>}/<-800,400>[ (g_t h)((B f_s)h)`(g_t h)((Bh)(f_s h));(g_t h)\Theta]
		\morphism(800,-800)|l|/@{>}/<-800,-400>[ ((g_t)(f_t A))h`(g_t h)((f_t A)h);]
		\morphism(0,0)|r|/@{>}/<800,-400>[((g_t f_t)h)(Ah)`((g_t f_t) A)h ;\Phi]
		\morphism(800,-400)|r|/@{>}/<0,-400>[ ((g_t f_t) A)h`((g_t)(f_t A))h;\Psi h]
		\morphism(800,-800)|r|/@{>}/<0,-800>[ ((g_t)(f_t A))h`(g_t(B f_s))h;(g_t \overline{F})h]
		\morphism(-800,-1600)|r|/@{>}/<1600,0>[ (g_t h)((B f_s)h)`(g_t(B f_s))h;]
		\morphism(800,-1600)|r|/@{>}/<0,-400>[(g_t(B f_s))h`((g_t B) f_s)h;\Psi h]
		\morphism(-1600,-1200)|l|/@{>}/<0,-800>[(g_t h)((Bh)(f_s h))`((g_t h)(B h))(f_s h);\Psi]
		\morphism(-1600,-2000)|a|/@{>}/<1600,0>[((g_t h)(B h))(f_s h)`((g_t B)h)(f_s h);\Phi (f_s h)]
		\morphism(800,-2000)|a|/@{>}/<-800,0>[((g_t B)f_s)h`((g_t B)h)(f_s h);]
		\morphism(-1600,-2000)|a|/@{>}/<0,-400>[((g_t h)(B h))(f_s h)`((Ch)(g_s h))(f_s h);(\overline{Gh}(f_s h))]
		\morphism(0,-2000)|l|/@{>}/<0,-400>[((g_t B)h)(f_s h)`((C g_s)h)(f_s h);((\overline{G}h)(f_s h))]
		\morphism(0,-2400)|l|/@{>}/<-1600,0>[((C g_s)h)(f_s h) `((Ch)(g_s h))(f_s h);\Theta (g_s h)]
		\morphism(800,-2000)|r|/@{>}/<0,-400>[((g_t B)f_s) h `((C g_s)f_s) h;(\overline{G}f_s)h]
		\morphism(0,-2400)|r|/@{>}/<800,0>[((C g_s)h)(f_s h) `((C g_s)f_s) h;]
		\morphism(-1600,-2400)|l|/{@{>}@/^0em/}/<800,-400>[((Ch)(g_s h))(f_s h)`(Ch)((g_s h)(f_s h));\Psi]
		\morphism(-800,-2800)|l|/@{>}/<1600,0>[(Ch)((g_s h)(f_s h))`(C(g_s f_s))h;]
		\morphism(800,-2400)|r|/@{>}/<0,-400>[((C g_s)f_s)h`(C(g_s f_s))h;\Psi h]
		\morphism(-800,-2800)|l|/@{>}/<800,-400>[(Ch)((g_s h)(f_s h))`(Ch) ((g_s f_s) h);\Upsilon(\Theta^{g,f,h}_s,Ch)]
		\morphism(800,-2800)|r|/@{>}/<-800,-400>[(C(g_s f_s))h`(Ch) ((g_s f_s) h);\Theta]
		\morphism(800,-400)|r|/{@{>}@/^6em/}/<0,-2400>[ ((g_t f_t) A)h`(C(g_s f_s))h;(\overline{G\circ F})h]
		
		\morphism(-100,-600)|a|/@{=>}/<200,0>[`;]
		\morphism(-900,-1200)|a|/@{=>}/<200,0>[`;\text{ind.hyp.}]
		\morphism(200,-1400)|a|/@{=>}/<200,0>[`;(1)]
		\morphism(-700,-1800)|a|/@{=>}/<200,0>[`;]
		\morphism(-1100,-2200)|a|/@{=>}/<200,0>[`;\text{ind.hyp.}]
		\morphism(300,-2200)|a|/@{=>}/<200,0>[`;(2)]
		\morphism(-100,-3000)|a|/@{=>}/<200,0>[`;]
		\morphism(-100,-2600)|a|/@{=>}/<200,0>[`;]
		\morphism(1100,-1800)|a|/@{=>}/<200,0>[`;\text{ind.hyp.}]
		\efig
		\]
		The $2$-cells filling this diagram either come from the inductive hypothesis oh this lemma and of the following one (when specified), from contractibility of appropriate globular sums (the unlabeled $2$-cells) or are of the form $(1)$ and $(2)$. The construction of $(2) $is similar to that of$ (1)$, which is the content of Lemma \ref{coherence modif 1}. The composite of this pasting diagram provides the $2$-cell we are looking for, the left-hand side (resp. right-hand side) composite being (isomorphic to) $\Upsilon(\Theta^{g,f,h}_s,Ch)\circ \overline{(Gh)\circ (Fh)}\circ \Gamma(Ah,\Theta^{g,f,h}_t)$ (resp.$\overline{(G\circ F)h}$).
	\end{proof}
\end{lemma}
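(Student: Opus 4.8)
The plan is to prove the statement by induction on $n$, run simultaneously with its companion lemma (the version whiskering $h$ on the other side), the base case $n=0$ being immediate: when $F$ and $G$ are $0$-cylinders, i.e. $1$-cells, the two sides of $\Theta^{g,f,h}$ differ only by the pseudofunctoriality constraints of whiskering, and a comparison cell exists by contractibility of the relevant globular sum. For the inductive step I would unwind Definition \ref{cyl defi}: a modification between $n$-cylinders is exactly the data of a pair of boundary $2$-cells $\Theta^{g,f,h}_s,\Theta^{g,f,h}_t$ together with a modification $\overline{\Theta^{g,f,h}}$ of the underlying $(n-1)$-cylinders inside a suitable hom-model. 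So the proof splits cleanly into first producing the boundary cells and then producing the inner modification.

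First I would define the boundary $2$-cells. The source cell compares $(s^n(G)h)(s^n(F)h)$ with $(s^n(G)s^n(F))h$, and the target cell compares $(t^n(G)t^n(F))h$ with $(t^n(G)h)(t^n(F)h)$; in both cases the two parallel composites of $1$-cells assemble into a pasting diagram of shape $D_1\plus{D_0}D_{m+1}\plus{D_m}D_{m+1}$, so a filler exists because this globular sum is contractible (Lemma \ref{vanishing of higher homotopy groups in globular sums}). The key point enabling this is that the relevant $0$-cells agree, $s^{n+m}(A)=s^m(t(t^n(F)))=s^m(s(t^n(G)))$, so that the triple $(h,t^n(F),t^n(G))$ genuinely defines a map out of that globular sum into $X$.

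The heart of the argument is the construction of $\overline{\Theta^{g,f,h}}$. I would write down a large pasting diagram in the locally posetal bicategory $\mathbf{hom}(D_{n-1},\Omega_{m+1}(X,s^n(Ah),t^n(Ch)))$ whose left boundary is (isomorphic to) the underlying $(n-1)$-cylinder of $(Gh)\circ(Fh)$, pre- and post-composed with the coherence cylinders $\Gamma(Ah,\Theta^{g,f,h}_t)$ and $\Upsilon(\Theta^{g,f,h}_s,Ch)$, and whose right boundary is the underlying $(n-1)$-cylinder of $(G\circ F)h$. Each tile of this diagram is filled by one of four kinds of $2$-cell: an instance of the inductive hypothesis of the present lemma one dimension down, an instance of the inductive hypothesis of the companion lemma, a structural cell coming from contractibility of an appropriate globular sum, or a dedicated coherence modification of type $(1)$ or $(2)$ supplied by Lemma \ref{coherence modif 1} and its mirror. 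Taking the composite of the whole diagram yields $\overline{\Theta^{g,f,h}}$, and transposing along $\Sigma\dashv\Omega$ packages the boundary cells together with the inner modification into the desired $\Theta^{g,f,h}$.

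The main obstacle is organisational rather than conceptual. Because the hom-bicategory is locally posetal, any two parallel $2$-cells that exist are automatically equal, so none of the bicategory axioms or naturality squares need to be verified by hand; the only real content is exhibiting each tile, i.e. checking that the relevant $2$-cell hom-set is nonempty. The delicate points are therefore: (i) ensuring the simultaneous induction is well-founded, so that every appeal to an inductive hypothesis strictly lowers $n$ and no circularity with the companion lemma arises; and (ii) checking that the tiles whose fillers come from contractibility really do have boundaries factoring through globular sums of low enough dimension for Lemma \ref{vanishing of higher homotopy groups in globular sums} to apply. Once these nonemptiness checks are in place, assembling the diagram and reading off its composite is routine, as in the neighbouring lemmas of this section.
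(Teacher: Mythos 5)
Your proposal follows the paper's own proof essentially verbatim: the same simultaneous induction with the companion whiskering lemma, the same construction of the boundary $2$-cells from contractibility of $D_1\plus{D_0}D_{m+1}\plus{D_m}D_{m+1}$ via the identification of $0$-cells, and the same pasting diagram in $\mathbf{hom}(D_{n-1},\Omega_{m+1}(X,s^n(Ah),t^n(Ch)))$ with tiles filled by the two inductive hypotheses, contractibility, and the coherence modifications of Lemma \ref{coherence modif 1}. The approach is correct and matches the paper.
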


\begin{lemma}
	\label{whiskering modif with 1-cell}
	Given a pair of $n$-cylinders $F,G\colon A \curvearrowright B$ in $\Omega_m (X ,\phi)$, a modification $\Lambda\colon F \Rightarrow G$ and a $1$-cell $c\colon b= t^{n+m}(B)\rightarrow b'$, we get an induced modification $c\Lambda\colon cF \Rightarrow cG$ between the $n$-cylinders $cF,cG\colon cA \curvearrowright cB$ in $\Omega_m (X, c\phi)$.
	\begin{proof}
		For sake of simplicity, we denote by $f_{\epsilon}$ the $1$-cell $\epsilon^n(F)$ in $\Omega_m (X,\phi_1,\phi_2)$ for $\epsilon=\sigma, \tau$, and similarly for $G$.
		
		Consider the bicategory $\mathbf{hom}(D_{n-1},\Omega_{m+1}(X,s^n(cA),t^n(cB))$, inside which we define the following $2$-dimensional pasting diagram:
		\[
		\bfig
		\morphism(0,0)|l|/@{>}/<-800,-400>[(c g_t)(cA)` (c f_t)(cA);\Gamma(cA,c\Lambda_t) ] 
		\morphism(-800,-400)|l|/@{>}/<0,-400>[(c f_t)(cA)` c (f_t A);\phi ] 
		\morphism(0,0)|r|/@{>}/<0,-400>[(c g_t)(cA)`c(g_t A);\phi ]
		\morphism(0,-400)|a|/@{>}/<-800,-400>[c(g_t A)`c(f_t A);c\Gamma(A,\Lambda_t) ]
		\morphism(-800,-800)|l|/@{>}/<0,-400>[c (f_t A)` c (B f_s);c\overline{F} ] 
		
		\morphism(0,-400)|r|/{@{>}@/^2em/}/<0,-800>[c(g_t A)`c(B g_s);c\overline{G} ]
		\morphism(0,-400)|l|/{@{>}@/^-2em/}/<0,-800>[c(g_t A)`c(B g_s);c(\Upsilon\circ \overline{F} \circ \Gamma) ]
		\morphism(-800,-1200)|a|/@{>}/<800,0>[ c (B f_s)` c (B g_s);c\Upsilon(\Lambda_s, B) ] 
		\morphism(-800,-1200)|a|/@{>}/<0,-400>[ c (B f_s)`(cB)(c f_s);\Theta] 
		\morphism(-800,-1600)|a|/@{>}/<800,0>[(cB)(c f_s)`(cB)(c g_s);\Upsilon(c\Lambda_s, cB) ] 
		\morphism(0,-1200)|r|/@{>}/<0,-400>[c (B g_s)`(cB)(c g_s);\Theta] 
		
		\morphism(-500,-400)|a|/@{=>}/<200,0>[`;]
		\morphism(-500,-1000)|a|/@{=>}/<200,0>[`;\text{ind.hyp.}]
		\morphism(-100,-800)|a|/@{=>}/<200,0>[`;\text{ind.hyp.}]
		\morphism(-500,-1400)|a|/@{=>}/<200,0>[`;]
		\efig
		\]
		The $2$-cells that fill the diagram either come from the inductive hypothesis of this lemma or the previous one, or by contractibility of suitable globular sums when unlabeled. The composite of this pasting diagram is the $2$-cell we are looking for, and so this concludes the proof.	
	\end{proof}
\end{lemma}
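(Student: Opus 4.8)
The plan is to prove the statement by induction on $n$, running the induction \emph{simultaneously} with that of Lemma \ref{modif vert comp+whiskering 1-cell}: the two assertions are genuinely entangled, since constructing $\overline{c\Lambda}$ one dimension down requires knowing that whiskering interacts coherently with the vertical composites appearing in the definition of a modification, which is exactly the content of the companion lemma one level below. The base case $n=0$ is immediate: a modification between $0$-cylinders is essentially a $2$-cell, and $c\Lambda$ is its whiskering by $c$, whose existence follows from contractibility of the relevant globular sum (equivalently, from a filler in $\nCat{\omega}$, transported back by the methods of Section \ref{main constructions}).

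For the inductive step I would first define the two boundary $2$-cells $(c\Lambda)_s$ and $(c\Lambda)_t$ as the whiskerings of $\Lambda_s$, $\Lambda_t$ by $c$, using the operations $w$ of Definition \ref{whiskering w}; these exist because the relevant source and target data assemble into a map out of a contractible globular sum, so Lemma \ref{modifications in contractible groupoids} (or direct contractibility) supplies them. I would then construct the required $(n-1)$-modification $\overline{c\Lambda}$ representably, by exhibiting a $2$-dimensional pasting diagram in the bicategory $\mathbf{hom}\bigl(D_{n-1},\Omega_{m+1}(X,s^n(cA),t^n(cB))\bigr)$ whose composite is $\overline{c\Lambda}$.

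The cells filling that diagram would be of three kinds. First, the whiskering $c\overline{\Lambda}$ of the $(n-1)$-modification $\overline{\Lambda}\colon \Upsilon(\iota_0 F,\Lambda_t)\otimes\overline{F}\otimes\Gamma(\Lambda_s,\iota_1 F)\Rightarrow\overline{G}$, furnished by the inductive hypothesis of the present lemma. Second, $2$-cells mediating between the whiskering of a vertical composite and the vertical composite of the whiskerings, furnished by the inductive hypothesis of Lemma \ref{modif vert comp+whiskering 1-cell}. Third, coherence $2$-cells comparing $c\,\Upsilon(\Lambda_s,B)$ with $\Upsilon(c\Lambda_s,cB)$, and likewise for the $\Gamma$-cylinders and for the unitors relating $\overline{cF}$ to $c\overline{F}$; all of these I would obtain from contractibility, since their boundaries factor through globular sums. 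Arranging the diagram so that its outer boundary reads $\Upsilon((c\Lambda)_s,cB)\otimes\overline{cF}\otimes\Gamma(cA,(c\Lambda)_t)$ on one side and $\overline{cG}$ on the other then yields the modification $c\Lambda$.

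The main obstacle I expect is the bookkeeping: whiskering by $c$ commutes neither with the coherence cylinders $\Upsilon,\Gamma$ nor with the vertical composite $\otimes$ entering the bar of a modification, so each such failure of strict commutativity must be absorbed into an explicit mediating cell, and these must be threaded together so that consecutive $1$-cells in the pasting agree on the nose. The structural input that makes this possible is the coherence of the appendix bicategory $\mathbf{hom}(D_{n-1},-)$ together with the two inductive hypotheses; once the boundary has been matched using the coherence isomorphisms, the composite of the pasting diagram is the sought modification and the induction closes.
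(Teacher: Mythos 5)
Your proposal is correct and follows essentially the same route as the paper: a simultaneous induction with Lemma \ref{modif vert comp+whiskering 1-cell}, boundary $2$-cells obtained by whiskering $\Lambda_s,\Lambda_t$ with $c$, and a pasting diagram in $\mathbf{hom}(D_{n-1},\Omega_{m+1}(X,s^n(cA),t^n(cB)))$ filled by the two inductive hypotheses together with contractibility cells mediating between $c\Upsilon(\Lambda_s,B)$ and $\Upsilon(c\Lambda_s,cB)$ (and likewise for $\Gamma$ and for $c\overline{F}$ versus $\overline{cF}$). This is exactly the paper's argument.
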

\begin{lemma}
	\label{coherence modif 1}
	Given an $n$-cylinder $F\colon A \curvearrowright B$ in $\Omega_{m}(X,\phi)$, a $1$-cell $g$ in $\Omega_{m}(X,\phi)$ and a 1-cell $h\colon a \rightarrow s^{n+m}(A)$ in $X$, such that $s^2(g)=t^{n+1}(A)=t^{n+1}(B)$, there is a modification $\chi$ as displayed below, where the cylinders denoted by $\lambda_1,\lambda_2$ are obtained by contractibility of the appropriate globular sum.
	\[
	\bfig
	\morphism(0,0)|l|/@{>}/<-600,0>[(gA)h`(gh)(Ah);\lambda_1]
	\morphism(-600,0)|l|/@{>}/<0,-400>[(gh)(Ah)`(gh)(Bh);(gh)(Fh)]
	\morphism(-600,-400)|l|/@{>}/<600,0>[(gh)(Bh)`(gB)h;\lambda_2]
	\morphism(0,0)|r|/{@{>}@/^0em/}/<0,-400>[(gA)h`(gB)h;(gF)h]
	\morphism(-400,-200)|a|/@{=>}/<200,0>[`;\chi]
	\efig
	\] 
	\begin{proof}
		Firstly, notice that the existence of such modification does not depend on the choice of $\lambda_1,\lambda_2$.
		By definition, given $\epsilon=s,t$, we have that $\epsilon^n(\lambda_2 \circ (gh)(Fh) \circ \lambda_1)$ is given by a composite 
		\[
		\bfig
		\morphism(0,0)|a|/@{>}/<800,0>[(g\epsilon^n(A))h`(gh)(\epsilon^n(A)h);\simeq]
		\morphism(800,0)|a|/@{>}/<1200,0>[(gh)(\epsilon^n(A)h)`(gh)(\epsilon^n(B)h);(gh)(\epsilon^n(F)h)]
		\morphism(2000,0)|a|/@{>}/<800,0>[(gh)(\epsilon^n(B)h)`(g \epsilon^n(B))h;\simeq]
		\efig
		\]
		where the first and the third map arise from contractibility of suitable globular sums.
		
		On the other hand, $\epsilon^n((gF)h)$ is given by $(g\epsilon^n(F))h\colon (g\epsilon^n(A))h \rightarrow (g\epsilon^n(B))h$. From these observations it is clear that we can find a pair of two cells $\chi_s,\chi_t$ as required in the definition of a modification.
		The rest of the proof follows analogously to that of the previous results, so it will be omitted.
	\end{proof}
\end{lemma}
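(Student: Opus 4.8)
The plan is to argue representably and to use that $\mathbf{hom}(D_n,-)$ is locally posetal, so that only the \emph{existence} of the modification $\chi$ is at stake. First I would note, exactly as in the companion coherence lemmas, that this existence does not depend on the chosen coherence cylinders $\lambda_1,\lambda_2$: any two admissible choices are parallel cylinders factoring through a globular sum, hence are joined by a modification via the $\nCat{\omega}$-reduction below, and composing with such a modification transports the problem from one choice to another. It therefore suffices to construct $\chi$ for one convenient pair $\lambda_1,\lambda_2$.

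The two $n$-cylinders to be compared, $\lambda_2\circ(gh)(Fh)\circ\lambda_1$ and $(gF)h$, are parallel, from $(gA)h$ to $(gB)h$, and are both assembled out of the single cylinder $F$ by whiskering with the fixed cells $g$ and $h$ through the operations $w$ of Definition \ref{whiskering w} and the coherence cylinders of \eqref{coherence cyls}. In particular the entire configuration factors through one globular sum $G$ (recording the pasting data of $g$, a globe carrying $F$, and the arrow $h$) by a single map $G\to X$; consequently the desired modification is corepresented by an extension of this map along the direct cofibration $\cyl(G)\ast\cyl(G)\to\mathbf{M}$ of \eqref{cyl into mod cof}. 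Thus I may replace the construction of $\chi$ inside $X$ by the solution of this extension problem.

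Now I would invoke the reduction recorded in the remark opening this section together with the machinery of Section \ref{main constructions}: the latching maps of $\cyl(\bullet)\ast\cyl(\bullet)\to\mathbf{M}$ are pushouts of sphere inclusions $S^{k+1}\to D_{k+2}$, and by Lemma \ref{extension of coglobular objects} the extension exists in $\mathbf{Mod}(\mathfrak{C})$ as soon as it does in $\nCat{\omega}$, where Lemma \ref{vanishing of higher homotopy groups in globular sums} supplies all needed fillers since $\dim(G)$ stays within the admissible range. Unwinding Definition \ref{cyl defi}, this amounts to two checks. For the boundary data, for $\epsilon=s,t$ the component $\epsilon^n$ of $\lambda_2\circ(gh)(Fh)\circ\lambda_1$ is the composite $(g\epsilon^n(A))h\to(gh)(\epsilon^n(A)h)\to(gh)(\epsilon^n(B)h)\to(g\epsilon^n(B))h$ with coherence arrows at the ends, whereas $\epsilon^n((gF)h)=(g\epsilon^n(F))h$; these have the same globular boundary and factor through $G$, so the $2$-cells $\chi_s,\chi_t$ exist. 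The remaining recursive datum is a modification of the underlying $(n-1)$-cylinders with prescribed boundary $(\chi_s,\chi_t)$, furnished by the same fillers applied to the lower-dimensional piece.

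The main obstacle I anticipate is purely combinatorial: identifying the single globular sum $G$ through which the whiskered configuration factors and confirming that every latching map arising from $\cyl(G)\ast\cyl(G)\to\mathbf{M}$ is a pushout of a sphere inclusion of dimension at least $\dim(G)$, so that Lemma \ref{vanishing of higher homotopy groups in globular sums} applies uniformly. Once this dimension count is secured the existence of $\chi$ is automatic, and, by local posetality, no compatibility among the resulting $2$-cells needs checking; the passage to $\mathfrak{C}^{\mathbf{W}}$-models then follows as usual by applying the free functor $F\colon\mathbf{Mod}(\mathfrak{C})\to\mathbf{Mod}(\mathfrak{C}^{\mathbf{W}})$.
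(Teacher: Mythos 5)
Your identification of the boundary $2$-cells $\chi_s,\chi_t$ matches the paper's, but the core of your argument rests on a false factorization claim. You assert that ``the entire configuration factors through one globular sum $G$\,\dots\ by a single map $G\to X$'', so that $\chi$ can be produced by extending along the direct cofibration $\cyl(G)\ast\cyl(G)\to\mathbf{M}_{\bullet}$ and then appealing to Lemma \ref{extension of coglobular objects} and Lemma \ref{vanishing of higher homotopy groups in globular sums}. This cannot work: the datum $F$ is an arbitrary $n$-cylinder in an arbitrary model, i.e.\ a map $\cyl(D_n)\to\Omega_m(X,\phi)$, and $\cyl(D_n)$ is not a globular sum; the configuration $(g,F,h)$ therefore does not factor through any globular sum, and there is no contractible object through which to route the extension problem. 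The extension against $\cyl(\bullet)\ast\cyl(\bullet)\to\mathbf{M}_{\bullet}$ is only solvable when the \emph{target} is contractible (Lemma \ref{modifications in contractible groupoids}), whereas here the target is the arbitrary $X$. What does factor through globular sums are the coherence cylinders $\lambda_1,\lambda_2$, the associativity and whiskering constraints, and the boundary $2$-cells $\chi_s,\chi_t$ (these only involve the $1$-cells $g$, $h$ and the single cells $\epsilon^n(F)$, $A$, $B$) --- not the interior of the modification.

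The missing idea is the induction on $n$ that every neighbouring appendix lemma uses: one exhibits $\overline{\chi}$ as the composite of an explicit $2$-dimensional pasting diagram in the bicategory $\mathbf{hom}(D_{n-1},\Omega_{m+1}(X,\dots))$, in which the $2$-cells comparing operations are supplied by contractibility of suitable globular sums (via the machinery of Section \ref{main constructions}) while the $2$-cell involving the underlying $(n-1)$-cylinder $\overline{F}$ is supplied by the inductive hypothesis together with Lemmas \ref{modif vert comp+whiskering 1-cell} and \ref{whiskering modif with 1-cell}. This is precisely what the paper's terse ``the rest of the proof follows analogously to that of the previous results'' refers to. Your reduction to a single contractibility argument skips the step where the arbitrary cylinder $F$ enters, which is the only genuinely non-formal part of the lemma; the local-posetality remark is correct but does not help establish existence.
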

The next lemma address the problem of contructing the whiskering operations. The other half that is required follows from a duality-kind argument.
\begin{lemma}
	\label{Lemma B8}
	Assume given $n$-cylinders $F\colon A \curvearrowright B, \ G\colon B \curvearrowright C$ together with a modification $\Theta\colon F \Rightarrow F'$ in $X$. Then there is an induced modification \[G*\Theta\colon G\circ F \rightarrow G\circ F' \]
	\begin{proof}
		The cases $n=0,1$ are pretty straightforward. To prove the inductive step, consider the following $2$-dimensional pasting diagram in the bicategory $\mathbf{hom}(D_{n-1}, X(s^n(A),t^n(C)))$:
		\[
		\bfig
		\morphism(0,0)|l|/@{>}/<-800,0>[(g_t f'_t) A` (g_t f_t)A;\Gamma(A,g_t\Theta_t) ]
		\morphism(-800,0)|l|/@{>}/<0,-400>[(g_t f_t)A` g_t (f_t A);\Psi ]
		\morphism(-800,-400)|l|/@{>}/<0,-400>[g_t (f_t A)` g_t (B f_s);g_t \overline{F} ]
		\morphism(-800,-800)|l|/@{>}/<0,-400>[g_t (B f_s)` (g_t B) f_s;\Psi ]
		\morphism(-800,-1200)|l|/@{>}/<0,-400>[(g_t B) f_s` (C g_s) f_s;\overline{G} f_s]
		\morphism(-800,-1600)|l|/@{>}/<0,-400>[(C g_s) f_s` C (g_s f_s);\Psi]
		\morphism(-800,-2000)|l|/@{>}/<800,0>[C (g_s f_s)` C (g_s f'_s);\Upsilon(g_s \Theta_s,C	)]
		
		\morphism(0,0)|r|/@{>}/<0,-400>[(g_t f'_t) A` g_t (f'_t A);\Psi ]
		\morphism(0,-400)|r|/@{>}/<0,-400>[g_t (f'_t A)` g_t (B f'_s);g_t \overline{F'} ]
		\morphism(0,-800)|r|/@{>}/<0,-400>[g_t (B f'_s)` (g_t B) f'_s;\Psi ]
		\morphism(0,-1200)|r|/@{>}/<0,-400>[(g_t B) f'_s` (C g_s) f'_s;\overline{G} f'_s]
		\morphism(0,-1600)|r|/@{>}/<0,-400>[(C g_s) f'_s` C (g_s f'_s);\Psi]
		\morphism(0,-400)|a|/@{>}/<-800,0>[g_t (f'_t A)` g_t (f_t A); g_t \Gamma(A, \Theta_t)]
		\morphism(-800,-800)|a|/@{>}/<800,0>[g_t (B f_s)` g_t (B f'_s);g_t \Upsilon(\Theta_s,B) ]
		\morphism(0,-1200)|r|/@{>}/<-800,0>[(g_t B) f'_s` (g_t B) f_s;]
		\morphism(-800,-1600)|l|/@{>}/<800,0>[(C g_s) f_s` (C g_s) f'_s;]
		
		\morphism(-500,-200)|a|/@{=>}/<200,0>[`;]
		\morphism(-500,-600)|a|/@{=>}/<200,0>[`;(1)]
		\morphism(-500,-1000)|a|/@{=>}/<200,0>[`;]
		\morphism(-500,-1400)|a|/@{=>}/<200,0>[`;(2)]
		\morphism(-500,-1800)|a|/@{=>}/<200,0>[`;]
		\efig
		\]
		The unlabeled cells come from the contractibility of the appropriate globular sums, while $(1)$ is provided by Lemmas \ref{modif vert comp+whiskering 1-cell} and \ref{whiskering modif with 1-cell}. Finally, the $2$-cell labeled with $(2)$ is constructed in the following lemma.
	\end{proof}
\end{lemma}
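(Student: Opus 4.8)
The plan is to argue by induction on $n$, constructing $G*\Theta$ as a single $2$-cell inside the hom-bicategory one dimension down, in the same spirit as the preceding lemmas of this section. The base cases $n=0,1$ can be treated directly: there the data of a modification amount to a bounded collection of $2$- and $3$-cells in $X$ whose existence is immediate from contractibility (and, in the $\mathfrak{C}^{\mathbf{W}}$-setting, from the chosen systems of composition, identities and inverses).

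For the inductive step I would first fix the boundary $2$-cells of the modification to be built. Since vertical composition of cylinders satisfies $s^n(G\circ F)=s^n(G)\,s^n(F)$ and $t^n(G\circ F)=t^n(G)\,t^n(F)$, the natural choices for $(G*\Theta)_s$ and $(G*\Theta)_t$ are the whiskerings $g_s\Theta_s$ and $g_t\Theta_t$ of the components of $\Theta$ by the $1$-cells $g_s=s^n(G)$ and $g_t=t^n(G)$; whiskering a $2$-cell of $X$ by a $1$-cell is the $n=0$ instance of the whiskering operation and is already at our disposal. By Definition \ref{cyl defi} it then remains to produce the underlying $(n-1)$-modification $\overline{G*\Theta}$ comparing the whiskered underlying cylinder $\Upsilon(\,\cdot\,,g_t\Theta_t)\otimes\overline{G\circ F}\otimes\Gamma(g_s\Theta_s,\,\cdot\,)$ with $\overline{G\circ F'}$, inside $X(s^n(A),t^n(C))$.

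The heart of the proof is to realise this $(n-1)$-modification as the composite of one large $2$-dimensional pasting diagram in the bicategory $\mathbf{hom}(D_{n-1},X(s^n(A),t^n(C)))$. Concretely, I would unfold $\overline{G\circ F}$ and $\overline{G\circ F'}$ into their defining strings (each a composite of $\overline{G}$, $\overline{F}$ resp.\ $\overline{F'}$ and the coherence cylinders $\Psi$), whisker the underlying $(n-1)$-modification of $\Theta$ by the $1$-cell $g_t$, and splice these together. The faces of the resulting diagram are of four kinds: cells coming from the inductive hypotheses of Lemmas \ref{modif vert comp+whiskering 1-cell} and \ref{whiskering modif with 1-cell} (and of the present lemma in dimension $n-1$); the coherence cell supplied by Lemma \ref{modif vert comp+whiskering 1-cell}; the cell labelled $(2)$, which I would isolate as a separate companion lemma; and the remaining structural cells, whose boundaries factor through globular sums of the appropriate dimension and are therefore filled by contractibility, using the fillers produced by the techniques of Section \ref{main constructions}. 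The composite is $\overline{G*\Theta}$, and its source and target are by construction the prescribed $g_s\Theta_s$ and $g_t\Theta_t$.

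The main obstacle I anticipate is organisational rather than conceptual: the whole family of whiskering and interchange statements (Lemmas \ref{modif vert comp+whiskering 1-cell}, \ref{whiskering modif with 1-cell} and this one) must be run as a \emph{single simultaneous induction} on $n$, so the delicate point is to lay out the pasting diagram so that every appeal to an inductive hypothesis sits at a strictly smaller dimension, and so that the one genuinely new coherence cell $(2)$ can be split off as a clean, independently provable sublemma. Once the diagram is organised in this way, each face is filled either by induction, by a companion lemma, or by a contractibility filler, and local posetality of the hom-bicategory makes all the bicategorical coherence conditions automatic.
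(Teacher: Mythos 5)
Your proposal matches the paper's proof essentially step for step: the same induction on $n$ with direct base cases, the same choice of boundary $2$-cells $g_s\Theta_s$ and $g_t\Theta_t$, and the same pasting diagram in $\mathbf{hom}(D_{n-1},X(s^n(A),t^n(C)))$ obtained by unfolding $\overline{G\circ F}$ and $\overline{G\circ F'}$, whiskering $\overline{\Theta}$ by $g_t$, filling the structural faces by contractibility, and splitting off the one remaining coherence cell as a companion lemma. Your remark about running the whiskering lemmas as a simultaneous induction is exactly how the paper organises these results, so no gap remains.
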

\begin{lemma}
	Given an $n$-cylinder $G\colon A \curvearrowright B$ in $\Omega_{m}(X,\phi)$ and a $2$-cell in $X$ \[\xymatrix{
		**[l] a \rtwocell_{f'}^{f}{\alpha} & **[r]s^{n+m}(A)\\}\] 
	%**[l]\wgpd \xtwocell[r]{}_{\Omega}^{\Sigma}{'\perp}& **[r]
	%$\bfig 	\morphism(0,0)|a|/{@{>}@/^1em/}/<400,>[a`s^{n+m}(A);f]
	%	\morphism(0,0)|b|/{@{>}@/^-1em/}/<400,0>[a`s^{n+m}(A);f']
	%	\efig$ 
	we get an induced modification 
	\[
	\bfig
	\morphism(0,0)|a|/@{>}/<400,0>[Af'` Af;\Lambda_1]
	\morphism(0,0)|l|/@{>}/<0,-400>[Af'` Bf';Gf']
	\morphism(400,0)|r|/@{>}/<0,-400>[Af` Bf;Gf]
	\morphism(400,-400)|a|/@{>}/<-400,0>[Bf` Bf';\Lambda_2]
	
	\morphism(100,-200)|a|/@{=>}/<200,0>[`;\Delta]
	\efig
	\]
	Here, $\Lambda_1$ and $\Lambda_2$ are obtained by contractibility of appropriate globular sums, and the existence of $\Delta$ does not depend on the choice of these.
	\begin{proof}
		The pair of $2$-cells $\Delta_s,\Delta_t$ is obtained by contractibility of suitable globular sums, and the modification $\Delta$ is given by the composite of the following $2$-dimensional pasting diagram in the bicategory $\mathbf{hom}(D_{n-1}, \Omega_m (X,\phi)(s^n(Af'),t^n(Bf'))$:
		\[
		\bfig
		\morphism(0,0)|a|/@{>}/<-1100,0>[(g_t f')(Af')`( (1 (g_t f)) 1) (A f');\Gamma(Af',\Delta_t)]
		\morphism(-1100,0)|l|/@{>}/<0,-400>[( (1 (g_t f)) 1) (A f')`(1 (g_t f)) (1 (A f'));\Psi]
		\morphism(-1100,-400)|l|/@{>}/<0,-400>[(1 (g_t f)) (1 (A f'))`(1 (g_t f)) ((Af) 1);(1 (g_t f)) \overline{\Lambda}_1]
		\morphism(-1100,-800)|l|/@{>}/<0,-400>[(1 (g_t f)) ((Af) 1)`(1 ((g_t f)(Af))) 1;\Psi]
		\morphism(-1100,-1200)|l|/@{>}/<400,-400>[(1 ((g_t f)(Af))) 1`(1 ((Bf)(g_s f))) 1;(1\overline{Gf} )1]
		\morphism(-700,-1600)|l|/@{>}/<0,-400>[(1 ((Bf)(g_s f))) 1`(1 (Bf))((g_s f)1);\Psi]
		\morphism(-700,-2000)|a|/@{>}/<1300,0>[(1 (Bf))((g_s f)1)`((Bf') 1)((g_s f)1);\Lambda_2((g_s f)1)]
		\morphism(600,-2000)|l|/@{>}/<400,-400>[((Bf') 1)((g_s f)1)`(Bf') (1 ((g_s f)1));\Psi]
		\morphism(1000,-2400)|r|/@{>}/<400,400>[(Bf') (1 ((g_s f)1))`(Bf') (g_s f');\Upsilon(\Delta_s,Bf')]
		
		\morphism(0,0)|r|/@{>}/<0,-400>[(g_t f')(Af')`(g_t A)f';\Phi]
		\morphism(0,-400)|r|/@{>}/<0,-400>[(g_t A)f'`(g_t A)f;\eta_1]
		\morphism(0,-800)|r|/@{>}/<0,-400>[(g_t A)f`(1((g_t A)f))1;]
		\morphism(-1100,-1200)|l|/@{>}/<1100,0>[(1 ((g_t f)(Af))) 1`(1((g_t A)f))1;]
		\morphism(0,-1200)|l|/@{>}/<600,-400>[(1((g_t A)f))1`(1((B g_s)f))1;(1 (\overline{G}f))1]
		\morphism(600,-1600)|r|/@{>}/<-1300,0>[(1((B g_s)f))1`(1 ((Bf)(g_s f))) 1;]
		\morphism(600,-1600)|r|/@{>}/<0,400>[(1((B g_s)f))1`(B g_s) f;]
		\morphism(0,-800)|a|/{@{>}@/^0em/}/<600,-400>[(g_t A) f`(B g_s)f;\overline{G}f]
		\morphism(600,-1200)|r|/@{>}/<800,0>[(B g_s) f`(B g_s) f';\eta_2]
		\morphism(0,-400)|r|/{@{>}@/^0em/}/<1400,-800>[(g_t A)f'`(B g_s)f';\overline{G}f']
		\morphism(1400,-1200)|r|/@{>}/<0,-800>[(B g_s) f'`(B f') (g_s f');\Theta]

		\morphism(-650,-600)|a|/@{=>}/<200,0>[`;]				\morphism(-650,-1400)|a|/@{=>}/<200,0>[`;(1)]
		\morphism(600,-1800)|a|/@{=>}/<200,0>[`;]	
		\morphism(250,-1300)|a|/@{=>}/<200,0>[`;(2)]	
		\morphism(500,-1000)|a|/@{=>}/<200,0>[`;ind.hyp.]	
		\efig
		\]
		The unlabelled cells are obtained by contractibility of suitable globular sums. The existence of the $2$-cell denoted by $(1)$ is ensured by Lemmas \ref{modif vert comp+whiskering 1-cell} and \ref{whiskering modif with 1-cell}, $(2)$ is constructed in the next lemma and the remaining $2$-cell exists by inductive hypothesis, as indicated.
	\end{proof}
\end{lemma}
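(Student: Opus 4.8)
The plan is to follow the pattern established in the preceding whiskering lemmas (Lemmas~\ref{modif vert comp+whiskering 1-cell}, \ref{whiskering modif with 1-cell} and~\ref{Lemma B8}) and to produce $\Delta$ as the composite of a two-dimensional pasting diagram living in the mapping bicategory $\mathbf{hom}\bigl(D_{n-1},\Omega_{m}(X,\phi)(s^n(Af'),t^n(Bf'))\bigr)$. By the running inductive hypothesis of the appendix this bicategory is locally posetal, so its hom-categories are posets: every coherence equation between $2$-cells holds automatically, and the only genuine task is to \emph{exhibit} the needed $2$-cells rather than to verify any identity among them. This is precisely what makes the closing independence claim immediate, since the value of $\Delta$, being one more $2$-cell parallel to any competitor built from a different choice of $\Lambda_1,\Lambda_2$, is forced to coincide with it in a locally posetal bicategory.

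First I would fix the boundary. A modification between $n$-cylinders is determined by a source/target pair $\Delta_s,\Delta_t$ together with an interior $(n-1)$-modification, and I would obtain $\Delta_s$ and $\Delta_t$ directly from contractibility of suitable globular sums via Lemma~\ref{vanishing of higher homotopy groups in globular sums}: each compares $\epsilon^n$ of the two composite cylinders, that is, the two ways of whiskering $\epsilon^n(G)$ by the $2$-cell $\alpha$ for $\epsilon=s,t$. These are readily seen to be parallel because their zero-dimensional boundaries agree with $s^{n+m}(A)$ and $t^{n+m}(B)$. One should record at this point that the relevant globular sums have dimension at most that of the ambient model, so that the filler lemma genuinely applies.

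Next I would assemble the interior. Writing the whiskered cylinder $Gf$ through the inductive decomposition of Definition~\ref{cyl defi}, I would build a vertical pasting diagram whose constituent $2$-cells are of three kinds: structural coherence cells — including the interiors of $\Lambda_1,\Lambda_2$ — supplied by contractibility together with Lemma~\ref{modifications in contractible groupoids}; cells supplied by the inductive hypotheses of the mutually recursive whiskering lemmas~\ref{modif vert comp+whiskering 1-cell} and~\ref{whiskering modif with 1-cell}; and one further coherence cell (the one labelled $(2)$ in the companion lemma) comparing the whiskering of $G$ by $f$ with the appropriate bracketing of the boundary data. The composite of this diagram is the sought interior modification, and transposing it along $\Sigma\dashv\Omega$ yields $\Delta$ with the prescribed source and target $\Delta_s,\Delta_t$.

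The hard part will be organisational rather than computational: this lemma cannot be proved in isolation but only as one clause of a simultaneous induction on $n$ together with Lemmas~\ref{modif vert comp+whiskering 1-cell}, \ref{whiskering modif with 1-cell} and~\ref{coherence modif 1}, so the delicate point is to verify that every appeal to an inductive hypothesis is made at strictly smaller cylinder dimension (typically inside $X(x,y)$ or $\Omega_{m+1}$, where cylinders drop by one) and to dispatch the base cases $n=0,1$ separately, where the structure is immediate. Once the bookkeeping of the pasting diagram is arranged, local posetality discharges all remaining coherence, so there is no genuine calculation left to perform.
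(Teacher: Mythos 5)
Your proposal follows essentially the same route as the paper: fix $\Delta_s,\Delta_t$ by contractibility, then build the interior as the composite of a pasting diagram in $\mathbf{hom}\bigl(D_{n-1},\Omega_m(X,\phi)(s^n(Af'),t^n(Bf'))\bigr)$ whose cells come from contractibility of globular sums, the mutually inductive whiskering lemmas, and the auxiliary unit-whiskering cell constructed in the following lemma. The identification of the simultaneous induction and of where each constituent $2$-cell comes from matches the paper's argument, so this is correct.
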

\begin{lemma}
	Assume given an $n$-cylinder $C\colon A \curvearrowright B$ in $\Omega_{m} (X,\phi)$ and a choice of an identity $1$-cell $1_a\colon a \rightarrow a$ in $X$, where $a=s^{n+m}(A)$. We then get a modification of the following form:
	\[
	\bfig
	\morphism(0,0)|l|/@{>}/<-400,0>[A`A1_a;\Lambda_1]
	\morphism(-400,0)|l|/@{>}/<0,-400>[A1_a`B1_a;C1_a]
	\morphism(-400,-400)|l|/@{>}/<400,0>[B1_a`B;\Lambda_2]
	\morphism(0,0)|r|/@{>}/<0,-400>[A`B;C]
	\morphism(-300,-200)|a|/@{=>}/<200,0>[`;\beta]
	\efig
	\]
	Again, $\Lambda_1$ and $\Lambda_2$ are obtained by contractibility of the appropriate globular sums and the existence of $\beta$ does not depend on a choice of such.
	\begin{proof}
		The $1$-cells $\epsilon^n(\Lambda_i)$ in $\Omega_{m} (X,\phi)$, for $\epsilon=s,t$ and $i=1,2$, are obtained by contractibility of $D_n$ and are therefore identity cells (having the same surce and target). For this reason, we denote all of them by $1$, as no confusion should arise.
		
		The pair of $2$-cells $\beta_s,\beta_t$ is obtained by contractibility, and we choose $\beta$ to be induced by the composite of the following $2$-dimensional pasting diagram in $\mathbf{hom}(D_{n-1}, \Omega_m (X,\phi)(s^n(A),t^n(B)))$:
		\[
		\bfig
		\morphism(0,0)|l|/@{>}/<-1000,0>[c_t A`(1(c_t1_a)1)A;\Gamma(A,\beta_t)]
		\morphism(-1000,0)|l|/@{>}/<0,-400>[(1(c_t1_a)1)A`(1(c_t1_a))(1A);\Psi]
		\morphism(-1000,-400)|l|/@{>}/<0,-400>[(1(c_t1_a))(1A)`(1(c_t1_a))((A1_a)1);(1(c_t 1_a))\overline{\Lambda_1}]
		\morphism(-1000,-800)|a|/@{>}/<1000,0>[(1(c_t1_a))((A1_a)1)`(1 ((c_t1_a)(A1_a)))1;\Psi]
		\morphism(0,-800)|a|/@{>}/<1500,0>[(1 ((c_t1_a)(A1_a)))1`(1 ((B1_a)(c_s 1_a)))1;(1 (\overline{C}1_a))1]
		\morphism(1500,-800)|a|/@{>}/<0,400>[(1 ((B1_a)(c_s 1_a)))1`(1 (B1_a))((c_s 1_a)1);\Psi]
		\morphism(1500,-400)|r|/@{>}/<0,400>[(1 (B1_a))((c_s 1_a)1)`B((c_s 1_a)1);\overline{\Lambda_2}((c_s 1_a)1)]
		\morphism(1500,0)|l|/@{>}/<-900,0>[B((c_s 1_a)1)`Ba;\Upsilon(\beta_s,B)]
		
		\morphism(0,0)|r|/{@{>}@/^0em/}/<0,-800>[c_t A`(1 ((c_t1_a)(A1_a)))1;\gamma_1]
		\morphism(1500,-800)|l|/{@{>}@/^0em/}/<-900,800>[(1 ((B1_a)(c_s 1_a)))1`Ba;\gamma_2]
		
		\morphism(0,0)|a|/{@{>}@/^0em/}/<600,0>[c_t A`Ba;\overline{C}]
		\morphism(-600,-400)|a|/@{=>}/<200,0>[`;]
		\morphism(1200,-200)|a|/@{=>}/<-200,0>[`;]
		\morphism(300,-400)|a|/@{=>}/<0,200>[`;ind.hyp.]
		\efig
		\]
		where the unlabelled $2$-cells arise from contractibility of the appropriate globular sums, and the remaining one comes from the inductive hypothesis.
	\end{proof}
\end{lemma}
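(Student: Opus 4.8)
The plan is to argue by induction on the cylinder dimension $n$, mirroring the inductive construction of $\mathbf{hom}(D_\bullet,X)$ itself. The statement is a unit coherence for the right whiskering action $(\bullet)1_a$ on $n$-cylinders: it says that whiskering $C$ by the identity $1$-cell $1_a$ returns $C$, up to the coherence cylinders $\Lambda_1,\Lambda_2$. A key simplification I would exploit throughout is that we work inside a locally posetal bicategory — by the standing hypothesis that $\mathbf{hom}(D_{k},X)$ is locally posetal for $k<n$ — so that any two parallel $2$-cells (modifications) automatically coincide. This immediately delivers the asserted independence of $\beta$ from the chosen fillers $\Lambda_1,\Lambda_2$: two contractibility fillers of the same boundary are joined by a (unique, invertible) modification, so the composite $1$-cell is replaced by an isomorphic one and the existence of a modification down to $C$ is preserved. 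Local posetality also means that no coherence equations among the $2$-cells I build will ever need checking.

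First I would pin down the boundary. The source and target $1$-cells $\epsilon^n(\Lambda_i)$ are forced to be fillers of spheres living in $D_n$, hence identities by contractibility of $D_n$ (Lemma \ref{vanishing of higher homotopy groups in globular sums}); I write them all as $1$. Dually, the two $2$-cells $\beta_s,\beta_t$ of the prospective modification are produced from contractibility of the globular sums carrying the relevant boundary data. This reduces the problem to exhibiting the single underlying filler $\overline\beta$ between the two $(n-1)$-cylinders obtained by unfolding the definitions of the composite $\Lambda_2\circ(C1_a)\circ\Lambda_1$ and of $C$ one dimension down.

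The base cases $n=0,1$ are direct: a $0$-cylinder is a $1$-cell and $C1_a$ differs from $C$ only by the right unitor of the ambient $2$-groupoid, while for $n=1$ one fills the resulting square of unitors by contractibility. For the inductive step I would pass to the hom-bicategory $\mathbf{hom}\!\left(D_{n-1},\Omega_m(X,\phi)(s^n(A),t^n(B))\right)$ and display $\overline\beta$ as the composite of one $2$-dimensional pasting diagram whose boundary $1$-cells are $\Gamma(A,\beta_t)$, the underlying cylinder $\overline{C1_a}$ of the whiskered cylinder, $\Upsilon(\beta_s,B)$, and $\overline C$. The interior is filled by cells of three kinds: coherence modifications coming from contractibility of the auxiliary globular sums that mediate between $\overline{C1_a}$ and the whiskering of $\overline C$ by $1$; the modification supplied by the inductive hypothesis of the \emph{present} lemma applied to the $(n-1)$-cylinder $\overline C$; and a compatibility cell relating whiskering of a modification by a $1$-cell to whiskering after vertical composition, which is exactly the content of Lemmas \ref{modif vert comp+whiskering 1-cell} and \ref{whiskering modif with 1-cell} (any residual interchange region being produced by the same contractibility technique). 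Reading the resulting $(n-1)$-cylinder $\overline\beta$ as the datum of an $n$-dimensional modification then yields $\beta$.

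The main obstacle I anticipate is organisational rather than conceptual. One must insert the coherence cylinders $\Lambda_1,\Lambda_2$ and the accompanying unitors precisely at the positions where $(\bullet)1_a$ fails to be strictly unital, and then verify that the pasting diagram is well formed, i.e.\ that every interior region is either a contractibility filler or an instance of a previously established lemma. Because the ambient hom-bicategory is locally posetal, there are no coherence identities among these cells to be checked; the entire difficulty lies in arranging the boundary so that the prescribed $\beta_s,\beta_t$ appear as the source and target of the composite. I would handle this by fixing the shape of the diagram first and only then extracting $\beta_s,\beta_t$ from contractibility, so that they match by construction.
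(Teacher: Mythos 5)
Your proposal is correct and follows essentially the same route as the paper: induction on $n$, observing that the boundary $1$-cells $\epsilon^n(\Lambda_i)$ are identities by contractibility of $D_n$, extracting $\beta_s,\beta_t$ from contractibility, and then assembling the underlying $(n-1)$-modification as the composite of a pasting diagram in $\mathbf{hom}(D_{n-1},\Omega_m(X,\phi)(s^n(A),t^n(B)))$ whose interior cells are contractibility fillers surrounding a single application of the inductive hypothesis to $\overline{C}$. The only cosmetic difference is that you also budget for cells supplied by Lemmas \ref{modif vert comp+whiskering 1-cell} and \ref{whiskering modif with 1-cell}, whereas the paper's diagram for this particular lemma gets by with contractibility fillers alone; this does not change the argument.
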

The next lemma provides the unit constraint for the bicategory structure on $\mathbf{hom}(D_n,X)$. We only prove one side of the unit constraint, the other one being analogous.
\begin{lemma}
	\label{unitality of vert comp}
	Given an $n$-cylinder $C\colon A \curvearrowright B$ there exists a modification $\upsilon \colon C\circ \mathbf{C}_A \Rightarrow C$.
	\begin{proof}
		The existence of the pair of $2$-cells $\upsilon_s,\upsilon_t$ is straightforward.
		
		Consider the following pasting diagram in the bicategory $\mathbf{hom}(D_{n-1},X(s^n(A),t^n(B)))$, where $a=s^n(A),b=t^n(B)$:
		\[
		\bfig
		\morphism(0,0)|l|/@{>}/<-1200,0>[c_t A`(c_t 1_a)A;\Gamma(A,\upsilon_t)]
		\morphism(-1200,0)|l|/@{>}/<0,-400>[(c_t 1_b)A`c_t (1_b A);\Psi]
		\morphism(-1200,-400)|l|/@{>}/<0,-400>[c_t (1_b A)`c_t (A 1_a);c_t \overline{\mathbf{C}_A}]
		\morphism(-1200,-800)|l|/@{>}/<0,-400>[c_t (A 1_a)`(c_t A) 1_a;\Psi]
		\morphism(-1200,-1200)|l|/@{>}/<600,0>[(c_t A) 1_a`(B c_s) 1_a;\overline{C} 1_a]
		\morphism(-600,-1200)|l|/@{>}/<600,0>[(B c_s) 1_a`B(c_s 1_a);\Psi]
		\morphism(0,-1200)|r|/@{>}/<0,600>[B (c_s 1_a)`B c_s;\Upsilon(\upsilon_t,A)]
		
		\morphism(0,0)|l|/@{>}/<-1200,-1200>[c_t A`(c_t A)1_a;\lambda_1]
		\morphism(-600,-1200)|l|/@{>}/<600,600>[(B c_s) 1_a`B c_s ;\lambda_2]
		\morphism(0,0)|r|/{@{>}@/^0em/}/<0,-600>[c_t A`B c_s;\overline{C}]
		
		\morphism(-800,-400)|a|/@{=>}/<200,0>[`;]
		\morphism(-400,-700)|a|/@{=>}/<150,150>[`;(1)]
		\morphism(-200,-1100)|a|/@{=>}/<0,200>[`;]
		\efig
		\]
		The unlabeled $2$-cells come from contractibility of appropriate globular sums, as well as $\lambda_1$ and $\lambda_2$, and the $2$-cell labeled with $(1)$ is provided by the previous lemma.
	\end{proof}
\end{lemma}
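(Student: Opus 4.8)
The plan is to argue by induction on $n$, mirroring the inductive scheme already used throughout this appendix for vertical composition and identity modifications. For the base case $n=0$ an $n$-cylinder is just a $1$-cell of $X$ and $\mathbf{C}_A$ is an identity $1$-cell, so that $\upsilon$ is nothing but the usual unitor of the locally posetal bicategory $X(\cdot,\cdot)$; its existence is immediate from contractibility of the coherator, exactly as in the construction of the unit constraints in $\mathbf{hom}(D_0,X)$.

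For the inductive step I would first unwind what a modification between $n$-cylinders is. Recalling Definition \ref{cyl defi} for the inductive structure of cylinders, specifying $\upsilon\colon C\circ \mathbf{C}_A \Rightarrow C$ amounts to producing a pair of $2$-cells $\upsilon_s\colon s^n(C\circ\mathbf{C}_A)\to s^n(C)$ and $\upsilon_t\colon t^n(C)\to t^n(C\circ\mathbf{C}_A)$, together with a modification of $(n-1)$-cylinders in the hom-$\mathfrak{C}$-model $X(s^n(A),t^n(B))$. Since $s^n(\mathbf{C}_A)$ and $t^n(\mathbf{C}_A)$ are identity $1$-cells, the source and target $1$-cells of $C\circ\mathbf{C}_A$ differ from those of $C$ only by a unit constraint, so $\upsilon_s,\upsilon_t$ are obtained directly from contractibility of the relevant globular sums (cf.\ Lemma \ref{modifications in contractible groupoids}), and their precise choice will turn out to be immaterial.

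The heart of the argument is then the construction of the $(n-1)$-modification $\overline{\upsilon}$. Writing $a=s^n(A)$, $b=t^n(B)$, I would assemble a $2$-dimensional pasting diagram inside $\mathbf{hom}(D_{n-1},X(a,b))$ whose two boundary legs are, on one side, the whiskered composite $\Upsilon(\upsilon_t,-)\otimes \overline{C\circ\mathbf{C}_A}\otimes \Gamma(-,\upsilon_s)$ dictated by the definition of a modification, and on the other the $(n-1)$-cylinder $\overline{C}$. The interior cells split into three kinds: unlabelled fillers supplied by contractibility of the appropriate globular sums (these also furnish the auxiliary coherence cylinders $\lambda_1,\lambda_2$ that reassociate and absorb the identity $1$-cell $1_a$), one cell provided by the inductive hypothesis applied to $\overline{C}$, and one cell which is precisely the whiskering-by-identity modification $\beta$ produced by the previous lemma. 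Taking the composite of this diagram yields $\overline{\upsilon}$, and together with $\upsilon_s,\upsilon_t$ this assembles into the desired $\upsilon$.

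I expect the main obstacle to be bookkeeping rather than anything conceptual: one must verify that the boundary of the $(n-1)$-modification extracted from the pasting diagram agrees, after whiskering along $\Upsilon$ and $\Gamma$, with the prescribed $\upsilon_s,\upsilon_t$, and that the unit constraints threaded through $\lambda_1,\lambda_2$ are compatible with the instance of the previous lemma inserted in the middle. Because everything lives in a \emph{locally posetal} bicategory, no equality of $2$-cells needs to be checked, so once the cells fit together the composite is forced; the only genuine work is exhibiting a diagram all of whose cells are either inductively available, instances of the previous lemma, or consequences of contractibility, and confirming that its two legs are exactly the $(n-1)$-cylinders demanded by the definition of a modification.
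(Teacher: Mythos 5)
Your proposal is correct and follows essentially the same route as the paper: produce $\upsilon_s,\upsilon_t$ by contractibility, then fill a pasting diagram in $\mathbf{hom}(D_{n-1},X(s^n(A),t^n(B)))$ whose only non-trivial interior cell is the whiskering-by-identity modification from the previous lemma (applied to $\overline{C}$), the remaining cells and the auxiliary cylinders $\lambda_1,\lambda_2$ coming from contractibility of globular sums. The paper does not separately invoke an inductive instance of this lemma on $\overline{C}$ — the previous lemma's cell already does that job — but this is only a cosmetic difference in bookkeeping.
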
 
We now turn to the final construction, that of the associator for the bicategory $\mathbf{hom}(D_n,X)$. We start with a preliminary lemma
\begin{lemma}
	\label{associativity whiskering cylinders}
	Given an $n$-cylinder $F\colon A \curvearrowright B$ in $\Omega_m(X,\phi)$, and a pair of composable $1$-cells $h\colon t^{n+m}(A) \rightarrow b, \ g\colon b \rightarrow c$, there is a modification
	\[
	\bfig
	\morphism(0,0)|l|/@{>}/<-600,0>[h(gA)`(hg)A;\lambda_1]
	\morphism(-600,0)|l|/@{>}/<0,-400>[(hg)A`(hg)B;(hg)F]
	\morphism(-600,-400)|l|/@{>}/<600,0>[(hg)B`h(gB);\lambda_2]
	\morphism(0,0)|r|/@{>}/<0,-400>[h(gA)`h(gB);h(gF)]
	\morphism(-400,-200)|a|/@{=>}/<200,0>[`;\zeta]
	\efig
	\]
	Here, $\lambda_1,\lambda_2$ come from the contractibility of $D_n \coprod_{ D_0} D_1  \coprod_{ D_0} D_1$, and the existence of $\zeta$ does not depend on the choice of such cylinders.
	\begin{proof}
		We denote the $1$-cells $\epsilon^n(\lambda_i)$, for $\epsilon=s,t$ and $i=1,2$ with $a$, being an instance of an associativity constraint.
		
		The $2$-cells $\zeta_s,\zeta_t$ arise from contractibility of appropriate globular sums, and the modification we are looking for is given by the composite of the following $2$-dimensional pasting diagram in the bicategory $\mathbf{hom}(D_{n-1},\Omega_m(X,\phi)(s^n(h(gA)),t^n(h(gB))))$:
		\[
		\bfig
		\morphism(0,0)|a|/@{>}/<-800,-400>[(h(gf_t))(h(gA))`(a(((hg)f_t)a))(h(gA));\Gamma(h(gA),\zeta_t)]
		\morphism(-800,-400)|l|/@{>}/<0,-400>[(a(((hg)f_t)a))(h(gA))`(a((hg) f_t))(a(h(gA)));\Psi]
		\morphism(-800,-800)|l|/@{>}/<0,-400>[(a((hg) f_t))(a(h(gA)))`(a((hg) f_t))(((hg)A)a);(a((hg) f_t))\overline{\lambda_1}]
		\morphism(-800,-1200)|l|/@{>}/<0,-400>[(a((hg) f_t))(((hg)A)a)`(a(((hg)f_t)((hg)A)))a;\Psi]
		\morphism(-800,-1600)|l|/@{>}/<0,-400>[(a(((hg)f_t)((hg)A)))a`(a(((hg)B)((hg)f_s)))a;(a(\overline{(hg)F}))a]
		\morphism(-800,-2000)|a|/@{>}/<1200,-200>[(a(((hg)B)((hg)f_s)))a`(a((hg)B))(((hg) f_s) a);\Psi]
		\morphism(400,-2200)|r|/@{>}/<1400,400>[(a((hg)B))(((hg) f_s) a)`(h((gB)a))(((hg) f_s) a); \ \  \ \overline{\lambda_2}(((hg) f_s) a)]
		\morphism(1800,-1800)|r|/@{>}/<0,400>[(h((gB)a))(((hg) f_s) a)`(h(gB))(h(gf_s));\Upsilon(\zeta_s,h(gB)) \ \ ]
		
		\morphism(-800,-1600)|a|/@{>}/<1400,400>[(a(((hg)f_t)((hg)A)))a`(a((hg)(f_t A)))a;(a\Phi)a]
		\morphism(600,-1200)|a|/@{>}/<0,-400>[(a((hg)(f_t A)))a`(a((hg)(B f_s)))a;(a((hg)\overline{F}))a]
		\morphism(600,-1600)|a|/@{>}/<-1400,-400>[(a((hg)(B f_s)))a`(a(((hg)B)((hg)f_s)))a;(a\Theta)a]

		\morphism(0,0)|r|/@{>}/<600,-400>[(h(gf_t))(h(gA))`h((g f_t)A);\Phi]
		\morphism(600,-400)|a|/@{>}/<0,-400>[h((g f_t)A)`h(g(f_t A));\Phi]
		\morphism(600,-800)|r|/@{>}/<600,-200>[h(g(f_t A))`h(g(B f_s));h(g(\overline{F}))]
		\morphism(600,-800)|l|/{@{>}@/^0em/}/<0,-400>[h(g(f_t A))`(a((hg)(f_t A)))a;\delta_1]
		\morphism(600,-1600)|r|/{@{>}@/^-3em/}/<600,600>[(a((hg)(B f_s)))a`h(g(B f_s));\delta_2]
		
		\morphism(1200,-1000)|r|/@{>}/<200,600>[h(g(B f_s))`h((gB)(g f_s));\Theta]
		\morphism(600,-400)|a|/{@{>}@/^0em/}/<800,0>[h((g f_t)A)`h((gB)(g f_s));h(\overline{gF})]
		\morphism(1400,-400)|r|/{@{>}@/^0em/}/<400,-1000>[h((gB)(g f_s))`(h(gB))(h(gf_s));\Theta]
		
		\morphism(-100,-800)|a|/@{=>}/<200,0>[`;]
		\morphism(-100,-1600)|a|/@{=>}/<200,0>[`;(1)]
		\morphism(800,-1400)|a|/@{=>}/<200,0>[`;(1)]
		\morphism(900,-600)|a|/@{=>}/<200,0>[`;ind.hyp.]
		\morphism(400,-1900)|a|/@{=>}/<200,0>[`;]
		\efig
		\]
		Here, $\delta_1,\delta_2$ and the unlabelled $2$-cells come from contractibility of suitable globular sums. On the other hand, the $2$-cells labelled with $(1)$ come from  Lemmas \ref{modif vert comp+whiskering 1-cell} and \ref{whiskering modif with 1-cell} and the remaining one comes from the inductive hypothesis, as indicated.
	\end{proof}
\end{lemma}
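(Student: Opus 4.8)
The plan is to prove this by induction on the cylinder dimension $n$, following exactly the inductive template already used for Lemmas \ref{modif vert comp+whiskering 1-cell}, \ref{whiskering modif with 1-cell} and \ref{Lemma B8}. The base cases $n=0,1$ are direct: there the whiskered cylinders $h(gF)$ and $(hg)F$ differ only by reassociation of $1$-cells, so a filler $\zeta$ is supplied by contractibility of the relevant globular sum. For the inductive step I would first unpack the definition of an $n$-cylinder: $F$ is the datum of source and target $1$-cells $F_s,F_t$ together with an $(n-1)$-cylinder $\overline{F}\colon F_t A \curvearrowright BF_s$ in the mapping $\mathfrak{C}$-model one dimension down, and a modification of $n$-cylinders is accordingly a pair of boundary $2$-cells together with an inductively constructed modification of the underlying $(n-1)$-cylinders. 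Hence the construction of $\zeta$ splits into (i) producing the boundary $2$-cells $\zeta_s,\zeta_t$, and (ii) producing the modification $\overline{\zeta}$ between the $(n-1)$-cylinders extracted from the two sides of the square.

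For step (i) I would obtain $\zeta_s,\zeta_t$ from contractibility of the globular sum $D_n\plus{D_0}D_1\plus{D_0}D_1$ carrying the datum $(F_\epsilon,g,h)$ for $\epsilon=s,t$; this is precisely the shape encoding an $n$-cell equipped with two composable whiskering $1$-cells, so the comparison $2$-cell between $h(gF_\epsilon)$ and $(hg)F_\epsilon$ lives there and is unique up to higher cells, which is also what makes $\zeta$ independent of the chosen $\lambda_1,\lambda_2$. Step (ii) is the heart of the argument: I would realise $\overline{\zeta}$ as the composite of a single large $2$-dimensional pasting diagram inside the bicategory $\mathbf{hom}(D_{n-1},\Omega_m(X,\phi)(x,y))$, with $x=s^n(h(gA))$ and $y=t^n(h(gB))$, whose left/right boundaries are the underlying $(n-1)$-cylinders of $\lambda_2\otimes (hg)F\otimes\lambda_1$ and of $h(gF)$ respectively. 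The interior would be filled by cells of three kinds: structural isomorphisms from contractibility of globular sums, which reshuffle bracketings and mediate between $h(g(-))$ and $(hg)(-)$ at the level of the $(n-1)$-cylinders; the whiskering-of-modification cells provided by Lemmas \ref{modif vert comp+whiskering 1-cell} and \ref{whiskering modif with 1-cell}; and one central cell obtained by applying the inductive hypothesis to $\overline{F}$.

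The hard part will be the bookkeeping needed to make the pasting diagram close up. Concretely, one must verify that after transposing along the adjunction $\Sigma\dashv\Omega$ the source and target of the composite $(n-1)$-modification agree on the nose with $\zeta_s,\zeta_t$, and that the repeated reassociations of the whiskerings are compatible with the coherence cells already in hand rather than merely with some uncontrolled higher cell. Since we are ultimately in the $3$-truncated setting, dimension reasons collapse the top coherences to equalities, so once the boundaries match and all interior cells are produced the composite is forced; I would therefore concentrate the effort on checking that the interior cells of each of the three kinds can be chosen with matching boundaries, invoking Remark \ref{two-sided vs left and right} to pass uniformly between left and right inverses wherever an inverse whiskering intervenes. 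Finally, independence of $\zeta$ from the choice of $\lambda_1,\lambda_2$ follows because any two choices are joined by a globular-sum filler one dimension up, so the two resulting modifications coincide after quotienting out the $4$-cells.
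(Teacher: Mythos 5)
Your proposal follows essentially the same route as the paper's proof: the boundary $2$-cells $\zeta_s,\zeta_t$ are produced by contractibility of the relevant globular sums, and the underlying $(n-1)$-modification is realised as the composite of one large pasting diagram in $\mathbf{hom}(D_{n-1},\Omega_m(X,\phi)(s^n(h(gA)),t^n(h(gB))))$, with interior cells supplied by contractibility, by Lemmas \ref{modif vert comp+whiskering 1-cell} and \ref{whiskering modif with 1-cell}, and by one application of the inductive hypothesis. The only cosmetic deviations (the appeal to Remark \ref{two-sided vs left and right} and to $3$-truncation, neither of which the paper needs here since the construction lives in $\mathbf{Mod}(\mathfrak{C})$ for arbitrary $n$) do not change the argument.
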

Finally, here is the construction of the modification representing the associativity constraint in the bicategory $\mathbf{hom}(D_n,X)$.
\begin{lemma}
	Given a composable triple of $n$-cylinders $F\colon A \curvearrowright B, \ G\colon B \curvearrowright C$ and $H\colon C \curvearrowright D$ in $X$, there exists a modification 
	\[\alpha\colon ( H\circ G) \circ F \Rightarrow H\circ (G \circ F) \]
	\begin{proof}
		The required $2$-cells $\alpha_s,\alpha_t$ are simply instances of associativity constraints for composition of $1$-cells in the coherator $\mathfrak{C}$.
		The modification $\alpha$ is induced by composing the following $2$-dimensional pasting diagram in the bicategory $\mathbf{hom}(D_{n-1},X(s^n(A),t^n(C)))$:
		\[
		\bfig
		\morphism(0,0)|l|/@{>}/<-800,-200>[(h_t( g_t f_t)) A`((h_t g_t)f_t)A;\Gamma(A,\alpha_t)]
		\morphism(-800,-200)|l|/@{>}/<0,-400>[((h_t g_t)f_t)A`(h_t g_t)(f_t A);\Psi]
		\morphism(-800,-600)|l|/@{>}/<0,-400>[(h_t g_t)(f_t A)`(h_t g_t)(B f_s);(h_t g_t)\overline{F}]
		\morphism(-800,-1000)|l|/@{>}/<0,-800>[(h_t g_t)(B f_s)`((h_t g_t)B)f_s;\Psi]
		\morphism(-800,-1800)|l|/@{>}/<0,-400>[((h_t g_t)B)f_s`(D(h_s g_s))f_s;\overline{H\circ G}f_s]
		\morphism(-800,-2200)|l|/@{>}/<0,-400>[(D(h_s g_s))f_s`D((h_s g_s) f_s);\Psi]
		\morphism(-800,-2600)|l|/@{>}/<400,-400>[D((h_s g_s) f_s)`D(h_s(g_s f_s));\Upsilon(\alpha_s,D) \ \ ]
		
		\morphism(0,0)|r|/@{>}/<1000,-200>[(h_t( g_t f_t)) A`h_t ((g_t f_t)A);\Psi]
		\morphism(1000,-200)|a|/@{>}/<0,-600>[h_t ((g_t f_t)A)`h_t (C (g_s f_s));h_t \overline{G\circ F}]
		\morphism(1000,-800)|r|/@{>}/<800,-800>[h_t (C (g_s f_s))`(h_t C)(g_s f_s);\Psi]
		\morphism(1800,-1600)|r|/{@{>}@/^5em/}/<-1800,-1000>[(h_t C)(g_s f_s)`(D h_s)(g_s f_s);\overline{H}(g_s f_s)]
		\morphism(0,-2600)|r|/{@{>}@/^0em/}/<-400,-400>[(D h_s)(g_s f_s)`D(h_s(g_s f_s));\Psi]
		\morphism(1000,-200)|a|/@{>}/<-1000,-400>[h_t ((g_t f_t)A)`h_t (g_t(f_t A));h_t \Psi]
		\morphism(0,-600)|r|/@{>}/<0,-400>[h_t (g_t(f_t A))`h_t (g_t(B f_s));h_t (g_t \overline{F})]
		\morphism(0,-1000)|r|/@{>}/<0,-400>[h_t (g_t(B f_s))`h_t ((g_t B) f_s);h_t\Psi]
		\morphism(0,-1400)|a|/@{>}/<1000,0>[h_t ((g_t B) f_s)`h_t ((C g_s) f_s);h_t(\overline{G}f_s)]
		\morphism(1000,-1400)|l|/@{>}/<0,600>[h_t ((C g_s) f_s)`h_t (C (g_s f_s));h_t\Psi]
		
		\morphism(0,-600)|a|/@{>}/<-800,0>[h_t (g_t(f_t A))`(h_t g_t)(f_t A);\eta_1]
		\morphism(-800,-1000)|l|/@{>}/<800,0>[(h_t g_t)(B f_s)`h_t(g_t(B f_s));\eta_2]
		
		\morphism(-800,-1800)|a|/@{>}/<800,0>[((h_t g_t)B)f_s`(h_t (g_t B))f_s;\Psi f_s]
		\morphism(0,-1800)|a|/@{>}/<1000,0>[(h_t (g_t B))f_s`(h_t (C g_s))f_s;(h_t \overline{G})f_s]
		\morphism(1000,-1800)|a|/@{>}/<0,-400>[(h_t (C g_s))f_s`((h_t C) g_s)f_s;\Psi f_s]
		\morphism(1000,-2200)|a|/@{>}/<-1000,0>[((h_t C) g_s)f_s`((D h_s) g_s)f_s; (\overline{H}g_s) f_s]
		\morphism(0,-2200)|a|/@{>}/<-800,0>[((D h_s) g_s)f_s`(D (h_s g_s))f_s; \Psi f_s]
		\morphism(0,-1400)|l|/@{>}/<0,-400>[h_t ((g_t B) f_s)`(h_t(g_t B))f_s;\mu_1]
		\morphism(1000,-1800)|a|/@{>}/<0,400>[(h_t (C g_s))f_s`h_t((C g_s)f_s);\mu_2]
		\morphism(1800,-1600)|r|/@{>}/<-800,-600>[(h_t C)(g_s f_s)`((h_t C)g_s)f_s;\nu_1]
		\morphism(0,-2200)|a|/@{>}/<0,-400>[((D h_s) g_s)f_s`(D h_s) (g_s f_s); \nu_2]
		
		\morphism(-100,-200)|a|/@{=>}/<200,0>[`;]
		\morphism(-500,-800)|a|/@{=>}/<200,0>[`;(1)]
		\morphism(400,-800)|a|/@{=>}/<200,0>[`;(0)]
		\morphism(-500,-1400)|a|/@{=>}/<200,0>[`;]
		\morphism(400,-1600)|a|/@{=>}/<200,0>[`;(2)]
		\morphism(1200,-1600)|a|/@{=>}/<200,0>[`;]
		\morphism(-100,-2000)|a|/@{=>}/<200,0>[`;(0)]
		\morphism(500,-2400)|a|/@{=>}/<200,0>[`;(3)]
		\morphism(-500,-2400)|a|/@{=>}/<200,0>[`;]
		\efig
		\]
		Here, the unlabelled $2$-cells and the $1$-cells $\eta_i,\mu_i$ and $\nu_i$ for $i=1,2$ all come from contractibility of suitable globular sums.
		The $2$-cells labelled with $(0)$ have been constructed in Lemmas \ref{modif vert comp+whiskering 1-cell} and \ref{whiskering modif with 1-cell}. Finally, $(1)$ is constructed in Lemma \ref{associativity whiskering cylinders}, and $(2)$ and $(3)$ are built up in an analogous way.
	\end{proof}
\end{lemma}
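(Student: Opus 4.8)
The plan is to prove the statement by induction on $n$, following exactly the template already used for the unit constraint in Lemma \ref{unitality of vert comp} and for the whiskering operations. Since the bicategory $\mathbf{hom}(D_n,X)$ is locally posetal, there is no pentagon identity to verify: the entire content is the \emph{existence} of some modification $\alpha\colon (H\circ G)\circ F \Rightarrow H\circ (G\circ F)$, so the work is purely the assembly of a valid pasting diagram whose composite has the prescribed source and target. I would state everything representably, i.e. for a fixed triple of cylinders in $X$, and obtain the $\mathfrak{C}^{\mathbf{W}}$-model case at the end by applying the free functor $F\colon\mathbf{Mod}(\mathfrak{C})\to\mathbf{Mod}(\mathfrak{C}^{\mathbf{W}})$, which preserves cylinders, their vertical composites, and modifications.

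First I would dispose of the base cases $n=0,1$: here vertical composition of cylinders degenerates to composition of $1$-cells (resp. to pasting a single square), so that $(H\circ G)\circ F$ and $H\circ(G\circ F)$ differ only by rebracketing, and the required modification is produced directly from the associativity constraints available in $\mathfrak{C}$ together with the fillers granted by contractibility of globular sums (or, in the $\mathfrak{C}$-model setting, by Lemma \ref{vanishing of higher homotopy groups in globular sums} through the methods of Section \ref{main constructions}).

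For the inductive step I would use the recursive description of cylinders from Definition \ref{cyl defi}: an $n$-cylinder $C\colon A\curvearrowright B$ is a pair of $1$-cells $C_s,C_t$ together with an underlying $(n-1)$-cylinder $\overline{C}\colon C_tA\curvearrowright BC_s$ in the $\mathfrak{C}$-model $X(s(C_s),t(C_t))$. I would first set the boundary $2$-cells $\alpha_s\colon s^n((H\circ G)\circ F)\to s^n(H\circ(G\circ F))$ and $\alpha_t$ to be the evident instances of the associativity constraint for composition of $1$-cells in $\mathfrak{C}$. The heart of the argument is then the construction of the interior modification $\overline{\alpha}$ between the two underlying $(n-1)$-cylinders, realized as a large $2$-dimensional pasting diagram inside the appropriate hom-bicategory $\mathbf{hom}(D_{n-1},X(x,y))$. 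The two vertical sides of this diagram are the underlying cylinders of $(H\circ G)\circ F$ and of $H\circ(G\circ F)$; both are built from the coherence cylinders $\Psi,\Phi,\Theta$ of Definition \ref{coherence cyls} that implement vertical composition and whiskering, glued to the whiskered underlying cylinders $\overline{F},\overline{G},\overline{H}$, but bracketed differently, and the diagram rebrackets one into the other. The interior fillers come from four sources: generic $2$-cells obtained from contractibility of the relevant globular sums, as in Lemma \ref{modifications in contractible groupoids}; the compatibility $2$-cells relating vertical composition to whiskering, supplied by Lemmas \ref{modif vert comp+whiskering 1-cell} and \ref{whiskering modif with 1-cell}; the associativity-of-whiskering modification of Lemma \ref{associativity whiskering cylinders}; and the inductive hypothesis applied to $\overline{F},\overline{G},\overline{H}$. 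Composing the diagram yields $\overline{\alpha}$, whose transpose under $\Sigma\dashv\Omega$ is the desired
\[\alpha\colon (H\circ G)\circ F \Rightarrow H\circ (G\circ F).\]

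I expect the main obstacle to be the sheer bookkeeping of this pasting diagram: one must arrange the coherence cylinders $\Psi,\Phi,\Theta$ and the several whiskerings so that every intermediate $1$-cell boundary matches and each region is fillable by exactly one of the four mechanisms above. Concretely, the delicate points are verifying that the regions where associativity of whiskering is invoked genuinely satisfy the hypotheses of Lemma \ref{associativity whiskering cylinders}, and that the regions where vertical composition interacts with whiskering really match the statements of Lemmas \ref{modif vert comp+whiskering 1-cell} and \ref{whiskering modif with 1-cell}. The crucial simplification that keeps this tractable is local posetality of the target bicategory: no coherence axiom (pentagon or triangle) needs to be checked, so once a syntactically correct pasting diagram with fillable regions is exhibited, the proof is complete.
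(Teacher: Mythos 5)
Your proposal is correct and follows essentially the same route as the paper: the boundary $2$-cells $\alpha_s,\alpha_t$ are taken to be associativity constraints in $\mathfrak{C}$, and the interior is a single large pasting diagram in $\mathbf{hom}(D_{n-1},X(s^n(A),t^n(C)))$ whose regions are filled by contractibility of globular sums, the whiskering-compatibility lemmas, and the associativity-of-whiskering lemma, with local posetality removing any need to verify coherence axioms. The only cosmetic difference is that you attribute some interior regions to an inductive hypothesis on $\overline{F},\overline{G},\overline{H}$, whereas the paper delegates all the induction to the auxiliary lemmas it cites; this does not change the substance of the argument.
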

We conclude this section of the Appendix with the following results, which requires the existence of inverses and does not hold true in $\mathbf{Mod}(\mathfrak{C})$.
\begin{lemma}
	\label{inverse of modifications}
Given a pair of $n$-cylinders $F,G\colon \cyl(D_n)\rightarrow X$ in $\mathbf{Mod}(\mathfrak{C}^{\mathbf{W}})$ (see Definition \eqref{D_W def}) and a modification $\Theta\colon F\rightarrow G$ there exists a modification $\Theta' \colon G \rightarrow F$
\begin{proof}
We denote by $f^{-1}$ the result of promoting either a left or a right inverse for $f$ to a two-sided inverse.
If $n=0$ then $\Theta'$ is obtained by inverting the 2-cell $\Theta$. Let $n>0$, we define $\Theta'_s=(\Theta_s)^{-1}$ and $\Theta'_t=(\Theta_t)^{-1}$. By definition, $\Theta$ induces a modification of $(n-1)$-cylinders of the form $\overline{\Theta} \colon\Upsilon(C_0,\Theta_t)\otimes \bar{C}\otimes\Gamma(\Theta_s,C_1)\Rightarrow \bar{D}$ (where $\otimes$ denotes the vertical composition operation). By inductive hypothesis this can be inverted, to give us $\overline{\Theta}' \colon \bar{D}\Rightarrow \Upsilon(C_0,\Theta_t)\otimes \bar{C}\otimes\Gamma(\Theta_s,C_1)$. Lemma \ref{Lemma B8} implies that we get a modification:
\[
\bfig 
		\morphism(0,0)|a|/@{=>}/<0,-400>[\Upsilon(C_0,(\Theta_t)^{-1})\otimes \bar{D} \otimes \Gamma((\Theta_s)^{-1},C_1)`\Upsilon(C_0,(\Theta_t)^{-1})\otimes \Upsilon(C_0,\Theta_t)\otimes \bar{C}\otimes\Gamma(\Theta_s,C_1) \otimes \Gamma((\Theta_s)^{-1},C_1);\Upsilon(C_0,(\Theta_t)^{-1})\overline{\Theta}'\Gamma((\Theta_s)^{-1}]
\efig 
\]by whiskering.

Now, the existence of 3-cells $\Theta_s \circ (\Theta_s)^{-1}\rightarrow 1_{t(\Theta_s)}$ and $(\Theta_t)^{-1}\circ (\Theta_t)\rightarrow 1_{s(\Theta_t)} $ implies that there is an induced modification (using the usual methods to produce such modification in $\mathbf{Mod}(\mathfrak{C})$, applied to the globular sums $D_3\plus{D_0}D_n$ and $D_n \plus{D_0}D_3$):
\[\Upsilon(C_0,(\Theta_t)^{-1})\otimes \Upsilon(C_0,\Theta_t)\Rightarrow \Upsilon(C_0,1_{t(\Theta_s)})\] and 
\[\Gamma(\Theta_s,C_1) \otimes \Gamma((\Theta_s)^{-1},C_1) \Rightarrow \Gamma(1_{s(\Theta_t)},C_1)\]
The usual methods can also be employed to construct modifications $\Gamma(1_{s(\Theta_t)},C_1)\Rightarrow \mathbf{C}_{n-1}(C_1 C_s)$ and $\Upsilon(C_0,1_{t(\Theta_s)})\Rightarrow \mathbf{C}_{n-1}(C_t C_0)$, so that upon composing the $(n-1)$-modification defined so far we get one of the form:
\[
\bfig 
\morphism(0,0)|a|/@{=>}/<0,-400>[\Upsilon(C_0,(\Theta_t)^{-1})\otimes \bar{D} \otimes \Gamma((\Theta_s)^{-1},C_1)`\mathbf{C}_{n-1}(C_t C_0)\otimes \bar{C}\otimes \mathbf{C}_{n-1}(C_1 C_s);]
\efig 
\]
 We can now finish the construction of $\overline{\Theta'}$ by invoking Lemma \ref{unitality of vert comp}.
\end{proof}
\end{lemma}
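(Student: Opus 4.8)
The plan is to argue by induction on $n$, inverting the $2$-cell data of the modification directly and then using the bicategorical calculus of Appendix B to invert the recursive $(n-1)$-cylinder datum. When $n=0$ a modification $\Theta\colon F\Rightarrow G$ is essentially a $2$-cell between the $1$-cells $F$ and $G$, so its inverse $\Theta'$ is obtained simply by inverting $\Theta$ with the inverse operation available in $\mathfrak{C}^{\mathbf{W}}$, promoting a one-sided inverse to a two-sided one via Remark \ref{two-sided vs left and right}. This base case already makes clear why the statement is about $\mathfrak{C}^{\mathbf{W}}$-models rather than bare $\mathfrak{C}$-models: the inverse operation is exactly the ingredient unavailable for a categorical coherator.

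For $n>0$, recall that $\Theta$ consists of a pair of $2$-cells $\Theta_s\colon s^n(F)\to s^n(G)$, $\Theta_t\colon t^n(G)\to t^n(F)$ together with a modification of $(n-1)$-cylinders $\overline{\Theta}\colon \Upsilon(\iota_0 F,\Theta_t)\otimes\overline{F}\otimes\Gamma(\Theta_s,\iota_1 F)\Rightarrow\overline{G}$. Since $F$ and $G$ are parallel, their bottom and top $n$-cells agree, so I would first set $\Theta'_s=(\Theta_s)^{-1}$ and $\Theta'_t=(\Theta_t)^{-1}$ (again as two-sided inverses). The remaining task is to construct a modification $\overline{\Theta'}\colon \Upsilon(\iota_0 F,(\Theta_t)^{-1})\otimes\overline{G}\otimes\Gamma((\Theta_s)^{-1},\iota_1 F)\Rightarrow\overline{F}$. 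The inductive hypothesis applied to $\overline{\Theta}$ yields an inverse $\overline{\Theta}'\colon\overline{G}\Rightarrow\Upsilon(\iota_0 F,\Theta_t)\otimes\overline{F}\otimes\Gamma(\Theta_s,\iota_1 F)$, and whiskering this on both sides by the coherence cylinders $\Upsilon(\iota_0 F,(\Theta_t)^{-1})$ and $\Gamma((\Theta_s)^{-1},\iota_1 F)$ (Lemma \ref{Lemma B8}) produces a modification whose codomain is the long composite
\[
\Upsilon(\iota_0 F,(\Theta_t)^{-1})\otimes\Upsilon(\iota_0 F,\Theta_t)\otimes\overline{F}\otimes\Gamma(\Theta_s,\iota_1 F)\otimes\Gamma((\Theta_s)^{-1},\iota_1 F).
\]

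The crux is then to collapse this composite back to $\overline{F}$. Here I would use that $\Theta_s\circ(\Theta_s)^{-1}$ and $(\Theta_t)^{-1}\circ\Theta_t$ carry $3$-cells to the appropriate identities; feeding these into the filler constructions of Section \ref{main constructions}, applied to appropriate globular sums such as $D_3\amalg_{D_0}D_n$ and $D_n\amalg_{D_0}D_3$, gives modifications collapsing $\Upsilon(\iota_0 F,(\Theta_t)^{-1})\otimes\Upsilon(\iota_0 F,\Theta_t)$ and $\Gamma(\Theta_s,\iota_1 F)\otimes\Gamma((\Theta_s)^{-1},\iota_1 F)$ onto trivial (coherence) cylinders $\mathbf{C}$. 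A final appeal to the unit constraints in the bicategory $\mathbf{hom}(D_{n-1},X(x,y))$ (Lemma \ref{unitality of vert comp}) absorbs these trivial cylinders, leaving exactly $\overline{F}$; transposing this back up one dimension produces the required $\overline{\Theta'}$, and hence $\Theta'$.

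The main obstacle I anticipate is bookkeeping rather than conceptual difficulty: one must track the opposing directions of $\Theta_s$ and $\Theta_t$, be careful about which one-sided inverse is promoted to a two-sided one in each place, and verify that the whiskering, collapsing, and unitality modifications assemble compatibly. Because the hom-bicategories $\mathbf{hom}(D_{n-1},X(x,y))$ are locally posetal (Appendix B), any two parallel $2$-cells coincide, so the higher coherence obligations are automatic and the genuine content is only the \emph{existence} of the various fillers—each of which reduces, via the relative lifting lemmas of Section \ref{main constructions}, to an existence check in $\nCat{\omega}$ that holds for dimensional reasons.
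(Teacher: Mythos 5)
Your proposal follows essentially the same route as the paper's proof: the same base case, the same definitions $\Theta'_s=(\Theta_s)^{-1}$, $\Theta'_t=(\Theta_t)^{-1}$, the inductive inversion of $\overline{\Theta}$, whiskering via Lemma \ref{Lemma B8}, collapsing the $\Upsilon$- and $\Gamma$-pairs using the $3$-cells witnessing invertibility over the globular sums $D_3\amalg_{D_0}D_n$ and $D_n\amalg_{D_0}D_3$, and the final appeal to Lemma \ref{unitality of vert comp}. The argument is correct and complete.
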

\newpage


\begin{thebibliography}{9999}
	\bibliographystyle{alpha}
	
	\bibitem[Ar1]{AR1} D. Ara - ``\emph{Sur les $\infty$-groupoïdes de Grothendieck et une variante $\infty$-catégorique}'', PhD Thesis.
	\bibitem[Ar2]{AR2} D. Ara - ``\emph{On the homotopy theory of Grothendieck $\infty$-groupoids}'', Journal of Pure and Applied Algebra, 217(7) (2013), 1237-1278.
	\bibitem[Bat]{BAT} M. Batanin - ``\emph{Monoidal globular categories as natural environement for the theory of weak n-categories}'', Advances in Mathematics 136 (1998), pp.39-103.
	\bibitem[JB]{JB} J. Bourke - ``\emph{Note on the construction of globular weak omega-groupoids from types, topological spaces etc
	}'', https://arxiv.org/abs/1602.07962
\bibitem[JB2]{JB2} J. Bourke - ``\emph{Grothendieck $\omega$-groupoids as iterated injectives}'', talk given at CT2016, slides available at  http://mysite.science.uottawa.ca/phofstra/CT2016/slides/Bourke.pdf
	\bibitem[BG]{BG} J. Bourke and R. Garner - ``\emph{On semiflexible, flexible and pie algebras}'', Journal of Pure and Applied Algebra 217 (2013), no. 2, pages 293–321.
	\bibitem[BG2]{BG2} J. Bourke and R. Garner - ``\emph{Algebraic weak factorisation systems II:
		categories of weak maps
}'', Journal of Pure and Applied Algebra 220 (2016), pages 148-174.
	\bibitem[Bor]{BOR}F. Borceux - ``\emph{Handbook of Categorical Algebra 2. Categories and Structures}'', Cambridge University Press, 1995.
	\bibitem[Cis]{CIS} D.-C. Cisinski - ``\emph{Les préfaisceux comme modèles des types d'homotopie}'', Astérisque 308 (2006).
	\bibitem[FR]{FR} B. Fresse - ``\emph{Modules over operads and functors}'', Lecture Notes in Mathematics, Springer (2009).
	\bibitem[EL]{EL} E. Lanari - ``\emph{Natural transformations and modifications of weak $\infty$-groupoids}'', preprint
	\bibitem[Gr]{Gr} A. Grothendieck ``\emph{Pursuing Stacks}'', Manuscript, 1983, to be published in Documents Mathématiques
	\bibitem[Hen]{Hen} S. Henry ``\emph{Algebraic models of homotopy types and the homotopy hypothesis}'', https://arxiv.org/abs/1609.04622
	\bibitem[Ho]{Ho} M. Hovey, ``\emph{Model Categories}'', Issue 63 of Mathematical Surveys and Monographs, American Mathematical Society, (1999)	
	\bibitem[La]{La}S. Lack, ``\emph{A Quillen model structure for bicategories}'', K-Theory, 33 (2004), 185–197
	\bibitem[Ma]{MA} G. Maltsiniotis - ``\emph{Grothendieck $\infty$-groupoids and still another definition of $\infty$-categories}'', https://arxiv.org/pdf/1009.2331.pdf.
	
	\bibitem[Nik]{Nik} T. Nikolaus - ``\emph{Algebraic models for higher categories}'', Indag. Math. (N.S.) 21.1- 2 (2011), pp. 52 - 75.
\end{thebibliography}
\end{document}